\numberwithin{equation}{section}
\newtheorem{Theorem}{Theorem}[section]
\newtheorem*{Theorem*}{Theorem}
\newtheorem{Corollary}[Theorem]{Corollary}
\newtheorem{Lemma}[Theorem]{Lemma}
\newtheorem{Proposition}[Theorem]{Proposition}
\newtheorem{mainthm}{Theorem}
\newtheorem{maincor}[mainthm]{Corollary}
\newtheorem{mainthmp}{Theorem}
\theoremstyle{definition}
\newtheorem{assumption}[Theorem]{Assumption}
\newtheorem{Definition}[Theorem]{Definition}
\newtheorem{Example}[Theorem]{Example}
\newtheorem{Remark}[Theorem]{Remark}
\tikzset{
 blob/.style={circle, draw=black, fill=black, inner sep=0, minimum size=\blobsize},
 flin/.style={circle, draw=black, fill=white, inner sep=0, minimum size=\blobsize, line width=1pt},
 flout/.style={rectangle, draw=black, fill=white, inner sep=0, minimum size=\blobsize, line width=1pt}
}
\def\blobsize{2mm}
\def\circwidth{0.7pt}
\def\LLwidth{2.5pt}
\newcommand{\flow}[4][]{\begin{scope}[#1]
\pgfmathsetseed{#4}
\draw[decorate, decoration={random steps, segment length=0.11cm, amplitude=0.04cm, pre=lineto, pre length=0.07cm, post=lineto, post length=0.07cm}, rounded corners=0.04cm, ->, shorten >=-1pt, line cap=butt] #2 -- ($#2!0.5!#3$) node(midpt){};
\draw[decorate, decoration={random steps, segment length=0.11cm, amplitude=0.04cm, pre=lineto, pre length=0.07cm, post=lineto, post length=0.07cm}, rounded corners=0.04cm, line cap=butt] ($#2!0.5!#3$) -- #3;
\end{scope}
} % draws Morse flow line from #2 to #3 with random seed #4; the optional first argument is the drawing instruction, eg colour
\renewcommand{\phi}{\varphi}
\newcommand{\id}{\operatorname{id}}
\newcommand{\Ham}{\operatorname{Ham}}
\newcommand{\coker}{\operatorname{coker}}
\renewcommand{\min}{\mathrm{min}}
\newcommand{\val}{\operatorname{\mathfrak{v}}}
\newcommand{\CC}{\mathbb{C}}
\newcommand{\RR}{\mathbb{R}}
\newcommand{\ZZ}{\mathbb{Z}}
\newcommand{\PP}{\mathbb{P}}
\newcommand{\LL}{L^\flat}
\newcommand{\hatS}{\widehat{S}}
\newcommand{\tilS}{\widetilde{S}}
\newcommand{\hatR}{\widehat{R}}
\newcommand{\hatCO}{\widehat{\CO}{}^0_\bL}
\newcommand{\hatCOb}{\widehat{\CO}{}^0_{(\bL, \mathbf{b})}}
\newcommand{\hatB}{\widehat{\calB}}
\newcommand{\hatPhi}{\widehat{\Phi}}
\newcommand{\hatTheta}{\widehat{\Theta}_{\LL}}
\newcommand{\tilalpha}{\widetilde{\alpha}}
\newcommand{\calA}{\mathcal{A}}
\newcommand{\calB}{\mathcal{B}}
\newcommand{\calC}{\mathcal{C}}
\newcommand{\calD}{\mathcal{D}}
\newcommand{\calE}{\mathcal{E}}
\newcommand{\calL}{\mathcal{L}}
\newcommand{\calM}{\mathcal{M}}
\newcommand{\calN}{\mathcal{N}}
\newcommand{\calP}{\mathcal{P}}
\newcommand{\calQ}{\mathcal{Q}}
\newcommand{\calS}{\mathcal{S}}
\newcommand{\eps}{\varepsilon}
\newcommand{\bim}[2]{\mu^{#1 \vert 1 \vert #2}}
\newcommand{\kk}{\mathbf{k}}
\newcommand{\Char}{\operatorname{char}}
\newcommand{\ev}{\operatorname{ev}}
\newcommand{\m}{\mathfrak{m}}
\newcommand{\frn}{\mathfrak{n}}
\newcommand{\ks}{\operatorname{\mathfrak{ks}}}
\renewcommand{\mod}{\mathbin{\mathrm{mod}}}
\newcommand{\gr}{\operatorname{gr}}
\newcommand{\bL}{\mathbf{L}}
\newcommand{\op}{\mathrm{op}}
\newcommand{\Frob}{\operatorname{\mathfrak{f}}}
\newcommand{\trho}{\widetilde{\rho}}
\newcommand{\hateta}{\widehat{\eta}}
\newcommand{\mf}{\mathrm{mf}}
\newcommand{\diff}{\mathrm{d}}
\newcommand{\rH}{\operatorname{H}}
\newcommand{\rC}{\operatorname{C}}
\newcommand{\HH}{\operatorname{HH}}
\newcommand{\rCC}{\operatorname{CC}}
\newcommand{\tCC}{\operatorname{{}_2CC}}
\newcommand{\HF}{\operatorname{HF}}
\newcommand{\Fuk}{\mathcal{F}}
\newcommand{\subFuk}{\mathcal{G}}
\newcommand{\Hom}{\operatorname{Hom}}
\newcommand{\CF}{\operatorname{CF}}
\newcommand{\Spec}{\operatorname{Spec}}
\newcommand{\LMF}[1][]{\operatorname{\mathcal{LM}}_{#1}}
\newcommand{\Ext}{\operatorname{Ext}}
\newcommand{\eend}{\operatorname{end}}
\newcommand{\End}{\operatorname{End}}
\newcommand{\CO}{\operatorname{\mathcal{CO}}}
\newcommand{\tCO}{\operatorname{{}_2\mathcal{CO}}}
\newcommand{\OC}{\operatorname{\mathcal{OC}}}
\newcommand{\QH}{\operatorname{QH}}
\newcommand{\Jac}{\operatorname{Jac}}
\newcommand{\qprod}{\mathbin{\star}}
\newcommand{\xto}[1]{\xrightarrow{\ \ #1 \ \ }}
\newcommand{\unsign}[1]{\mathop{\overline{#1}}\nolimits}
\newcommand{\PSS}{\operatorname{PSS}}
\begin{document}

\allowdisplaybreaks

\newcommand{\arXivNumber}{2308.03438}

\renewcommand{\PaperNumber}{013}

\FirstPageHeading

\ShortArticleName{Hochschild Cohomology of the Fukaya Category}

\ArticleName{Hochschild Cohomology of the Fukaya Category \\ via Floer Cohomology with Coefficients}

\Author{Jack SMITH}

\AuthorNameForHeading{J.~Smith}

\Address{Cambridge, UK}
\Email{\mail{jackesmith100@gmail.com}}

\ArticleDates{Received January 28, 2025, in final form January 25, 2026; Published online February 11, 2026}

\Abstract{Given a monotone Lagrangian $L$ in a compact symplectic manifold $X$, we construct a commutative diagram relating the closed-open string map $\mathcal{CO}_\lambda \colon \operatorname{QH}^*(X) \to \operatorname{HH}^*(\mathcal{F} (X)_\lambda)$ to a variant of the length-zero closed-open map on $L$ incorporating $\mathbf{k}[\operatorname{H}_1(L; \mathbb{Z})]$ coefficients, denoted $\mathcal{CO}^0_\mathbf{L}$. The former is categorically important but very difficult to compute, whilst the latter is geometrically natural and amenable to calculation. We further show that, after a suitable completion, injectivity of $\mathcal{CO}^0_\mathbf{L}$ implies injectivity of $\mathcal{CO}_\lambda$. Via Sheridan's version of Abouzaid's generation criterion, this gives a powerful tool for proving split-generation of the Fukaya category. We illustrate this by showing that the real part of a monotone toric manifold (of minimal Chern number at least~2) split-generates the Fukaya category in characteristic~2. We also give a short new proof (modulo foundational assumptions in the non-monotone case) that the Fukaya category of an arbitrary compact toric manifold is split-generated by toric fibres.}

\Keywords{Fukaya category; Lagrangian submanifold; Floer cohomology}

\Classification{53D37; 53D40; 53D12}

\section{Introduction}
%\label{secIntroduction}

\subsection{Hochschild cohomology of the Fukaya category}

Let $(X, \omega)$ be a compact monotone symplectic manifold of dimension $2n$. The monotone Fukaya category $\Fuk(X)$ of $X$, constructed by Sheridan \cite{SheridanFano} (see Ritter--Smith \cite{RitterSmith} for the non-compact version), is an $A_\infty$-category which encodes rich information about the Lagrangian submanifolds of $X$ and their Floer theory. More precisely, it is a collection of categories $\Fuk(X)_\lambda$, labelled by $\lambda$ in our coefficient field $\kk$. This $\lambda$ prescribes the weighted count of pseduoholomorphic discs with boundary on each Lagrangian.

A powerful way to understand $\Fuk(X)_\lambda$, for example, to prove homological mirror symmetry, is to find an object $\LL$ that split-generates it, in the sense that every object in the category can be built from $\LL$ by repeatedly taking mapping cones and splitting off summands. Here~$\LL$ denotes a~La\-gran\-gian $L$ equipped with various extra data. Given such a split-generator, the Yoneda embedding lets us view $\Fuk(X)_\lambda$ concretely as a category of $A_\infty$-modules over $\eend^*\bigl(\LL\bigr) = \CF^*\bigl(\LL, \LL\bigr)$. More geometrically, if $\LL$ split-generates $\Fuk(X)_\lambda$, then $L$ must intersect every other Lagrangian supporting a non-zero object. Identifying split-generators is therefore a central problem in categorical symplectic topology.

One of the key tools for proving split-generation is the following result, which is discussed in more detail, and refined in various ways, in Appendix~\ref{secGen}.

\begin{Theorem}[{generation criterion, Abouzaid \cite{AbouzaidGeometricCriterion}, Sheridan \cite[Corollary~2.19]{SheridanFano}}]\samepage
\label{thmGeneration}
If $\LL$ is an~object in~$\Fuk(X)_\lambda$, and if the composition
\begin{equation}
\label{eqCOrestrict}
\CO_{\LL} \colon \ \QH^*(X) \xto{\CO_\lambda} \HH^*(\Fuk(X)_\lambda) \xto{\mathrm{restrict}} \HH^*\bigl(\CF^*\bigl(\LL, \LL\bigr)\bigr)
\end{equation}
is injective, then $\LL$ split-generates $\Fuk(X)_\lambda$.
\end{Theorem}

Here $\HH^*(\Fuk(X)_\lambda)$ is the Hochschild cohomology of the Fukaya category, which is an important Hodge-theoretic invariant in its own right, \smash{$\HH^*\bigl(\CF^*\bigl(\LL, \LL\bigr)\bigr)$} is the Hochschild cohomology of the Floer $A_\infty$-algebra of $\LL$, and $\CO_\lambda$ is the closed-open string map: a geometrically defined unital $\kk$-algebra homomorphism. These Hochschild invariants are algebraically formidable objects, involving all of the $A_\infty$-operations. They, and the map $\CO_\lambda$, are also extremely difficult to calculate directly, since they are defined by counts of pseudoholomorphic discs with arbitrarily many boundary inputs. These require delicate perturbation schemes to makes sense of, especially when inputs are repeated.

The goal of this paper is to take these objects and connect them to more intuitive and computable geometric constructions, arising from Floer cohomology with coefficients. Our approach is related to the family Floer theory of Fukaya \cite{FukayaFamilies}, Abouzaid \cite{AbouzaidFamily}, and Cho--Hong--Lau \cite{ChoHongLauLMF}, and is consistent with the philosophy that repeated insertion of degree $1$ boundary inputs should be equivalent to formal expansion of local systems. The main innovation is that we work with mixed objects lying between the two sides of the mirror: Hochschild cochains of the Fukaya category with coefficients in matrix factorisations. In this introduction, we briefly summarise Floer cohomology with coefficients, before describing the motivating example for this work. We then set up and state the main results.

\subsection{Floer cohomology with coefficients}
%\label{sscHFCoeffs}

In the Fukaya category, we typically allow our Lagrangians to be equipped with rank-$1$ local systems $\calL$ over $\kk$, i.e., flat $\kk$-line bundles. These line bundles weight the operations on the category according to their monodromies or parallel transport maps around the boundaries of the pseudoholomorphic discs being counted.

When working with a single Lagrangian $L$, we can essentially consider all local systems at once, by working over the group algebra $S = \kk[\rH_1(L; \ZZ)]$ and weighting each disc by the monomial describing its boundary homology class. We will denote this \emph{Floer algebra with coefficients} by~$\CF_S^*(\bL, \bL)$, for reasons that will become clear shortly. Whilst this is a very natural geometric construction, it does not make sense within the Fukaya category over $\kk$; see Section~\ref{sscFukS}. Instead, we enlarge our coefficient ring to $S$, denoting the new category by $\Fuk_S(X)$, and define the object~${\bL \in \Fuk_S(X)}$ to be $L$ equipped with the tautological rank-$1$ local system over $S$. Then the Floer algebra with coefficients is exactly the ordinary Floer algebra of $\bL$, justifying our choice of notation. More generally, we will write $\CF_S^*$ and $\HF_S^*$ for Floer complexes and their cohomology computed in $\Fuk_S(X)$. This may implicitly involve viewing objects of $\Fuk(X)$ as objects of $\Fuk_S(X)$ by extension of coefficients.

Returning momentarily to the map \smash{$\CO_{\LL}$} from \eqref{eqCOrestrict}, it has a \emph{length-zero} version
\[
\CO^0_{\LL} \colon \ \QH^*(X) \to \HF^*\bigl(\LL, \LL\bigr),
\]
obtained by projecting the bar-type Hochschild cochain complex onto its shortest piece. In~general, this loses much of the information of \smash{$\CO_{\LL}$}, but it is much easier to calculate since the discs it counts have no boundary inputs and so sophisticated perturbation schemes are not necessary. Its codomain is also much easier to understand since it only involves the differential on~$\CF^*\bigl(\LL, \LL\bigr)$, and not the higher operations. By now incorporating weights from $S$, we can define analogous map
\[
\CO^0_\bL \colon \ \QH^*(X) \to \HF_S^*(\bL, \bL).
\]
In practice, it is no harder to compute than \smash{$\CO^0_{\LL}$}, but carries more information in the form of the universal boundary weights, as opposed to the specific weights corresponding to $\calL$.

\subsection{The motivating example}
\label{sscMotivatingExample}

If $X$ is a compact monotone toric manifold and $L$ is the monotone toric fibre, then it is well-known, following Batyrev \cite{Batyrev} and Givental \cite{Givental1, Givental2}, that $\QH^*(X)$ can be described as the Jacobian ring~${\Jac W_L}$ of the superpotential of $L$, whose definition we recall in Section~\ref{sscIngredients}. Assuming that~$\kk$ is algebraically closed, or at least contains all critical points of $W_L$, the quantum cohomology decomposes as a product of ideals $\QH^*(X)_\alpha$ labelled by critical points $\alpha$. Under the isomorphism $\QH^*(X) \cong \Jac W_L$ these correspond to the completions $(\Jac W_L)_\alpha$ of $\Jac W_L$ at each $\alpha$. There is an induced decomposition of the Fukaya category into pieces $\Fuk(X)_\alpha$, each of which contains the object $L_\alpha$ given by $L$ equipped with the local system (whose monodromy is) described by~$\alpha$.

In \cite[Corollary~1.3.1]{EvansLekiliGeneration}, Evans--Lekili prove that each~$\Fuk(X)_\alpha$ is split-generated by $L_\alpha$, concluding the proof of one direction of homological mirror symmetry in this setting \cite[Corollary~9.2]{ChoHongLauTorus}. They do not use the generation criterion, which here says that~$L_\alpha$ split-generates $\Fuk(X)_\alpha$ if $\CO_{L_\alpha}$ is injective on $\QH^*(X)_\alpha$. Instead, they exploit the fact that $L$ is a free orbit of a Hamiltonian torus action on $X$. However, it is a consequence of their argument \cite[Corollary~6.2.8]{EvansLekiliGeneration} that
\begin{equation}
\label{eqELisom}
\QH^*(X)_\alpha \cong \HH^*(\Fuk(X)_\alpha) \cong \HH^*(\CF^*(L_\alpha, L_\alpha))
\end{equation}
for each $\alpha$, where the second isomorphism uses Morita invariance of Hochschild cohomology. It is natural to expect that the composition of the two isomorphisms in \eqref{eqELisom} coincides with $\CO_{L_\alpha}$.

To connect this to Floer cohomology with coefficients, the Jacobian ring $\Jac W_L$ arises in this setting as $\HF_S^*(\bL, \bL)^\op$ (see Proposition~\ref{propHFtoric}; the $\op$ is irrelevant here since the algebra is commutative, but it will be important later) and the isomorphism $\QH^*(X) \cong \Jac W_L$ is realised by precisely the map \smash{$\CO^0_\bL$} (see Theorem~\ref{thmSmallQH}, which is derived from \cite{FOOOtoricMS, SmithQH}). We can then view each $(\Jac W_L)_\alpha$ as the Floer cohomology \smash{$HF_{\hatS}^*(\bL, \bL)$} with coefficients in a suitable completion~\smash{$\hatS$} of~$S$, and the isomorphism \smash{$\QH^*(X)_\alpha \cong (\Jac W_L)_\alpha$} as the corresponding completion~\smash{$\hatCO$}. We~thus have a diagram
\begin{equation*}
\begin{tikzcd}[column sep=5em, row sep=2.5em]
\QH^*(X)_\alpha \arrow{r}{\CO_{L_\alpha}} \arrow{d}{\hatCO}[swap, anchor=center, rotate=-90, yshift=-1ex]{\cong} & \HH^*(\CF^*(L_\alpha, L_\alpha))
\\ \HF_{\hatS}^*(\bL, \bL)^\op &
\end{tikzcd}
\end{equation*}
in which all groups are isomorphic to each other. The vertical arrow is known to be an isomorphism and the horizontal arrow is expected to be one. Given the formal similarity of these two closed-open-type maps, it is natural to ask whether they can be related to each other, for example by extending the diagram to a commuting square in which the bottom and right-hand arrows are isomorphisms. The present work provides an affirmative answer to this question, and hence (by showing that $\CO_{L_\alpha}$ is an isomorphism) an alternative proof that each $L_\alpha$ split-generates~$\Fuk(X)_\alpha$.

\begin{Remark}%\label{rmkToricPrevious}
Partial results towards toric split-generation via injectivity of $\CO$ have previously been obtained by Ritter \cite{RitterFanoToric}, in the case when $W$ is Morse (he also deals with some non-compact~$X$), and Tonkonog \cite{TonkonogCO}, when $W$ has at worst $A_2$ singularities.
\end{Remark}

We also show, under foundational assumptions, that the analogous generation result, Theorem~\ref{TheoremL}, holds for general (not necessarily monotone) compact toric manifolds. These foundations are expected to be addressed in work announced by Abouzaid--Fukaya--Oh--Ohta--Ono around~2016, which we believe also considers their application to the toric generation question.

\subsection{Ingredients}
\label{sscIngredients}

We now introduce the subjects of our main results. We follow Seidel's conventions for $A_\infty$-algebras and -categories \cite{SeidelBook} except for the signs used to define the opposite algebra or category; see Section~\ref{sscSignConventions}.

Let $L \subset X$ be a monotone Lagrangian submanifold defining an object $\LL$ in the Fukaya category $\Fuk(X)_\lambda$ as in Section~\ref{sscFuk}. Associated to $L$ is its superpotential $W_L \in S = \kk[\rH_1(L; \ZZ)]$, which counts rigid pseudoholomorphic discs with boundary on $L$ that send a boundary marked point to a generically chosen point in $L$, each weighted by its boundary homology class. We view this as a function on the space $\rH^1(L; \kk^\times) \cong \Spec S$ of rank-$1$ local systems on $L$. For our present purposes, the object $\LL$ comprises $L$ equipped with such a local system $\calL$ satisfying~$
{W_L(\calL) = \lambda}$.

Given $L$ and $\lambda$, there is a localised mirror functor
\[
\LMF[L,\lambda] \colon \ \Fuk(X)_\lambda \to \mf(S, W_L-\lambda)
\]
to the dg-category of matrix factorisations of $W_L-\lambda$. This functor was introduced by Cho--Hong--Lau in \cite{ChoHongLauTorus}, and is essentially defined by extending scalars from $\kk$ to $S$ and then applying the covariant hom-functor associated to $\bL$. In particular, it sends each object $K$ in $\Fuk(X)_\lambda$ to $\CF_S^*(\bL, K)$. We recall the construction in more detail in Section~\ref{sscLMF}. Cho, Hong, and Lau focused on the case where $L$ is a torus, but their ideas immediately extend to general~$L$.

\begin{Remark}
One has to be a little careful about calling it a hom-functor, since $\bL$ lies in~$\Fuk_S(X)_{W_L}$ rather than $\Fuk_S(X)_\lambda$. This is what causes the `differential' on $\CF_S^*(\bL, K)$ to square to~${W_L - \lambda}$ rather than $0$, and hence what causes the functor to land in matrix factorisations rather than cochain complexes.
\end{Remark}

Via the functor $\LMF[L,\lambda]$ we can view $\mf(S, W_L-\lambda)$ as an $\Fuk(X)_\lambda$-bimodule, giving a Hochschild cochain complex $\rCC^*(\Fuk(X)_\lambda, \mf(S, W_L - \lambda))$ and a pushforward map
\[
(\LMF[L,\lambda])_* \colon \ \rCC^*(\Fuk(X)_\lambda) \to \rCC^*(\Fuk(X)_\lambda, \mf(S, W_L - \lambda)).
\]
This is the first ingredient in our results.

The second ingredient is the $A_\infty$-algebra $\CF_S^*(\bL, \bL)$. More precisely, we want the \emph{opposite} algebra $\CF_S^*(\bL, \bL)^\op$, for reasons that will become clear the proof of Proposition~\ref{propThetaHom}. This is obtained by reversing the order of the inputs to operations and introducing an associated sign twist, namely \eqref{eqopdefinition}. Its cohomology algebra $\rH^*(\CF_S^*(\bL, \bL)^\op)$ is the ordinary graded-opposite algebra to $\HF_S^*(\bL, \bL)$, which we denote by $\HF_S^*(\bL, \bL)^\op$ (\emph{graded}-opposite just means that we include Koszul signs).

\subsection{Main results}
\label{sscMainResults}

We are now in a position to state our main results. The first relates the algebra $\CF_S^*(\bL, \bL)^\op$ to the Hochschild cochain algebra $\rCC^*(\Fuk(X)_\lambda, \mf(S, W_L - \lambda))$.

\begin{mainthm}[Proposition~\ref{propThetaHom}, Lemma~\ref{lemThetaModuleAction} and Corollary~\ref{corThetaUnital}]
\label{TheoremA}
There is a geometrically defined $S$-linear $A_\infty$-algebra homomorphism
\[
\Theta\colon \ \CF_S^*(\bL, \bL)^\op \to \rCC^*(\Fuk(X)_\lambda, \mf(S, W_L - \lambda)).
\]
It is cohomologically unital and extends the module action of $\CF_S^*(\bL, \bL)^\op$ on $\LMF[L,\lambda]$ in a natural sense.
\end{mainthm}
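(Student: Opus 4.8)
The plan is to construct $\Theta$ by the same kind of disc count that defines the closed--open map and the localised mirror functor, but now with the "closed" input replaced by a Floer cochain on $\bL$. Concretely, for a sequence of objects $K_0, \ldots, K_d$ in $\Fuk(X)_\lambda$ and morphisms $x_i \in \CF^*(K_{i-1}, K_i)$, a Hochschild cochain with values in $\mf(S, W_L - \lambda)$ eats these together with an element of $\CF_S^*(\bL, K_0)$ and outputs an element of $\CF_S^*(\bL, K_d)$. Given $a \in \CF_S^*(\bL, \bL)$, I would define $\Theta^d(a)$ to be the operation that first "multiplies in" $a$ on the $\bL$ side and then runs the usual $\LMF$-type continuation, i.e.\ counts pseudoholomorphic discs with boundary inputs $a, x_1, \ldots, x_d$ (in the cyclic order appropriate for a bimodule cochain), boundary on $\bL$ over the arc hitting the $K_0 \to \bL \to \bL \to K_0$ portion, and boundary on the $K_i$ elsewhere, weighted by the $S$-monomial of the $\bL$-boundary. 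For $d = 0$ this should literally be the module multiplication map $\CF_S^*(\bL,\bL) \otimes \CF_S^*(\bL, K_0) \to \CF_S^*(\bL, K_0)$, which is exactly what forces the last sentence of the theorem.

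The key steps, in order, are: (i) set up the moduli spaces of discs with one distinguished "input" marked point carrying $a$ and $d$ further boundary marked points, with the appropriate Lagrangian labels and asymptotics, and verify transversality/compactness using the perturbation data already fixed for $\Fuk_S(X)$ and for $\LMF$ — nothing new is needed since these are the same discs that build the Yoneda/bimodule structure; (ii) write down the signed count defining $\Theta^d$ and identify $\Theta^0$ with the module action, establishing the "extends the module action" clause essentially by definition; (iii) prove the $A_\infty$-homomorphism relations by the standard codimension-one boundary analysis of the one-dimensional moduli spaces — the boundary strata either bubble off a disc contributing an $A_\infty$-operation $\mu_{\CF_S(\bL,\bL)}$ among the $a$-inputs (these assemble into the $\CF_S^*(\bL,\bL)^\op$ side, and it is precisely here that the opposite algebra and its sign twist \eqref{eqopdefinition} appear, because breaking the boundary of a disc reverses the cyclic order of the inputs), or they bubble off a disc contributing a Hochschild differential or an $A_\infty$-structure map of the target bimodule complex $\rCC^*(\Fuk(X)_\lambda, \mf(S, W_L-\lambda))$; matching these up termwise gives the relations; (iv) deduce cohomological unitality by feeding in the unit $e_{\bL} \in \CF_S^*(\bL,\bL)$ and checking that $\Theta^0(e_{\bL})$ is the identity module map while $\Theta^{\geq 1}(e_{\bL}, \ldots)$ vanishes on cohomology, via the usual unit-insertion arguments (a marked point carrying a strict unit forces the disc to be constant or degenerate), appealing to \cref{corThetaUnital}.

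The main obstacle I anticipate is the sign bookkeeping in step (iii). There are three sources of signs that must be reconciled simultaneously: Seidel's $A_\infty$-sign conventions for the target Hochschild complex, the Koszul signs in the "graded-opposite" structure on $\CF_S^*(\bL,\bL)^\op$, and the orientation signs on the moduli spaces coming from gluing the $\bL$-boundary arc. The opposite algebra is not a cosmetic choice — it is dictated by the geometry, since the $a$-inputs sit on a boundary arc that is traversed in the opposite orientation to the way the $x_i$'s are read off in a bimodule cochain, so the natural composition law on that arc is the reversed one. Getting \eqref{eqopdefinition} to drop out with exactly the right sign twist, rather than off by a $(-1)^{\ast}$ that would break the homomorphism relation, is the delicate part; everything else is a routine (if lengthy) transcription of the arguments underlying the localised mirror functor and the closed--open map. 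A secondary, more bookkeeping-level subtlety is ensuring that the $S$-linearity is manifest, which it is as long as the monomial weights are assigned multiplicatively to the $\bL$-boundary components and one checks that gluing along the $\bL$-arc adds homology classes compatibly.
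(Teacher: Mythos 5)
Your construction of $\Theta$ and your proof of the $A_\infty$-homomorphism equations follow the same route as the paper: discs with inputs from $\CF_S^*(\bL,\bL)$ placed on the $\bL$-boundary arc and Fukaya-category inputs on the other arc, with the homomorphism relations extracted from codimension-one boundaries of the one-dimensional moduli spaces, and with the opposite-algebra sign twist \eqref{eqopdefinition} forced exactly as you say by the reversed (clockwise) ordering on the $\bL$-arc (cf.\ \cref{defTheta}, the twist \eqref{eqThetaTwist}, and \cref{propThetaHom}); the identification of the one-input, length-zero piece with the module action is \cref{lemThetaModuleAction} (there it is only ``homotopic to'' the module action, since the perturbation data need not match). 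One presentational caution: an $A_\infty$-homomorphism requires all multilinear components $\Theta^l(c_l,\dots,c_1)$ with arbitrarily many inputs on the $\bL$-arc, not just a single $a$; your notation $\Theta^d(a)$ obscures this, though your later reference to bubbling ``among the $a$-inputs'' shows you intend the general configuration, which is indeed what the homomorphism equation needs.

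Where you genuinely diverge is cohomological unitality. The paper does not prove it geometrically at all: it is deduced in \cref{corThetaUnital} from the commutative diagram of \cref{TheoremB} together with unitality of $\CO^0_\bL$, $\CO_\lambda$, and $\rH((\LMF)_*)$ (and this deduction uses the field hypothesis, via Ganatra's comparison of $\CO_\lambda$ with its two-pointed version). Your proposed unit-insertion argument is closer to the alternative the paper sketches in \cref{rmkRingUnitality} (mimicking the unitality proof for the two-pointed closed--open map), and has the advantage of not needing \cref{TheoremB} or field coefficients; but as stated it leans on a \emph{strict} unit (``a marked point carrying a strict unit forces the disc to be constant''), whereas $\CF_S^*(\bL,\bL)$ in the chord or pearl model only has a cohomological unit, so you would need homotopy-unit machinery or the Sheridan-style argument to make that step honest. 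With that repair your route is viable; the paper's route simply trades this geometric work for the already-established unitality statements about the closed--open maps.
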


The functor $\LMF[L,\lambda]$ and homomorphism $\Theta$ are independent of auxiliary choices in a manner explained in Section~\ref{sscIndependence}. In particular, the induced maps on cohomology are canonical.

Our second main result is that $\Theta$ is compatible with the relevant closed-open maps as follows.

\begin{mainthm}[Corollary~\ref{corThmBCommutes}]
\label{TheoremB}
The following diagram commutes:
\begin{equation}
\label{eqThmB}
\begin{tikzcd}[column sep=5em, row sep=2.5em]
\QH^*(X) \arrow{r}{\CO_\lambda} \arrow{d}{\CO^0_\bL} & \HH^*(\Fuk(X)_\lambda) \arrow{d}{\rH((\LMF[L,\lambda])_*)}
\\ \HF_S^*(\bL, \bL)^\op \arrow{r}{\rH(\Theta)} & \HH^*(\Fuk(X)_\lambda, \mf(S, W_L - \lambda)).
\end{tikzcd}
\end{equation}
\end{mainthm}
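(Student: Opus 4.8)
The plan is to deduce \cref{TheoremB} from the chain-level statement of \cref{TheoremA} together with the known chain-level definitions of the closed--open maps. The key point is that all four maps in \eqref{eqThmB} are (cohomology-level shadows of) geometrically defined chain maps built from the same moduli spaces of pseudoholomorphic discs, and the square will commute already on chains up to an explicit homotopy, after which one passes to cohomology. Concretely, I would first recall that $\CO_\lambda$ is represented by a chain map $\rCC^*(\Fuk(X)_\lambda) \to \rC^*(X)$-dual type complex sending a quantum cochain $x$ to the Hochschild cochain whose length-$d$ component counts discs with one interior input labelled by $x$ and $d$ boundary inputs; that $\CO^0_\bL$ is the length-zero truncation of the corresponding map into $\CF_S^*(\bL,\bL)$; and that $(\LMF[L,\lambda])_*$ is postcomposition of a Hochschild cochain with the $A_\infty$-functor $\LMF[L,\lambda]$, while $\rH(\Theta)$ is induced by the homomorphism $\Theta$ of \cref{TheoremA}.

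The core of the argument is a comparison of two chains of operations applied to an interior quantum input $x \in \QH^*(X)$: going right-then-down we form $\CO_\lambda(x)$, a Hochschild cochain of $\Fuk(X)_\lambda$, and push it forward along $\LMF[L,\lambda]$; going down-then-right we form $\CO^0_\bL(x)$, an element of $\CF_S^*(\bL,\bL)^\op$, and apply $\Theta$. I would exhibit a \emph{master moduli space} of discs carrying one interior marked point labelled $x$, a distinguished boundary marked point, and additional boundary marked points, with boundary conditions on $\bL$ near the interior input and on arbitrary objects $K_i$ of $\Fuk(X)_\lambda$ further along the boundary, together with a chosen arc from the interior point to the boundary recording where the ``$\bL$-region'' is cut. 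The codimension-one boundary strata of the associated parametrised moduli space break into exactly the terms of $\rH((\LMF[L,\lambda])_*) \circ \CO_\lambda$, the terms of $\rH(\Theta)\circ \CO^0_\bL$, and terms involving the $A_\infty$-structure maps (the differential on the Hochschild complexes, the curvature $W_L-\lambda$ of the matrix-factorisation target, and the operations used to build $\Theta$) — i.e.\ the failure to commute on the nose is precisely a Hochschild coboundary. Gromov compactness, the monotonicity hypothesis (ruling out disc and sphere bubbling in the wrong codimension), and transversality for the auxiliary perturbation data make this count well-defined, exactly as in the constructions underlying \cref{TheoremA}; indeed the moduli spaces here are the ones already used to define $\Theta$, now decorated with one extra interior input, so no new analytic input is needed beyond what \cref{TheoremA} supplies.

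To organise the signs and the bookkeeping I would package this as a single homotopy-commutative square of $A_\infty$-type: $\CO_\lambda$ and $\CO^0_\bL$ are the length-$\geq 0$ and length-$0$ parts of one map $\widetilde{\CO}$, $\Theta$ is the map of \cref{TheoremA}, and $(\LMF[L,\lambda])_*$ is functor-pushforward; the parametrised moduli space above is the homotopy $h$ with $\partial h + h\partial = \Theta \circ \CO^0_\bL - (\LMF[L,\lambda])_* \circ \CO_\lambda$ (up to the sign conventions of \cref{sscSignConventions} and the $\op$-twist of \cref{TheoremA}, which is why the opposite algebra must appear). Passing to cohomology and using unitality of $\Theta$ from \cref{corThetaUnital} to pin down the units then yields \eqref{eqThmB}. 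The main obstacle I anticipate is bookkeeping rather than geometry: matching the combinatorics of the boundary strata of the master moduli space with the precise term-by-term definitions of $\Theta$ and of the pushforward, and in particular getting the signs to line up given the opposite-algebra convention and Seidel's orientation conventions. The disc-bubbling analysis is the one place a genuine geometric subtlety could hide — one must check that a disc with boundary on $\bL$ bubbling off at the interior input contributes $W_L$ against the curvature term and nothing extraneous — but this is exactly the phenomenon already controlled in the proof of \cref{propThetaHom}, so I would cite that analysis rather than redo it.
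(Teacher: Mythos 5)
Your proposal follows essentially the same route as the paper: the paper proves \cref{corThmBCommutes} by constructing exactly such a geometric chain homotopy, a Hochschild cochain $\tau$ counting discs with one interior marked point constrained to the pseudocycle for $\alpha$, a thick $\bL$-boundary segment, and arbitrary boundary inputs, whose codimension-one degenerations give $\mu^1_{\rCC}(\tau) = {\LMF}_*(\sigma) - \Theta^1(\sigma_\bL)$, and then passes to cohomology. One small caveat: your appeal to cohomological unitality of $\Theta$ is unnecessary (the homotopy alone gives commutativity) and would in fact be circular here, since the paper deduces \cref{corThetaUnital} \emph{from} \cref{TheoremB} rather than the other way round.
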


We reiterate that the top arrow is categorically important but algebraically complicated and difficult to calculate, whilst the left-hand arrow is simple, geometric, and relatively computable.

\begin{Remark}$\QH^*(X)$ carries an $A_\infty$-structure given by viewing it as the Floer cohomology of the diagonal in $X^- \times X$, but this has been little-studied. Since all of the maps in Theorem~\ref{TheoremB} are (the $1$-ary terms of) $A_\infty$-homomorphisms, one may hope to use them to compute this $A_\infty$-structure. For example, using knowledge of the $A_\infty$-structure on $\CF_S^*(\bL, \bL)$ when $L$ is the equator in $X = \CC\PP^1$ and $\Char \kk = 2$, one can recover the non-trivial $A_\infty$-product on $\QH^*\bigl(\CC\PP^1; \kk\bigr)$ computed in \cite[Example 7.3.6]{EvansLekiliGeneration}.
\end{Remark}

To apply the generation criterion to the object $\LL$ in $\Fuk(X)_\lambda$, we need to consider the composition $\CO_{\LL}$ of $\CO_\lambda$ with the restriction map from $\HH^*(\Fuk(X)_\lambda)$ to $\HH^*\bigl(\CF^*\bigl(\LL, \LL\bigr)\bigr)$. Let~$\calE$ denote the matrix factorisation \smash{$\LMF[L,\lambda]\bigl(\LL\bigr) = \CF_S^*\bigl(\bL, \LL\bigr)$}, and let $\calB$ denote its endomorphism dg-algebra. The localised mirror functor and $\Theta$ then induce $A_\infty$-algebra homomorphisms
\[
\Phi\colon \ \CF^*\bigl(\LL, \LL\bigr) \to \calB \qquad \text{and} \qquad \Theta_{\LL}\colon \ \CF_S^*(\bL, \bL)^\op \to \rCC^*\bigl(\CF^*\bigl(\LL, \LL\bigr), \calB\bigr),
\]
respectively. Restricting \eqref{eqThmB} from $\Fuk(X)_\lambda$ to $\LL$ tells us that the following diagram commutes
\begin{equation*}
\begin{tikzcd}[column sep=5em, row sep=2.5em]
\QH^*(X) \arrow{r}{\CO_{\LL}} \arrow{d}{\CO^0_\bL} & \HH^*\bigl(\CF^*\bigl(\LL, \LL\bigr)\bigr) \arrow{d}{\rH(\Phi_*)}
\\ \HF_S^*(\bL, \bL)^\op \arrow{r}{\rH(\Theta_{\LL})} & \HH^*\bigl(\CF^*\bigl(\LL, \LL\bigr), \calB\bigr).
\end{tikzcd}
\end{equation*}

It turns out that a slight algebraic modification will allow us to better understand these maps. To set this up, note that the local system $\calL$ on $\LL$ corresponds, via its monodromy, to a~group homomorphism $\rho \colon \rH_1(L; \ZZ) \to \kk^\times$, and hence to an~algebra homomorphism $S \to \kk$. Let $\m \subset S$ denote the kernel of this algebra homomorphism, and let $\widehat{S}$ denote the $\m$-adic completion of~$S$. The constructions of $\CF_S^*(\bL, \bL)^{(\op)}$, $\CO^0_\bL$, $\calB$, $\Phi$, and $\Theta_{\LL}$ have immediate analogues over~\smash{$\widehat{S}$}, which we denote by \smash{$\CF_{\hatS}^*(\bL, \bL)^{(\op)}$}, \smash{$\hatCO$}, \smash{$\hatB$}, \smash{$\hatPhi$}, and \smash{$\hatTheta$}, respectively. Note that the completion depends on the choice of $\calL$ although our notation does not explicitly show this. We then have a~commuting diagram
\begin{equation}
\label{eqCompletedDiagram}
\begin{tikzcd}[column sep=5em, row sep=2.5em]
\QH^*(X) \arrow{r}{\CO_{\LL}} \arrow{d}{\hatCO} & \HH^*\bigl(\CF^*\bigl(\LL, \LL\bigr)\bigr) \arrow{d}{\rH(\hatPhi_*)}
\\ \HF_{\hatS}^*(\bL, \bL)^\op \arrow{r}{\rH(\hatTheta)} & \HH^*(\CF^*\bigl(\LL, \LL\bigr), \hatB).
\end{tikzcd}
\end{equation}

\begin{mainthm}[Theorem~\ref{thmEMcomparison} and Proposition~\ref{propThmC}]
\label{TheoremC}
The map $\hatTheta$ is a quasi-isomorphism. So to prove that $\CO_{\LL}$ is injective on a subspace of $\QH^*(X)$ it suffices to prove that \smash{$\hatCO$} is injective on that subspace. More generally, the same result holds when $S$ is replaced by the quotient $R = S/I$ before completing, for any ideal $I$ in $S$ contained in $\m$.
\end{mainthm}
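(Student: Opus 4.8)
The plan is to prove that $\hatTheta$ is a quasi-isomorphism and then read off the two injectivity assertions formally. For the latter: once $\rH(\hatTheta)$ is known to be an isomorphism, commutativity of \eqref{eqCompletedDiagram} gives $\rH(\hatPhi_*)\circ\CO_{\LL}=\rH(\hatTheta)\circ\hatCO$, so injectivity of $\hatCO$ on a subspace $V\subseteq\QH^*(X)$ forces injectivity of $\rH(\hatTheta)\circ\hatCO|_V$, hence of $\rH(\hatPhi_*)\circ\CO_{\LL}|_V$, hence of $\CO_{\LL}|_V$. For the final sentence of the theorem, one observes that the completion $\hatR$ of $R=S/I$ at $\m$ is again a Noetherian complete local ring with residue field $\kk$, and that every object appearing below --- the matrix factorisation $\CF_{\hatR}^*(\bL,\LL)$, its endomorphism algebra, $\hatTheta$, and the two steps that follow --- makes sense verbatim over $\hatR$, so the same argument applies.

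So fix attention on the quasi-isomorphism, and write $A:=\CF^*(\LL,\LL)$ and $\widehat{\calE}:=\CF_{\hatS}^*(\bL,\LL)$, so that $\hatB=\eend_{\mf(\hatS,\,W_L-\lambda)}(\widehat{\calE})$ is simply the $\hatS$-linear endomorphism complex of $\widehat{\calE}$ with its matrix-factorisation differential, regarded as an $A$-bimodule through $\hatPhi:A\to\hatB$ on both sides. The first step is the Eilenberg--Moore comparison of \cref{thmEMcomparison}: the bimodule bar resolution of $A$ identifies the Hochschild cochain complex $\rCC^*(A,\hatB)$ with the complex computing the derived $A$-module endomorphisms of $\widehat{\calE}$ in the category of $\hatS$-modules, and this requires no finiteness of $\widehat{\calE}$ over $A$, only that the Floer complexes are $\hatS$-free. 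Under this identification, $\hatTheta$ is carried to the canonical map from $\CF_{\hatS}^*(\bL,\bL)^{\op}$ given by right multiplication by $\CF_{\hatS}^*(\bL,\bL)$ on $\widehat{\calE}=\CF_{\hatS}^*(\bL,\LL)$, which takes values in $A$-linear endomorphisms because the left $A$-action and the right $\CF_{\hatS}^*(\bL,\bL)$-action on $\widehat{\calE}$ commute, by associativity of $\mu^2$; verifying that this really is $\hatTheta$ is an unwinding of the description of $\Theta$ in \cref{TheoremA}, whose completion $\hatTheta$ packages the module action of $\CF_{\hatS}^*(\bL,\bL)^{\op}$ on the localised mirror functor. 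The theorem is thus reduced to showing that this canonical map from $\CF_{\hatS}^*(\bL,\bL)^{\op}$ to the derived $A$-linear endomorphism algebra of $\widehat{\calE}$ is a quasi-isomorphism.

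The second step is to prove exactly this, and it is a Koszul-duality statement encoding the fact that $\bL$, viewed over $\hatS$, is the universal $\m$-adic deformation of $\LL$: with compatible perturbation data one has $\CF^*(\LL,\LL)\cong\CF_{\hatS}^*(\bL,\bL)/\m$, and the Floer complexes over the Artinian quotients $\hatS/\m^k$ interpolate between $\CF_{\hatS}^*(\bL,\bL)$ and $\CF^*(\LL,\LL)$. I would argue by $\m$-adic approximation: filter $\CF_{\hatS}^*(\bL,\bL)$, $\widehat{\calE}$, and everything assembled from them by powers of $\m$, pass to associated gradeds over $\gr_\m\hatS$, and there invoke the classical Koszul duality for modules over that graded ring --- in the coefficient-twisted form appropriate to having the DG algebra $\CF_{\hatS}^*(\bL,\bL)$ in the coefficients rather than the ground field --- to obtain the quasi-isomorphism on associated gradeds. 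One then transports this back to the filtered level by a convergence argument, using that $\widehat{\calE}$ and $\CF_{\hatS}^*(\bL,\bL)$ are finite over the Noetherian ring $\hatS$ and that $\hatS$ is $\m$-adically complete, so the relevant filtrations are complete and exhaustive in each cohomological degree.

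The hard part is this second step, and within it the essential role of the completion. Before completing, the Eilenberg--Moore map need not be a quasi-isomorphism --- this is precisely why the statement is made only after passing to $\hatS$ (or $\hatR$) --- so the real content is to show that completion repairs it: one has to pin down convergence of the $\m$-adic spectral sequence and identify its first page with the Koszul pattern. One cannot fall back on regularity: $\hatS$ is itself non-regular when $\rH_1(L;\ZZ)$ has torsion of order divisible by $\Char\kk$, and $\hatR$ is rarely regular, so the residue field may have infinite projective dimension and there is no finite Koszul resolution to lift; the whole argument must therefore be run with finitely generated modules and $\m$-adic completeness, carrying the $A_\infty$-structure on the coefficient algebra along throughout. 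The remaining checks --- cohomological unitality of $\hatTheta$ and independence of auxiliary choices --- are routine given \cref{TheoremA} and the comparison above.
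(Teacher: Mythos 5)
Your high-level architecture matches the paper's: the two injectivity assertions are formal once $\rH(\hatTheta)$ is an isomorphism, the case $R=S/I$ is handled verbatim over $\hatR$, and the engine is the $\m$-adic filtration together with the comparison theorem for complete exhaustive filtrations (note that \cref{thmEMcomparison} is that spectral-sequence statement, not the bar-resolution identification you attribute to it; the identification of $\rCC^*(\CF^*(\LL,\LL),\hatB)$-type complexes with $A_\infty$-module morphism complexes is the uncurrying lemma of \cref{appHH}, which the paper only needs after reducing mod $\m$). The bookkeeping you wave at — that the $\m$-adic filtrations on $\CF^*_{\hatR}(\bL,\bL)^\op$ and on the Hochschild complex are complete and have associated gradeds of the form $((\m^p+I)/(\m^{p+1}+I))\otimes(\text{mod-}\m\text{ complex})$ — is exactly \cref{lemCompletion,lemgrCF,lemgrCC}, and uses Noetherianness of $S$, finite-dimensionality of $\m^p/(\m^{p+1}+I)$, and finite rank of the Floer complexes; that part of your plan is fine in outline.

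The genuine gap is in your second step, i.e.\ the statement at the associated graded level. What is needed there is \emph{not} any form of Koszul duality over $\gr_\m\hatS$. Modulo $\m$ the module $\CF^*_{\hatS}(\bL,\LL)$ becomes $\CF^*(\LL,\LL)$ as a module over itself — free of rank one — and the coefficient bimodule becomes the coinduced bimodule $\eend^*_\kk(\CF^*(\LL,\LL))$; the reduction $\theta$ of $\hatTheta^1$ is then a quasi-isomorphism for a soft, unital reason: by \cref{corCCN} the map $\phi\mapsto(-1)^{\lvert\phi\rvert}\phi^0(e_{\LL})$ from $\rCC^*(\CF^*(\LL,\LL),\eend^*_\kk(\CF^*(\LL,\LL)))$ to $\CF^*(\LL,\LL)$ is a quasi-isomorphism, and composing it with $\theta$ gives $c\mapsto\mu^2_\Fuk(e_{\LL},c)$, which is $\pm\,\id$ on cohomology by cohomological unitality. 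No resolution of the residue field over $\hatS$ or $\hatR$, finite or infinite, enters anywhere, so the non-regularity obstruction you highlight is a red herring — but it is fatal to the route you actually describe: ``classical Koszul duality for modules over $\gr_\m\hatS$'' is a statement about $R\hom$ of the residue field (symmetric/exterior-type pairs), which is the opposite situation from a module that is free mod $\m$, and by your own admission you have no finite Koszul resolution to lift, nor do you say what the ``coefficient-twisted'' duality is or why it holds. As written, the graded step therefore does not go through. Once you replace it by the observation that $\gr\,\CF^*_{\hatS}(\bL,\LL)$ is an induced (free) module and quote the unit-evaluation quasi-isomorphism, your argument collapses onto the paper's proof of \cref{propThmC}, with the comparison theorem \cref{thmEMcomparison} doing the transport back to the filtered level exactly as you intend.
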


The generalisation to $S/I$ is useful in applications, for example the following.

\begin{mainthm}[Proposition~\ref{propTonkonogInterpretation}]
\label{TheoremD}
Tonkonog's criterion, Theorem~$\ref{thmTonkonogCriterion}$ {\rm \cite[{\it Theorem} 1.7]{TonkonogCO}}, for partial injectivity of $\CO_{\LL}$ is really a criterion for partial injectivity of \smash{$\hatCO$} for $I = \m^2$. This implies the corresponding statement for \smash{$\CO_{\LL}$} by Theorem~{\rm \ref{TheoremC}}. $($In this case, the $\m$-adic completion does nothing, since $I$ contains a power of $\m$.$)$
\end{mainthm}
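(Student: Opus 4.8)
The plan is to unwind \cref{thmTonkonogCriterion} term by term into a statement about the reduction of $\CO^0_\bL$ modulo $\m^2$, and then invoke \cref{TheoremC}. First I would record the elementary point that, because $\rH_1(L; \ZZ)$ is finitely generated, $R = S/\m^2$ is a finite-dimensional local $\kk$-algebra, hence already $\m$-adically complete; so $\hatS = R$, the map $\hatCO$ is literally $\CO^0_\bL$ followed by the reduction $\HF_S^*(\bL, \bL) \to \HF_R^*(\bL, \bL)$, and the hypothesis $I = \m^2 \subseteq \m$ of \cref{TheoremC} holds trivially. Thus it suffices to prove that Tonkonog's criterion is equivalent to injectivity of this reduced map on the subspace $V \subseteq \QH^*(X)$ that appears in his statement.

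The heart of the argument is a dictionary coming from the square-zero extension $0 \to \m/\m^2 \to R \to \kk \to 0$, in which $\m/\m^2$ is a finite-dimensional $\kk$-vector space canonically built from $\rH_1(L; \kk)$. Accordingly $\hatCO$ splits as its $\kk$-component, which is exactly $\CO^0_{\LL}$, together with a first-order term $\QH^*(X) \to \HF^*(\LL, \LL) \otimes (\m/\m^2)$. Geometrically the latter is the derivative of $\CO^0$ in the local-system direction: it counts the same index-zero discs with one interior constraint and one free boundary marked point that define $\CO^0_{\LL}$, but now remembers the first-order part of the boundary homology class of each disc. The claim to be verified is that this operation is precisely the ``secondary'' count underlying Tonkonog's criterion --- in his formulation it is visible through the Hessian of $W_L$ at $\calL$ and the associated Kodaira--Spencer data $\ks$ --- so that injectivity of $\hatCO$ on $V$ says exactly that no nonzero class of $V$ is annihilated simultaneously by $\CO^0_{\LL}$ and by this first-order term. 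This in turn is the algebraic shadow of the $A_{\le 2}$ hypothesis: the Hessian of $W_L$ at $\calL$ has corank at most one and, in the corank-one case, a nonzero cubic term in the null direction, which is exactly what is needed for $\m/\m^2$ to detect the deformation that $\m$ alone does not. More generally an $A_k$ singularity would require working over $I = \m^k$, which explains why Tonkonog's method stops at $A_2$.

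Assembling the two halves: by the dictionary, Tonkonog's hypotheses are equivalent to injectivity of $\hatCO$ on $V$, and \cref{TheoremC} with $I = \m^2$ upgrades this to injectivity of $\CO_{\LL}$ on $V$, which is Tonkonog's conclusion. The main obstacle is the geometric identification in the previous paragraph: one must pin down that the $\m/\m^2$-component of $\CO^0_\bL$ is the same count that Tonkonog uses, matching moduli spaces, orientation conventions, and --- since the present statement is in arbitrary characteristic whereas his foundations are typically set up over $\ZZ$ or $\QQ$ --- checking that the comparison is characteristic-independent, rather than merely agreeing up to an unidentified automorphism of the first-order data. Everything else is bookkeeping with the extension $R \to \kk$ and the already-established \cref{TheoremC}.
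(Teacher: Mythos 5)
Your algebraic framing (that $R = S/\m^2$ is a finite-dimensional local algebra, already $\m$-adically complete, so that \cref{TheoremC} with $I=\m^2$ converts a statement about $\hatCO$ into one about $\CO_{\LL}$) agrees with the paper. But the ``dictionary'' you defer as the step ``to be verified'' misidentifies what Tonkonog's criterion actually says, so the verification you propose is aimed at the wrong target. \cref{thmTonkonogCriterion} is not phrased through the Hessian of $W_L$ at $\calL$, Kodaira--Spencer data, or an $A_{\leq 2}$ hypothesis --- those belong to Tonkonog's toric \emph{applications}, not to the criterion itself. The criterion says: if $\CO_{\LL}(S(\gamma)\qprod Q)$ and $\CO_{\LL}(Q)$ are linearly dependent then there exists $a\in\HF^*(\LL,\LL)$ with $\mu^2(a,\PSS(y))+\mu^2(\PSS(y),a)=\langle y,l\rangle\,\CO^0_{\LL}(Q)$ for all $y\in\rH^1(L)$, where $l$ is the orbit class of the Hamiltonian loop $\gamma$. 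Accordingly, what must be proved (\cref{propTonkonogInterpretation}) is an equivalence between condition \eqref{TonkHyp} and linear dependence of the two specific classes $\hatCO(S(\gamma)\qprod Q)$ and $\hatCO(Q)$ in $\HF^*_R(\bL,\bL)$, not ``injectivity of $\hatCO$ on a subspace $V$ appearing in his statement''.

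Two ingredients are missing from your sketch. First, $\HF^*_R(\bL,\bL)$ is not simply $\HF^*(\LL,\LL)\oplus\bigl(\rH_1(L)\otimes\HF^*(\LL,\LL)\bigr)$: the differential over $R$ has the off-diagonal term $\mu^1_{\rH_1(L)}$, so the cohomology is assembled from $\ker\diff_1$ and $\coker\diff_1$, and the ``first-order part'' of a class is only defined modulo $\operatorname{im}\diff_1$. This is precisely where the class $a$ in \eqref{TonkHyp} comes from: using the mod-$\m^2$ Seidel-element computation (\cref{propTonka} and its $S$-coefficient analogue, giving $\hatCO(S(\gamma))-(-1)^{\eps(l)}\rho(l)\cdot 1_\bL=\iota\bigl((-1)^{\eps(l)}\rho(l)\, l\otimes 1_{\LL}\bigr)$) together with multiplicativity of $\hatCO$, linear dependence of the two classes reduces to $l\otimes\CO^0_{\LL}(Q)$ lying in the image of $\diff_1$, i.e.\ $\diff_1(a)=l\otimes\CO^0_{\LL}(Q)$ for some $a$; your proposal never brings in the Seidel element or this cokernel analysis. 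Second, the genuinely geometric step --- absent from your proposal --- is the identification, up to an overall sign and for all $y\in\rH^1(L)$, of $\langle y,\diff_1(a)\rangle$ with $\mu^2(a,\PSS(y))+\mu^2(\PSS(y),a)$. In the paper this is a pearl-model cobordism argument: the $\PSS(y)$-input of the triangle product is traded for a boundary marked point, so each of the two products computes $\mu^1(a)$ weighted by the pairing of one boundary arc with $y$, and their sum gives the full boundary pairing that defines $\diff_1$. Without this bridge between the $\m/\m^2$-weighted Floer differential and the PSS-products in \eqref{TonkHyp}, the equivalence you assert is not established; the ``first-order term of $\CO^0_\bL$'' you propose to match against Tonkonog's count is not the object that condition \eqref{TonkHyp} is about.
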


The proof of Theorem~\ref{TheoremC} uses a spectral sequence argument to reduce $S$ or $R$ coefficients to~$\kk$ coefficients, where the result can be deduced from the general nonsense of $A_\infty$-(bi)modules. The $\m$-adic completion allows us to apply the Eilenberg--Moore comparison theorem. Geometrically, it may seem a little mysterious, but it can be understood as follows: we cannot expect $\CF^*\bigl(\LL, \LL\bigr)$ to know about the Floer theory of $L$ away from the local system $\calL$, so to relate its Hochschild invariants to $\CF_S^*(\bL, \bL)^\op$ we should expect to have to complete, or at least localise, the latter~at~$\m$.

When combined with the generation criterion, Theorem~\ref{TheoremC} gives a powerful new tool for proving split-generation. For example, suppose $X$ is a compact monotone toric manifold whose minimal Chern number is at least $2$, and assume our coefficient field $\kk$ has characteristic $2$. Let $\LL$ be the real locus of $X$ equipped with the trivial local system. By building on work of Tonkonog \cite{TonkonogCO}, for an appropriate choice of $I$ as in Theorem~\ref{TheoremC} we construct a commutative diagram of $\kk$-algebras
\[
\begin{tikzcd}[column sep=5em, row sep=2.5em]
\QH^*_R(X) \arrow{d}{\pi} \arrow{r}{\Frob_R} & \QH^{2*}_R(X) \arrow{d}{\calD_R}[swap, anchor=center, rotate=-90, yshift=-1ex]{\cong}
\\ \QH^*(X) \arrow{r}{\hatCO} & \HF^*_R(\bL, \bL).
\end{tikzcd}
\]
Here $\QH^*_R(X)$ is an extension of quantum cohomology to $R = S/I$ coefficients, $\pi$ is reduction $R \to \kk$ modulo $\m$, and $\Frob_R$ is a $\kk$-linear extension of the Frobenius morphism $x \mapsto x^2$, in a sense explained in Section~\ref{sscThmDOutline}. Meanwhile, $\calD_R$ is an isomorphism we construct, extending the known isomorphisms $\rH^{2*}(X) \to \rH^*(L)$ and $\QH^{2*} \to \HF^*\bigl(\LL, \LL\bigr)$ of Duistermaat \cite{Duistermaat} and Haug and Hyvrier \cite{Haug,Hyvrier}. Using this diagram, we show the following.

\begin{mainthm}[Proposition~\ref{propKerFrob}]
\label{TheoremE}
The map \smash{$\hatCO$} is injective, so
\begin{enumerate}[label=$(\roman*)$,ref=(\roman*)]\itemsep=0pt
\item\label{itmOrientable} If the real locus $\LL$ is orientable, then it split-generates $\Fuk(X)_0$.
\item In general, $\LL$ split-generates the ungraded Fukaya category $\Fuk(X)^\mathrm{un}_0$.
\end{enumerate}
\end{mainthm}

\begin{Remark}
Our default Fukaya category is $\ZZ/2$-graded, and each Lagrangian in it must be equipped with a $\ZZ/2$-grading in the sense of~\cite{SeidelGraded}. In our setup, existence of such a grading is equivalent to orientability; see Remark~\ref{rmkGradings}. When $\kk$ has characteristic~$2$, we can drop the $\ZZ/2$-gradings (we no longer need them to define various signs in our formulae), and obtain an ungraded category, $\Fuk(X)_\lambda^\mathrm{un}$, which allows non-orientable Lagrangians. The algebraic structures, including the generation criterion, carry over to this ungraded setting as discussed in~\cite{AmorimCho}.

The one relevant property which does not carry over to the ungraded setting is the connection between decompositions of the Fukaya category and decompositions of $\QH^*(X)$ into generalised eigenspaces $\QH^*(X)_\lambda$ of quantum multiplication by $c_1(X)$, discussed in Appendix~\ref{sscEval}. The breakdown of this connection explains the potentially puzzling fact that in Theorem~\ref{TheoremE} the map~\smash{$\hatCO$}, and hence $\CO_{\LL}$, is non-zero on every $\QH^*(X)_\lambda$ even though $\LL$ lies in $\Fuk(X)^\mathrm{un}_0$ and~$\lambda$ may be non-zero. For example, if $\kk$ is an algebraically closed field of characteristic $2$ and $n$ is even, then $\QH^*(\CC\PP^n)$ splits into eigenspaces with eigenvalues given by the primitive $n$th roots of unity, yet the composition
\[
\CO_{\LL}\colon \ \QH^*(\CC\PP^n) \xto{\CO_0} \HH^*(\Fuk(\CC\PP^n)^\mathrm{un}_0) \xto{\text{restrict}} \HH^*\bigl(\CF^*\bigl(\LL, \LL\bigr)\bigr)
\]
is injective.

If the real locus $L$ of $X$ is orientable, then the usual argument \cite[Proposition~2.9]{SheridanFano} shows that
\[
\CO_0\colon \ \QH^*(X) \to \HH^*(\Fuk(X)_0)
\]
vanishes on $\QH^*(X)_\lambda$ for all $\lambda \neq 0$. Combining this with Theorem~\ref{TheoremE} shows that in this case we must have $\QH^*(X) = \QH^*(X)_0$. In fact, orientability of $L$ is equivalent to vanishing of the first Stiefel--Whitney class $w_1(L) \in \rH^1(L; \ZZ/2)$, which is equivalent (via the Duistermaat isomorphism of Section~\ref{sscDuistermaat}) to the vanishing of $c_1(X)$ in characteristic $2$. So if $L$ is orientable, then $c_1(X)$ is actually zero in $\QH^*(X)$.
\end{Remark}

Injectivity of $\CO_{\LL}$, and hence the split-generation results of Theorem~\ref{TheoremE}, were previously shown by Tonkonog \cite{TonkonogCO} in several families of examples, including $\RR\PP^n \subset \CC\PP^n$. Using a totally different approach, Evans--Lekili \cite[Example~7.2.4]{EvansLekiliGeneration} proved part~\ref{itmOrientable}. The general question of split-generation by the real locus, however, has remained open.

\begin{Remark}
Split-generation of the ungraded category by an orientable Lagrangian neither implies nor is implied by split-generation of the graded category: the ungraded category includes more objects, but one is allowed to construct more (i.e., ungraded) twisted complexes and to break off more summands.
\end{Remark}

\begin{Remark}
The minimal Chern number assumption is used to ensure that the real locus has minimal Maslov number at least~$2$, and hence that it defines a valid object in the monotone Fukaya category. The same hypothesis appears in~\cite{EvansLekiliGeneration} (via the requirement that the real locus is orientable) and in~\cite{TonkonogCO}. Alston--Amorim~\cite{AlstonAmorim} have shown that even when the minimal Chern number is $1$ the real locus is Floer-theoretically unobstructed in the sense of Fukaya--Oh--Ohta--Ono~\cite{FOOOBook}, i.e., that one can equip it with a bounding cochain so that it becomes a valid object in the Fukaya category. In Section~\ref{sscNonMonotone}, we discuss some extensions of our results that incorporate bounding cochains, but they require additional assumptions related to strict unitality. We are not aware of these assumptions having been established in the case of the real locus, so we do not state our results for minimal Chern number~$1$.
\end{Remark}

\begin{maincor}
\label{CorollaryF}
In the setting of Theorem~{\rm \ref{TheoremE}}, if $K$ is any monotone Lagrangian in $X$ that supports a non-zero object in $\Fuk(X)^\mathrm{un}_\lambda$ for some $\lambda$, then $K$ is non-displaceable from the real part $L$ by Hamiltonian isotopies.
\end{maincor}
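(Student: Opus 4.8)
The plan is to obtain \cref{CorollaryF} from \cref{TheoremE} by the standard passage from split-generation to non-displaceability. Since $X$ is monotone toric with minimal Chern number at least $2$ and $\Char \kk = 2$, the real locus $L$ equipped with the trivial local system is the object $\LL$ of $\Fuk(X)_0$ to which \cref{TheoremE} applies, and that theorem says $\LL$ split-generates $\Fuk(X)_0$. Let $\mathbf{K}$ be a non-zero object supported by $K$, and consider first the case $\mathbf{K} \in \Fuk(X)_0$ --- the only case in which the real locus can see $\mathbf{K}$ at all, since Floer cohomology between objects lying over distinct values of the disc potential is acyclic. Then $\mathbf{K}$, being split-generated by $\LL$, is a homotopy retract of a twisted complex $T$ assembled from shifts of $\LL$ by iterated mapping cones; fix morphisms $\iota \colon \mathbf{K} \to T$ and $\pi \colon T \to \mathbf{K}$ with $[\pi \iota] = e_{\mathbf{K}}$ in $\HF^0(\mathbf{K}, \mathbf{K})$.

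The core of the argument is then formal. Applying the cohomological functor $\HF^*(-, \mathbf{K})$ to the tower of mapping cones defining $T$ yields finitely many long exact sequences all of whose terms, other than $\HF^*(T, \mathbf{K})$, are shifts of $\HF^*(\LL, \mathbf{K})$; so if $\HF^*(\LL, \mathbf{K})$ vanished, so would $\HF^*(T, \mathbf{K})$, whence $\pi = 0$ and $e_{\mathbf{K}} = [\pi\iota] = 0$, forcing $\mathbf{K} \simeq 0$ and contradicting the hypothesis. Thus $\HF^*(\LL, \mathbf{K}) \neq 0$. This group is the Lagrangian intersection Floer cohomology of the pair $(L, K)$, computed from the intersection points of $L$ with a Hamiltonian perturbation of $K$; it is invariant under Hamiltonian isotopy of either Lagrangian, and it vanishes as soon as the two are disjoint. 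Hence $\phi(K) \cap L \neq \emptyset$ for every $\phi \in \Ham(X)$, which is the assertion.

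When $\mathbf{K}$ lies instead in some $\Fuk(X)_\lambda$ with $\lambda \neq 0$, the real locus --- which sits in $\Fuk(X)_0$ --- has identically vanishing Floer cohomology with $\mathbf{K}$, so the argument above is empty and one must route the conclusion through another object: either replace $\mathbf{K}$ by a non-zero object of $\Fuk(X)_0$ supported by $K$ (when one is available) and re-run the retract argument, or bring in the monotone toric fibre that split-generates $\Fuk(X)_\lambda$ by the generation result recalled in the introduction. Organising this $\lambda$-bookkeeping, so that whatever certifies the non-triviality of $K$ can be made to interact with the real locus, is the step I expect to be the genuine obstacle; the split-generation formalism used in the first two paragraphs is entirely routine.
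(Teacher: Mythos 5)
Your first two paragraphs correctly dispose of the case $\lambda = 0$, but that is the routine part; the content of \cref{CorollaryF} lies precisely in the case $\lambda \neq 0$, which you explicitly leave open, so the proposal has a genuine gap. The missing idea is the doubling trick that the paper uses (and attributes to Abreu--Macarini, Alston--Amorim, and Tonkonog): pass to $X \times X$, which is again a compact monotone toric manifold with minimal Chern number at least $2$ and whose real locus is $L \times L$. By the K\"unneth theorem the product object supported on $K \times K$ is non-zero, and its obstruction number is $\lambda + \lambda = 2\lambda = 0$ because $\Char \kk = 2$; so $K \times K$ defines a non-zero object of $\Fuk(X \times X)_0$, which by \cref{TheoremE} applied to $X \times X$ is split-generated by $\LL \times \LL$. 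Your retract argument (or simply non-vanishing of $\HF^*(\LL \times \LL, K \times K)$) then shows $K \times K$ is non-displaceable from $L \times L$, and a Hamiltonian isotopy displacing $K$ from $L$ would induce one displacing $K \times K$ from $L \times L$, giving the conclusion for every $\lambda$ at once.

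Neither of the two workarounds you sketch for $\lambda \neq 0$ can replace this step. A non-zero object of $\Fuk(X)_0$ supported on $K$ need not exist (the hypothesis only provides one over some $\lambda$), so option (a) does not cover the general case; and option (b), invoking the toric fibre that split-generates $\Fuk(X)_\lambda$, would at best prove that $K$ is non-displaceable from that toric fibre, which says nothing about displaceability from the real locus $L$. So the ``$\lambda$-bookkeeping'' you identify as the obstacle is resolved not by re-routing within $X$ but by changing the ambient manifold to $X \times X$, where the characteristic-$2$ hypothesis kills the obstruction number.
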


\begin{proof}
In this situation, $K \times K$ supports a non-zero object in $\Fuk(X \times X)^\mathrm{un}_0$, which is split-generated by $\LL \times \LL$ by Theorem~\ref{TheoremE} applied to $X \times X$. So $K \times K$ is non-displaceable from $L \times L$, and hence $K$ is non-displaceable from $L$.
\end{proof}

\begin{Remark}
The idea of passing to $X \times X$ to make the obstruction number $\lambda$ vanish was used by Abreu--Macarini \cite[Remark~2.9]{AbreuMacarini}, Alston--Amorim \cite[Theorem~1.3]{AlstonAmorim}, and Tonkonog \cite[Theorem~1.5]{TonkonogCO} in similar situations. In particular, Alston--Amorim \cite[Corollary~1.4]{AlstonAmorim} showed that $\LL$ is non-displaceable from the toric fibre, whilst Tonkonog \cite[Corollary~3.10]{TonkonogCO} proved the analogue of Corollary~\ref{CorollaryF} for a specific family of $X$.
\end{Remark}

\begin{Example}
Amorim--Cho \cite{AmorimCho} introduced ungraded matrix factorisations and the idea that they should be mirror to non-orientable Lagrangians. They studied $\LL = \RR\PP^2 \subset \CC\PP^2$ using the localised mirror functor associated to the monotone toric fibre $K$, and showed that it is mirror~to
\begin{gather*}
\left( \kk\bigl[x^{\pm 1}, y^{\pm 1}\bigr]^{\oplus 4}, \begin{pmatrix} 0 & 1 & 1 & \frac{1}{xy} \\ y & 0 & \frac{1}{x} & 1 \\ x & \frac{1}{y} & 0 & 1 \\ 1 & x & y & 0 \end{pmatrix}\right) \\\qquad{}
\in \mf^\mathrm{un}\bigg(\kk[\rH_1(K; \ZZ)] \cong \kk\bigl[x^{\pm 1}, y^{\pm 1}\bigr], W_K = x + y + \frac{1}{xy}\bigg).
\end{gather*}
We instead study $\LL$ using the localised mirror functor associated to $L$ itself, working over the ring $R = S / I = \kk[\rH_1(L; \ZZ/2)]$. In the case of $\LL = \RR\PP^n \subset \CC\PP^n$ we have $R \cong \kk[z] / \bigl(z^2-1\bigr)$, $W_L = 0$, and
\[
\HF^*_R(\bL, \bL) \cong \kk[h, z] / \bigl(h^{n+1} - z, z^2 - 1\bigr) \cong \kk[h] / \bigl(h^{2(n+1)} - 1\bigr),
\]
where $h$ is (the PSS image of) the real hyperplane class. Under the localised mirror functor associated to $L$, $\LL$ is sent to the ungraded matrix factorisation
\begin{gather*}
\bigl( \kk[h, z] / \bigl(h^{n+1} - z, z^2 - 1\bigr), h(z-1) \bigr) \cong \left( R^{n+1},
\begin{pmatrix} 0 & 0 & \dots & 0 & z-1 \\ z-1 & 0 & \dots & 0 & 0 \\ 0 & z-1 & \dots & 0 & 0 \\ \vdots & \vdots & \ddots & \vdots & \vdots \\ 0 & 0 & \dots & z-1 & 0 \end{pmatrix}\right)
\end{gather*}
in $\mf^\mathrm{un}(R, 0)$.
\end{Example}

Returning now to the case of general $X$ and $\kk$, having seen that the horizontal map~\smash{$\rH\bigl(\hatTheta\bigr)$} in~\eqref{eqCompletedDiagram} is an isomorphism it is natural to ask how much information the vertical map~\smash{$\rH\bigl(\hatPhi_*\bigr)$}~remembers. We are able to answer this question in two important situations. The first is when~$L$ is a torus, which is of particular interest in mirror symmetry.

\begin{mainthm}[Proposition~\ref{propThmF}]
\label{TheoremG}
If $L$ is a torus and $\calL$ is a critical point of $W_L$, then \smash{$\rH\bigl(\hatPhi_*\bigr)$} is an isomorphism.
\end{mainthm}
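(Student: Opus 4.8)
The plan is to prove the sharper statement that the $A_\infty$-algebra homomorphism $\hatPhi \colon \CF^*(\LL, \LL) \to \hatB$ is itself a quasi-isomorphism. This suffices: any $A_\infty$-quasi-isomorphism $\phi \colon A \to B$ induces a quasi-isomorphism of diagonal $A$-bimodules $A \to \phi^*B$, and $\HH^*(A, -)$ sends quasi-isomorphisms of bimodules to isomorphisms, so $\rH(\hatPhi_*)$ is then an isomorphism. (Here $\CF^*(\LL, \LL)$ lives over $\kk$; only the target $\hatB$ involves the completed ring $\hatS$.)

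First I would pin down the matrix factorisation $\calE = \CF_{\hatS}^*(\bL, \LL)$ whose endomorphism dg-algebra is $\hatB$. Because $L$ is a torus, $\CF_S^*(\bL, \LL)$ is, as a $\ZZ/2$-graded $S$-module, a single copy of $\Lambda^\bullet \rH^1(L; \kk) \otimes_\kk S$ carrying the Floer differential twisted by the discrepancy between the tautological local system on $\bL$ and $\calL$; reduction modulo $\m$ returns $\CF^*(\LL, \LL)$. Using $\lambda = W_L(\calL)$ and the fact that $\calL$ is a critical point, so that $W_L - \lambda$ lies in the square of the maximal ideal of $\hatS$, one recognises $\calE$ as (a model for) the Koszul matrix factorisation of $W_L - \lambda$, equivalently the stabilised residue field of $\hatS$ viewed in $\mf(\hatS, W_L - \lambda)$; some of this should be visible directly from the construction of $\LMF[L,\lambda]$ on $\LL$, and otherwise it is a routine unwinding of that functor. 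Granting that $\calL$ is an isolated critical point (automatic whenever $\QH^*(X) \cong \Jac W_L$, as in the toric applications), the work of Dyckerhoff and its refinements then computes $\rH(\hatB) = \Ext^*_{\mf(\hatS, W_L - \lambda)}(\calE, \calE)$ as the Koszul dual of the curved algebra $(\hatS, W_L - \lambda)$: as a graded vector space it is $\Lambda^\bullet \rH^1(L; \kk)$, with an $A_\infty$-structure recording the Taylor jets of $W_L$ at $\calL$ (the Clifford algebra of the Hessian when $\calL$ is non-degenerate, and a curved deformation of the exterior algebra in general).

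What remains is to see that the geometric map $\hatPhi$ realises the identification of this algebra with $\CF^*(\LL, \LL)$. By construction $\hatPhi$ sends a Floer class to the endomorphism of $\calE$ induced by the module action of $\CF^*(\LL, \LL)$, which on a minimal model is left multiplication on $\Lambda^\bullet \rH^1(L; \kk) \otimes_\kk \hatS$ --- precisely the canonical comparison map from a Koszul-dual candidate into $\Ext^*(\calE, \calE)$. The content is therefore that $\CF^*(\LL, \LL)$ is itself a model for the Koszul dual of $(\hatS, W_L - \lambda)$, i.e.\ that the Floer $A_\infty$-structure of a monotone Lagrangian torus at a critical point of its superpotential is the one dictated by the Landau--Ginzburg mirror; this is the single-object case of homological mirror symmetry and follows from the explicit descriptions of the torus Floer $A_\infty$-algebra in terms of $W_L$ (Cho; Fukaya--Oh--Ohta--Ono; cf.\ also Cho--Hong--Lau's localised mirror functor). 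With this in hand $\hatPhi$ is a quasi-isomorphism. The main obstacle is exactly this last step: not the abstract isomorphism of the two algebras, but checking that the geometrically defined $\hatPhi$ is the algebraic Koszul-duality comparison map --- especially in the degenerate case, where neither side is formal and $\hatB$ is not semisimple --- together with the bookkeeping of the $\op$-twist and of signs when $\Char \kk = 2$. (In the toric situation there is a shortcut avoiding all of this: by \cref{TheoremC} the lower map in \eqref{eqCompletedDiagram} is a quasi-isomorphism, and $\hatCO$ is the Batyrev--Givental isomorphism followed by projection to the summand $\QH^*(X)_\calL$, so commutativity of the square forces $\CO_{\LL}$ to be injective on $\QH^*(X)_\calL$; a dimension count via \eqref{eqELisom} upgrades this to an isomorphism there, after which $\rH(\hatPhi_*) = \rH(\hatTheta) \circ \hatCO \circ \CO_{\LL}^{-1}$ on $\HH^*(\CF^*(\LL,\LL))$ is a composite of isomorphisms.)
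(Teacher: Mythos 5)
Your opening reduction is the same as the paper's: the paper also proves \cref{TheoremG} by showing that $\hatPhi$ itself is a quasi-isomorphism and then transferring this to Hochschild cohomology via the length filtration and the Eilenberg--Moore comparison theorem (the completeness argument you gloss over when you assert that $\HH^*(A,-)$ preserves quasi-isomorphisms of bimodules; since the Hochschild complex is a product, this step is exactly where Eilenberg--Moore is needed, but over a field it goes through as you expect). The divergence, and the genuine gap, is in how you establish that $\hatPhi$ is a quasi-isomorphism. You correctly identify that the real content is not the abstract identification of $\rH(\hatB)$ with the Koszul dual of $(\hatS, W_L-\lambda)$ (Dyckerhoff), but the statement that the geometrically defined $\hatPhi$ realises that identification---and you then leave precisely this point as ``the main obstacle'', deferring it to ``explicit descriptions of the torus Floer $A_\infty$-algebra in terms of $W_L$''. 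No off-the-shelf result in the generality needed (arbitrary monotone torus in an arbitrary compact monotone $X$, arbitrary characteristic, possibly degenerate critical point) gives you this comparison of the map rather than of the algebras; the paper closes exactly this gap by citing \cite{SmithSuperfiltered} (Proposition 6.11 and the discussion below Remark 6.12 there, building on Cho--Hong--Lau), where, for a carefully chosen pearl model, $\calE$ is shown to be quasi-isomorphic to a wedge--contraction (Koszul) matrix factorisation and the composite $\Psi\circ\Phi$ is computed directly to be a quasi-isomorphism; $\hatPhi$ is then a quasi-isomorphism because the completion $\hatS$ is flat over the Noetherian ring $S$. Without that computation (or an equivalent one), your argument reduces the theorem to essentially the statement being proven.

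Two further points. First, your main route requires $\calL$ to be an \emph{isolated} critical point in order to invoke Dyckerhoff, but \cref{TheoremG} makes no such assumption---the paper explicitly allows non-isolated critical points (where $\HF^*_{\hatS}(\bL,\bL)$ is infinite-dimensional), and its proof needs no isolatedness; your parenthetical shortcut does not repair this, since it applies only to toric $X$ (and leans on \eqref{eqELisom}), whereas the theorem concerns an arbitrary monotone Lagrangian torus. Second, both your argument and the paper's prove the quasi-isomorphism only for a specific choice of auxiliary data (you implicitly pass to a ``minimal model''), so one must also know that the statement about $\rH(\hatPhi_*)$ is independent of such choices; the paper handles this by the arguments of \cref{sscIndependence}, and your write-up should at least flag it.
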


By arguments analogous to those used in Section~\ref{sscIndependence} to prove that~$\Theta$ is independent of auxiliary choices, this statement is also independent of choices. So it suffices to prove it for a specific choice of auxiliary data, and we can choose these data so that in fact \smash{$\hatPhi$} is itself a~quasi-isomorphism (using~\cite{SmithSuperfiltered}, which builds on~\cite{ChoHongLauTorus}), from which the result follows.

Theorem~\ref{TheoremG} has the following immediate consequence.

\begin{maincor}\label{CorollaryH}
If $L$ is a torus and $\calL$ is a critical point of $W_L$, then the map $\CO_{\LL}$ corresponds to the formal expansion of \smash{$\CO^0_\bL$} about $\calL$, after suitable identification of the codomains. In~particular, $\CO_{\LL}$ is injective on a subspace of $\QH^*(X)$ if and only if the expansion of \smash{$\CO^0_\bL$} is injective on the same subspace.
\end{maincor}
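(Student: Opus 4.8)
The plan is to read the corollary straight off the commuting square \eqref{eqCompletedDiagram} once both its right-hand and bottom arrows are known to be isomorphisms. By \cref{TheoremC} the map $\hatTheta$ is a quasi-isomorphism, so $\rH(\hatTheta)$ is an isomorphism; and by \cref{TheoremG}, under the hypotheses that $L$ is a torus and $\calL$ is a critical point of $W_L$, the map $\rH(\hatPhi_*)$ is an isomorphism as well. Commutativity of \eqref{eqCompletedDiagram} then yields
\[
\CO_{\LL} = \bigl(\rH(\hatPhi_*)\bigr)^{-1} \circ \rH(\hatTheta) \circ \hatCO ,
\]
so $\CO_{\LL}$ agrees with $\hatCO$ after identifying the codomains $\HH^*(\CF^*(\LL, \LL))$ and $\HF_{\hatS}^*(\bL, \bL)^\op$ via the isomorphism $\bigl(\rH(\hatPhi_*)\bigr)^{-1} \circ \rH(\hatTheta)$. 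This is the ``suitable identification of the codomains'' in the statement; it is canonical because, as explained in \cref{sscIndependence}, the relevant $A_\infty$-homomorphisms are independent of auxiliary choices at the level of cohomology.

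Next I would identify $\hatCO$ with the formal expansion of $\CO^0_\bL$ about $\calL$. By construction $\hatS$ is the $\m$-adic completion of $S$ at the maximal ideal $\m \subset S$ cut out by $\calL$, and $\CF_{\hatS}^*(\bL, \bL) = \CF_S^*(\bL, \bL) \otimes_S \hatS$; since $S$ is Noetherian, $\hatS$ is flat over $S$, so $\HF_{\hatS}^*(\bL, \bL) = \HF_S^*(\bL, \bL) \otimes_S \hatS$, which for the finitely generated $S$-module $\HF_S^*(\bL, \bL)$ is exactly its $\m$-adic completion, i.e.\ its formal expansion at $\calL$. Under this identification $\hatCO$ is obtained from $\CO^0_\bL$ by post-composition with the completion map $\HF_S^*(\bL, \bL)^\op \to \HF_{\hatS}^*(\bL, \bL)^\op$, which is precisely what ``the formal expansion of $\CO^0_\bL$ about $\calL$'' means.

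Together these give the first sentence of the corollary. For the ``in particular'' clause, since $\bigl(\rH(\hatPhi_*)\bigr)^{-1} \circ \rH(\hatTheta)$ is injective, $\CO_{\LL}$ is injective on a subspace $V \subseteq \QH^*(X)$ if and only if $\hatCO$ is, and by the identification above this holds if and only if the formal expansion of $\CO^0_\bL$ about $\calL$ is injective on $V$. I do not anticipate a real obstacle here: the argument is bookkeeping on top of \cref{TheoremC} and \cref{TheoremG}, and the only points requiring a little care are tracking the $\op$ sign conventions through the square and the elementary flatness input used to commute completion past cohomology.
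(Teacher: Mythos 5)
Your proposal is correct and is exactly the argument the paper intends: the corollary is stated as an immediate consequence of \cref{TheoremG}, read off the commuting square \eqref{eqCompletedDiagram} using \cref{TheoremC} for the bottom arrow, with $\hatCO$ interpreted as the formal expansion of $\CO^0_\bL$ about $\calL$ (your flatness remark matching the paper's own use of flatness of $\m$-adic completion over the Noetherian ring $S$). No gaps; this matches the paper's (unwritten, $\qed$'d) proof.
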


\begin{Example}\label{exToricGen}
Returning to the example of Section~\ref{sscMotivatingExample}, we know that $\CO^0_\bL$ is an isomorphism
\[
\QH^*(X) \to \Jac W_L
\]
and that its completion about each critical point $\alpha$ induces an isomorphism
\[
\QH^*(X)_\alpha \to (\Jac W_L)_\alpha.
\]
Corollary~\ref{CorollaryH} then tells us that
\[
\CO_{L_\alpha}\colon \ \QH^*(X)_\alpha \to \HH^*(\CF^*(L_\alpha, L_\alpha))
\]
is an isomorphism, so $L_\alpha$ split-generates $\Fuk(X)_\alpha$ by the generation criterion, as proved by Evans--Lekili using completely different methods.
\end{Example}

\begin{Remark}
Suppose $\QH^*(X)$ decomposes as a product of algebras $Q \times Q^\perp$. It follows from general structural properties \cite[Theorem 8]{GanatraAutomaticGeneration} that if $\LL$ lies in the $Q$-summand of $\Fuk(X)_\lambda$ and if~\smash{$\CO_{\LL}|_Q$} is injective then, in addition to $\LL$ split-generating this piece of the Fukaya category, we get that \smash{$\CO_{\LL}|_Q$} is an \emph{isomorphism}, i.e., it is automatically surjective (for more detail on these decompositions of the category see Appendix~\ref{sscDecomp}). Corollary~\ref{CorollaryH} thus has the following curious consequence: if $L$ is a torus and the formal expansion \smash{$\hatCO$} of \smash{$\CO^0_\bL$} about a critical point of $W_L$ is injective on $Q$ and vanishes on $Q^\perp$ (the latter ensures that $\LL$ lies in the $Q$-summand of the category), then \smash{$\hatCO|_Q$} is an isomorphism. In particular, these hypotheses can never hold if the critical point is non-isolated, since then \smash{$\HF^*_{\hatS}(\bL, \bL)$} is infinite-dimensional.
\end{Remark}

Corollary~\ref{CorollaryH} can also be used to prove new, non-toric, generation results for Lagrangian tori. In favourable situations this can even be done with no knowledge beyond the ambient quantum cohomology ring and $W_L$ itself. One useful tool here is the famous result of Auroux \cite[Lemma~6.7]{AurouxTDuality}, Kontsevich, and Seidel that $\CO^0_{\LL}$ sends twice the first Chern class $c_1$ of $X$ to~${2W_L(\calL) \cdot 1_{\LL}= 2\lambda \cdot 1_{\LL}}$; see Lemma~\ref{lemAKS} for a precise statement. The same argument shows the following.

\begin{maincor}\label{CorollaryI}
For any $L$ $($not necessarily a torus$)$, the map $\CO^0_\bL$ sends $2c_1$ to $2W_L \cdot 1_\bL$, and if $L$ is orientable, then we can cancel the factors of $2$. So if $L$ is a torus and $\calL$ is a critical point of $W_L$, then, under the correspondence in Corollary~{\rm \ref{CorollaryH}}, \smash{$\CO_{\LL}(c_1)$} corresponds to the formal expansion of~$W_L$ about $\calL$.
\end{maincor}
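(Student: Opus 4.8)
The plan is to reduce the statement to the known Auroux--Kontsevich--Seidel computation of $\CO^0_{\LL}(2c_1)$ by lifting it to the universal setting of $S$-coefficients, just as Floer cohomology with coefficients was set up in \cref{sscHFCoeffs}. First I would recall the precise form of \cref{lemAKS}: the length-zero closed--open map $\CO^0_{\LL} : \QH^*(X) \to \HF^*(\LL, \LL)$ sends $2c_1$ to $2W_L(\calL) \cdot 1_{\LL} = 2\lambda \cdot 1_{\LL}$, and that this is proved by analysing the moduli space of pseudoholomorphic discs with one interior input constrained to a cycle representing $2c_1$ (or, more precisely, by the dimension/degeneration argument that identifies the relevant disc count with the derivative-of-superpotential count appearing in the definition of $W_L$). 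The key observation is that the exact same moduli spaces, now with each disc weighted by the monomial in $S = \kk[\rH_1(L;\ZZ)]$ recording its boundary class, compute $\CO^0_\bL(2c_1) \in \HF_S^*(\bL, \bL)$; the weighted count of the constrained discs is by definition $2W_L \in S$, so $\CO^0_\bL(2c_1) = 2W_L \cdot 1_\bL$. The factor of $2$ arises because each disc contributes with a multiplicity coming from the two ways its interior marked point can meet the cycle dual to $2c_1$; this is precisely the point at which orientability of $L$ lets one halve the count, since then one can work with $c_1$ itself and an honest cycle, rather than needing the mod-$2$ fundamental-class trick. Hence: over $S$ we get $\CO^0_\bL(2c_1) = 2W_L \cdot 1_\bL$ in general, and $\CO^0_\bL(c_1) = W_L \cdot 1_\bL$ when $L$ is orientable.

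For the second assertion, I would combine this with \cref{CorollaryH}. When $L$ is a torus and $\calL$ is a critical point of $W_L$, \cref{CorollaryH} identifies $\CO_{\LL}$ with the formal expansion of $\CO^0_\bL$ about $\calL$, via the quasi-isomorphism $\hatTheta$ of \cref{TheoremC} (applied with, say, $I = 0$) and the isomorphism $\rH(\hatPhi_*)$ of \cref{TheoremG}, after the identification of codomains recorded there. Under this correspondence, applying $\CO_{\LL}$ to $c_1$ corresponds to taking the $\m$-adic formal expansion about $\calL$ of the element $\CO^0_\bL(c_1) = W_L \cdot 1_\bL \in \HF_S^*(\bL, \bL)$ — i.e.\ to the Taylor expansion of the superpotential $W_L$ around the critical point $\calL$. (Here I should note that since $L$ is a torus it is automatically orientable, so the factors of $2$ have already been cancelled and no characteristic hypothesis is needed.) This gives the claimed description of $\CO_{\LL}(c_1)$.

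The main obstacle is the first step — making precise that the disc-counting moduli spaces defining $\CO^0_{\LL}(2c_1)$ and those defining $\CO^0_\bL(2c_1)$ are literally the same, and that passing from $\kk$- to $S$-coefficients simply records an additional weight without altering the transversality setup or the count. This is the content of "the same argument shows" in the statement: one has to check that the Auroux--Kontsevich--Seidel degeneration/dimension argument is entirely local to each disc and therefore insensitive to the bookkeeping of boundary homology classes, so that it applies verbatim in $\Fuk_S(X)$ with the tautological local system in place of $\calL$. I expect this to be routine once the conventions of \cref{sscHFCoeffs} and the definition of $W_L$ in \cref{sscIngredients} are in hand, but it is where the geometric input genuinely lives; the remaining steps are formal consequences of \cref{CorollaryH} and the already-established independence of the constructions from auxiliary choices.
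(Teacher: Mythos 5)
Your proposal is correct and follows essentially the same route as the paper: run the Auroux--Kontsevich--Seidel argument of \cref{lemAKS} with the discs now weighted by their boundary classes in $S$, so the index-$2$ sweep becomes $W_L \cdot 1_\bL$ and the intersection number $2$ with the cycle dual to the Maslov class gives $\CO^0_\bL(2c_1) = 2W_L \cdot 1_\bL$, then feed $\CO^0_\bL(c_1) = W_L \cdot 1_\bL$ into \cref{CorollaryH}.  The only loose phrase is your description of the orientability step: the precise mechanism is that orientability makes the Maslov class even, so $c_1$ lifts to $\mu/2 \in \rH^2(X, L; \ZZ)$, whose dual cycle in $X \setminus L$ meets each index-$2$ disc once, which is how the factor of $2$ is cancelled without any hypothesis on $\Char \kk$.
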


\begin{Example}%\label{exQuadric}
Let $X \subset \CC\PP^4$ be the quadric threefold and let us temporarily work over $\ZZ$. The quantum cohomology is
\[
\QH^*(X) \cong \ZZ[H, E]/\bigl(H^2-2E, E^2-H\bigr),
\]
with $c_1 = 3H$. $X$ admits a degeneration to the singular toric manifold whose polytope is a square-based pyramid. This gives a Lagrangian $3$-sphere $V \subset X$ as the vanishing cycle, and a disjoint monotone Lagrangian torus $L \subset X$ as the parallel transport of the barycentric torus from the central fibre. After choosing a basis for $\rH_1(L; \ZZ)$ to give an identification $S \cong \ZZ\bigl[x^{\pm 1}, y^{\pm 1}, z^{\pm 1}\bigr]$, and equipping $L$ with the standard spin structure, we have by \cite[Remark 7.1.3]{EvansLekiliGeneration} (using \cite[Theorem~1]{NishinouNoharaUeda12}) that
\[
W_L = x+y+z+\frac{1}{xy}+\frac{1}{yz}.
\]
By Corollary~\ref{CorollaryI}, we then have \smash{$\CO^0_\bL(3H) = \CO^0_\bL(c_1) = W_L \cdot 1_\bL$}. Moreover, the $S$-subalgebra of $\HF^*_S(\bL, \bL)$ generated by the unit is
\begin{align*}
\Jac W_L &= \ZZ\bigl[x^{\pm 1}, y^{\pm 1}, z^{\pm 1}\bigr] / \bigl(x-z, y-1/x^2, 2x^3-1\bigr) \\
&= \ZZ\bigl[x^{\pm 1}\bigr] / \bigl(2x^3-1\bigr) = \ZZ\bigl[\tfrac{1}{2}, x\bigr] / \bigl(x^3 - \tfrac{1}{2}\bigr).
\end{align*}
Under these identifications we get \smash{$\CO^0_\bL(3H) = 6x \in \ZZ\bigl[\tfrac{1}{2}, x\bigr] / \bigl(x^3 - \tfrac{1}{2}\bigr)$}, and hence $\CO^0_\bL(H) = 2x$.

Let us now pass back to field coefficients $\kk$, with $\Char \kk \neq 2$. Then $\QH^*(X)$ decomposes as a~direct sum of ideals \smash{$Q_0 = \bigl(1-H^3/4\bigr)$} and $Q_1 = (H, E)$, whilst $\Jac W_L$ becomes \smash{$\kk[x] / \bigl(x^3 - \tfrac{1}{2}\bigr)$}, and the calculation $\CO^0_\bL(H) = 2x$ tells us that $\CO^0_\bL$ is injective on $Q_1$. This in turn implies that~$L$ split-generates the $Q_1$-summand of the Fukaya category. More precisely:
\begin{itemize}\itemsep=0pt
\item If $\Char \kk = 3$, then $\Jac W_L$ is local, with maximal ideal $\m = (x+1, y-1, z+1)$. The $\m$-adic completion \smash{$\hatCO$} is therefore still injective on $Q_1$, so $L$ equipped with the local system defined by $x=z=-1$ and $y=1$ split-generates the $Q_1$-summand of the category.
\item If $\Char \kk \neq 3$, then, assuming $\kk$ contains all cube roots of $2$, $\Jac W_L$ contains three maximal ideals, corresponding to three local systems on $L$. Meanwhile $Q_1$ decomposes as $\kk \times \kk \times \kk$, and each completion \smash{$\hatCO$} at a maximal ideal is injective on one of these factors. So the $Q_1$-summand of the category splits into three pieces, each split-generated by $L$ with one of the three critical local systems.
\end{itemize}
\end{Example}

\begin{Remark}
When $\Char \kk \neq 2$, the $Q_0$-summand of the category is split-generated by $V$, by~\mbox{\cite[Proposition~7.1.1]{EvansLekiliGeneration}} (or by \cite[Lemma 4.6]{SmithQuadrics} if $\kk = \CC$). When $\Char \kk = 2$, the splitting into~$Q_0$ and $Q_1$ breaks down and the ring $\QH^*(X)$ is local. In this case, $V$ split-generates the whole category, again by \cite[Proposition~7.1.1]{EvansLekiliGeneration}.
\end{Remark}

Returning once more to the general setting, the second situation in which we can describe the map \smash{$\rH\bigl(\hatPhi_*\bigr)$} is when $L$ is simply connected. In this case, we have $S = \kk$, so \smash{$\CF^*_S(\bL, \bL)^{(\op)}$} reduces to \smash{$\CF^*\bigl(\LL, \LL\bigr)^{(\op)}$} and completions are irrelevant. So \smash{$\rH\bigl(\hatTheta\bigr)^{-1} \circ \rH\bigl(\hatPhi_*\bigr)$} is a map
\begin{equation}
\label{eqThmIa}
\HH^*\bigl(\CF^*\bigl(\LL, \LL\bigr)\bigr) \to \HF^*\bigl(\LL, \LL\bigr)^\op.
\end{equation}
If $\LL$ is also weakly exact, i.e., bounds no topological discs of positive symplectic area, then \smash{$\CF^*\bigl(\LL, \LL\bigr)$} reduces to $\rC^*(L)$. We can then use the isomorphism
\begin{equation}
\label{eqLoopSpace}
\HH^*(\rC^*(L)) \cong \rH_{-*}\bigl(\Lambda L^{-TL}\bigr),
\end{equation}
which holds for simply connected $L$, to view \smash{$\rH\bigl(\hatTheta\bigr)^{-1} \circ \rH\bigl(\hatPhi_*\bigr)$} as a map
\begin{equation}
\label{eqThmIb}
\rH_{-*}\bigl(\Lambda L^{-TL}\bigr) \to \rH^*(L).
\end{equation}
Here $\Lambda L$ is the free loop space of $L$ and $\Lambda L^{-TL}$ is the Thom spectrum associated to the stable normal bundle of $L$, pulled back under the evaluation-at-basepoint map ${\rm ev} \colon \Lambda L \to L$. (We have dropped the $\op$ from the codomain since $\rH^*(L)$ is graded-commutative.) Our final main result describes the maps~\eqref{eqThmIa} and \eqref{eqThmIb} algebraically and geometrically, respectively.

\begin{mainthm}\label{TheoremJ}
\quad
\begin{enumerate}\itemsep=0pt
\item[$(i)$]%\label{Ia}
If $L$ is simply connected, then
$
\rH\bigl(\hatTheta\bigr)^{-1} \circ \rH\bigl(\hatPhi_*\bigr)\colon \HH^*\bigl(\CF^*\bigl(\LL, \LL\bigr)\bigr) \to \HF^*\bigl(\LL, \LL\bigr)^\op
$
is projection to length zero $($Proposition~$\ref{propThmIa})$.
\item[$(ii)$] %\label{Ib}
If $L$ is simply connected and weakly exact, then \smash{$\rH\bigl(\hatTheta\bigr)^{-1} \circ \rH\bigl(\hatPhi_*\bigr)$} is identified with
$\ev_* \colon \rH_{-*}\bigl(\Lambda L^{-TL}\bigr) \to \rH_{-*}\bigl(L^{-TL}\bigr) \cong \rH^*(L)$ $($Proposition~$\ref{propThmIb})$.
\end{enumerate}
\end{mainthm}

\begin{Remark}Strictly the projection to length zero from $\HH^*\bigl(\CF^*\bigl(\LL, \LL\bigr)\bigr)$ lands in \linebreak $\HF^*\bigl(\LL, \LL\bigr)$, rather than \smash{$\HF^*\bigl(\LL, \LL\bigr)^\op$}. But it actually lands in the graded-centre of \linebreak $\HF^*\bigl(\LL, \LL\bigr)$, which is the same as the graded-centre of \smash{$\HF^*\bigl(\LL, \LL\bigr)^\op$}, so Theorem~\ref{TheoremJ} makes sense.
\end{Remark}

\begin{Remark}We have been unable to locate the original reference for \eqref{eqLoopSpace}, but it seems to have been known to experts in the 1980s (if not earlier) at the level of modules, i.e., ignoring the string topology product on $\Lambda L$. We shall refer to \cite[Corollary~11]{CohenJones}, since it proves the isomorphism as algebras and gives an explicit description of the map that we will use. Our grading of Hochschild cohomology is the negative of that used in~\cite{CohenJones}, so we obtain $\rH_{-*}$ in place of their~$\rH_*$.
\end{Remark}

\begin{Remark}\label{rmkExtensions}
Some of our results apply slightly more generally than stated. In particular:
\begin{enumerate}\itemsep=0pt
\item We could allow $X$ to be non-compact (but keeping $L$ compact) in Theorems~\ref{TheoremA},~\ref{TheoremB},~\ref{TheoremC},~\ref{TheoremG} and~\ref{TheoremJ}, and Corollaries~\ref{CorollaryH} and~\ref{CorollaryI}, as long as it is tame at infinity. If $X$ is a Liouville manifold, then we could also replace quantum cohomology with symplectic cohomology in the domain of all versions of the closed-open map. However, compactness is necessary for the generation criterion in the form of Theorem~\ref{thmGeneration}, since its proof relies on Poincar\'e duality, so Theorem~\ref{TheoremE}, Corollary~\ref{CorollaryF} and Example~\ref{exToricGen} also require compactness. Similarly, Tonkonog's criterion is based in the compact setting so Theorem~\ref{TheoremD} is too.
\item We could have relaxed the assumption that $\kk$ is a field in some places. Theorem~\ref{TheoremA} holds over an arbitrary ground ring with the same proof (modulo Remark~\ref{rmkRingUnitality}), and the commutative diagram in Theorem~\ref{TheoremB} holds over any ring if one defines the maps with compatible choices of auxiliary data. This compatibility is necessary because over a general ring it is not clear that the Hochschild cohomology of the Fukaya category is independent of choices (the usual proof of independence uses the spectral sequence associated to the length filtration on Hochschild cochains, whose first page---over a field!---is the Hochschild cohomology of the cohomology category).
\item Given a class in $\rH^2(X; \kk^\times)$, represented by a cocycle $b$, we could modify the quantum (or symplectic) cohomology and Fukaya category of $X$ by weighting every pseudoholomorphic curve $u$ by $b(u)$, as in Appendix~\ref{sscQHcoeff}. In the special case where $B$ is a closed $\CC$-valued $2$-form on $X$ and $b = e^{B}$, this modification is equivalent to turning on the \emph{B-field} $B$. All of our arguments extend automatically to this setting with the obvious modifications. Similarly, one can equip $X$ with a \emph{background class} in $\rH^2(X; \ZZ/2)$, represented by a cocycle~$\mathsf{b}$, and then consider Lagrangians equipped with \emph{relative} pin structures with respect to~$\mathsf{b}$. Again, everything goes through straightforwardly. We will not discuss these options any further.
\end{enumerate}
\end{Remark}

\subsection{(Non-)monotonicity}\label{sscNonMonotone}

So far, we have restricted attention to monotone symplectic manifolds and Lagrangians, owing to three complications that arise in the non-monotone case. First, the Fukaya category and closed-open map have not been defined in general outside the monotone setting, and the generation criterion has not been proved. These foundational issues are expected to be addressed in work of Abouzaid--Fukaya--Oh--Ohta--Ono, and we shall simply take them as black boxes via Assumption~\ref{MonAss} below. Second, there are extra algebraic complexities arising from the need to filter and complete various objects, to ensure convergence of curve counts. One consequence of this is that rather than working with the analogue of $S$, we work with unspecified algebras satisfying certain natural conditions. We will call these algebras $R$, as they play a similar role to the $R = S/I$ appearing previously. Third, an additional unobstructedness hypothesis is needed, which is automatic for monotone $L$ but is in general a non-trivial geometric condition. Modulo these issues, Theorems~\ref{TheoremA}, \ref{TheoremB} and~\ref{TheoremC} naturally generalise to the non-monotone case as we describe momentarily. Despite the apparent algebraic and geometric restrictions, these results are strong and flexible enough to apply to interesting examples, which we demonstrate with the case of toric fibres.

To start setting up our results, in place of $\kk$ we take our ground field to be the Novikov field
\[
\Lambda = \CC \bigl[\hspace{-1mm}\bigl[ T^\RR \bigr]\hspace{-1mm}\bigr] = \Biggl\{ \sum_{j = 1}^\infty a_j T^{s_j}\mid a_j \in \CC \text{ and } s_j \in \RR \text{ with } s_j \to \infty\Biggr\}.
\]
This carries a natural decreasing filtration by additive subgroups $\Lambda_{\geq s}$, for $s \in \RR$, comprising those series $\sum a_j T^{s_j}$ with $s_j \geq s$ for all $j$. We will require that the objects we work with are filtered and complete in a similar way, made precise in Definition~\ref{defFiltrations}.

We now introduce our technical assumptions for later reference.

\begin{assumption}\label{MonAss}
Sufficient technical foundations have been established to
\begin{enumerate}\itemsep=0pt
\item\label{ass1} Define the Fukaya category $\Fuk(X)_\lambda$ over $\Lambda$ and prove that the operations satisfy the $A_\infty$-relations. We assume moreover that the category is strictly unital. Objects are La\-gran\-gians~$L$ equipped with analogous auxiliary data to the monotone case (the local system must now be $c$-filtered in the sense of Definition~\ref{defcfiltered}), plus a weak bounding cochain~${b \in \CF^\mathrm{odd}\bigl(\LL, \LL\bigr)}$ in sufficiently positive filtration level, whose curvature is $\lambda \in \Lambda$.
\item\label{ass2} Construct $\CO_\lambda \colon \QH^*(X) \to \HH^*(\Fuk(X)_\lambda)$ as a unital $\Lambda$-algebra homomorphism. We assume that the construction can be made to land in strictly unital Hochschild cochains.
\item%\label{ass3}
 Prove a generation criterion for summands of $\Fuk(X)_\lambda$ based on injectivity of
\[
\CO_{\LL}\colon \ \QH^*(X) \to \HH^*\bigl(\CF^*\bigl(\LL, \LL\bigr)\bigr)
\]
on factors of $\QH^*(X)$.
\end{enumerate}
\end{assumption}

\begin{Remark}
\label{rmkStrictUnit}
The reader may be concerned about the assumptions of strict unitality, but for applications to split-generation it suffices to construct a strictly unital model for the Floer algebra and $\CO_\lambda$ map for the specific Lagrangian $L$ of interest. In our main example, when $L$ is a toric fibre, such a model is provided by \cite{FOOOToricI}.
\end{Remark}

Suppose Assumption~\ref{MonAss}\,\ref{ass1} holds and fix a complete filtered augmented $\Lambda$-algebra $R$. The augmentation $R \to \Lambda$ corresponds to a closed maximal ideal $\m$ in $R$. Using the same machinery as for $\Fuk(X)_\lambda$ we can define the Fukaya category $\Fuk_R(X)_r$ over $R$ for any $r \in R$.

Fix an object \smash{$\bigl(\LL, b\bigr) \in \Fuk(X)_\lambda$}.

\begin{Definition}
We say \smash{$\bigl(\LL, b\bigr)$} \emph{lifts to $R$} if the following holds. There exists a $c$-filtered rank-$1$ local system $\calL_R$ over $R$ on $L$, whose reduction modulo $\m$ is the local system $\calL$ on \smash{$\LL$}. We denote~$L$ equipped with this local system \big(and the same grading and pin structure as \smash{$\LL$}\big) by $\bL$. We then require that \smash{$\bL$} admits a weak bounding cochain $\mathbf{b}$ whose reduction mod $\m$ is $b$. In this case, we denote the curvature of $\mathbf{b}$ by $W_L \in R$. Note that there may be multiple choices for $\calL_R$ and $\mathbf{b}$, and that $W_L$ depends on these choices, although our notation does not reflect this.
\end{Definition}

Assume now that $\bigl(\LL, b\bigr)$ lifts to $R$, and fix a choice of $\calL_R$ and $\mathbf{b}$. We can then mimic the construction of $\LMF[L,\lambda]$ to give a localised mirror functor
\[
\LMF[(\bL, \mathbf{b}), \lambda]\colon \ \Fuk(X)_\lambda \to \mf(R, W_L - \lambda).
\]
This allows us to define $\rCC^*(\Fuk(X)_\lambda, \mf(R, W_L - \lambda))$, and the analogue of Theorem~\ref{TheoremA} is as follows.

\begin{mainthmp}[Proposition~\ref{propThmAp}]
\label{TheoremAp}
Suppose Assumption~$\ref{MonAss}\,\ref{ass1}$ holds and $\bigl(\LL, b\bigr)$ lifts to $R$. Given a choice of lift $(\bL, \mathbf{b})$, there is a geometrically defined $R$-linear $A_\infty$-algebra homomorphism
\[
\Theta\colon \ \CF^*_R((\bL, \mathbf{b}), (\bL, \mathbf{b}))^\op \to \rCC^*(\Fuk(X)_\lambda, \mf(R, W_L - \lambda)).
\]
It is cohomologically unital and extends the module action of $\CF^*_R((\bL, \mathbf{b}), (\bL, \mathbf{b}))^\op$ on $\LMF[(\bL, \mathbf{b}), \lambda]$.
\end{mainthmp}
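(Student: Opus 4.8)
The plan is to prove \cref{TheoremAp} by running the proof of \cref{TheoremA} essentially verbatim, carrying along the three features that are new in the non-monotone setting: coefficients in the complete filtered augmented $\Lambda$-algebra $R$ in place of the group ring $S$, the insertion of the weak bounding cochains, and the resulting curvature $W_L \in R$. Concretely, for $k \geq 1$ and objects $K_0, \dots, K_d$ of $\Fuk(X)_\lambda$, the component $\Theta^k$ evaluated on elements $x_1, \dots, x_k \in \CF^*_R((\bL, \mathbf{b}), (\bL, \mathbf{b}))$ is the Hochschild cochain whose value on Hochschild inputs $y_1 \in \hhom(K_0, K_1), \dots, y_d \in \hhom(K_{d-1}, K_d)$ is the $R$-linear map $\LMF[(\bL, \mathbf{b}), \lambda](K_0) \to \LMF[(\bL, \mathbf{b}), \lambda](K_d)$ sending $z$ to the signed count of rigid pseudoholomorphic discs whose boundary, read cyclically, runs over $\bL, K_0, \dots, K_d$, with the $x_j$ at marked points on the $\bL$-arc, $z$ at the $\bL$--$K_0$ puncture, $y_i$ at the $K_{i-1}$--$K_i$ puncture, the output on the $K_d$--$\bL$ arc valued in $\LMF[(\bL, \mathbf{b}), \lambda](K_d)$, and arbitrarily many bounding-cochain insertions distributed along all arcs; each disc is weighted by the Novikov parameter to the power of its energy and by the holonomy of its boundary homology class under $\calL_R$ and the local systems of the $K_i$. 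These moduli spaces are of the same type as those defining the operations of $\Fuk_R(X)$ and the functor $\LMF[(\bL, \mathbf{b}), \lambda]$ (see \cref{sscLMF}), with one extra distinguished Lagrangian label, so the foundational machinery granted by \cref{MonAss}\ref{ass1} applies to them without modification; in particular one may fix consistent perturbation data so that $\Theta$ is strictly compatible with the module structure and the $A_\infty$-operations in play.

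Given the moduli spaces, I would read the $A_\infty$-homomorphism relations off the codimension-one boundary of the one-dimensional ones, exactly as in \cref{propThetaHom}. The boundary strata come in three families. First, strip-breaking at one of the $z$-, $y_i$-, or output-punctures, which reassembles the Hochschild differential and Gerstenhaber-type products of $\rCC^*(\Fuk(X)_\lambda, \mf(R, W_L - \lambda))$ together with the bimodule-structure maps of $\LMF[(\bL, \mathbf{b}), \lambda]$. Second, a disc bubbling off along the $\bL$-arc carrying a consecutive, possibly empty, block of the $x_j$ together with some bounding-cochain insertions: a nonempty block produces an operation of $\CF^*_R((\bL, \mathbf{b}), (\bL, \mathbf{b}))$ applied to that block, which appears in the cyclic order opposite to the order in which these elements act on $\LMF[(\bL, \mathbf{b}), \lambda]$ and hence forces the opposite algebra on the source of $\Theta$ as in \cref{propThetaHom}; an empty block produces the curvature $W_L \cdot 1_\bL$ of $\mathbf{b}$, which combines with the structure maps of $\LMF[(\bL, \mathbf{b}), \lambda]$ to encode the matrix-factorisation relation $d^2 = W_L - \lambda$ on the target bimodule. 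Third, a disc bubbling off along a $K_i$-arc, which is absorbed into the $\mathbf{b}$-deformed Fukaya operations. Matching these against the defining relation of an $A_\infty$-algebra homomorphism into the dg-algebra $\rCC^*(\Fuk(X)_\lambda, \mf(R, W_L - \lambda))$---whose $\mu^1$ is the Hochschild differential and whose $\mu^2$ is the cup product---yields that $\Theta$ is such a homomorphism; the signs are organised exactly as in \cref{propThetaHom}, with the conventions of \cref{sscSignConventions} for the opposite.

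The genuinely new work, and the step I expect to be the main obstacle, is convergence. Because $R$ is complete and filtered, $\calL_R$ is $c$-filtered in the sense of \cref{defcfiltered}, and $\mathbf{b}$ lies in sufficiently positive filtration level (all in the sense of \cref{defFiltrations}), Gromov compactness bounds from below the energy of any disc contributing to a fixed matrix entry modulo $\Lambda_{\geq s}$; hence only finitely many homotopy classes, and only boundedly many bounding-cochain insertions, contribute in each filtration level. This shows that each $\Theta^k$ is a well-defined $R$-linear map between the relevant free $R$-modules and that the infinite sums in the $A_\infty$-relations converge $T$-adically, so that the verification of the previous paragraph makes sense over $R$. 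This is the same estimate already used to define the $\mathbf{b}$-deformed operations and the functor $\LMF[(\bL, \mathbf{b}), \lambda]$, so it requires no new geometric input beyond careful bookkeeping; but it is the point at which the argument genuinely departs from the monotone case.

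Finally I would establish cohomological unitality and the module-action statement. By construction the part of $\Theta^1(x)$ with no Hochschild inputs is left-module multiplication by $x$ on $\LMF[(\bL, \mathbf{b}), \lambda](K) = \CF^*_R((\bL, \mathbf{b}), K)$, so $\Theta$ restricts along the length-zero inclusion to the module action of $\CF^*_R((\bL, \mathbf{b}), (\bL, \mathbf{b}))^\op$ on $\LMF[(\bL, \mathbf{b}), \lambda]$ (compare \cref{lemThetaModuleAction}); in particular the length-zero part of $\Theta^1(1_\bL)$ is the identity endomorphism of each $\LMF[(\bL, \mathbf{b}), \lambda](K)$, which represents the unit of the dg-algebra $\rCC^*(\Fuk(X)_\lambda, \mf(R, W_L - \lambda))$ in cohomology. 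Since $1_\bL$ is a cohomological unit for $\CF^*_R((\bL, \mathbf{b}), (\bL, \mathbf{b}))^\op$, it follows that $\rH(\Theta)$ sends $[1_\bL]$ to the unit, which is cohomological unitality, exactly as in \cref{corThetaUnital}. Modulo \cref{rmkRingUnitality} (as invoked in \cref{rmkExtensions}) this unitality argument is unchanged over a general $R$.
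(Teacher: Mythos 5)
Your construction of $\Theta$, the codimension-one boundary analysis establishing the $A_\infty$-homomorphism equations, the convergence discussion via the filtration on $R$ and the positive filtration level of $\mathbf{b}$, and the module-action statement all track the paper's proof of \cref{propThmAp}, which indeed just re-runs \cref{defTheta}, \cref{propThetaHom}, and \cref{lemThetaModuleAction} with $R$-coefficients and bounding-cochain insertions, the convergence being exactly the estimate already built into the setup of \cref{MonAss}\ref{ass1}.

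The genuine gap is in your unitality argument. Knowing that the length-zero component of $\Theta^1(1_\bL)$ is (homotopic to) the identity of each $\LMF[(\bL,\mathbf{b}),\lambda](K)$ does \emph{not} imply that the class $[\Theta^1(1_\bL)]$ equals the unit of $\HH^*(\Fuk(X)_\lambda, \mf(R, W_L-\lambda))$: the difference between $\Theta^1(1_\bL)$ and the unit cochain is a Hochschild cocycle whose length-zero part is exact, but such a cocycle can perfectly well represent a nonzero class (already in $\HH^1$ of an ordinary algebra, outer derivations have vanishing length-zero part), and projection to length zero does not detect Hochschild classes in general. Your appeal to \cref{corThetaUnital} is also a misattribution: that corollary does not argue via the length-zero component at all, but deduces unitality of $\Theta$ from the commutative diagram of \cref{TheoremB}, using unitality of $\CO^0_\bL$, of $\CO_\lambda$, and of $\rH((\LMF)_*)$. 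Correspondingly, in the non-monotone setting the paper obtains cohomological unitality either as a consequence of \cref{propThmBp} --- which requires the additional hypothesis \cref{MonAss}\ref{ass2} --- or, staying within \cref{MonAss}\ref{ass1}, by directly mimicking the proof of unitality of the $2$-pointed closed--open map as indicated in \cref{rmkRingUnitality}, i.e.\ by a geometric homotopy showing $\Theta^1(1_\bL)$ is cohomologous to the unit cochain in \emph{all} lengths. Your closing sentence gestures at \cref{rmkRingUnitality}, but the argument you actually give does not supply that homotopy; you should either route the unitality claim through \cref{propThmBp} (flagging the extra assumption) or carry out the $2$-pointed-style argument.
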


\begin{Remark}
In order to define $\LMF[(\bL, \mathbf{b}), \lambda]$ and $\Theta$, there was no need to start with $\bigl(\LL, b\bigr)$ or to assume that $R$ was augmented. We could have just taken $R$ to be a complete filtered $\Lambda$-algebra (not augmented), $W_L$ to be an arbitrary element of $R$, and $(\bL, \mathbf{b})$ to be an arbitrary object in~$\Fuk_R(X)_{W_L}$. We could have done analogously in the monotone setting too. However, we chose to start with $\bigl(\LL, b\bigr)$ since our main interest is in showing that $\bigl(\LL, b\bigr)$ split-generates a~piece of the Fukaya category.
\end{Remark}

If Assumption~\ref{MonAss}\,\ref{ass2} also holds, then we can similarly define
\[
\CO^0_{(\bL, \mathbf{b})}\colon \ \QH^*(X) \to \HF^*_R((\bL, \mathbf{b}), (\bL, \mathbf{b}))^{(\op)},
\]
and Theorem~\ref{TheoremB} adapts in the obvious way.

\begin{mainthmp}[Proposition~\ref{propThmBp}]
\label{TheoremBp}
In the setting of Theorem~{\rm \ref{TheoremAp}}, suppose Assumption~$\ref{MonAss}\,\ref{ass2}$ also holds. Then the following diagram commutes:
\[
\begin{tikzcd}[column sep=5em, row sep=2.5em]
\QH^*(X) \arrow{r}{\CO_\lambda} \arrow{d}{\CO^0_{(\bL, \mathbf{b})}} & \HH^*(\Fuk(X)_\lambda) \arrow{d}{\rH((\LMF[(\bL, \mathbf{b}),\lambda])_*)}
\\ \HF_R^*((\bL, \mathbf{b}), (\bL, \mathbf{b}))^\op \arrow{r}{\rH(\Theta)} & \HH^*(\Fuk(X)_\lambda, \mf(R, W_L - \lambda)).
\end{tikzcd}
\]
\end{mainthmp}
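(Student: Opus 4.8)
The plan is to adapt the proof of \cref{TheoremB} (\cref{corThmBCommutes}) essentially line by line: once \cref{TheoremAp} supplies the $R$-linear homomorphism $\Theta$, the only genuinely new content is checking that the curve counts converge over the completed filtered algebra $R$ and that the weak bounding cochains $b$ and $\mathbf{b}$ are carried through consistently. The proof of \cref{TheoremB} I have in mind works at the chain level: one produces a homotopy between the two composites
\[
\Theta \circ \CO^0_{(\bL, \mathbf{b})} \qquad \text{and} \qquad (\LMF[(\bL, \mathbf{b}), \lambda])_* \circ \CO_\lambda,
\]
both landing in $\rCC^*(\Fuk(X)_\lambda, \mf(R, W_L - \lambda))$, built from a single master moduli space. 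So first I would reconstruct this moduli space using $\Fuk(X)_\lambda$ and the geometric construction of $\CO_\lambda$ supplied by \cref{MonAss}\ref{ass1} and \ref{ass2}, with all $A_\infty$- and module operations deformed by $b$ (on the Fukaya side) and $\mathbf{b}$ (on $\bL$) in the standard way, i.e.\ by summing over the number and cyclic placement of $b$- and $\mathbf{b}$-insertions on the boundary.

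The master moduli space is the one underlying $\Theta$ --- discs with boundary arcs on $\bL$ and on a tuple of test objects $K_0, \dots, K_d$, carrying $\bL$-inputs and Fukaya inputs --- now decorated with one extra interior marked point labelled by a (pseudo)cycle representing a quantum cohomology class. Its codimension-$1$ boundary splits into two families of strata. In the first, the interior point collides with a disc component attached along the $\bL$-arc that carries no test-object inputs; summing over these recovers $\Theta \circ \CO^0_{(\bL, \mathbf{b})}$ together with the differential of $\CF^*_R((\bL, \mathbf{b}), (\bL, \mathbf{b}))^\op$ applied after $\CO^0_{(\bL, \mathbf{b})}$. In the second, the interior point collides with a component carrying only Fukaya inputs; summing over these recovers $(\LMF[(\bL, \mathbf{b}), \lambda])_* \circ \CO_\lambda$ together with the Hochschild differential on the target and the $A_\infty$-error terms witnessing that $\Theta$ is an $A_\infty$-homomorphism. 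Matching the two families, with Seidel's sign conventions as modified in \cref{sscSignConventions}, produces the chain homotopy, hence commutativity on cohomology. The curvature defect $W_L - \lambda$ enters precisely through the strata in which an $\bL$-bubble with no constrained boundary point breaks off, so it appears identically on both sides of the square and is consistent with the target being the matrix-factorisation Hochschild complex rather than an honest cochain complex.

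Three points require the non-monotone adaptation, and the last is the main obstacle. \emph{Convergence:} Gromov compactness bounds contributions of symplectic energy $\le E$, and configurations of energy $> E$ contribute in filtration level tending to $\infty$ with $E$, so all the sums defining the homotopy and the two composites converge in the completed $R$-module $\rCC^*(\Fuk(X)_\lambda, \mf(R, W_L - \lambda))$; this is the same mechanism already used in \cref{TheoremAp} to define $\Theta$. \emph{Bounding-cochain compatibility:} since $\calL_R$ and $\mathbf{b}$ reduce modulo $\m$ to $\calL$ and $b$, the $R$-linear operations specialise correctly, the $b$- and $\mathbf{b}$-deformed moduli spaces glue in the standard fashion, and the $\mathbf{b}$-curvature $W_L$ and $b$-curvature $\lambda$ assemble into the $W_L - \lambda$ defect above. \emph{Coherent perturbations:} the foundational package of \cref{MonAss} must supply perturbation data for the Fukaya operations and for $\CO_\lambda$ that extend to regular, consistent perturbations of the parametrised master moduli space, restricting correctly on every boundary stratum above while remaining controlled enough in filtration level that the Novikov-completed sums still converge; in the monotone case this is the standard inductive construction of coherent perturbations, and in the filtered case it is exactly the content encapsulated by \cref{MonAss}. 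To phrase the final statement on cohomology one also fixes, as in \cref{sscIndependence} (cf.\ \cref{rmkExtensions}), a single coherent choice of all this data at once, which likewise ensures the induced cohomology maps are the canonical ones. Modulo that black box, the argument is a routine transcription of the proof of \cref{corThmBCommutes}.
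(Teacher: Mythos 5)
Your proposal matches the paper's proof: \cref{propThmBp} is established exactly by rerunning the construction of the cochain $\tau$ from \cref{propmu1Gamma}/\cref{corThmBCommutes} (your ``master moduli space'' with one constrained interior marked point, whose codimension-$1$ strata yield $\rH(\Theta)\circ\CO^0_{(\bL,\mathbf{b})}$, $(\LMF[(\bL,\mathbf{b}),\lambda])_*\circ\CO_\lambda$, and the Hochschild differential terms), with the $b$- and $\mathbf{b}$-insertions, filtered convergence, and coherent perturbation data all subsumed in \cref{MonAss}\ref{ass1} and \ref{ass2}. No substantive difference from the paper's argument.
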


\begin{Remark}
Recall that the main point of Theorem~\ref{TheoremB} was to make \smash{$\CO_\lambda$}, and in particular~\smash{$\CO_{\LL}$}, more amenable to computation. If $\mathbf{b}$ is non-zero, then \smash{$\CO^0_{(\bL, \mathbf{b})}$} is rather difficult to calculate because it suffers from the problem of repeated inputs, which is what we wanted to avoid. So we will mostly be interested in the case where $\mathbf{b}$ (and hence $b$) vanishes. We previously used a~pearl model for computations, but we now have in mind a de Rham model as in \cite{FOOOToricI}.
\end{Remark}

In the monotone case, the ring $R = S/I$ is automatically Noetherian, which is used in the proof of Theorem~\ref{TheoremC}. In our present setting, we have to include this assumption separately, but the analogue of Theorem~\ref{TheoremC} then holds.

\begin{mainthmp}[Proposition~\ref{propThmCp}]\label{TheoremCp}
In the setting of Theorem~{\rm \ref{TheoremAp}}, suppose $R$ is Noetherian. Then the $\m$-adically completed map \smash{$\hatTheta$} is a quasi-isomorphism. So, under Assumption~$\ref{MonAss}\,\ref{ass2}$, to prove that \smash{$\CO_{\LL}$} is injective on a subspace of \smash{$\QH^*(X)$} it suffices to prove that \smash{$\hatCOb$} is injective on that subspace.
\end{mainthmp}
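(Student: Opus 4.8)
The plan is to adapt the argument behind \cref{TheoremC} to the present generality, with the Noetherian hypothesis on $R$ taking over the role played there by the automatic Noetherianity of $S/I$. Write $\calC = \CF^*((\LL, b), (\LL, b))$ for the Floer $A_\infty$-algebra of our object; we may assume it is finite-dimensional over $\Lambda$, since $\Lambda$ is a field and all the structures below are quasi-isomorphism invariant, so we can replace $\calC$ by a minimal model (transporting $\calE$, $\calB$ and $\Theta_\LL$ below accordingly). Let $\calE = \LMF[(\bL, \mathbf{b}), \lambda](\LL) = \CF^*_R((\bL, \mathbf{b}), (\LL, b))$ be the image of our object under the localised mirror functor, and $\calB = \eend_{\mf}(\calE)$ its endomorphism dg-algebra. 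Composing $\Theta$ from \cref{TheoremAp} with the restriction of Hochschild cochains to the single object $(\LL, b)$ produces the $A_\infty$-homomorphism $\Theta_\LL : \CF^*_R((\bL, \mathbf{b}), (\bL, \mathbf{b}))^\op \to \rCC^*(\calC, \calB)$, whose $\m$-adic completion is the map $\hatTheta$ in the statement.

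The first step is to make the target transparent. Since $\calE$ is a finite-rank free $R$-module equipped with an odd endomorphism squaring to $(W_L - \lambda)\cdot\id$, its endomorphism algebra $\calB = \Hom_R(\calE, \calE)$ is again finite-rank and free over $R$, and the $\calC$-bimodule structure on $\calB$ supplied by $\LMF[(\bL, \mathbf{b}), \lambda]$ is exactly the one induced by pre- and post-composition from the left $\calC$-module structure on $\calE$. The general homological algebra of $A_\infty$-modules (adjunction applied to the two-sided bar resolution) then gives an $R$-linear identification $\rCC^*(\calC, \calB) \simeq \operatorname{RHom}_\calC(\calE, \calE)$ with the derived endomorphism complex of $\calE$ as a left $\calC$-module, under which --- this being the content of ``$\Theta$ extends the module action'' in \cref{TheoremAp} --- $\Theta_\LL$ is the tautological homomorphism classifying the commuting right $\CF^*_R((\bL, \mathbf{b}), (\bL, \mathbf{b}))$-module structure on $\calE$; the $\op$ absorbs the reversal inherent in turning a right action into left-module endomorphisms.

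Next I would reduce modulo $\m$. Because the lift $\calL_R$ of the local system reduces to $\calL$ and $\mathbf{b}$ reduces to $b$, the object $(\bL, \mathbf{b})$ reduces to $(\LL, b)$, so $\calE \otimes_R \Lambda \cong \calC$ as a left $\calC$-module: the free rank-one diagonal module. Hence $\Theta_\LL \otimes_R \Lambda$ is the module-action map $\calC^\op \to \operatorname{RHom}_\calC(\calC, \calC)$, which is a quasi-isomorphism by cohomological unitality of $\calC$ --- this is the Yoneda lemma for the free rank-one module, and the one point at which only pure $A_\infty$-module formalism is needed. To upgrade this over $\hatR$, equip source and target with their $\m$-adic filtrations (with respect to which $\Theta_\LL$, being $R$-linear, is a filtered map). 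This is where the Noetherian hypothesis is used: $\gr_\m R$ is a finitely generated $\Lambda$-algebra, so each $\m^j/\m^{j+1}$ is finite-dimensional over $\Lambda$, whence tensoring over $\Lambda$ commutes with the infinite product defining $\rCC^*$ and the associated graded of $\Theta_\LL$ is the base change of $\Theta_\LL \otimes_R \Lambda$ along $\Lambda \hookrightarrow \gr_\m \hatR$ --- the tensor product over the field $\Lambda$ of a quasi-isomorphism with $\gr_\m \hatR$, hence a quasi-isomorphism in every filtration degree. After $\m$-adic completion both filtrations are complete, Hausdorff and exhaustive (this is what completing buys us: over $R$ itself they need not converge), so both associated spectral sequences converge, the induced morphism between them is an isomorphism on the $E_1$-page, and the Eilenberg--Moore comparison theorem gives that $\hatTheta$ is a quasi-isomorphism.

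The injectivity reduction is then formal. Completing the restriction of the diagram of \cref{TheoremBp} to $(\LL, b)$ --- for which we invoke \cref{MonAss}\ref{ass2}, so that $\CO_\lambda$ exists --- yields a commuting square with $\CO_\LL$ along the top, $\hatCOb$ down the left, $\rH(\hatTheta)$ along the bottom, and the pushforward $\rH(\hatPhi_*)$ down the right. Since $\rH(\hatTheta)$ is now an isomorphism, injectivity of $\hatCOb$ on a subspace $V \subseteq \QH^*(X)$ makes $\rH(\hatPhi_*) \circ \CO_\LL = \rH(\hatTheta) \circ \hatCOb$ injective on $V$, hence $\CO_\LL$ itself injective on $V$; with the generation criterion \cref{MonAss}\ref{ass3} this yields split-generation. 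I expect the genuine obstacle to be checking the convergence hypotheses needed to apply the Eilenberg--Moore comparison theorem to the Hochschild cochain complex, whose underlying module is an infinite product: this is exactly where the Noetherian assumption must be used --- to keep $\gr_\m R$ degreewise finite-dimensional, so that $\gr$ of $\rCC^*(\calC, \calB)$ is again a Hochschild complex --- and some care is also needed to see that this $\m$-adic filtration interacts correctly with the ambient Novikov filtration carried by all the complexes in play.
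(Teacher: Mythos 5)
Your proposal is correct and follows essentially the same route as the paper: filter both sides $\m$-adically, use Noetherianity of $R$ (finite-dimensionality of $\m^p/\m^{p+1}$ over the ground field) so that the associated graded of the Hochschild complex is again a Hochschild complex and $\gr\hatTheta$ is the base change of the reduction mod $\m$, identify that reduction with the module-action map into endomorphisms of the diagonal module (a quasi-isomorphism by cohomological unitality, exactly the content of \cref{corCCN}), and conclude by Eilenberg--Moore. The only cosmetic difference is that you phrase the uncurrying identification $\rCC^*(\calC,\calB)\simeq\hhom_\calC(\calE,\calE)$ over $R$ before reducing mod $\m$, whereas the paper reduces mod $\m$ first and does this identification over the ground field in \cref{appHH}.
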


\begin{Remark}
In our main application, $R$ will be local and finite-dimensional over $\Lambda$, in which case it is automatically Noetherian and $\m$-adically complete; see Lemma~\ref{lemFinDimProperties}\,\ref{FDNoetherianComplete}.
\end{Remark}

From Theorem~\ref{TheoremCp}, we obtain a generation criterion in terms of \smash{$\hatCOb$}.

\begin{maincor}\label{CorollaryK}
Suppose that Assumption~$\ref{MonAss}$ holds in full, that $R$ is a complete filtered augmented $\Lambda$-algebra which is Noetherian, and that \smash{$\bigl(\LL, b\bigr)$} is an object in $\Fuk(X)_\lambda$ which lifts to~$R$ as $(\bL, \mathbf{b})$. If \smash{$\hatCOb$} is injective on a factor of $\QH^*(X)$, then \smash{$\bigl(\LL, b\bigr)$} split-generates the corresponding summand of $\Fuk(X)_\lambda$.
\end{maincor}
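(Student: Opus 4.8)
The plan is to obtain the statement as a formal combination of \cref{TheoremCp} with the generation criterion supplied by \cref{MonAss}\ref{ass3}; once the hypotheses are lined up there is essentially nothing left to do.

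First I would check that we are in the situation of \cref{TheoremCp}. By assumption \cref{MonAss} holds in full, so in particular \cref{MonAss}\ref{ass1} and \ref{ass2} are available; the object $(\LL, b) \in \Fuk(X)_\lambda$ lifts to $R$, with a chosen lift $(\bL, \mathbf{b})$; and $R$ is a complete filtered augmented $\Lambda$-algebra which is Noetherian. These are exactly the hypotheses of \cref{TheoremCp}, which therefore applies: the $\m$-adically completed homomorphism $\hatTheta$ is a quasi-isomorphism, and consequently injectivity of $\hatCOb$ on a subspace $Q \subseteq \QH^*(X)$ forces injectivity of $\CO_{\LL} : \QH^*(X) \to \HH^*(\CF^*(\LL, \LL))$ on the same subspace $Q$.

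Next I would take $Q$ to be the summand of $\QH^*(X)$ on which $\hatCOb$ is assumed injective. The previous step then gives that $\CO_{\LL}$ is injective on $Q$. Since $Q$ is a genuine direct summand of the ring $\QH^*(X)$, \cref{MonAss}\ref{ass3} now applies and yields that $(\LL, b)$ split-generates the summand of $\Fuk(X)_\lambda$ corresponding to $Q$, which is exactly the assertion to be proved.

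I do not expect any genuine obstacle here: the real content has already been packaged into \cref{TheoremCp} --- whose proof reduces $R$-coefficients to $\Lambda$-coefficients by a spectral sequence argument and then invokes the Eilenberg--Moore comparison theorem to handle the $\m$-adic completion --- and into \cref{MonAss}\ref{ass3}, which we are taking as a black box from the expected foundational work of Abouzaid--Fukaya--Oh--Ohta--Ono. The only point that needs a moment's care is confirming that the word ``summand'' is used consistently on the two sides, i.e.\ that the direct-sum decomposition of $\QH^*(X)$ is the one inducing the decomposition of $\Fuk(X)_\lambda$ appearing in the generation criterion; this compatibility is built into the formulation of \cref{MonAss}\ref{ass3}, so no further argument is required.
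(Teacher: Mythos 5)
Your proposal is correct and matches the paper's own (implicit) argument: the paper states \cref{CorollaryK} as an immediate consequence of \cref{TheoremCp} together with the generation criterion of \cref{MonAss}\ref{ass3}, exactly the two ingredients you combine. Nothing further is needed.
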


Using this, and by taking $R$ to be a local factor of $\QH^*(X)$, we show the following.

\begin{mainthm}[Proposition~\ref{propThmL}]\label{TheoremL}
Under Assumption~$\ref{MonAss}$, suppose $X$ is a compact toric manifold,~$R$~is a local factor of $\QH^*(X)$, and $\lambda$ is the eigenvalue of the generalised eigenspace of $c_1 \qprod$ containing $R$. Then the $R$-summand of $\Fuk(X)_\lambda$ is split-generated by a~specific toric fibre with a~specific rank-$1$ local system.
\end{mainthm}

As mentioned in Section~\ref{sscMotivatingExample}, we believe work announced by Abouzaid--Fukaya--Oh--Ohta--Ono around 2016 establishes a version of Assumption~\ref{MonAss} and also considers this toric generation question.

\begin{Remark}
If we also assume that $\CO_\lambda$ respects the eigenvalue decomposition of $\QH^*(X)$, in the sense that it vanishes on the generalised $\lambda'$-eigenspace of $c_1 \qprod$ if $\lambda \neq \lambda'$, then we conclude that in fact all pieces of the Fukaya category of $X$ are split-generated by toric fibres with rank-$1$ local systems.
\end{Remark}

\begin{Remark}
In a similar spirit to Remark~\ref{rmkExtensions}, we could allow $X$ to be non-compact in Theorems~\ref{TheoremAp}, \ref{TheoremBp} and~\ref{TheoremCp} if a suitable form of Assumption~\ref{MonAss} holds. And we could allow quantum cohomology and Floer theory to be modified by bulk deformations everywhere.
\end{Remark}

\subsection{Structure of the paper}

Section~\ref{secSetup} describes the technical setup for the paper, which is brought together in Section~\ref{secThmsAB} to construct $\Theta$ and prove Theorems~\ref{TheoremA} and~\ref{TheoremB}. Section~\ref{secThmC}, which is almost entirely algebraic, then proves Theorem~\ref{TheoremC}. The remaining sections can be read somewhat independently of each other: Section~\ref{secTonk} looks at Tonkonog's criterion in light of our results and proves Theorem~\ref{TheoremD}; Section~\ref{secRealToric} studies the case of real toric Lagrangians and proves that they split-generate the Fukaya category in characteristic $2$ (Theorem~\ref{TheoremE}); Section~\ref{sechatPhi} analyses the map $\hatPhi$ in the settings of Theorems~\ref{TheoremG} and~\ref{TheoremJ}; and Section~\ref{secNonMonotone} discusses the non-monotone versions of our results, as sumarised in Section~\ref{sscNonMonotone}, including the proof that toric fibres split-generate (Theorem~\ref{TheoremL}). The paper ends with three appendices, covering respectively the generation criterion, some $A_\infty$-algebra lemmas used in the proof of Theorem~\ref{TheoremC}, and the quantum cohomology of monotone toric manifolds as needed for Section~\ref{secRealToric}.

\section{Setup}
\label{secSetup}

Fix from now on a monotone symplectic $2n$-manifold $(X, \omega)$ which is compact or tame at infinity and equipped with a $\ZZ/2$-grading \cite{SeidelGraded}. Fix also a coefficient field $\kk$. The Fukaya category and all (co)homology groups and algebraic operations will be over $\kk$ unless stated otherwise. In this section we summarise the background ideas we will use later. (In Section~\ref{secNonMonotone}, we will drop the monotonicity condition and work over the Novikov field $\Lambda$ instead of $\kk$, but we will flag this at the time.)

\subsection{Conventions}
\label{sscSignConventions}

Recall that we mostly follow Seidel's conventions for $A_\infty$-algebras and -categories \cite{SeidelBook}. (We~will mainly discuss categories, with algebras given by the special case where there is only one object.) In particular, the differential $\diff$ on a dg-category is related to the $\mu^1$ operation on the corresponding $A_\infty$-category by
\begin{equation}
\label{eqdgAinftyDiff}
\mu^1(a) = (-1)^{\lvert a \rvert} \diff a.
\end{equation}
Similarly, the composition $\circ$ on a dg-category or its cohomology is related to the $\mu^2$ operation~via
\begin{gather}
\label{eqdgAinftyProd}
\mu^2(a_2, a_1) = (-1)^{\lvert a_1 \rvert} a_2 \circ a_1.
\end{gather}
Higher order compositions are written from right to left, for example
\[
\mu^k\colon \ \hom^*(O_{k-1}, O_k) \otimes \dots \otimes \hom^*(O_0, O_1) \to \hom^*(O_0, O_k).
\]
We write $\hom^*$ for chain-level morphism spaces and $\Hom^*$ for their cohomology.

The place where we differ from \cite{SeidelBook} is in our signs for the opposite $A_\infty$-algebra or -category.

\begin{Definition}
\label{defOpposite}
Given an $A_\infty$-category $\calC$, with operations $\mu_\calC$, its \emph{opposite category} $\calC^\op$ has the same objects as $\calC$, and the same morphism spaces as graded modules. Its operations $\mu_{\calC^\op}$ are defined by
\begin{equation}
\label{eqopdefinition}
\mu_{\calC^\op}^l(c_l, \dots, c_1) = (-1)^{\triangle_l + l - 1} \mu_\calC^l(c_1, \dots, c_l)
\end{equation}
for all $c_1, \dots, c_l$, where
\[
\triangle_l = \sum_{i < j} (\lvert c_i \rvert - 1) (\lvert c_j \rvert - 1).
\]
These are easily checked to satisfy the $A_\infty$-relations, and if $\calC$ is cohomologically unital then so is $\calC^\op$, with the same cohomological units.
\end{Definition}

\begin{Lemma}
\label{lemOpposite}
The cohomology category $\rH^*(\calC^\op)$ is the ordinary graded-opposite category $\rH^*(\calC)^\op$ to $\rH^*(\calC)$.
\end{Lemma}

\begin{Remark}
By \emph{graded}-opposite we mean that it includes Koszul signs.
\end{Remark}

\begin{proof}[Proof of Lemma~\ref{lemOpposite}]
Let $\circ_\calC$, $\circ_{\calC^\op}$, and $\circ_\calC^\op$ denote the compositions in $\rH^*(\calC)$, $\rH^*(\calC^\op)$, and $\rH^*(\calC)^\op$, respectively. For cocycles $c_2$ and $c_1$ in $\calC$ or $\calC^\op$, representing cohomology classes $[c_2]$ and~$[c_1]$, we have
\begin{align*}
[c_2] \circ_{\calC^\op} [c_1] &= (-1)^{\lvert c_1 \rvert}\bigl[\mu^2_{\calC^\op}(c_2, c_1)\bigr] = (-1)^{\lvert c_2 \rvert\lvert c_1 \rvert + \lvert c_2 \rvert}\bigl[\mu^2_\calC(c_1, c_2)\bigr] \\
&= (-1)^{\lvert c_2 \rvert \lvert c_1 \rvert}[c_1] \circ_\calC [c_2] = [c_2] \circ_\calC^\op [c_1].\qedhere
\tag*{\qed}
\end{align*}
\renewcommand{\qed}{}
\end{proof}

%-----------------------------------

\subsection{The Fukaya category}
\label{sscFuk}

Next we recap the definition of the monotone Fukaya category.

Recall that a Lagrangian submanifold $L \subset X$ is monotone if there is a positive real constant~$\tau$ such that the Maslov index and area homomorphisms
$\mu, \omega\colon \pi_2(X, L) \to \RR$
are related by $\omega = \tau \mu$. Recall also that the minimal Maslov number $N_L$ of $L$ is defined to be the positive generator of the image $\mu(\pi_2(X, L)) \subset \ZZ$ if this image is non-zero, and infinity otherwise.

For each $\lambda \in \kk$, the compact monotone Fukaya category $\Fuk(X)_\lambda$ is $\ZZ/2$-graded $A_\infty$-category defined as follows (see \cite{RitterSmith, SheridanFano} for details). The objects are Lagrangians $L$ in $X$ such that
\begin{enumerate}\itemsep=0pt
\item\label{itm1} $L$ is compact, connected, and monotone, with minimal Maslov number $N_L$ at least $2$.
\item\label{itm2} The image of $\pi_1(L)$ in $\pi_1(X)$ is trivial.
\item\label{itm3} $L$ is equipped with a pin structure (not needed if $\Char \kk = 2$) and a $\ZZ/2$-grading with respect to the $\ZZ/2$-grading of $X$.
\item\label{itm4} $L$ carries a rank-$1$ local system $\calL$ over $\kk$ such that $W_L(\calL) = \lambda$, where $W_L$ is the superpotential of $L$ as defined as in Section~\ref{sscIngredients}.
\end{enumerate}
Formally, one usually fixes a finite or countable collection of such Lagrangians to work with, although each may be allowed to carry arbitrarily many different local systems.

\begin{Remark}
\label{rmkUngraded}
We drop the $\ZZ/2$-gradings on $X$ and $L$ if $\Char \kk = 2$, and obtain the \emph{ungraded} Fukaya category $\Fuk(X)_\lambda^\mathrm{un}$ appearing in Theorem~\ref{TheoremE}. We will give all arguments assuming the existence of $\ZZ/2$-gradings, but these are really only used to define various signs in our formulae, and the same arguments go through in the ungraded case.
\end{Remark}

\begin{Remark}
\label{rmkMonotonicity}
Some authors replace our definition of monotonicity with the slightly stronger condition that the Maslov index and area are positively proportional on $\rH_2(X, L; \ZZ)$ rather than $\pi_2(X, L)$, or that the Maslov class and symplectic form are positively proportional in $\rH^2(X, L; \RR)$. Assumption \ref{itm2} can then be weakened to $\rH_1(L; \ZZ)$ being trivial in $\rH_1(X; \ZZ)$.
\end{Remark}

\begin{Remark}
\label{rmkGradings}
Recall from \cite{SeidelGraded} that a $\ZZ/2$-grading of $X$ is a homotopy class of square root of the complex line bundle \smash{$\bigl(\Lambda_\CC^n TX\bigr)^{\otimes 2}$}. There is always a canonical choice, namely \smash{$\Lambda_\CC^n TX$}, and other choices are all obtained by twisting by an element $t$ of $\rH^1(X; \ZZ/2)$, i.e., tensoring with an isomorphism class of real line bundle $\mathcal{T}$ on $X$. A $\ZZ/2$-grading of $L$ with respect to this $\ZZ/2$-grading of $X$ is then a homotopy class of trivialisation of \smash{$\bigl(\Lambda_\RR^n TL\bigr) \otimes \mathcal{T}|_L$}, i.e., an orientation of~$L$ twisted by $\mathcal{T}$. In light of \ref{itm2}, or its variant in Remark~\ref{rmkMonotonicity}, the bundle $\mathcal{T}|_L$ is trivial, so $L$ has a $\ZZ/2$-grading if and only if it is orientable.
\end{Remark}

The morphism spaces are Floer cochain complexes and the $A_\infty$-operations are defined by counting rigid pseudoholomorphic discs using a choice of regular perturbation data. (We will freely use the terminology of Floer data and perturbation data from \cite[Section~(8e)]{SeidelBook}, but the casual reader uninterested in technicalities can safely ignore such discussions.)

\begin{Definition}
We write $\mu_\Fuk$ for the $A_\infty$-operations on $\Fuk(X)_\lambda$.
\end{Definition}

From now on, all Lagrangians will be assumed to satisfy \ref{itm1}--\ref{itm3}. We will write $\LL$ for a~pair~$(L, \calL)$ as in \ref{itm4}, as we may want to distinguish the submanifold $L \subset X$ from the object~$\LL$ of~$\Fuk(X)_\lambda$, but for Lagrangians named with other letters we will gloss over this distinction and simply say things like `take a Lagrangian $K$ in $\Fuk(X)_\lambda$'.

There are different geometric models available for Floer cochain complexes in this setting. For arbitrary Lagrangians $K_1$ and $K_2$, we may take $\CF^*(K_1, K_2)$ to be generated by intersection points between $K_1$ and $K_2$, if they intersect transversely, or more generally by Hamiltonian chords from $K_1$ to $K_2$. This chord model is the one set up by Seidel \cite{SeidelBook} and adapted to the monotone case by Sheridan \cite{SheridanFano} and Ritter--Smith \cite{RitterSmith}. In this model, inputs and outputs of pseudoholomorphic discs are boundary punctures asymptotic to chords. In the special case when $K_1$ and $K_2$ are equal to a common $\LL$, we can instead use a \emph{pearl model}, as developed by Cornea--Lalonde \cite{CorneaLalonde}, Biran--Cornea \cite{biran2007quantum}, and Sheridan \cite{SheridanCY}. Here generators of $\CF^*\bigl(\LL, \LL\bigr)$ are critical points of a Morse function on $L$, and $A_\infty$-operations count `pearly trees' built from Morse flowlines and pseudoholomorphic discs.

In \cite{SmithSuperfiltered}, following Cho--Hong--Lau \cite{ChoHongLauTorus}, we used the pearl model for $\CF^*\bigl(\LL, \LL\bigr)$ but could equally well have used the chord model as long as the perturbations were chosen appropriately. Likewise, in the present paper we are free to use either model. We will describe moduli spaces and draw pictures in the chord model, i.e., using discs without Morse trees, as they are cleaner to work with, but everything can be straightforwardly translated by the reader who prefers the pearl model.

%--------------------------------------

\subsection[The Fukaya category over S]{The Fukaya category over $\boldsymbol{S}$}
\label{sscFukS}

Fix an object $\LL = (L, \calL)$ in $\Fuk(X)_\lambda$, with superpotential $W_L$ in $S = \kk[\rH_1(L; \ZZ)]$. The Fukaya category over $S$, denoted by $\Fuk_S(X)$ and with Floer complexes and cohomology denoted by $\CF_S^*$ and $\HF_S^*$, has already appeared informally and is defined completely analogously to the category over $\kk$. We denote the operations on the category by $\mu_S$. Recall that we may implicitly view objects of $\Fuk(X)_\lambda$ as objects of $\Fuk_S(X)_\lambda$ by extending scalars from $\kk$ to $S$, i.e., applying $S\otimes_{\kk} {-}$ to their local systems.

Concretely, if the $L_i$ are Lagrangians in $\Fuk_S(X)$, equipped with rank-$1$ local systems $\calE_i$ over~$S$, then
\[
\CF^*((L_i, \calE_i), (L_j, \calE_j)) = \bigoplus_{\substack{\text{Ham chords $\gamma$} \\ \text{from $L_i$ to $L_j$}}} \Hom_S\bigl(\calE_i|_{\gamma(0)}, \calE_j |_{\gamma(1)}\bigr).
\]
The differential and $A_\infty$-operations count pseudoholomorphic discs, using parallel transport around boundary segments to map between the fibres in these $\Hom_S$ spaces.

Recall also that the object $\bL \in \Fuk_S(X)_{W_L}$, central to our results, is given by $L$ with the tautological local system, whose monodromy around a loop $\gamma \in \rH_1(L; \ZZ)$ is the monomial in~$S$ corresponding to $\gamma$. The Floer algebra of $L$ with coefficients is $\CF_S^*(\bL, \bL)$, and $\CF_S^*(\bL, \bL)^\op$ is obtained from this by reversing the order of inputs and twisting signs as in Definition~\ref{defOpposite}. We~denote the operations on $\CF_S^*(\bL, \bL)^\op$ by $\mu_\bL$.

We could try to interpret $\CF_S^*(\bL, \bL)$ purely within $\Fuk(X)$ by considering the object $(L, \calS)$, where $\calS$ is the universal abelian local system over $\kk$. Explicitly, this would involve equipping~$L$ with the rank-$\lvert \rH_1(L; \ZZ) \rvert$ local system over $\kk$ whose fibres are modelled on $S$, and whose monodromy around $\gamma \in \rH_1(L; \ZZ)$ is again given by multiplication by the corresponding monomial. Use of such higher-rank local systems is permitted in principle but leads to various complications~\cite{Konstantinov}. In any case, the resulting Floer complexes do not give what we want: $\CF^*((L, \calS), (L, \calS))$ has the right differential but the underlying module is too large, as each Floer generator gives rise to a copy of $\End_{\kk}(S)$ instead of $S$; and $\CF^*\bigl((L, \calS), \LL\bigr)$ has the right module but the differential is wrong.

\begin{Remark}
The complex $\CF^*\bigl((L, \calS), \LL\bigr)$ coincides with $\CF_S^*\bigl(\bL, \LL\bigr)$ and can be interpreted as the lifted Floer complex of $L$, in the sense of Damian \cite{Damian}, associated to the universal abelian cover.
\end{Remark}

%----------------------------------------

\subsection{The localised mirror functor}
\label{sscLMF}

Next we describe more precisely the localised mirror functor
\[
\LMF[L,\lambda]\colon \ \Fuk(X)_\lambda \to \mf(S, W_L-\lambda)
\]
mentioned in Section~\ref{sscIngredients}. For further details see the original paper of Cho--Hong--Lau \cite{ChoHongLauTorus}. Related ideas appear in \cite{Damian}. From now on, we will abbreviate $\LMF[L,\lambda]$ and $\mf(S, W_L - \lambda)$ simply to $\LMF$ and $\mf$ respectively. We write $\mu_\mf^1$ and $\mu_\mf^2$ for the operations on $\mf$.

Recall that the $A_\infty$-Yoneda embedding \cite{FukayaFloerHomologyFor3Manifold, FukayaFloerHomologyAndMirrorSymmetryII} associates to each object $\LL \in \Fuk(X)_\lambda$ an~$A_\infty$-module $Y_{\LL}$ over $\Fuk(X)_\lambda$. By definition, such a module is an $A_\infty$-functor from $\Fuk(X)_\lambda$ to the dg-category of chain complexes over $\kk$. The specific module $Y_{\LL}$ is defined to send each object~${K \in \Fuk(X)_\lambda}$ to the cochain complex $\CF^*\bigl(\LL, K\bigr)$ with differential \smash{$\diff a = (-1)^{\lvert a \rvert}\mu_\Fuk^1(a)$}; the~\smash{$(-1)^{\lvert a \rvert}$} is our standard sign change for passing between dg- and $A_\infty$-language as in \eqref{eqdgAinftyDiff}. For each~${k \geq 1}$, the $k$th component
\[
Y_{\LL}^k\colon \ \CF^*(K_{k-1}, K_k)[1] \otimes \dots \otimes \CF^*(K_0, K_1)[1] \to \hom^*\bigl(\CF^*\bigl(\LL, K_0\bigr), \CF^*\bigl(\LL, K_k\bigr)\bigr)[1]
\]
is then given by the $\mu_\Fuk^{k+1}$ operation, with sign twisted, via
\begin{equation}
\label{eqYonedaSignTwist}
Y_{\LL}^k(a_k, \dots, a_1)(a_0) = (-1)^{\lvert a_0 \rvert}\mu_\Fuk^{k+1}(a_k, \dots, a_1, a_0).
\end{equation}

\begin{Remark}
With these conventions, $Y_{\LL}$ is a left module and the Yoneda embedding is contravariant. One could instead, with suitable sign changes, take $Y_{\LL}(K) = \CF^*\bigl(K, \LL\bigr)$, which would give a right module and make the Yoneda embedding covariant. We use the above conventions because it is really the module $Y_{\LL}$ that we care about. In particular, we want this module to be covariant when viewed as a functor to chain complexes.
\end{Remark}

\begin{Definition}\label{defLMF}
The localised mirror functor $\LMF$ is defined in exactly the same way as $Y_{\LL}$, but with $\bL$ in place of $\LL$ and $\CF_S^*$ in place of $\CF^*$. Explicitly, $K \in \Fuk(X)_\lambda$ is sent to $\CF_S^*(\bL, K)$ with `differential' $\diff a = (-1)^{\lvert a \rvert} \mu_S^1(a)$. Following~\cite{SmithSuperfiltered}, we refer to this $\diff$ as the \emph{squifferential}. The usual proof that $\diff^2 = 0$ in monotone Floer theory shows that on $\CF_S^*(\bL, K)$ we have $\diff^2 = W_L - \lambda$. Morally, this is because
\[
\diff^2 a = -\mu_S^1\bigl(\mu_S^1(a)\bigr) = \mu_S^2\bigl(a, \mu_\bL^0\bigr) + (-1)^{\lvert a \rvert - 1}\mu_S^2\bigl(\mu_K^0, a\bigr) = a \cdot W_L - \lambda \cdot a.
\]
The components \smash{$\LMF^k$} are similarly given by the \smash{$\mu_S^{k+1}$} operation, twisted by a sign as in \eqref{eqYonedaSignTwist}. The fact that these do indeed define an $A_\infty$-functor is shown in \cite[Theorem 6.4]{ChoHongLauTorus}, with different sign conventions, and in \cite[Lemma 3.3]{SmithSuperfiltered} in a slightly different setting but with the same sign conventions. We sketch the argument pictorially below.
\end{Definition}

\begin{Remark}
In characteristic $2$, the localised mirror functor can be extended to the ungraded Fukaya category, now landing in ungraded matrix factorisations~\cite{AmorimCho}.
\end{Remark}

\subsection{Diagrams for operations}
\label{sscDiagrams}

We depict the $\mu_\Fuk^k$ operation on $\Fuk(X)_\lambda$ schematically as a disc with $k$ Floer inputs (open circles) and one Floer output (an open square) on the boundary, as shown in the left-hand part of Figure~\ref{figDiscs}.
\begin{figure}[t]
\centering
\begin{tikzpicture}
\def\r{1.8cm}
\begin{scope}[xshift=-5.5cm]
\draw[line width=\circwidth] (0, 0) circle[radius=\r];
\draw (-180: \r) node[flout]{};
\draw (-130:\r) node[flin]{};
\draw (-60: \r) node[flin]{};
\draw (-10: \r) node[flin]{};
\draw (30: \r) node[flin]{};
\draw (110: \r) node[flin]{};
\end{scope}
\begin{scope}
\draw[line width=\circwidth] (0, 0) circle[radius=\r];
\draw[line width=\LLwidth] (-180: \r) arc[start angle=-180, end angle=0, radius=\r];
\draw (-180: \r) node[flout]{};
\draw (0:\r) node[flin]{};
\end{scope}
\begin{scope}[xshift=5.5cm]
\draw[line width=\circwidth] (0, 0) circle[radius=\r];
\draw[line width=\LLwidth] (-180: \r) arc[start angle=-180, end angle=0, radius=\r];
\draw (-180: \r) node[flout]{};
\draw (0:\r) node[flin]{};
\draw (40: \r) node[flin]{};
\draw (80: \r) node[flin]{};
\draw (140: \r) node[flin]{};
\end{scope}
\end{tikzpicture}
\caption{The $A_\infty$-operation $\mu_\Fuk^5$ (left), the squifferential on $\LMF(K) = \CF_S^*(\bL, K)$ (centre), and the component $\LMF^3$ (right).}\label{figDiscs}
\end{figure}
The unlabelled inputs are implicitly in anticlockwise order when read from right to left. We depict the squifferential on a matrix factorisation $\LMF(K) = \CF_S^*(\bL, K)$ and the components $\LMF^k$ similarly, with a thick boundary segment indicating the position of the $\bL$ in these operations. See the centre and right-hand part of Figure~\ref{figDiscs}. We refer to the Floer input and output at the end of the thick segment of the boundary as the \emph{distinguished} input and output.

The $A_\infty$-functor equations for $\LMF$ can be verified by considering equivalent moduli spaces to those defining $\LMF$ but in virtual dimension $1$, rather than $0$. These moduli spaces can be compactified in the usual way, and the fact that the boundary comprises zero (signed) points gives us the required relations. To see this, consider the possible codimension-$1$ degenerations of the right-hand disc in Figure~\ref{figDiscs}. Note that we are talking about degenerations of the \emph{map}; these may involve degeneration of the domain, but may instead only involve strip breaking at the inputs or output. Four things can happen: some of the non-distinguished inputs can bubble off to give a $\mu_\Fuk$ operation; the squifferential can bubble off at either the distinguished input or output, corresponding to the differential on $\mf$; or the disc can break along the thick segment into two discs of the same form, corresponding to the composition on $\mf$.
\begin{figure}[t]
\centering
\begin{tikzpicture}[scale=0.9]
\def\r{1.05cm}
\def\s{0.75cm}

\begin{scope}[xshift=-8cm, yshift=-0cm]
\draw[line width=\circwidth] (0, 0) circle[radius=\r];
\draw[line width=\LLwidth] (-180: \r) arc[start angle=-180, end angle=0, radius=\r];
\draw (-180: \r) node[flout]{};
\draw (0:\r) node[flin]{};
\draw (110: \r) node[flin]{};
\begin{scope}[shift={(50:\r+\s)}]
\draw[line width=\circwidth] (0, 0) circle[radius=\s];
\draw (-20: \s) node[flin]{};
\draw (100: \s) node[flin]{};
\end{scope}
\draw (50: \r) node[flout, rotate=50]{};
\draw (50: \r) node[flin]{};
\end{scope}

\begin{scope}[xshift=-4.6cm]
\draw[line width=\circwidth] (0, 0) circle[radius=\r];
\draw[line width=\circwidth] (0:\r+\s) circle[radius=\s];
\draw[line width=\LLwidth] (-180: \r) arc[start angle=-180, end angle=0, radius=\r];
\begin{scope}[shift={(0:\r+\s)}]
\draw[line width=\LLwidth] (\s, 0) arc[start angle=0, end angle=-180, radius=\s];
\end{scope}
\draw (-180: \r) node[flout]{};
\draw (0:\r) node[flout]{};
\draw (0:\r) node[flin]{};
\draw (40: \r) node[flin]{};
\draw (100: \r) node[flin]{};
\draw (150: \r) node[flin]{};
\draw (0:\r+2*\s) node[flin]{};
\end{scope}

\begin{scope}[xshift = 1.4cm]
\draw[line width=\circwidth] (0, 0) circle[radius=\r];
\draw[line width=\circwidth] (-\r-\s, 0) circle[radius=\s];
\draw[line width=\LLwidth] (-180: \r) arc[start angle=-180, end angle=0, radius=\r];
\draw[line width=\LLwidth] (-180: \r+2*\s) arc[start angle=-180, end angle=0, radius=\s];
\draw (-180: \r) node[flout]{};
\draw (-180: \r) node[flin]{};
\draw (0:\r) node[flin]{};
\draw (30: \r) node[flin]{};
\draw (60: \r) node[flin]{};
\draw (120: \r) node[flin]{};
\draw (-\r-2*\s, 0) node[flout]{};
\end{scope}

\begin{scope}[xshift=4.4cm]
\draw[line width=\circwidth] (0, 0) circle[radius=\r];
\draw[line width=\LLwidth] (-180: \r) arc[start angle=-180, end angle=0, radius=\r];
\begin{scope}[shift={(0: 2*\r)}, rotate=0]
\draw[line width=\circwidth] (0, 0) circle[radius=\r];
\draw[line width=\LLwidth] (-180: \r) arc[start angle=-180, end angle=0, radius=\r];
\draw (0: \r) node[flin]{};
\draw (70: \r) node[flin]{};
\end{scope}
\draw (-180: \r) node[flout]{};
\draw (0:\r) node[flout]{};
\draw (0:\r) node[flin]{};
\draw (60: \r) node[flin]{};
\draw (130: \r) node[flin]{};
\end{scope}

\end{tikzpicture}
\caption{Codimension-$1$ degenerations of a $\LMF^3$ disc.}\label{figLMFdegenerations}
\end{figure}
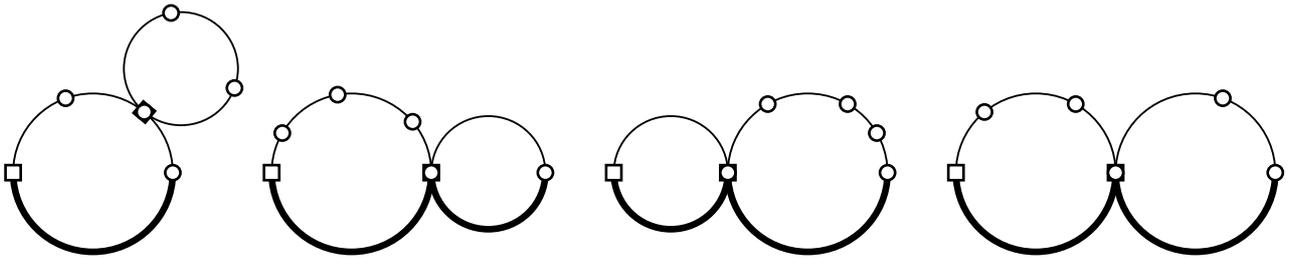
These four possibilities are shown in Figure~\ref{figLMFdegenerations}, from left to right. An open circle on top of an open square denotes a Floer generator which is an output of one disc and an input of another. Note that an~unstable disc bubble cannot form, for if it did then it would carry Maslov index at least $2$ (by monotonicity), so deleting the bubble would produce an element of a moduli space similar to the original configuration but of negative virtual dimension. This moduli space must be empty by regularity.

In a similar way, the operations $\mu_\bL$ on $\CF_S^*(\bL, \bL)^\op$ can be depicted as discs whose entire boundary is thick, as shown in Figure~\ref{figcCF}.
\begin{figure}[t]
\centering
\begin{tikzpicture}
\def\r{1.8cm}
\def\R{2.15cm}
\def\astart{-50}
\def\aend{60}

\begin{scope}[xshift=-5cm]
\draw[line width=\LLwidth] (0, 0) circle[radius=\r];

\draw (-180: \r) node[flout]{};

\draw (\astart:\r) node[flin]{};
\draw (\aend:\r) node[flin]{};

\draw (-120:\r) node[flin]{};
\draw (140:\r) node[flin]{};

\foreach \i in {1, 2, 3} \draw ({\astart+\i*(\aend-\astart)/4}: \R) node{$\cdot$};
\end{scope}

\end{tikzpicture}
\caption{The operations $\mu_\bL$ on $\CF_S^*(\bL, \bL)^\op$.}\label{figcCF}
\end{figure}
Inputs are now implicitly in \emph{clockwise} order when read from right to left. More generally, our convention is that if the boundary has both thick and thin segments, then inputs on the \emph{interior} of the thick segment are clockwise and the rest are anticlockwise.

These diagrams do not in themselves encode the signs attached to the discs we are counting. We specify these separately by relating them to the standard signs attached to discs defining $\mu_S$ operations, or to other well-established operations. For example, we could say that $\LMF^3(a_3, a_2, a_1)(a_0)$ is defined by the right hand picture in Figure~\ref{figDiscs}, counted with the same signs as for $\mu_S^4(a_3, \dots, a_0)$ but with a sign twist of $(-1)^{\lvert a_0 \rvert}$. Or we could say that the $\mu_\bL^l$ operation on $\CF_S^*(\bL, \bL)^\op$ is defined by Figure~\ref{figcCF} but with a sign twist of $(-1)^{\triangle_l+l-1}$.

\subsection{Hochschild cohomology}
%\label{sscSetupHH}

Next we recall various versions of the Hochschild cochain complex. If $\calC$ is a $\ZZ$- (or $\ZZ/2$-)graded $A_\infty$-category over $\kk$, then the $\ZZ$- (respectively $\ZZ/2$-)graded Hochschild cochain complex $\rCC^*(\calC)$ is given by
\[
\rCC^t(\calC) = \prod_{k \geq 0} \prod_{\substack{\text{objects} \\ O_0, \dots , O_k}} \hom^t_{\kk}(\hom^*_\calC(O_{k-1}, O_k)[1] \otimes \dots \otimes \hom^*_\calC(O_0, O_1)[1], \hom^*_\calC(O_0, O_k)).
\]
The differential on this complex, with sign shifted as in \eqref{eqdgAinftyDiff}, satisfies
\begin{gather}
\mu^1_{\rCC} (\phi)^k(a_k, \dots, a_1) \nonumber\\
\qquad{}=\sum_{i,j} (-1)^{(\lvert \phi \rvert-1)\maltese_i}\mu_\calC^{k-j+1}\bigl(a_k, \dots, a_{i+j+1}, \phi^j(a_{i+j}, \dots, a_{i+1}), a_i, \dots, a_1\bigr)\nonumber\\
\quad\qquad{}+ \sum_{i,j} (-1)^{\maltese_i+\lvert \phi \rvert}\phi^{k-j+1}\bigl(a_k, \dots, a_{i+j+1}, \mu_\calC^j(a_{i+j}, \dots, a_{i+1}), a_i, \dots, a_1\bigr)\label{eqCCdiff1}
\end{gather}
for all $\phi \in \rCC^*(\calC)$, where $\maltese_i = \lvert a_1 \rvert + \dots + \lvert a_i \rvert - i$. The complex carries $A_\infty$-operations defined~by
\begin{gather*}
\mu_{\rCC}^2(\phi_2, \phi_1)^k(a_k, \dots, a_1)\\
\qquad{} = \sum (-1)^{(\lvert \phi_2 \rvert -1) \maltese_l + (\lvert \phi_1 \rvert-1) \maltese_i}\mu_\calC^{k-j-m+1}\bigl(a_k, \dots, a_{l+m+1},\\
\qquad\ {}\hphantom{=\sum } \phi_2^m(a_{l+m}, \dots, a_{l+1}), a_l, \dots, a_{i+j+1}, \phi_1^j(a_{i+j}, \dots, a_{i+1}), a_i, \dots, a_1\bigr)
\end{gather*}
and similarly for \smash{$\mu_{\rCC}^{\geq 3}$}. If $\calC$ is strictly unital, then $\rCC^*(\calC)$ is quasi-isomorphic to the \emph{reduced} Hochschild complex \smash{$\overline{\rCC}^*(\calC)$} given by those $\phi \in \rCC^*$ which vanish whenever one or more input is the strict unit.

We will mainly be interested in the case where $\calC = \Fuk(X)_\lambda$. We will also be interested in the Hochschild cohomology of $\Fuk(X)_\lambda$ with coefficients in $\mf$, viewed as a $\Fuk(X)_\lambda$-bimodule via~$\LMF$. This mixed Hochschild complex is given by
\begin{gather*}
%\label{eqCCmixed}
\begin{gathered}
\rCC^t(\Fuk(X)_\lambda, \mf) = \prod_{k \geq 0} \prod_{\substack{\text{Lagrangians} \\ K_0, \dots , K_k}} \hom^t_{\kk}\bigl(\CF^*(K_{k-1}, K_k)[1] \otimes \dots \otimes \CF^*(K_0, K_1)[1],\\
\hspace{47mm}
\hom^*_\mf(\LMF(K_0), \LMF(K_k))\bigr).
\end{gathered}
\end{gather*}
The differential, with sign modified via \eqref{eqdgAinftyDiff}, is
\begin{gather*}
\mu^1_{\rCC}(\phi)^k(a_k, \dots, a_1)
\\ \qquad{}=\sum_{\substack{i,j,p,r \\ q_1, \dots, q_p \\ s_1, \dots, s_r}} (-1)^{(\lvert \phi \rvert-1)\maltese_i}\mu_\mf^{p+r+1}\bigl(\LMF^{s_r}(a_k, \dots, a_{k-s_r+1}), \dots, \LMF^{s_1}(a_{i+j+s_1}, \dots, a_{i+j+1}),
\\
\hspace{25mm}\phi^j(a_{i+j}, \dots, a_{i+1}), \LMF^{q_p}(a_i, \dots, a_{i-q_p+1}), \dots, \LMF^{q_1}(a_{q_1}, \dots, a_1)\bigr)
\\ \qquad\quad{} + \sum_{i,j} (-1)^{\lvert \phi \rvert+\maltese_i}\phi^{k-j+1}\bigl(a_k, \dots, a_{i+j+1}, \mu_\Fuk^j(a_{i+j}, \dots, a_{i+1}), a_i, \dots, a_1\bigr).
\end{gather*}
To declutter the notation, we will write expressions like this as
\begin{align*}
\mu^1_{\rCC} (\phi) ={}& \sum (-1)^{(\lvert \phi \rvert-1)\maltese_i}\mu_\mf(\LMF(\dots), \dots, \LMF(\dots), \phi(\dots), \LMF\underbrace{(\dots), \dots, \LMF(\dots)}_i) \\
&{+}\, \sum (-1)^{\lvert \phi \rvert+\maltese_i} \phi(\dots, \mu_\Fuk(\dots), \stackrel{i}{\dots}).
\end{align*}
Note that the first $i$ refers to the total number of inputs to the $\LMF$ operations to the right of~$\phi$, not to the number of $\LMF$ operations themselves. Since $\mf$ is a dg-category, i.e., \smash{$\mu_\mf^{\geq 3} = 0$}, this simplifies to
\begin{align}
\mu^1_{\rCC} (\phi) ={}& \mu^1_\mf(\phi(\dots)) + \sum \mu^2_\mf(\LMF(\dots), \phi(\dots))\nonumber
\\ & + \sum (-1)^{(\lvert \phi \rvert - 1)\maltese_i}\mu^2_\mf(\phi(\dots), \LMF(\stackrel{i}{\dots})) \nonumber\\
& + \sum (-1)^{\lvert \phi \rvert+\maltese_i} \phi(\dots, \mu_\Fuk(\dots), \stackrel{i}{\dots}).\label{eqCCdiff}
\end{align}
Similarly, the product on $\rCC^*(\Fuk(X)_\lambda, \mf)$ (with sign modified by \eqref{eqdgAinftyProd}) simplifies to
\begin{equation}
\label{eqCCprod}
\mu_{\rCC}^2(\psi, \phi) = \sum (-1)^{(\lvert \psi \rvert - 1)\maltese_i}\mu^2_\mf(\psi(\dots),\phi(\stackrel{i}{\dots})),
\end{equation}
and we have \smash{$\mu^{\geq 3}_{\rCC} = 0$}. See \cite[Section~(1d)]{SeidelBook} for further discussion of this mixed Hochschild complex, viewed as endomorphisms of $\phi$ in the $A_\infty$-category of $A_\infty$-functors $\Fuk(X)_\lambda \to \mf$.

\begin{Remark}
For the ungraded Fukaya category, in characteristic $2$, we can define everything by the same formulae but ignoring the signs. Hochschild cohomology is itself ungraded in this~case.
\end{Remark}

\subsection{The closed-open map}
\label{sscCOsetup}

The last thing we need to recall for now is the closed-open string map, so fix a class $\alpha$ in $\QH^*(X)$ and a pseudocycle \cite{Zinger} $Z \to X$ representing its Poincar\'e dual. We will use $Z$ to define $\CO_\lambda(\alpha)$ and $\CO^0_\bL(\alpha)$.

The class \smash{$\CO_\lambda(\alpha) \in \HH^{\lvert \alpha \rvert}(\Fuk(X)_\lambda)$} is represented by the cocycle $\sigma$ defined as follows. For each $k \geq 0$, the component
\[
\sigma^k\colon \ \CF^*(K_{k-1}, K_k)[1] \otimes \dots \otimes \CF^*(K_0, K_1)[1] \to \CF^*(K_0, K_k),
\]
counts rigid pseudoholomorphic discs with $k$ inputs (anticlockwise) and a single output on the boundary, and with an interior marked point constrained to lie on~$Z$. We depict this as shown on the left in Figure~\ref{figCO}, with the solid dot indicating the marked point on $Z$.
\begin{figure}[t]
\centering
\begin{tikzpicture}
\def\r{1.8cm}
\def\R{2.15cm}
\def\astart{-70}
\def\aend{40}

\begin{scope}
\draw[line width=\circwidth] (0, 0) circle[radius=\r];

\draw (-180: \r) node[flout]{};

\draw (0, 0) node[blob]{};

\draw (\astart:\r) node[flin]{};
\draw (\aend:\r) node[flin]{};

\draw (-140:\r) node[flin]{};
\draw (110:\r) node[flin]{};

\foreach \i in {1, 2, 3} \draw ({\astart+\i*(\aend-\astart)/4}: \R) node{$\cdot$};
\end{scope}

\begin{scope}[xshift=6cm]
\draw[line width=\LLwidth] (0, 0) circle[radius=\r];

\draw (-180: \r) node[flout]{};

\draw (0, 0) node[blob]{};
\end{scope}
\end{tikzpicture}
\caption{The cocycles $\sigma$ (left) and $\sigma_\bL$ (right), representing $\CO_\lambda(\alpha)$ and $\CO^0_\bL(\alpha)$, respectively.}\label{figCO}
\end{figure}
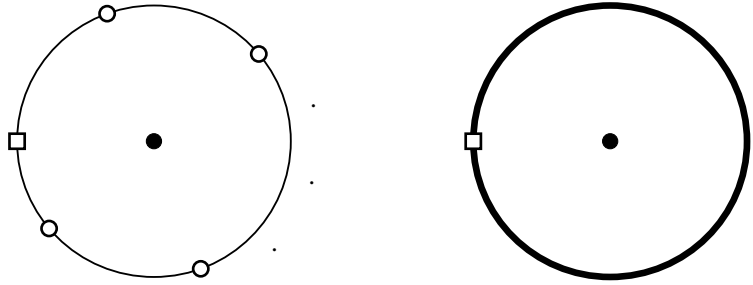
This cocycle depends on a choice of regular perturbation data, which should be compatible with degeneration of the domain in the following sense. Whenever the domain degenerates, the induced perturbation data on the disc component carrying the interior marked point should agree with those defining $\sigma$, whilst the induced perturbation data on the other components should agree with those defining the $A_\infty$-operations $\mu_\Fuk$. Suitable perturbation data can be constructed by induction on $k$ in the usual way, as in \cite[Section~9]{SeidelBook}. The Hochschild cohomology class of $\sigma$ is independent of this choice.

The class \smash{$\CO^0_{\bL}(\alpha) \in \HF_S^{\lvert \alpha \rvert}(\bL, \bL)^{(\op)}$} is represented by the cocycle $\sigma_\bL$ that counts rigid pseudoholomorphic discs with a single interior marked point constrained to $Z$, and a single output on the boundary, as depicted on the right in Figure~\ref{figCO}. We can use arbitrary regular perturbation data that agree with the Floer datum for $\bL$ at the output. The signs are the same as those for~$\sigma^0$ (that is, the $k=0$ component of $\sigma$), which we will not need explicit expressions for.\looseness=1

The fact that $\CO_\lambda$ is an algebra homomorphism is proved in \cite[Proposition 2.1]{SheridanFano}. Applying the $k=0$ part of this proof to $\Fuk_S(X)_{W_L}$ shows that \smash{$\CO^0_\bL$} is an algebra homomorphism to~\smash{$\HF_S^*(\bL, \bL)$}. Since $\QH^*(X)$ is graded-commutative, \smash{$\CO^0_\bL$} is also an algebra homomorphism to~\smash{$\HF_S^*(\bL, \bL)^\op$}.

The homomorphism $\CO^0_\bL$ is unital because for $\alpha = 1_X$, represented by $Z = X$, the cocycle $\sigma_\bL$ defines the cohomological unit in $\CF_S^*(\bL, \bL)^{(\op)}$ \cite[Section~2.4]{SheridanFano}. However, unitality of $\CO_\lambda$ is more subtle. In \cite{GanatraThesis}, Ganatra introduces the \emph{$2$-pointed} Hochschild complex $\tCC^*(\Fuk(X)_\lambda)$, which describes endomorphisms of the diagonal bimodule, and a corresponding $2$-pointed closed-open map $\tCO_\lambda$. We are slightly modifying his notation here to align it with ours. He constructs a~chain~map
\[
\Psi\colon \ \rCC^*(\Fuk(X)_\lambda) \to \tCC^*(\Fuk(X)_\lambda),
\]
which intertwines $\CO_\lambda$ with $\tCO_\lambda$ \cite[Proposition 5.6]{GanatraThesis}, and which is a quasi-isomorphism when working over a field \cite[Proposition 2.5]{GanatraThesis}. Unitality of $\CO_\lambda$ is then proved over $\CC$ in \cite{RitterSmith, SheridanFano} by showing that $\tCO_\lambda$ is unital (over any ring) and invoking Ganatra's results to transfer this (over a field) to $\CO_\lambda$. Unitality of $\CO_\lambda$ over an arbitrary coefficient ring seems to be unknown.\looseness=1

\begin{Remark}
The restriction to field coefficients here is analogous to the projectivity hypothesis in \cite[Corollary 9.1.5]{Weibel}, which interprets Hochschild cohomology in terms of bimodule $\Ext$~groups.
\end{Remark}

\section{Proof of Theorems~\ref{TheoremA} and \ref{TheoremB}}
\label{secThmsAB}

With this setup in place, we are ready to prove our first two main theorems. Recall that Theorem~\ref{TheoremA} asserts the existence of a cohomologically unital $S$-linear $A_\infty$-algebra homomorphism
\[
\Theta\colon \ \CF_S^*(\bL, \bL)^\op \to \rCC^*(\Fuk(X)_\lambda, \mf),
\]
extending the module action of $\CF_S^*(\bL, \bL)^\op$ on $\LMF$ in a way we shall make precise in Section~\ref{sscModuleAction}. Theorem~\ref{TheoremB} then asserts that $\Theta$ fits into a commutative diagram
\[
\begin{tikzcd}
\QH^*(X) \arrow{r}{\CO_\lambda} \arrow{d}{\CO^0_\bL} & \HH^*(\Fuk(X)_\lambda) \arrow{d}{\rH({\LMF}_*)}
\\ \HF_S^*(\bL, \bL)^\op \arrow{r}{\rH(\Theta)} & \HH^*(\Fuk(X)_\lambda, \mf).
\end{tikzcd}
\]

\subsection[Constructing Theta]{Constructing $\boldsymbol{\Theta}$}
%\label{sscConstructingTheta}

First we define the map $\Theta$. To do this, for each tuple $c_1, \dots, c_l$ in $\CF_S^*(\bL, \bL)^\op$ with $l \geq 1$ we need to specify the element $\Theta^l(c_l, \dots, c_1)$ in $\rCC^*(\Fuk(X)_\lambda, \mf)$. This amounts to defining, for each $k \geq 0$, each $(k+1)$-tuple of objects $K_1, \dots, K_k$ in $\Fuk(X)_\lambda$, each $k$-tuple of morphisms
\begin{equation}
\label{eqai}
(a_k, \dots, a_1) \in \CF^*(K_{k-1}, K_k) \times \dots \times \CF^*(K_0, K_1),
\end{equation}
and each $x$ in $\LMF(K_0) = \CF_S^*(\bL, K_0)$, an element
\begin{equation}
\label{eqThetaDef}
\Theta^l(c_l, \dots, c_1)(a_k, \dots, a_1)(x) \in \LMF(K_k) = \CF_S^*(\bL, K_k).
\end{equation}

\begin{Definition}\label{defTheta}
The element \eqref{eqThetaDef} is defined by counting analogous curves to those defining~\smash{$\mu^1_\mf$} and $\LMF$ but with additional inputs on the thick segment of the boundary, and twisted by a~sign. More precisely, we count rigid discs of the form shown in Figure~\ref{figTheta},
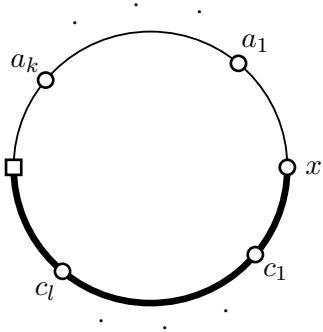
\begin{figure}[b]
\centering
\begin{tikzpicture}
\def\r{1.8cm}
\def\R{2.15cm}
\def\astart{50}
\def\aend{140}
\def\cstart{-40}
\def\cend{-130}

\draw[line width=\circwidth] (0, 0) circle[radius=\r];
\draw[line width=\LLwidth] (-180: \r) arc[start angle=-180, end angle=0, radius=\r];

\draw (-180: \r) node[flout]{};

\draw (\astart:\r) node[flin]{};
\draw (\astart:\R) node{$a_1$};
\draw (\aend:\r) node[flin]{};
\draw (\aend:\R) node{$a_k$};

\foreach \i in {1, 2, 3} \draw ({\astart+\i*(\aend-\astart)/4}: \R) node{$\cdot$};

\draw (\cstart:\r) node[flin]{};
\draw (\cstart:\R) node{$c_1$};
\draw (\cend:\r) node[flin]{};
\draw (\cend:\R) node{$c_l$};

\foreach \i in {1, 2, 3} \draw ({\cstart+\i*(\cend-\cstart)/4}: \R) node{$\cdot$};

\draw (0:\r) node[flin]{};
\draw (0:\R) node{$x$};
\end{tikzpicture}
\caption{The discs defining $\Theta(c_l, \dots, c_1)(a_k, \dots, a_1)(x)$.}\label{figTheta}
\end{figure}
with inputs $a_1, \dots, a_k$ anticlockwise on the ordinary segment of the boundary, and $c_1, \dots, c_l$ clockwise on the thick segment. Each disc contributes with the same sign as it would towards
\[
\mu_S^{k+l+1}(a_k, \dots, a_1, x, c_1, \dots, c_l)
\]
but with a twist of
\begin{equation}
\label{eqThetaTwist}
(-1)^{(\square_l - 1)(\lvert x \rvert -1) + \triangle_l + l - 1}.
\end{equation}
Here \smash{$\triangle_l = \sum_{i<j} (\lvert c_i \rvert - 1)(\lvert c_j \rvert - 1)$} as before, and \smash{$\square_l = \sum_j (\lvert c_j \rvert - 1)$}; we call this $\square$ rather than~$\maltese$ to distinguish it from the corresponding quantity for the $a_i$.

As usual, we have to choose regular perturbation data, and we do this by induction on the number of inputs, compatibly with degeneration of the domain in the following sense. After a~degeneration, each of the components looks like the domain of a curve defining $\Theta$ (but with strictly fewer $a$- or $c$-inputs), $\LMF$, $\mu_\bL$, or $\mu_\Fuk$, for which we have already defined perturbation data. We require that the perturbation data induced by the degeneration agree with those already defined on each component.
\end{Definition}

\begin{Remark}
Discs of a similar form to Figure~\ref{figTheta}, with a distinguished input and output separated by arbitrarily many inputs on either side, appear in the definition of Ganatra's $2$-pointed closed-open map \cite[Section~5.6]{GanatraThesis}.
\end{Remark}

\subsection[Theta is a homomorphism]{$\boldsymbol{\Theta}$ is a homomorphism}

The heart of Theorem~\ref{TheoremA} is the following result.

\begin{Proposition}\label{propThetaHom}
$\Theta$ is an $A_\infty$-algebra homomorphism.
\end{Proposition}

Before proving this, we introduce a useful shorthand: we add a bar to operations involving sign twists to denote the same operations without the sign twists. For example, \smash{$\unsign{\Theta}$}, \smash{$\unsign{\LMF}$}, and~\smash{$\unsign{\mu}_\bL$} count the same discs as $\Theta$, $\LMF$, and $\mu_\bL$ but with the signs that would be carried by the plain~$\mu_S$ operations with the corresponding inputs.

\begin{proof}[Proof of Proposition~\ref{propThetaHom}]
Consider the moduli spaces of discs analogous to those defining~$\Theta$ but in virtual dimension $1$. The boundaries of their compactifications comprise degenerate configurations illustrated in Figure~\ref{figThetaHom}, and give rise to relations between $\Theta$, $\LMF$, $\mu_\bL$, $\mu_\Fuk$, and~$\mu^1_S$, just as we obtained the $A_\infty$-functor equations for $\LMF$ in Section~\ref{sscDiagrams}.
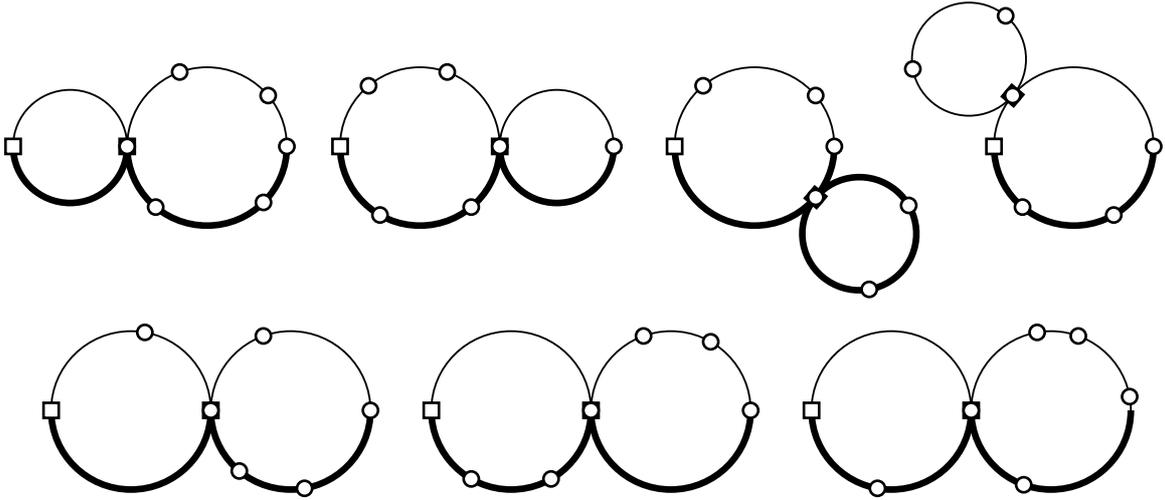
\begin{figure}[t]
\centering
\begin{tikzpicture}
\def\r{1.05cm}
\def\s{0.75cm}

\begin{scope}[xshift = -4cm]
\draw[line width=\circwidth] (0, 0) circle[radius=\r];
\draw[line width=\circwidth] (-\r-\s, 0) circle[radius=\s];
\draw[line width=\LLwidth] (-180: \r) arc[start angle=-180, end angle=0, radius=\r];
\draw[line width=\LLwidth] (-180: \r+2*\s) arc[start angle=-180, end angle=0, radius=\s];
\draw (-180: \r) node[flout]{};
\draw (-180: \r) node[flin]{};
\draw (-130:\r) node[flin]{};
\draw (-45: \r) node[flin]{};
\draw (0: \r) node[flin]{};
\draw (40: \r) node[flin]{};
\draw (110: \r) node[flin]{};
\draw (-\r-2*\s, 0) node[flout]{};
\end{scope}

\begin{scope}[xshift=-1.2cm]
\draw[line width=\circwidth] (0, 0) circle[radius=\r];
\draw[line width=\circwidth] (0:\r+\s) circle[radius=\s];
\draw[line width=\LLwidth] (-180: \r) arc[start angle=-180, end angle=0, radius=\r];
\begin{scope}[shift={(0:\r+\s)}]
\draw[line width=\LLwidth] (0: \s) arc[start angle=0, end angle=-180, radius=\s];
\end{scope}
\draw (-180: \r) node[flout]{};
\draw (0:\r) node[flout]{};
\draw (0:\r) node[flin]{};
\draw (-120: \r) node[flin]{};
\draw (-50: \r) node[flin]{};
\draw (70: \r) node[flin]{};
\draw (130: \r) node[flin]{};
\draw (0:\r+2*\s) node[flin]{};
\end{scope}

\begin{scope}[xshift=3.2cm]
\draw[line width=\circwidth] (0, 0) circle[radius=\r];
\draw[line width=\LLwidth] (-180: \r) arc[start angle=-180, end angle=0, radius=\r];
\begin{scope}[shift={(-40: \r+\s)}, rotate=-40]
\draw[line width=\LLwidth] (0, 0) circle[radius=\s];
\draw (-40: \s) node[flin]{};
\draw (70: \s) node[flin]{};
\end{scope}
\draw (-180: \r) node[flout]{};
\draw (-40:\r) node[flout, rotate=-40]{};
\draw (-40:\r) node[flin]{};
\draw (0: \r) node[flin]{};
\draw (40: \r) node[flin]{};
\draw (130: \r) node[flin]{};
\end{scope}

\begin{scope}[xshift=7.4cm]
\draw[line width=\circwidth] (0, 0) circle[radius=\r];
\draw[line width=\LLwidth] (-180: \r) arc[start angle=-180, end angle=0, radius=\r];
\draw (-180: \r) node[flout]{};
\draw (0:\r) node[flin]{};
\draw (-60: \r) node[flin]{};
\draw (-130: \r) node[flin]{};
\begin{scope}[shift={(140:\r+\s)}]
\draw[line width=\circwidth] (0, 0) circle[radius=\s];
\draw (190: \s) node[flin]{};
\draw (50: \s) node[flin]{};
\end{scope}
\draw (140: \r) node[flout, rotate=140]{};
\draw (140: \r) node[flin]{};
\end{scope}

\begin{scope}[xshift = -5cm, yshift = -3.5cm]
\draw[line width=\circwidth] (0, 0) circle[radius=\r];
\draw[line width=\circwidth] (0:2*\r) circle[radius=\r];
\draw[line width=\LLwidth] (-180: \r) arc[start angle=-180, end angle=0, radius=\r];
\begin{scope}[shift={(0:2*\r)}]
\draw[line width=\LLwidth] (0: \r) arc[start angle=0, end angle=-180, radius=\r];
\draw (0: \r) node[flin]{};
\draw (110: \r) node[flin]{};
\draw (-80:\r) node[flin]{};
\draw (-130: \r) node[flin]{};
\end{scope}
\draw (-180: \r) node[flout]{};
\draw (0:\r) node[flout]{};
\draw (0:\r) node[flin]{};
\draw (80: \r) node[flin]{};
\end{scope}

\begin{scope}[xshift = 0cm, yshift = -3.5cm]
\draw[line width=\circwidth] (0, 0) circle[radius=\r];
\draw[line width=\circwidth] (0:2*\r) circle[radius=\r];
\draw[line width=\LLwidth] (-180: \r) arc[start angle=-180, end angle=0, radius=\r];
\begin{scope}[shift={(0:2*\r)}]
\draw[line width=\LLwidth] (0: \r) arc[start angle=0, end angle=-180, radius=\r];
\draw (0:\r) node[flin]{};
\draw (110: \r) node[flin]{};
\draw (60: \r) node[flin]{};
\end{scope}
\draw (-180: \r) node[flout]{};
\draw (0:\r) node[flout]{};
\draw (0:\r) node[flin]{};
\draw (-60: \r) node[flin]{};
\draw (-120: \r) node[flin]{};
\end{scope}

\begin{scope}[xshift = 5cm, yshift = -3.5cm]
\draw[line width=\circwidth] (0, 0) circle[radius=\r];
\draw[line width=\circwidth] (0:2*\r) circle[radius=\r];
\draw[line width=\LLwidth] (-180: \r) arc[start angle=-180, end angle=0, radius=\r];
\begin{scope}[shift={(0:2*\r)}]
\draw[line width=\LLwidth] (0: \r) arc[start angle=0, end angle=-180, radius=\r];
\draw (100: \r) node[flin]{};
\draw (70: \r) node[flin]{};
\draw (10:\r) node[flin]{};
\draw (-110: \r) node[flin]{};
\end{scope}
\draw (-180: \r) node[flout]{};
\draw (0:\r) node[flout]{};
\draw (0:\r) node[flin]{};
\draw (-100: \r) node[flin]{};
\end{scope}

\end{tikzpicture}
\caption{Degenerations corresponding to terms in \eqref{eqThetaRelationUnsigned}.}\label{figThetaHom}
\end{figure}
(The $\mu_S^1$ terms represent squifferentials on matrix factorisations, modulo sign twists, and did not appear in the discussion of perturbation data in Definition~\ref{defTheta} since they do not correspond to degeneration of the domain.) The resulting relations are exactly analogous to those proving the $A_\infty$-relations for $\mu_S$, except for the twisting of signs in $\Theta$, $\LMF$, and~$\mu_\bL$. Temporarily ignoring the sign twists, we obtain
\begin{gather}
\mu^1_S\bigl(\unsign{\Theta}(\dots)(\dots)(x)\bigr)+ (-1)^{\square_l} \unsign{\Theta}(\dots)(\dots)\bigl(\mu^1_S(x)\bigr) \nonumber
\\ \qquad{} + \sum (-1)^{\square_l - \square_{l-j}}\unsign{\Theta}\bigm(\stackrel{j}{\dots}, \unsign{\mu}_\bL(\dots), \dots\bigr)(\dots)(x)\nonumber
\\ \qquad{} + \sum (-1)^{\square_l+\lvert x \rvert - 1 + \maltese_i} \unsign{\Theta}(\dots)(\dots, \mu_\Fuk(\dots), \stackrel{i}{\dots})(x) + \sum \unsign{\LMF}(\dots)\bigl(\unsign{\Theta}(\dots)(\dots)(x)\bigr)\nonumber
\\ \qquad{} + \sum (-1)^{\square_l} \unsign{\Theta}(\dots)(\dots) \bigl(\unsign{\LMF}(\dots)(x)\bigr)\nonumber
\\ \qquad{} + \sum (-1)^{\square_l - \square_j} \unsign{\Theta}(\dots)(\dots)\bigl(\unsign{\Theta}\bigl(\stackrel{j}{\dots}\bigr)(\dots)(x)\bigr) = 0.\label{eqThetaRelationUnsigned}
\end{gather}
Some of the sums may be empty if the number of inputs is small. For example, if $k=0$ (no $a$-inputs), then the fourth sum is empty, whilst if $l = 1$ (only one $c$-input), then the final sum is empty. The clockwise ordering of the $c$-inputs is needed to make the composition of $\unsign{\Theta}$ operations in the final sum come out the right way round.

Next we reinstate the sign twists for $\Theta$, $\LMF$, and $\mu_\bL$, namely \eqref{eqThetaTwist}, \smash{$(-1)^{\lvert \text{distinguished input} \rvert}$}, and~\eqref{eqopdefinition}. After simplifying, and cancelling off an overall factor of \smash{$(-1)^{\square_l(\lvert x \rvert - 1) + \triangle_l + l}$}, \eqref{eqThetaRelationUnsigned}~becomes
\begin{gather}\nonumber
\mu^1_\mf(\Theta(\dots)(\dots)) + \sum (-1)^{\square_l+\maltese_i+1} \Theta(\dots)(\dots, \mu_\Fuk(\dots), \stackrel{i}{\dots})
\\ \qquad\quad{} + \sum \mu^2_\mf (\LMF(\dots), \Theta(\dots)(\dots)) + \sum (-1)^{\square_l\maltese_i} \mu^2_\mf(\Theta(\dots)(\dots), \LMF(\stackrel{i}{\dots}))\nonumber
\\ \qquad\quad{} + \sum (-1)^{(\square_l - \square_j)\maltese_i} \mu^2_\mf\bigl(\Theta(\dots)(\dots), \Theta\bigl(\stackrel{j}{\dots}\bigr)(\stackrel{i}{\dots})\bigr)\nonumber
\\ \qquad{} = \sum (-1)^{\square_j} \Theta\bigl(\dots, \mu_\bL(\dots), \stackrel{j}{\dots}\bigr)(\dots).\label{eqThetaRelationB}
\end{gather}
Note that we have eliminated $x$ and are now viewing this as an equality in
\[
\hom_\mf^*(\LMF(K_0), \LMF(K_k))
\]
rather than $\LMF(K_k)$. Using the expressions \eqref{eqCCdiff} and \eqref{eqCCprod} for \smash{$\mu^1_{\rCC}$} and \smash{$\mu^2_{\rCC}$}, we can rewrite \eqref{eqThetaRelationB} as
\[
\mu^1_{\rCC}(\Theta(\dots)) + \sum \mu^2_{\rCC}(\Theta(\dots), \Theta(\dots)) = \sum (-1)^{\square_j} \Theta\bigl(\dots, \mu_\bL(\dots), \stackrel{j}{\dots}\bigr).
\]
This an equality in $\rCC^*(\Fuk(X)_\lambda, \mf)$, and is exactly the $A_\infty$-homomorphism equation for $\Theta$ that we wanted to prove.
\end{proof}

\subsection{Compatibility with the module action}
\label{sscModuleAction}

Next we explain, and prove, the sense in which $\Theta$ extends the module action of $\CF_S^*(\bL, \bL)^\op$ on~$\LMF$. Note that for each object $K$ in $\Fuk(X)_\lambda$ the matrix factorisation ${\LMF(K) = \CF_S^*(\bL, K)}$ is naturally a right module for $\CF_S^*(\bL, \bL)$, and hence a left module for $\CF_S^*(\bL, \bL)^\op$. This module action is described by a chain map
\[
T_K\colon \ \CF_S^*(\bL, \bL)^\op \to \eend^*_\mf(\LMF(K)),
\]
which is given explicitly by
\begin{equation}
\label{eqTexpression}
T_K(c)(x) = (-1)^{\lvert c \rvert \lvert x \rvert + \lvert c \rvert} \mu_S^2(x, c)
\end{equation}
for $c \in \CF_S^*(\bL, \bL)^\op$ and $x \in \LMF(K)$. Here the \smash{$(-1)^{\lvert c \rvert \lvert x \rvert}$} is a Koszul sign whilst the \smash{$(-1)^{\lvert c \rvert}$} corresponds to the usual sign \eqref{eqdgAinftyProd} relating products to \smash{$\mu^2$} operations.

The relationship between this module action and $\Theta$ is then as follows.

\begin{Lemma}\label{lemThetaModuleAction}
For each object $K$ in $\Fuk(X)_\lambda$, the projection of $\Theta^1$ to length zero on $K$ is homotopic to~$T_K$.
\end{Lemma}

\begin{proof}
The projection of $\Theta^1$ to length zero corresponds to the $l=1$, $k=0$ case of Definition~\ref{defTheta}. Comparing this definition with \eqref{eqTexpression} proves the result, after noting that \eqref{eqThetaTwist} reduces to \smash{$(-1)^{\lvert c \rvert \lvert x \rvert + \lvert c \rvert}$}. We only get `homotopic to' since we did not specify that the perturbation data used to define $\Theta$ should be compatible with those used to define $T_K$.
\end{proof}

The remaining assertion of Theorem~\ref{TheoremA} is that $\Theta$ is cohomologically unital. We defer this to Corollary~\ref{corThetaUnital}, since we will deduce it from Theorem~\ref{TheoremB}.

\subsection{Independence of choices}
\label{sscIndependence}

Our constructions so far depend on choices of Floer and perturbation data for $\CF_S^*(\bL, \bL)^\op$, $\Fuk(X)_\lambda$, $\LMF$, and $\Theta$, as well as a choice of Lagrangians to allow as objects in $\Fuk(X)_\lambda$. We now explain the sense in which the homomorphism $\Theta$ is independent of these choices. We break this into two parts, depending on which choices are being changed, but these parts can of course be combined to deal with arbitrary changes of auxiliary data.

First, suppose we fix a choice of Floer and perturbation data for $\CF_S^*(\bL, \bL)^\op$. That is, we fix the domain of $\Theta$. Now suppose we have defined $\Fuk(X)_\lambda$, $\LMF$, and $\Theta$ using two different sets of auxiliary choices. We indicate these with subscripts ${}_1$ and ${}_2$. The invariance result is then that the maps $\Theta_1$ and $\Theta_2$ are related by a zigzag, in the following sense.

\begin{Lemma}%\label{lemThetaIndep1}
There exists a Fukaya category, denoted $\Fuk(X)_{\lambda,+}$, containing $\Fuk(X)_{\lambda,1}$, $\Fuk(X)_{\lambda,2}$
as full subcategories, such that the functors ${\LMF}_i$ extend to ${\LMF}_{+} \colon \Fuk(X)_{\lambda,+} \to \mf$, and the homomorphisms $\Theta_i$ extend to $\Theta_{+} \colon \CF_S^*(\bL, \bL)^\op \to \rCC^*(\Fuk(X)_{\lambda,+}, \mf)$. The following diagram then tautologically commutes:
\[
\begin{tikzcd}
\CF_S^*(\bL, \bL)^\op \arrow{r}{\Theta_1} \arrow{d}{\Theta_2} \arrow{dr}{\Theta_{+}} & \rCC^*(\Fuk(X)_{\lambda,1}, \mf)
\\ \rCC^*(\Fuk(X)_{\lambda,2}, \mf) & \rCC^*(\Fuk(X)_{\lambda,+}, \mf) \arrow[swap]{u}{\mathrm{restrict}} \arrow[swap]{l}{\mathrm{restrict}},
\end{tikzcd}
\]
and if $\Fuk(X)_{\lambda,1}$ and $\Fuk(X)_{\lambda,2}$ have essentially the same objects, in the sense that any Lagrangian in one is split-generated by the Lagrangians in the other, then the two restriction arrows are quasi-isomorphisms.
\end{Lemma}

\begin{proof}
Except for the last part, this is a standard double category argument, as in \cite[Section~(10a)]{SeidelBook}. Concretely, we construct a category whose set of objects is the disjoint union of the sets of objects in $\Fuk(X)_{\lambda,1}$ and $\Fuk(X)_{\lambda,2}$. We call objects originating in $\Fuk(X)_{\lambda,i}$ \emph{objects of type $i$}. When defining operations involving only objects of type $i$, we use auxiliary data from~$\Fuk(X)_{\lambda,i}$,~${\LMF}_i$, and~$\Theta_i$. For operations involving mixtures of objects of the two types, we make arbitrary choices of auxiliary data, compatible with those already chosen.

Now suppose that $\Fuk(X)_{\lambda,1}$ and $\Fuk(X)_{\lambda,2}$ have essentially the same objects, so that the inclusions $\Fuk(X)_{\lambda,i} \to \Fuk(X)_{\lambda,+}$ are quasi-equivalences. The fact that the restriction maps induce isomorphisms on Hochschild cohomology then follows from standard Morita invariance properties. Explicitly, one applies the Eilenberg--Moore comparison theorem (Theorem~\ref{thmEMcomparison}) to the length filtration; because we are working over a field this reduces the problem to the cohomology categories; and there one can give an explicit homotopy inverse \cite[Lemma~2.6]{SeidelBook}.
\end{proof}

Now suppose instead that we fix auxiliary data for $\Fuk(X)_\lambda$ and $\LMF$, but choose two sets of data for $\CF_S^*(\bL, \bL)^\op$, again denoted by subscripts ${}_1$ and ${}_2$. For each $i$, we can construct a~homomorphism $\Theta_i \colon \CF_S^*(\bL, \bL)^\op_i \to \rCC^*(\Fuk(X)_\lambda, \mf)$. As above, the invariance result is that these maps are related by a zigzag, but this time it has the following form.

\begin{Lemma}\label{lemThetaIndep2}
There exists a cohomologically unital $A_\infty$-algebra $\calA$, equipped with a cohomologically unital $A_\infty$-algebra homomorphism
\[
\Theta_\calA\colon \ \calA \to \rCC^*(\Fuk(X)_\lambda, \mf)
\]
and cohomologically unital $A_\infty$-quasi-isomorphisms
\[
I_i\colon \ \CF_S^*(\bL, \bL)^\op_i \to \calA,
\]
such that the following diagram commutes:
\[
\begin{tikzcd}
\calA \arrow{dr}{\Theta_\calA} & CF_S^*(\bL, \bL)^\op_1 \arrow{d}{\Theta_1} \arrow[swap]{l}{I_1}
\\ CF_S^*(\bL, \bL)^\op_2 \arrow[swap]{u}{I_2} \arrow{r}{\Theta_2} & \rCC^*(\Fuk(X)_\lambda, \mf).
\end{tikzcd}
\]
\end{Lemma}

\begin{proof}
We adapt the arguments of \cite[Section~2.4]{PerutzSheridanRelative}, based on the triangle algebra of \cite[Section~4.5]{Keller}, in conjunction with another double category argument. First, we view~\smash{$\CF_S^*(\bL, \bL)^\op_1$} and~\smash{$\CF_S^*(\bL, \bL)^\op_2$} as the endomorphism algebras of objects $\bL_1$ and $\bL_2$ in \smash{$\Fuk_S(X)_{W_L}^\op$}. The usual Floer continuation map construction gives a quasi-isomorphism $f \colon \bL_1 \to \bL_2$, and we define $\calA$ to be
\[
\begin{pmatrix} \hom^*(\bL_1, \bL_1) & \hom^*(\bL_2, \bL_1)[1] \\ 0 & \hom^*(\bL_2, \bL_2) \end{pmatrix},
\]
with $A_\infty$-operations given by summing over $\mu_S$ with all possible insertions of $f$. All such sums are finite since we can never insert more than one copy of $f$.

This $\calA$ is the upper-triangular version of the triangle algebra in \cite{PerutzSheridanRelative}, and has cohomological~unit
\[
\left[\begin{pmatrix} e_{\bL_1} & 0 \\ 0 & 0 \end{pmatrix}\right] = \left[\begin{pmatrix} 0 & 0 \\ 0 & e_{\bL_2} \end{pmatrix}\right].
\]
Following that paper, but dualising (i.e., reversing directions of maps), we define the maps $I_i$ by taking $I_i^1$ to be the inclusion of the $i$th diagonal summand into $\calA$, and taking all higher components \smash{$I_i^{\geq 2}$} to be zero. These maps are immediately seen to be cohomologically unital $A_\infty$-algebra homomorphisms, and are quasi-isomorphisms by dualising \cite[Lemma 2.17]{PerutzSheridanRelative}.

Next, we define $\Theta_\calA$ in a similar way to $\Theta$ but with perturbation data depending on where the inputs live (i.e., in which $\hom^*(\bL_i, \bL_j)$), and again summing over all possible insertions of~$f$. We choose these perturbation data to coincide with those defining $\Theta_i$ when all inputs lie in~$\hom^*(\bL_i, \bL_i)$. The composition $\Theta_\calA \circ I_i$ then tautologically coincides with $\Theta_i$, completing the~proof.
\end{proof}

Since we are working over a field $\kk$, the $A_\infty$-quasi-isomorphisms are automatically invertible. We could therefore straighten out the zigzags in the above results if desired.

%----------------------------------------------

\subsection{Transferring the closed-open map to matrix factorisations}

We next prove commutativity of the diagram in Theorem~\ref{TheoremB}. Explicitly, for an arbitrary class $\alpha \in \QH^*(X)$ we want to show that
\[
\rH({\LMF}_*) \circ \CO_\lambda(\alpha) = \rH(\Theta) \circ \CO^0_\bL(\alpha).
\]
After fixing a pseudocycle $Z \to X$ Poincar\'e dual to $\alpha$, and choosing suitable perturbation data, we have cocycles \smash{$\sigma \in \rCC^{\lvert \alpha \rvert}(\Fuk(X)_\lambda)$} and \smash{$\sigma_\bL \in \CF_S^{\lvert \alpha \rvert}(\bL, \bL)^\op$} representing $\CO_\lambda(\alpha)$ and $\CO^0_\bL(\alpha)$ respectively, as defined in Section~\ref{sscCOsetup}. Our task is then to show that \smash{${\LMF}_*(\sigma) - \Theta^1(\sigma_\bL)$} is exact, which we shall do by constructing geometrically a Hochschild cochain \smash{$\tau \in \rCC^{\lvert \alpha \rvert - 1}(\Fuk(X)_\lambda, \mf)$} satisfying
\[
\mu^1_{\rCC}(\tau) = {\LMF}_*(\sigma) - \Theta^1(\sigma_\bL).
\]
Such a $\tau$ is specified by its values $\tau^k(a_k, \dots, a_1)(x)$, for $a_1, \dots, a_k$ as in \eqref{eqai} and $x \in \LMF(K_0) = \CF_S^*(\bL, K_0)$.

\begin{Definition}
The element $\tau^k(a_k, \dots, a_1)(x) \in \LMF(K_k) = \CF_S^*(\bL, K_k)$ is defined by counting rigid discs of the form shown in Figure~\ref{figGamma}.
\begin{figure}[t]
\centering
\begin{tikzpicture}
\def\r{1.8cm}
\def\R{2.15cm}
\def\astart{50}
\def\aend{140}

\draw[line width=\circwidth] (0, 0) circle[radius=\r];
\draw[line width=\LLwidth] (-180: \r) arc[start angle=-180, end angle=0, radius=\r];

\draw (-180: \r) node[flout]{};
\draw (0:\r) node[flin]{};
\draw (0:\R) node{$x$};

\draw (0, 0) node[blob]{};

\draw (\astart:\r) node[flin]{};
\draw (\astart:\R) node{$a_1$};
\draw (\aend:\r) node[flin]{};
\draw (\aend:\R) node{$a_k$};

\foreach \i in {1, 2, 3} \draw ({\astart+\i*(\aend-\astart)/4}: \R) node{$\cdot$};
\end{tikzpicture}
\caption{The definition of $\tau^k(a_k, \dots, a_1)(x)$.}\label{figGamma}
\end{figure}
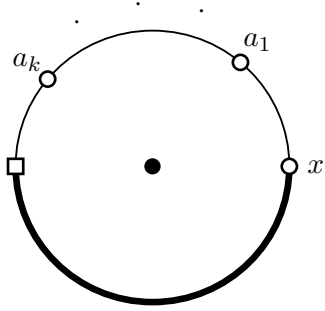
As in Section~\ref{sscCOsetup}, the solid dot indicates an interior marked point constrained to lie on $Z$. Each disc counts with the same sign as it would towards $\CO_\lambda(\alpha)(a_k, \dots, a_1, x)$ but twisted by \smash{$(-1)^{\lvert \alpha \rvert\lvert x \rvert + \lvert \alpha \rvert + \lvert x \rvert}$}. We write $\unsign{\tau}$ for the same cochain but without the sign twist. (Of course, $\CO_\lambda(\alpha)(a_k, \dots, a_1, x)$ does not strictly make sense, since $x$ is a morphism in $\Fuk_S(X)$ rather than $\Fuk(X)_\lambda$, but this is irrelevant for the purpose of defining signs.)

We choose regular perturbation data by induction, compatibly with degeneration of the domain in the following sense. After degeneration, each component looks like the domain of a~curve defining $\tau$ (but with strictly fewer inputs), $\sigma$, $\Theta^1$, $\sigma_\bL$, $\LMF$, or $\mu_\Fuk$, and we require that perturbation data induced by the degeneration agree with those already defined. See Figure~\ref{figGammaDegen} for examples of codimension-$1$ degenerations.
\begin{figure}[t]
\centering
\begin{tikzpicture}
\def\r{1.05cm}
\def\s{0.75cm}

\begin{scope}[xshift =-7.8cm]
\draw[line width=\circwidth] (0, 0) circle[radius=\r];
\draw[line width=\LLwidth] (-180: \r) arc[start angle=-180, end angle=0, radius=\r];
\draw (-180: \r) node[flout]{};
\draw (0:\r) node[flin]{};
\draw (70: \r) node[flin]{};
\draw (130: \r) node[flin]{};
\begin{scope}[shift={(-40:\r+\s)}]
\draw[line width=\LLwidth] (0, 0) circle[radius=\s];
\draw (0, 0) node[blob]{};
\end{scope}
\draw (-40: \r) node[flout, rotate=130]{};
\draw (-40: \r) node[flin]{};
\end{scope}

\begin{scope}[xshift=-4.1cm]
\draw[line width=\circwidth] (0, 0) circle[radius=\r];
\draw[line width=\LLwidth] (-180: \r) arc[start angle=-180, end angle=0, radius=\r];
\begin{scope}[shift={(0: 2*\r)}, rotate=0]
\draw[line width=\circwidth] (0, 0) circle[radius=\r];
\draw[line width=\LLwidth] (-180: \r) arc[start angle=-180, end angle=0, radius=\r];
\draw (0: \r) node[flin]{};
\draw (100: \r) node[flin]{};
\draw (0, 0) node[blob]{};
\end{scope}
\draw (-180: \r) node[flout]{};
\draw (0:\r) node[flout, rotate=0]{};
\draw (0:\r) node[flin]{};
\draw (70: \r) node[flin]{};
\end{scope}

\begin{scope}[xshift=1.7cm]
\draw[line width=\circwidth] (0, 0) circle[radius=\r];
\draw[line width=\LLwidth] (-180: \r) arc[start angle=-180, end angle=0, radius=\r];
\draw (-180: \r) node[flout]{};
\draw (0:\r) node[flin]{};
\draw (70: \r) node[flin]{};
\draw (40: \r) node[flin]{};
\begin{scope}[shift={(140:\r+\s)}]
\draw[line width=\circwidth] (0, 0) circle[radius=\s];
\draw (0, 0) node[blob]{};
\end{scope}
\draw (140: \r) node[flout, rotate=140]{};
\draw (140: \r) node[flin]{};
\end{scope}

\begin{scope}[xshift=5cm]
\draw[line width=\circwidth] (0, 0) circle[radius=\r];
\draw[line width=\LLwidth] (-180: \r) arc[start angle=-180, end angle=0, radius=\r];
\draw (0, 0) node[blob]{};
\draw (-180: \r) node[flout]{};
\draw (0:\r) node[flin]{};
\begin{scope}[shift={(130:\r+\s)}]
\draw[line width=\circwidth] (0, 0) circle[radius=\s];
\draw (170: \s) node[flin]{};
\draw (60: \s) node[flin]{};
\end{scope}
\draw (130: \r) node[flout, rotate=130]{};
\draw (130: \r) node[flin]{};
\end{scope}

\end{tikzpicture}
\caption{Example codimension-$1$ degenerations of a $\tau^2(a_2, a_1)$ disc. From left to right: $\Theta^1(\sigma_\bL)(a_2, a_1)$; $\LMF^1(a_2)\circ\tau^1(a_1)$; $\LMF^3\bigl(\sigma^0, a_2, a_1\bigr)$; $\tau^1\bigl(\mu_\Fuk^2(a_2, a_1)\bigr)$.}\label{figGammaDegen}
\end{figure}
We ask that the perturbation data induced by the degeneration agrees with those already defined on each component.
\end{Definition}

\begin{Remark}
The relationship between $\tau$ and $\sigma$ is analogous to the relationship between $\LMF$ and $\mu_\Fuk$ (add a single thick segment to the boundary). Similarly, the relationship between~$\tau$ and~$\LMF$ is analogous to the relationship between $\sigma$ and $\mu_\Fuk$ (add an interior marked point constrained to $Z$).
\end{Remark}

We now verify that $\tau$ has the desired property.

\begin{Proposition}
\label{propmu1Gamma}
The cochain $\tau$ satisfies
\[
\mu^1_{\rCC}(\tau) = {\LMF}_*(\sigma) - \Theta^1(\sigma_\bL).
\]
\end{Proposition}

Passing to cohomology gives the following.

\begin{Corollary}
\label{corThmBCommutes}
For any $\alpha$ in $\QH^*(X)$, we have
\[
\rH({\LMF}_*) \circ \CO_\lambda(\alpha) = \rH^*(\Theta) \circ \CO^0_\bL(\alpha),
\]
i.e., the diagram in Theorem~{\rm \ref{TheoremB}} commutes.
\end{Corollary}

\begin{proof}[Proof of Proposition~\ref{propmu1Gamma}]
Consider the boundaries of the compactified moduli spaces of discs equivalent to those defining $\tau$ but in virtual dimension $1$. This gives relations analogous to those proving that $\sigma$ is a Hochschild cocycle, i.e.,
\begin{equation}
\label{eqgammaCocycle}
\sum (-1)^{(\lvert \alpha \rvert - 1)\maltese_i} \mu_\Fuk(\dots, \sigma(\dots), \stackrel{i}{\dots}) + \sum (-1)^{\lvert \alpha \rvert + \maltese_i} \sigma(\dots, \mu_\Fuk(\dots), \stackrel{i}{\dots}) = 0,
\end{equation}
except for the presence of $\bL$ and the twisting of signs in $\tau$, $\Theta$, and $\LMF$. The analogue of \eqref{eqgammaCocycle} is then
\begin{gather*}
\unsign{\Theta}^1(\sigma_\bL)(\dots)(x) + \sum \unsign{\LMF}(\dots)(\unsign{\tau}(\dots)(x)) + \mu^1_S(\unsign{\tau}(\dots)(x))
\\ \qquad{} + \sum (-1)^{(\lvert \alpha \rvert - 1)(\lvert x \rvert - 1 + \maltese_i)} \unsign{\LMF}(\dots, \sigma(\dots), \stackrel{i}{\dots})(x) + (-1)^{\lvert \alpha \rvert} \unsign{\tau}(\dots)\bigl(\mu^1_S(x)\bigr)
\\ \qquad{} + \sum (-1)^{\lvert \alpha \rvert} \unsign{\tau}(\dots)\bigl(\unsign{\LMF}(\dots)(x)\bigr) + \sum (-1)^{\lvert \alpha \rvert + \lvert x \rvert - 1 + \maltese_i} \unsign{\tau} (\dots, \mu_\Fuk(\dots), \stackrel{i}{\dots})(x) = 0
\end{gather*}
for all $x$. Passing back to the sign-twisted versions and cancelling off $(-1)^{\lvert \alpha \rvert (\lvert x \rvert - 1)}$, this becomes
\begin{gather*}
\Theta^1\bigl(\sigma^0_\bL\bigr)(\dots)(x) + \sum (-1)^{\lvert \alpha \rvert + \maltese_i + 1} \LMF(\dots)(\tau(\stackrel{i}{\dots})(x)) + (-1)^{\lvert x \rvert} \mu^1_S(\tau(\dots)(x))
\\ \qquad{} + \sum (-1)^{(\lvert \alpha \rvert - 1) \maltese_i + 1} \LMF(\dots, \sigma(\dots), \stackrel{i}{\dots})(x) + (-1)^{\lvert x \rvert - 1} \tau(\dots)\bigl(\mu^1_S(x)\bigr)
\\ \qquad{} + \sum (-1)^{\lvert \alpha \rvert \maltese_i + \maltese_i + 1} \tau(\dots)(\LMF(\stackrel{i}{\dots})(x))
\\ \qquad{} + \sum (-1)^{\lvert \alpha \rvert - 1 + \maltese_i} \tau (\dots, \mu_\Fuk(\dots), \stackrel{i}{\dots})(x) = 0.
\end{gather*}
The third and fifth terms combine to give $\mu^1_\mf(\tau(\dots))(x)$, so we can drop the explicit $x$ and get
\begin{gather*}
\Theta^1\bigl(\sigma^0_\bL\bigr)(\dots) + \sum \mu^2_\mf(\LMF(\dots), \tau(\stackrel{i}{\dots})) + \mu^1_\mf(\tau(\dots))
\\ \qquad{} - \sum (-1)^{(\lvert \alpha \rvert - 1) \maltese_i} \LMF(\dots, \sigma(\dots), \stackrel{i}{\dots}) + \sum (-1)^{\lvert \alpha \rvert \maltese_i} \mu^2_\mf ( \tau(\dots), \LMF(\stackrel{i}{\dots}))
\\ \qquad{} + \sum (-1)^{\lvert \alpha \rvert - 1 + \maltese_i} \tau (\dots, \mu_\Fuk(\dots), \stackrel{i}{\dots}) = 0.
\end{gather*}
The second, third, fifth, and sixth terms are \smash{$\mu^1_{\rCC}(\tau)$}, using \eqref{eqCCdiff}, whilst the fourth term is~$-{\LMF}_*\sigma$, so rearranging gives the result.
\end{proof}

\subsection[Unitality of Theta]{Unitality of $\boldsymbol{\Theta}$}

It is now easy to complete the proof of Theorem~\ref{TheoremA}.

\begin{Corollary}
\label{corThetaUnital}
The $A_\infty$-algebra homomorphism $\Theta$ is cohomologically unital.
\end{Corollary}

\begin{proof}
This follows immediately from the commutative diagram in Theorem~\ref{TheoremB} using the unitality of \smash{$\CO^0_\bL$}, $\CO_\lambda$, and $\rH((\LMF[L][\lambda])_*)$.
\end{proof}

\begin{Remark}
\label{rmkRingUnitality}
We mentioned in Remark~\ref{rmkExtensions} that Theorem~\ref{TheoremA} holds over an arbitrary ground ring. However, the above proof of cohomological unitality breaks down because, as noted at the end of Section~\ref{sscCOsetup}, it is really the $2$-pointed closed-open map $\tCO_\lambda$ that is known to be unital, and this is only known to coincide with $\CO_\lambda$ over a field. To prove that $\Theta$ is cohomologically unital over an arbitrary ring, one can use the resemblance between $\rCC^*(\Fuk(X)_\lambda, \mf)$ and the $2$-pointed Hochschild complex to directly mimic the proof of unitality of the $\tCO_\lambda$ in \cite[Lemma~2.3]{SheridanFano}. We~leave the details to the interested reader.
\end{Remark}

\section{Proof of Theorem~\ref{TheoremC}}
\label{secThmC}

Unsurprisingly, in this section we prove Theorem~\ref{TheoremC}. Recall that the setup is as follows. We denote by $\calE$ the matrix factorisation $\CF^*_S\bigl(\bL, \LL\bigr)$ given by the image of $\LL \in \Fuk(X)_\lambda$ under $\LMF$, and by $\calB$ its endomorphism dg-algebra. The map $\Theta$ induces an $A_\infty$-algebra homomorphism
\[
\Theta_{\LL}\colon \ \CF_S^*(\bL, \bL)^\op \to \rCC^*\bigl(\CF^*\bigl(\LL, \LL\bigr), \calB\bigr).
\]
The monodromy of the local system $\calL$ on $\LL$ corresponds to a homomorphism $S \to \kk$, whose kernel we denote by $\m$. For any ideal $I$ in $S$ contained in $\m$ we write $R$ for $S/I$ and \smash{$\hatR$} for its $\m$-adic completion. All of the constructions we previously made over $S$ we can make over \smash{$\hatR$} instead, and we denote the results by \smash{$\CF^*_{\hatR}\bigl(\LL, \LL\bigr)^{(\op)}$}, \smash{$\hatCO$}, \smash{$\hatB$}, and so on. The statement of Theorem~\ref{TheoremC} is that
\[
\hatTheta\colon \ \CF^*_{\hatR}(\bL, \bL)^\op \to \rCC^*\bigl(\CF^*\bigl(\LL, \LL\bigr), \hatB\,\bigr)
\]
is a quasi-isomorphism, i.e., that the chain map \smash{$\hatTheta^1$} is a quasi-isomorphism.

\subsection{Simplifying the complexes}
%\label{sscSimplifying}

The chain map \smash{$\hatTheta^1$} is $S$-linear (in fact, $\hatR$-linear) so respects the $\m$-adic filtrations on the domain and codomain and induces a map between the associated gradeds. The first step towards proving that it is a quasi-isomorphism is to study these filtrations and associated gradeds, which are much simpler. This is the goal of the present subsection.

As preliminary step, note that $S = \kk[\rH_1(L; \ZZ)]$ is a quotient of a Laurent polynomial ring over $\kk$ in finitely many variables (since $\rH_1(L; \ZZ)$ is finitely generated), so is Noetherian. Thus~$\m$ is finitely generated, and so the following properties hold.

\begin{Lemma}\ \label{lemCompletion}
\begin{enumerate}[label=$(\roman*)$,ref=(\roman*)]\itemsep=0pt
\item\label{compiii} $\hatR$ is $\m$-adically complete, i.e., the natural map \smash{$\widehat{R} \to \varprojlim \hatR / \m^p\hatR$} is an isomorphism.
\end{enumerate}
For all $p$ and $q$ with $p \leq q$, we have
\begin{enumerate}[resume,label=$(\roman*)$,ref=(\roman*)]\itemsep=0pt
\item\label{compi} \smash{$\m^p\hatR = \ker\bigl(\hatR \to R/\m^p\bigr)$}, and this is the $\m$-adic completion \smash{$\widehat{\m^pR}$} of \smash{$\m^pR = (\m^p + I) / I$}.
\item\label{compii} \smash{$\m^p\hatR / \m^q\hatR = \m^pR / \m^qR = (\m^p + I) / (\m^q + I)$}, and this is finite-dimensional over $\kk$.
\end{enumerate}
\end{Lemma}
\begin{proof}
Parts \ref{compiii} and \ref{compi} are special cases of \cite[Lemma 10.96.3]{stacks-project}.

The second equality in \ref{compii} is obvious, whilst the first comes from taking the short exact sequence
\[
0 \to \m^qR \to \m^pR \to \m^pR/\m^qR \to 0,
\]
applying \cite[Lemma 10.96.4]{stacks-project} to get a short exact sequence \smash{$0 \to \widehat{\m^qR} \to \widehat{\m^pR} \to \m^pR/\m^qR \to 0$}, and then using \ref{compi} to rewrite this as \smash{$0 \to \m^q\hatR \to \m^p\hatR \to \m^pR/\m^qR \to 0$}. To prove finite-dimensionality, it suffices to show that $\m^p / \m^q$ is finite-dimensional, and by induction it is enough to deal with the case $q = p+1$. For this, note that $\m^p / \m^{p+1} \cong (R/\m) \otimes_R \m^p$, and $R / \m = \kk$, so a finite $\kk$-spanning set for $\m^p / \m^{p+1}$ can be obtained by taking a finite generating set for $\m^p$ and tensoring with $1 \in R/\m$.
\end{proof}

We can now prove the two results we need.

\begin{Lemma}
\label{lemgrCF}
\smash{$\CF_{\hatR}^*(\bL, \bL)^{\op}$} is $\m$-adically complete and for each $p$ we have an identification of complexes
\[
\gr^p \CF^*_{\hatR}(\bL, \bL)^\op = \bigl((\m^p + I) \big/ \bigl(\m^{p+1} + I\bigr)\bigr) \otimes \CF^*\bigl(\LL, \LL\bigr)^\op.
\]
Undecorated tensor products are over $\kk$, as usual, and the differential on the right-hand side is defined to be the identity on the first tensor factor and the ordinary Floer differential on the~second.
\end{Lemma}

\begin{proof}
First note that \smash{$CF^*_{\hatR}(\bL, \bL)^\op$} is a free \smash{$\hatR$}-module of finite rank. Therefore, completeness follows from Lemma~\ref{lemCompletion}\,\ref{compiii}, whilst the description of \smash{$\gr^p CF^*_{\hatR}(\bL, \bL)^\op$} at the level of vector spaces follows from Lemma~\ref{lemCompletion}\,\ref{compii}. The differential on \smash{$\CF^*_{\hatR}(\bL, \bL)^\op$} is given in our diagrammatic notation by counting discs with thick boundary, a single boundary input, and a single boundary output. Reducing modulo $\m$ converts the thick boundary, which corresponds to an $S$ or $\hatR$ weight, to an ordinary boundary, corresponding to the parallel transport of the local system $\calL$. In other words, it reduces the differential to that on \smash{$\CF^*\bigl(\LL, \LL\bigr)^\op$}, with the \smash{$\bigl((\m^p + I) \big/ \bigl(\m^{p+1} + I\bigr)\bigr)$} factor just coming along for the ride.
\end{proof}

\begin{Lemma}
\label{lemgrCC}
\smash{$\rCC^*\bigl(\CF^*\bigl(\LL, \LL\bigr), \hatB\,\bigr)$} is $\m$-adically complete and for each $p$ we have an identification of complexes
\begin{gather}
\label{eqgrCC}
\gr^p \rCC^*\bigl(\CF^*\bigl(\LL, \LL\bigr), \hatB\,\bigr)\nonumber\\
\qquad{} = \bigl((\m^p + I) \big/ \bigl(\m^{p+1} +I\bigr)\bigr) \otimes \rCC^*(\CF^*\bigl(\LL, \LL\bigr), \eend_\kk^*\bigl(\CF\bigl(\LL, \LL\bigr)\bigr)\bigr).
\end{gather}
The differential on the left-hand side is the associated graded of \eqref{eqCCdiff}, restricted to the single object $\LL$ so that $\mu_{\mf}$ becomes \smash{$\mu_{\hatB}$} and $\LMF$ becomes \smash{$\hatPhi$}. The differential on the right-hand side is defined by the same formula reduced modulo $\m$ on the second tensor factor, and by the identity on the first tensor factor.
\end{Lemma}

\begin{proof}
For each $p$ the ideal $\m^p \subset S$ is finitely generated, say by $s_1, \dots, s_k$, so we have
\begin{align*}
\m^p \rCC^*\bigl(\CF^*\bigl(\LL, \LL\bigr), \hatB\,\bigr) &= \sum_j s_j \rCC^*\bigl(\CF^*\bigl(\LL, \LL\bigr), \hatB\,\bigr)
\\ &= \sum_j \rCC^*\bigl(\CF^*\bigl(\LL, \LL\bigr), s_j\hatB\,\bigr) = \rCC^*\bigl(\CF^*\bigl(\LL, \LL\bigr), \m^p\hatB\,\bigr).
\end{align*}
We also have for all $p$ and $q$ with $p \leq q$ that
\begin{align*}
\m^p \hatB / \m^q \hatB &= \hom^*_{\hatR}\bigl(\CF^*_{\hatR}\bigl(\bL, \LL\bigr), \m^p\CF^*_{\hatR}\bigl(\bL, \LL\bigr) / \m^q\CF^*_{\hatR}\bigl(\bL, \LL\bigr)\bigr)
\\ &= \hom^*_{\hatR}\bigl(\CF^*_{\hatR}\bigl(\bL, \LL\bigr), ((\m^p + I) / (\m^q + I)) \otimes \CF^*\bigl(\LL, \LL\bigr)\bigr)
\\ &= \hom^*_\kk\bigl(\CF^*\bigl(\LL, \LL\bigr), ((\m^p + I) / (\m^q + I)) \otimes \CF^*\bigl(\LL, \LL\bigr)\bigr)
\\ &= ((\m^p + I) / (\m^q + I)) \otimes \eend^*_\kk\bigl(\CF^*\bigl(\LL, \LL\bigr)\bigr).
\end{align*}
The second equality follows from similar arguments to Lemma~\ref{lemgrCF}.

We thus have for all $p$ that
\begin{align*}
& \gr^p \rCC^*\bigl(\CF^*\bigl(\LL, \LL\bigr), \hatB\,\bigr)
\\ &\qquad{} = \rCC^*\bigl(\CF^*\bigl(\LL, \LL\bigr), \m^p\hatB\,\bigr) \big/ \rCC^*\bigl(\CF^*\bigl(\LL, \LL\bigr), \m^{p+1}\hatB\,\bigr)
\\ &\qquad{} = \prod_{r\geq 0} \hom^*_\kk\bigl(\CF^*\bigl(\LL, \LL\bigr)[1]^{\otimes r}, \gr^p\hatB\,\bigr)
\\ &\qquad{} = \prod_{r\geq 0} \bigl((\m^p + I) \big/ \bigl(\m^{p+1} +I\bigr)\bigr) \otimes \hom^*_\kk\bigl(\CF^*\bigl(\LL, \LL\bigr)[1]^{\otimes r}, \eend^*_\kk\bigl(\CF^*\bigl(\LL, \LL\bigr)\bigr)\bigr)
\\ &\qquad{} = \bigl((\m^p + I) \big/ \bigl(\m^{p+1} +I\bigr)\bigr) \otimes \rCC^*\bigl(\CF^*\bigl(\LL, \LL\bigr), \eend_\kk^*\bigl(\CF\bigl(\LL, \LL\bigr)\bigr)\bigr),
\end{align*}
proving the second part of the lemma. In the fourth equality we used finite-dimensionality from Lemma~\ref{lemCompletion}\,\ref{compii}.

For the first part of the lemma, note that we similarly have
\begin{align*}
&\varprojlim \rCC^*\bigl(\CF^*\bigl(\LL, \LL\bigr), \hatB\,\bigr) \big/ \m^p \rCC^*\bigl(\CF^*\bigl(\LL, \LL\bigr), \hatB\,\bigr)
\\ &\qquad{}= \prod_{r\geq 0} \varprojlim \hom^*_\kk\bigl(\CF^*\bigl(\LL, \LL\bigr)[1]^{\otimes r}, \hatB \big/ \m^p\hatB\,\bigr)
\\ &\qquad{}= \prod_{r\geq 0} \hom^*_\kk\bigl(\CF^*\bigl(\LL, \LL\bigr)[1]^{\otimes r}, \varprojlim \hatB \big/ \m^p\hatB\,\bigr)
\\ &\qquad{}= \rCC^*\bigl(\CF^*\bigl(\LL, \LL\bigr), \hatB\,\bigr).
\end{align*}
Here in the last equality we used completeness of $\hatB$, which follows from Lemma~\ref{lemCompletion}\,\ref{compiii} plus the fact that $\hatB$ is a free $\hatR$-module of finite rank.
\end{proof}

\begin{Remark}
An equivalent way to describe the differential on the second tensor factor on the right-hand side of~\eqref{eqgrCC} is that we view \smash{$\eend_\kk^*\bigl(\CF^*\bigl(\LL, \LL\bigr)\bigr)$} as a $\CF^*\bigl(\LL, \LL\bigr)$-bimodule via~\smash{$\gr \hatPhi$}, which is the map induced by the Yoneda module \smash{$Y_{\LL}$}, viewed as an $A_\infty$-functor $\Fuk(X)_\lambda \to \mathrm{Ch}_\kk$ (chain complexes over $\kk$). Explicitly, writing \smash{$\mu^*_{\eend}$} for the $A_\infty$-algebra operations on $\eend_\kk^*\bigl(\CF^*\bigl(\LL, \LL\bigr)\bigr)$, the bimodule operations are
\begin{gather}
\bim{0}{0}(\zeta)(x) = -\mu^1_{\eend}(\zeta)(x) = (-1)^{\lvert x \rvert + 1}\bigl(\mu_\Fuk^1(\zeta(x)) - \zeta\bigl(\mu_\Fuk^1(x)\bigr)\bigr), \nonumber\\
\bim{k}{0}(a_k, \dots, a_1, \zeta)(x) = - \mu^2_{\eend}\bigl(Y_{\LL}(a_k, \dots, a_1), \zeta\bigr)(x) = (-1)^{\lvert x \rvert + 1}\mu_\Fuk(a_k, \dots, a_1, \zeta(x)), \nonumber\\
\bim{0}{l}(\zeta, a_l, \dots, a_1)(x) = (-1)^{\maltese_l + 1} \mu^2_{\eend}\bigl(\zeta, Y_{\LL}(a_l, \dots, a_1)\bigr)(x) = (-1)^{\lvert x \rvert}\zeta(\mu_\Fuk(a_l, \dots, a_1, x)), \nonumber\\
\bim{k}{l} = 0.\label{eqendBimod}
\end{gather}
Here $k$ and $l$ are positive integers, $x$ and the $a_i$ are in $\CF^*\bigl(\LL, \LL\bigr)$, and $\zeta$ is in $\eend_\kk^*\bigl(\CF^*\bigl(\LL, \LL\bigr)\bigr)$. The differential on $\rCC^*\bigl(\CF^*\bigl(\LL, \LL\bigr), \eend_\kk^*\bigl(\CF^*\bigl(\LL, \LL\bigr)\bigr)\bigr)$, as defined by \eqref{eqCCdiff} (restricted to $\LL$ and reduced modulo $\m$), is expressed in terms of these bimodule operations by \eqref{eqCCbimod}.
\end{Remark}

\subsection{Applying Eilenberg--Moore}
%\label{sscApplyingEM}

To leverage the simplifications, we have just made, we will use the Eilenberg--Moore comparison theorem. Before stating this, recall that a filtration $F^pC$ of a complex $C$ is \emph{exhaustive} if~$\smash{\bigcup_p F^pC = C}$ and \emph{complete} if the natural map \smash{$C \to \varprojlim C/F^pC$} is an isomorphism.

\begin{Theorem}[{Eilenberg--Moore comparison theorem~\cite{EilenbergMoore} and \cite[Theorem~5.5.11]{Weibel}}]
\label{thmEMcomparison}
Suppose~$C$ and $D$ are cochain complexes equipped with filtrations that are exhaustive and complete, and~${f \colon C \to D}$ is a filtered chain map. If there exists $r \geq 0$ such that $f$ induces an isomorphism between the $r$th pages, $E_r$, of the associated spectral sequences, then $f$ is a quasi-isomorphism.
\end{Theorem}

Unsurprisingly, we shall apply this to the chain map
\[
\hatTheta^1\colon \ \CF_{\hatR}^*(\bL, \bL)^\op \to \rCC^*\bigl(\CF^*\bigl(\LL, \LL\bigr), \hatB\,\bigr),
\]
where the domain and codomain are both equipped with the $\m$-adic filtrations. By Lemmas~\ref{lemgrCF} and~\ref{lemgrCC}, these filtrations are complete, and they are obviously exhaustive. We already noted that \smash{$\hatTheta^1$} preserves the filtrations, so, by Eilenberg--Moore, the generalised Theorem~\ref{TheoremC} is implied by the following result.

\begin{Proposition}
\label{propThmC}
The map \smash{$\hatTheta^1$} induces an isomorphism between the $E_1$ pages of the associated spectral sequences. In other words, the associated graded map
\[
\gr \hatTheta^1\colon \ \gr \CF^*_{\hatR}(\bL, \bL)^\op \to \gr \rCC^*\bigl(\CF^*\bigl(\LL, \LL\bigr), \hatB\,\bigr)
\]
is a quasi-isomorphism.
\end{Proposition}

\begin{proof}
By Lemmas~\ref{lemgrCF} and~\ref{lemgrCC}, for each $p$ we can view \smash{$\gr^p \hatTheta^1$} as a map
\begin{gather*}
\bigl((\m^p + I) \big/ \bigl(\m^{p+1} + I\bigr)\bigr) \otimes \CF^*\bigl(\LL, \LL\bigr)^\op
\\ \qquad{}\to \bigl((\m^p + I) \big/ \bigl(\m^{p+1} +I\bigr)\bigr) \otimes \rCC^*\bigl(\CF^*\bigl(\LL, \LL\bigr), \eend_\kk^*\bigl(\CF\bigl(\LL, \LL\bigr)\bigr)\bigr).
\end{gather*}
By inspecting the definition of $\Theta$, this map can be written as \smash{$\id_{(I+\m^p) / (I+\m^{p+1})} \otimes \theta$}, where $\theta$ is the reduction of \smash{$\hatTheta^1$} modulo $\m$. This can be seen by `reducing diagrams mod $\m$' as in Lemma~\ref{lemgrCF}. To show that \smash{$\gr \hatTheta^1$} is a quasi-isomorphism, it therefore suffices to show that
\[
\theta\colon \ \CF^*\bigl(\LL, \LL\bigr)^\op \to \rCC^*\bigl(\CF^*\bigl(\LL, \LL\bigr), \eend_\kk^*\bigl(\CF^*\bigl(\LL, \LL\bigr)\bigr)\bigr)
\]
is a quasi-isomorphism. And from the construction of $\Theta$ in Definition~\ref{defTheta}, we have an explicit formula for $\theta$, namely
\begin{equation}
\label{eqtheta}
\theta(c)(a_k, \dots, a_1)(x) = (-1)^{\lvert c \rvert (\lvert x \rvert - 1)}\mu_\Fuk(a_k, \dots, a_1, x, c)
\end{equation}
for all $c$, $a_i$, and $x$ in \smash{$\CF^*\bigl(\LL, \LL\bigr)^{(\op)}$}.

In Appendix~\ref{appHH}, we study the purely algebraic problem of understanding complexes of the form $\rCC^*(\calA, \hom_\kk^*(\calA, \calN))$, where $\calA$ is an $A_\infty$-algebra, $\calN$ is a (left) $\calA$-module, and both are cohomologically unital. Here $\hom_\kk^*(\calA, \calN)$ is viewed as an $\calA$-bimodule using the $\calA$-module actions on $\calA$ and $\calN$. In the case where \smash{$\calA = \calN = \CF^*\bigl(\LL, \LL\bigr)$}, this bimodule action on $\hom_\kk^*(\calA, \calN)$ agrees with the one defined on $\eend_\kk^*\bigl(\CF^*\bigl(\LL, \LL\bigr)\bigr)$ by \eqref{eqendBimod}. This can be proved by directly comparing \eqref{eqendBimod} with \eqref{eqhombimod} after setting $\mu_\calM = \mu_\calN = -\mu_\Fuk$ in the latter. The minus sign here is our standard convention for viewing an $A_\infty$-algebra as a module over itself.

We can therefore apply the analysis of Appendix~\ref{appHH} to $\rCC^*\bigl(\CF^*\bigl(\LL, \LL\bigr), \eend_\kk^*\bigl(\CF^*\bigl(\LL, \LL\bigr)\bigr)\bigr)$. In particular, Corollary~\ref{corCCN} tells us that the map
\[
\Pi\colon \ \rCC^*\bigl(\CF^*\bigl(\LL, \LL\bigr), \eend_\kk^*\bigl(\CF^*\bigl(\LL, \LL\bigr)\bigr)\bigr) \to \CF^*\bigl(\LL, \LL\bigr)
\]
given by \smash{$\Pi(\phi) = (-1)^{\lvert \phi \rvert}\phi^0(e_{\LL})$} is a quasi-isomorphism. Here $e_{\LL}$ is a chain-level representative for the unit in $\HF^*\bigl(\LL, \LL\bigr)$. To complete the proof of Proposition~\ref{propThmC}, and hence of the generalised Theorem~\ref{TheoremC}, it therefore suffices to show that the chain map $\Pi \circ \theta \colon \CF^*\bigl(\LL, \LL\bigr)^\op \to \CF^*\bigl(\LL, \LL\bigr)$ is a quasi-isomorphism.

To do this, we just plug \eqref{eqtheta} into the definition of $\Pi$. We see that for all $c \in \CF^*\bigl(\LL, \LL\bigr)^{(\op)}$
\[
\Pi \circ \theta(c) = (-1)^{\lvert c \rvert} \theta(c)^0(e_{\LL}) = \mu^2_\Fuk(e_{\LL}, c).
\]
When $c$ is a cocycle, this is cohomologous to $(-1)^{\lvert c \rvert}c$ by definition of cohomological unitality. Therefore, $\Pi \circ \theta$ induces the map $(-1)^*$ on \smash{$\HF^*\bigl(\LL, \LL\bigr)^{(\op)}$}, which is an isomorphism.
\end{proof}

\begin{Remark}
\label{rmkPiSign}
The reader may be puzzled by the fact that $\Pi \circ \theta$ is supposed to be a chain map, but the map $\CF^*\bigl(\LL, \LL\bigr)^\op \to \CF^*\bigl(\LL, \LL\bigr)$ given by $c \mapsto \mu^2_\Fuk(e_{\LL}, c)$ intertwines $\mu^1_\Fuk$ on the domain with \emph{minus} $\mu^1_\Fuk$ on the codomain. This mystery is resolved when one notes that the domain is being treated as an $A_\infty$-algebra but the codomain is being treated (via Corollary~\ref{corCCN}) as an $A_\infty$-module. So $-\mu^1_\Fuk$ is indeed the correct $\mu^1$ operation on the codomain.
\end{Remark}

\section[Reduction mod m\^{}2 and Tonkonog's criterion]{Reduction mod $\boldsymbol{\m^2}$ and Tonkonog's criterion}
\label{secTonk}

In \cite{TonkonogCO}, Tonkonog considers a loop $\gamma$ of Hamiltonian symplectomorphisms of $X$ preserving our Lagrangian $L$ setwise. Associated to $\gamma$ is a \emph{Seidel element} $S(\gamma) \in \QH^*(X)$, introduced in \cite{SeidelRep}, and Tonkonog's goal is to compute $\CO_{\LL}(S(\gamma)) \in \HH^*\bigl(\CF^*\bigl(\LL, \LL\bigr)\bigr)$. In Section~\ref{sscTonkStatement}, we recall his main result, which is a criterion for $\CO_{\LL}(S(\gamma))$ to be linearly independent of the Hochschild cohomology unit, or more generally for $\CO_{\LL}(S(\gamma)\qprod Q)$ to be linearly independent of $\CO_{\LL}(Q)$ for arbitrary fixed $Q \in \QH^*(X)$. We then discuss the reduction of $\CO^0_\bL$ modulo $\m^2$, and how this gives a new interpretation of his result (Theorem~\ref{TheoremD}), explaining the potentially mysterious hypothesis in a natural way. Throughout this section, we use a pearl model for Floer cohomology, as constructed by Biran--Cornea \cite{biran2007quantum}, and we write $\mu^k$ (without subscript) for the $A_\infty$-operations on $\CF^*\bigl(\LL, \LL\bigr)$.

\subsection{Tonkonog's criterion}\label{sscTonkStatement}

Let $\rho\colon \rH_1(L; \ZZ) \to \kk^\times$ denote the monodromy of the local system $\calL$ on $\LL$, as usual, and let $l \in \rH_1(L; \ZZ)$ denote the homology class of an orbit of $\gamma$ on $L$. Building on work of Charette--Cornea~\cite{CharetteCornea}, Tonkonog first calculates $\CO^0_{\LL}(S(\gamma))$.

\begin{Proposition}[{\cite[Theorem 1.7\,(a)]{TonkonogCO}}]
\label{propTonka}
We have
\[
\CO^0_{\LL}(S(\gamma)) = (-1)^{\eps(l)}\rho(l) \cdot 1_{\LL} \in \HF^*\bigl(\LL, \LL\bigr),
\]
where $1_{\LL}$ is the unit in $\HF^*\bigl(\LL, \LL\bigr)$ and $(-1)^{\eps(l)}$ is a specific sign that is unimportant for our purposes.
\end{Proposition}

Now assume that $\HF^*\bigl(\LL, \LL\bigr) \neq 0$. We then have a canonical linear \emph{PSS map}
\[
\PSS\colon \ \rH^1(L) \to \HF^1\bigl(\LL, \LL\bigr),
\]
introduced by Albers \cite{Albers}. In the context of the pearl model \cite{biran2007quantum}, this map comes from the inclusion of degree $1$ Morse cochains into the pearl complex. Tonkonog's criterion then reads as follows.

\begin{Theorem}[{\cite[Theorem 1.7\,(c)]{TonkonogCO}}]
\label{thmTonkonogCriterion}
For $Q \in \QH^*(X)$, if the classes $\CO_{\LL}(S(\gamma) \qprod Q)$ and $\CO_{\LL}(Q)$ are $\kk$-linearly dependent in $\HH^*\bigl(\CF^*\bigl(\LL, \LL\bigr)\bigr)$, then there exists $a \in \HF^*\bigl(\LL, \LL\bigr)$ such that for all $y \in \rH^1(L)$ we have
\begin{equation}\label{TonkHyp}
\mu^2(a, \PSS(y)) + \mu^2(\PSS(y), a) = \langle y, l \rangle \cdot \CO^0_{\LL}(Q).\tag{$\Diamond$}
\end{equation}
\end{Theorem}

\begin{Remark}
The statement in \cite{TonkonogCO} includes a factor of \smash{$(-1)^{\eps(l)}\rho(l)$} on the right-hand side, but this is irrelevant since it can be absorbed into $a$.
\end{Remark}

\subsection[Reduction modulo m\^{}2]{Reduction modulo $\boldsymbol{\m^2}$}\label{sscmodm2}

According to Theorems~\ref{TheoremB} and~\ref{TheoremC}, if $\CO_{\LL}(S(\gamma) \qprod Q)$ and $\CO_{\LL}(Q)$ are linearly dependent in $\HH^*\bigl(\CF^*\bigl(\LL, \LL\bigr)\bigr)$, then \smash{$\hatCO(S(\gamma) \qprod Q)$} and \smash{$\hatCO(Q)$} are linearly dependent in \smash{$\HF^*_{\hatR}(\bL, \bL)^\op$}. Here \smash{$\hatR$} is the $\m$-adic completion of $R = S/I$, where $I$ is any ideal of $S$ contained in $\m$. We shall show that Theorem~\ref{thmTonkonogCriterion} is really a statement about linear dependence of \smash{$\hatCO(S(\gamma) \qprod Q)$} and \smash{$\hatCO(Q)$} for $I = \m^2$. Note that in this situation $R$ is already $\m$-adically complete so \smash{$\hatR = R$}. But we will leave the hat on \smash{$\hatCO$} to distinguish if from \smash{$\CO^0_\bL$} (which is over $S$, rather than $R$).

Our first step is to describe the algebra $R = S/\m^2$ itself.

\begin{Lemma}\label{lemRDescription}
There is a natural isomorphism of unital $\kk$-algebras
\[
R \cong \kk \oplus \rH_1(L),
\]
where the product on $\kk \oplus \rH_1(L)$ is defined by $(\lambda_1, \sigma_1) (\lambda_2, \sigma_2) = (\lambda_1\lambda_2, \lambda_1\sigma_2+\lambda_2\sigma_1)$.
\end{Lemma}
\begin{proof}
As a $\kk$-vector space, $S$ splits as $\kk \oplus \m$, where the first summand is spanned by the unit. The induced product on $\kk \oplus \m$ is
\[
(\lambda_1, m_1)(\lambda_2, m_2) = (\lambda_1\lambda_2, \lambda_1m_2+\lambda_2m_1+m_1m_2).
\]
Then $S/\m^2 \cong \kk \oplus \bigl(\m / \m^2\bigr)$, with product of the form claimed in the lemma, and it remains to show that $\m/\m^2$ is naturally identified with $\rH_1(L)$.

Define $\alpha\colon S \to \rH_1(L)$ by $z^\gamma \mapsto \rho(\gamma)\gamma$, extended $\kk$-linearly. Here $z$ is the formal variable in the group algebra $S = \kk[\rH_1(L; \ZZ)]$, and $\rho$ is the monodromy representation $\rH_1(L; \ZZ) \to \kk^\times$ as above. The ideal $\m$ is generated by the $z^\gamma - \rho(\gamma)$, from which it is easily checked that $\alpha\bigl(\m^2\bigr)=0$, so $\alpha$ induces a linear map $\bar\alpha \colon \m/\m^2 \to \rH_1(L)$. This will be our isomorphism.

To construct the inverse, consider the $\ZZ$-bilinear map $\kk \times \rH_1(L; \ZZ) \to \m / \m^2$ given by
\[
(\lambda, \gamma) \mapsto \lambda\bigl(\rho(\gamma)^{-1}z^\gamma-1\bigr).
\]
Note that linearity in $\rH_1(L; \ZZ)$ follows from the fact that for all $\gamma_1$ and $\gamma_2$ the element
\[
\bigl(\rho(\gamma_1)^{-1}z^{\gamma_1}-1\bigr)\bigl(\rho(\gamma_2)^{-1}z^{\gamma_2}-1\bigr)
\]
lies in $\m^2$, so
\[
\rho(\gamma_1+\gamma_2)^{-1}z^{\gamma_1+\gamma_2} - 1 = \bigl(\rho(\gamma_1)^{-1}z^{\gamma_1}-1\bigr) + \bigl(\rho(\gamma_2)^{-1}z^{\gamma_2}-1\bigr)
\]
in $\m / \m^2$. This $\ZZ$-bilinear map induces a $\ZZ$-linear map $\beta \colon \rH_1(L) = \kk \otimes_\ZZ \rH_1(L; \ZZ) \to \m / \m^2$ which is manifestly $\kk$-linear. One can then check directly that $\bar\alpha$ and $\beta$ are mutually inverse.
\end{proof}

Our next goal is to understand the algebra $\HF^*_R(\bL, \bL)^{(\op)}$, and we will drop the $(\op)$ as it is irrelevant for present purposes. By splitting $R$ as $\kk \oplus \rH_1(L)$ we get an identification
\begin{equation}
\label{eqCFsplitting}\
\CF^*_R(\bL, \bL) = \CF^*\bigl(\LL, \LL\bigr) \oplus \bigl(\rH_1(L) \otimes \CF^*\bigl(\LL, \LL\bigr)\bigr).
\end{equation}
Here, unsurprisingly, $\CF^*_R(\bL, \bL)$ means the Floer complex of $\bL$ over $R$, i.e., the reduction of $\CF^*_S(\bL, \bL)$ modulo $I = \m^2$. With respect to this splitting, the differential has the form
\[
\begin{pmatrix}
\mu^1 & 0
\\ \mu^1_{\rH_1(L)} & \id_{\rH_1(L)} \otimes \mu^1
\end{pmatrix},
\]
where $\mu_{\rH_1(L)} \colon \CF^*\bigl(\LL, \LL\bigr) \to \rH_1(L) \otimes \CF^*\bigl(\LL, \LL\bigr)$ denotes the usual $\mu^1_S$ differential but with $S$ coefficients reduced to $\rH_1(L)$ via the map $\alpha$ from the proof of Lemma~\ref{lemRDescription}.

Filtering the complex using this splitting, which is essentially equivalent to filtering $\m$-adically, we obtain a spectral sequence whose $E_1$ page is
\[
\HF^*\bigl(\LL, \LL\bigr) \oplus \bigl(\rH_1(L) \otimes \HF^*\bigl(\LL, \LL\bigr)\bigr),
\]
with differential $\diff_1$ induced by $\mu^1_{\rH_1(L)}$. Later differentials trivially vanish, so we get a surjection
\[
\pi\colon \ \HF^*_R(\bL, \bL) \to \ker \diff_1 \subset \HF^*\bigl(\LL, \LL\bigr),
\]
induced by projection of the $E_1$ page to the first summand. The kernel of $\pi$ is $\coker \diff_1$, mapping into $\HF^*_R(\bL, \bL)$ via the map
\[
\iota\colon \ \rH_1(L) \otimes \HF^*\bigl(\LL, \LL\bigr) \to \HF^*_R(\bL, \bL)
\]
induced by inclusion of the second summand into the $E_1$ page. Using the description of the product from Lemma~\ref{lemRDescription}, we get
\[
w \cdot \iota(x) = \iota(\pi(w) \cdot x)
\]
for all $w \in \HF^*_R(\bL, \bL)$ and $x \in \rH_1(L) \otimes \HF^*\bigl(\LL, \LL\bigr)$. Note also that for $Q \in \QH^*(X)$, we have \smash{$\pi\bigl(\hatCO(Q)\bigr) = \CO^0_{\LL}(Q)$}.

\begin{Remark}
If we had used a Hamiltonian chord model for $\CF^*$, rather than a pearl model, then \eqref{eqCFsplitting} would not quite be true. $\CF^*_R(\bL, \bL)$ would still be a free $R$-module whose reduction modulo $\m$ was equal to $\CF^*\bigl(\LL, \LL\bigr)$, but there would not be a canonical identification of the former with $R \otimes \CF^*\bigl(\LL, \LL\bigr)$. This is because the generators of $\CF^*_R(\bL, \bL)$ would be morphism spaces between \emph{different} fibres (namely, the fibres over the endpoints of each chord) of an abstract rank-$1$ local system over $R$. Things simplify in the pearl model since we effectively use chords of length zero, so the generators are endomorphism spaces of single fibres, and these endomorphism spaces are canonically identified with $R$.
\end{Remark}

With this in place, we are ready to reprove Tonkonog's criterion.

\subsection{Tonkonog's criterion revisited}
\label{sscTonkRevisit}

Fix an arbitrary $Q \in \QH^*(X)$. We wish to show that if
$\CO_{\LL}(S(\gamma) \qprod Q)$ and $\CO_{\LL}(Q)$
are $\kk$-linearly dependent in $\HH^*\bigl(\CF^*\bigl(\LL, \LL\bigr)\bigr)$, then condition \eqref{TonkHyp} holds. We will in fact prove the following result, which implies Tonkonog's criterion by the discussion at the beginning of Section~\ref{sscmodm2}.

\begin{Proposition}
\label{propTonkonogInterpretation}
Taking \smash{$\hatR = R = S/\m^2$} as above, the classes \smash{$\hatCO(S(\gamma) \qprod Q)$} and \smash{$\hatCO(Q)$} are $\kk$-linearly dependent in \smash{$\HF^*_R(\bL, \bL)$} if and only if condition \eqref{TonkHyp} holds.
\end{Proposition}
\begin{proof}
Suppose first that $\CO^0_{\LL}(Q) = 0$. Then \smash{$\pi\bigl(\hatCO(Q)\bigr) = 0$} (notation as in Section~\ref{sscmodm2}) so~\smash{$\hatCO(Q) = \iota(x)$} for some \smash{$x \in \rH_1(L) \otimes \HF^*\bigl(\LL, \LL\bigr)$}. Thus
\begin{align*}
\hatCO(S(\gamma)\qprod Q) &{}= \hatCO(S(\gamma)) \cdot \hatCO(Q) = \hatCO(S(\gamma)) \cdot \iota(x) = \iota\bigl(\pi\bigl(\hatCO(S(\gamma))\bigr) \cdot x\bigr)
\\ &{}= \iota\bigl(\CO^0_{\LL}(S(\gamma)) \cdot x\bigr) = \iota\bigl((-1)^{\eps(l)}\rho(l) \cdot x\bigr) = (-1)^{\eps(l)} \rho(l) \cdot \hatCO(Q),
\end{align*}
where the fifth equality uses Proposition~\ref{propTonka}. So \smash{$\hatCO(S(\gamma) \qprod Q)$} and \smash{$\hatCO(Q)$} are linearly dependent. But also \eqref{TonkHyp} holds, for $a=0$, so the result is proved in this case.

From now on, suppose that \smash{$\CO^0_{\LL}(Q) \neq 0$}. By considering their images under $\pi$, we see that~\smash{$\hatCO(S(\gamma) \qprod Q)$} and \smash{$\hatCO(Q)$} are linearly dependent if and only if
\[
\hatCO(S(\gamma) \qprod Q) = (-1)^{\eps(l)}\rho(l) \cdot \hatCO(Q).
\]
This in turn is equivalent to
\begin{equation}
\label{eqLD}
\bigl(\hatCO(S(\gamma)) - (-1)^{\eps(l)}\rho(l) \cdot 1_\bL\bigr) \cdot \hatCO(Q) = 0.
\end{equation}
The same proof as for Proposition~\ref{propTonka} shows that \smash{$\CO^0_\bL(S(\gamma)) = (-1)^{\eps(l)}z^l \cdot 1_\bL$} in $\HF^*_S(\bL, \bL)$, so
\[
\hatCO(S(\gamma)) - (-1)^{\eps(l)}\rho(l) \cdot 1_\bL = \iota\bigl((-1)^{\eps(l)}\rho(l) \cdot l\otimes 1_{\LL}\bigr).
\]
Thus \eqref{eqLD} is equivalent to
\[
\iota\bigl((-1)^{\eps(l)}\rho(l) \cdot l \otimes \pi\bigl(\hatCO(Q)\bigr)\bigr) = 0, \qquad \text{i.e.,} \qquad \iota\bigl(l \otimes \CO^0_{\LL}(Q)\bigr) = 0.
\]
This is the case if and only if \smash{$l \otimes \CO^0_{\LL}(Q)$} is in the image of the differential $\diff_1$ induced by~\smash{$\mu^1_{\rH_1(L)}$}, i.e., if and only if there exists \smash{$a \in \HF^*\bigl(\LL, \LL\bigr)$} such that
\[
\diff_1(a) = l \otimes \CO^0_{\LL}(Q) \qquad \text{in} \ \rH_1(L) \otimes \HF^*\bigl(\LL, \LL\bigr).
\]

To test an equality in $\rH_1(L) \otimes \HF^*\bigl(\LL, \LL\bigr)$, it suffices to test it after pairing the $\rH_1(L)$ part with each $y \in \rH^1(L)$. So, by the previous paragraph, \smash{$\hatCO(S(\gamma) \qprod Q)$} and \smash{$\hatCO(Q)$} are linearly dependent if and only if there exists \smash{$a \in \HF^*\bigl(\LL, \LL\bigr)$} such that for all $y \in \rH^1(L)$ we have
\[
\langle y, \diff_1(a)\rangle = \langle y, l\rangle \otimes \CO^0_{\LL}(Q) \qquad \text{in} \ \rH_1(L) \otimes \HF^*\bigl(\LL, \LL\bigr).
\]
To complete the proof, it thus suffices to show that for all $a$ and $y$ we have
\[
\langle y, \diff_1(a)\rangle = \mu^2(a, \PSS(y)) + \mu^2(\PSS(y), a)
\]
We shall actually prove this up to an overall sign, which may depend on the degree of $a$, but that is sufficient for our purposes.

Let our pearl complexes be defined using a Morse function $f$ on $L$, and a path $J_t$ of almost complex structures. So generators of the complexes are critical points of $f$, and the differentials count pearly trajectories weighted by monodromy in $\kk$ (from $\rho$), $S$, or $R$ depending on the coefficients. Explicitly, a pearly trajectory is a Morse flowline interrupted by bivalent $J_t$-holomorphic discs, where each disc is parametrised so that after deleting the marked points the domain is~${\RR \times [0, 1]}$ with holomorphic coordinate $s+it$, input at $s \to \infty$, and output at $s \to -\infty$.

The term $\mu^2(a, \PSS(y))$ counts Y-shaped pearly trees whose inputs are (linear combinations~of) critical points of $f$ representing $a$ and $y$ respectively, weighted by monodromy from $\rho$. This is shown on the left in Figure~\ref{figPearlyDef}, where each bivalent disc in the tree is $J_t$-holomorphic, as above, whilst the trivalent disc at the centre carries a domain-dependent almost complex structure and Hamiltonian perturbation (indicated by the shading) to ensure transversality of the moduli space.
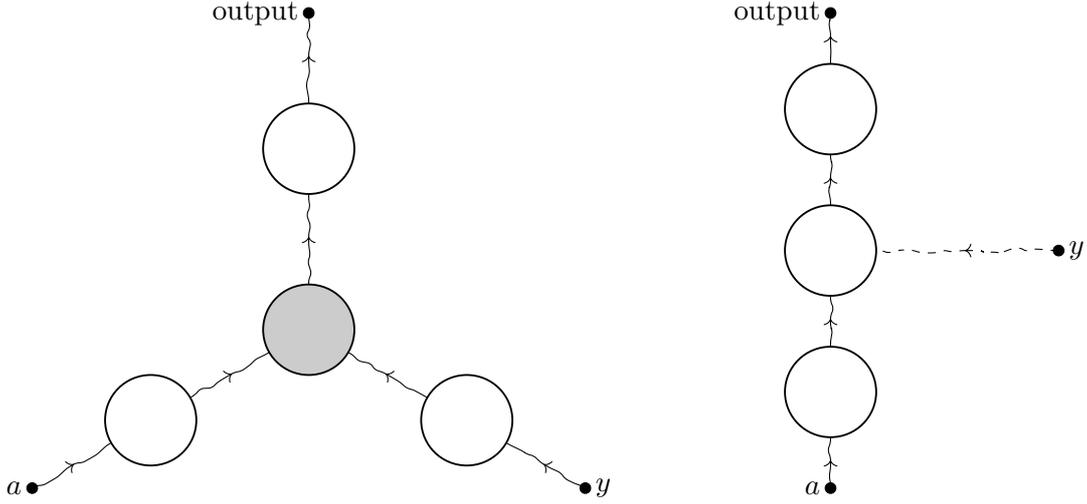
\begin{figure}[t]
\centering
\begin{tikzpicture}
\def\r{0.6cm}
\def\c{0.6}
\def\blobsize{4pt}

\begin{scope}
\def\a{\c*cos(30)}
\def\b{\c*sin(30)}
\draw (0, 0) node[blob]{};
\draw (0, 0) node[anchor=east]{$a$};
\flow{(0, 0)}{($\a*(2, 0) + \b*(0, 2)$)}{0}
\draw[line width=\circwidth] ($\a*(3, 0) + \b*(0, 3)$) circle[radius=\r];
\flow{($\a*(4, 0) + \b*(0, 4)$)}{($\a*(6, 0) + \b*(0, 6)$)}{10}

\draw[line width=\circwidth, fill=gray, fill opacity=0.4] ($\a*(7, 0) + \b*(0, 7)$) circle[radius=\r];

\flow{($\a*(10, 0) + \b*(0, 4)$)}{($\a*(8, 0) + \b*(0, 6)$)}{20}
\draw[line width=\circwidth] ($\a*(11, 0) + \b*(0, 3)$) circle[radius=\r];
\flow{($\a*(14, 0)$)}{($\a*(12, 0) + \b*(0, 2)$)}{3}
\draw ($\a*(14, 0)$) node[blob]{};
\draw ($\a*(14, 0)$) node[anchor=west]{$y$};

\flow{($\a*(7, 0) + \b*(0, 7) + \c*(0, 1)$)}{($\a*(7, 0) + \b*(0, 7) + \c*(0, 3)$)}{6}
\draw[line width=\circwidth] ($\a*(7, 0) + \b*(0, 7) + \c*(0, 4)$) circle[radius=\r];
\flow{($\a*(7, 0) + \b*(0, 7) + \c*(0, 5)$)}{($\a*(7, 0) + \b*(0, 7) + \c*(0, 7)$)}{8}
\draw ($\a*(7, 0) + \b*(0, 7) + \c*(0, 7)$) node[blob]{};
\draw ($\a*(7, 0) + \b*(0, 7) + \c*(0, 7)$) node[anchor=east]{$\mathrm{output}$};
\end{scope}

\begin{scope}[xshift=10.5cm]
\def\a{0}
\def\b{\c*1.125}

\draw (0, 0) node[blob]{};
\draw (0, 0) node[anchor=east]{$a$};
\flow{(0, 0)}{($\b*(0, 1)$)}{12}
\draw[line width=\circwidth] ($\b*(0, 1) + \c*(0, 1)$) circle[radius=\r];
\flow{($\b*(0, 1) + \c*(0, 2)$)}{($\b*(0, 2) + \c*(0, 2)$)}{17}
\draw[line width=\circwidth] ($\b*(0, 2) + \c*(0, 3)$) circle[radius=\r];
\flow{($\b*(0, 2) + \c*(0, 4)$)}{($\b*(0, 3) + \c*(0, 4)$)}{14}
\draw[line width=\circwidth] ($\b*(0, 3) + \c*(0, 5)$) circle[radius=\r];
\flow{($\b*(0, 3) + \c*(0, 6)$)}{($\b*(0, 4) + \c*(0, 6)$)}{15}
\draw ($\b*(0, 4) + \c*(0, 6)$) node[blob]{};
\draw ($\b*(0, 4) + \c*(0, 6)$) node[anchor=east]{$\mathrm{output}$};

\flow[dashed]{($(3, 0) + \b*(0, 2) + \c*(0, 3)$)}{($\c*(1, 0) + \b*(0, 2) + \c*(0, 3)$)}{19}
\draw ($(3, 0) + \b*(0, 2) + \c*(0, 3)$) node[blob]{};
\draw ($(3, 0) + \b*(0, 2) + \c*(0, 3)$) node[anchor=west]{$y$};
\end{scope}

\end{tikzpicture}
\caption{Trees computing $\mu^2(a, \PSS(y))$ (left) and $\mu^1(a)$ weighted by the intersection of the $t=1$ boundary with $y$ (right). The shaded disc and dashed flowline carry different auxiliary data from the others.}\label{figPearlyDef}
\end{figure}
The diagram shows one bivalent disc on each leg, but this is purely illustrative; there may be arbitrarily many, including zero.

Now consider deforming the almost complex structure and Hamiltonian perturbation on the trivalent disc so that it becomes a (possibly constant) $J_t$-holomorphic disc from the $a$-input to the output, with the $y$-input occurring as a marked point on the $t=1$ boundary. We can ensure transversality for these configurations by deforming the Morse function on the $y$-leg of the tree to a nearby Morse function $f'$. A standard cobordism argument shows that counts of old and new configurations agree at cohomology level.

The upshot is that $\mu^2(a, \PSS(y)) \in \HF^*\bigl(\LL, \LL\bigr)$ can also be computed by counting pearly trajectories from $a$ but with an additional input leg to the $t=1$ boundary corresponding to an $f'$ pearly trajectory from $y$. This additional input leg cannot contain any discs, otherwise deleting the leg would produce a pearly trajectory from $a$ of negative virtual dimension. So, up to an overall sign depending on $\lvert a \rvert$ and our conventions, $\mu^2(a, \PSS(y))$ computes $\mu^1(a)$ weighted by the pairing of the $t=1$ boundary path with $y$, as shown on the right in Figure~\ref{figPearlyDef}. (Again the diagram is purely illustrative and there may be any number of discs before or after the one receiving the input from $y$. The dashed flowline indicates the different Morse function, $f'$.) Similarly for~$\mu^2(\PSS(y), a)$ but now for the $t=0$ boundary path, oriented the opposite way. Note that in both cases the $f'$-leg does not contribute to the $\rho$ weighting of the configuration as it can be contracted away. Therefore, up to sign, $\mu^2(a, \PSS(y)) + \mu^2(\PSS(y), a)$ computes~$\mu^1(a)$ weighted by the pairing of the whole boundary with $y$. This is exactly $\pm \langle y, \diff_1(a)\rangle$, as claimed.
\end{proof}

\section{Real toric Lagrangians}
\label{secRealToric}

In this section, we apply our results to prove Theorem~\ref{TheoremE}. Throughout, we will take $X$ to be a compact monotone toric manifold of minimal Chern number $N_X \geq 2$, and assume that our coefficient field $\kk$ has characteristic $2$. We view $X$ as a symplectic reduction of $\CC^N$, via the Delzant construction; see \cite[Chapter~29]{CdS} for more details and Appendix~\ref{sscToricSetup} for our conventions. Our Lagrangian $\LL$ will be the real locus of $X$, defined to be the symplectic reduction of $\RR^N$ under this construction, and we equip it with the trivial local system $\calL$. This real locus is a valid object of $\Fuk(X)^\mathrm{un}_0$, and of $\Fuk(X)_0$ if it is orientable: it is monotone with minimal Maslov number equal to $N_X$; condition~\ref{itm2} from Section~\ref{sscFuk} holds automatically since $X$ is simply connected; the pin structure in condition~\ref{itm3} is not needed since we are working in characteristic $2$, whilst the $\ZZ/2$-grading exists when needed (i.e., when $L$ is orientable) by Remark~\ref{rmkUngraded}; and $W_L(\calL) = 0$ by Remark~\ref{rmkWLcancellation}. Theorem~\ref{TheoremE} asserts that $\LL$ split-generates $\Fuk(X)_0$ if it is orientable and split-generates $\Fuk(X)_0^\mathrm{un}$ in general. These will follow from the generation criterion if we can prove injectivity of
\[
\CO_{\LL}\colon \ \QH^*(X) \to \HH^*\bigl(\CF^*\bigl(\LL, \LL\bigr)\bigr).
\]

By Theorems~\ref{TheoremB} and~\ref{TheoremC}, it suffices to show that
\[
\hatCO\colon \ \QH^*(X) \to \HF_{\hatR}^*(\bL, \bL)
\]
is injective for some ideal $I \subset S = \kk[\rH_1(L; \ZZ)]$ contained in $\m$. Recall that $\hatR$ is the $\m$-adic completion of $R = S/I$, and because the local system $\calL$ is trivial we can describe $\m$ explicitly as
\[
(z^\gamma - 1 \mid \gamma \in \rH_1(L; \ZZ)),
\]
i.e., the ideal generated by expressions of the form $z^\gamma - 1$ as $\gamma$ ranges over $\rH_1(L; \ZZ)$. We will prove that \smash{$\hatCO$} is injective for $I = \bigl(z^{2\gamma} - 1 \mid \gamma \in \rH_1(L; \ZZ)\bigr)$. In this case, we have
\[
R = \kk[\rH_1(L; \ZZ) \otimes_\ZZ \ZZ/2] = \kk[\rH_1(L; \ZZ/2)].
\]

\begin{Remark}
If $\rH_1(L; \ZZ/2)$ has dimension $d$ over $\ZZ/2$, then we have
\[
R \cong \kk[z_1, \dots, z_d] \big/\bigl(z_1^2-1, \dots, z_d^2-1\bigr) = \kk[z_1, \dots, z_d] \big/\bigl((z_1-1)^2, \dots, (z_d-1)^2\bigr).
\]
(Recall that $\kk$ has characteristic $2$!) Under this isomorphism, $\m R$ is $(z_1 -1, \dots, z_d - 1)$, so by the pigeonhole principle we have \smash{$\m^{d+1} R = 0$}. Hence \smash{$\hatR = R$} and, as in Section~\ref{secTonk}, no completion is actually needed. However, also as in Section~\ref{secTonk}, we will leave the hat on \smash{$\hatCO$} in order to distinguish it from \smash{$\CO^0_\bL \colon \QH^*(X) \to \HF^*_S(\bL, \bL)$}.
\end{Remark}

\subsection{Outline of the argument}
\label{sscThmDOutline}

Recall from Section~\ref{sscMainResults} that the main step in proving injectivity of $\hatCO \colon \QH^*(X) \to \HF_R^*(\bL, \bL)$ is to establish a commutative diagram of $\kk$-algebras
\begin{equation}
\label{eqDRdiagram}
\begin{tikzcd}[column sep=5em, row sep=2.5em]
\QH^*_R(X) \arrow{d}{\pi} \arrow{r}{\Frob_R} & \QH^{2*}_R(X) \arrow{d}{\calD_R}[swap, anchor=center, rotate=-90, yshift=-1ex]{\cong}
\\ \QH^*(X) \arrow{r}{\hatCO} & \HF^*_R(\bL, \bL).
\end{tikzcd}
\end{equation}
Here $\QH^*_R(X)$ is an extension of quantum cohomology to $R$ coefficients, $\pi$ is reduction modulo~$\m$ (which sends every monomial in $R$ to $1$), $\calD_R$ is an isomorphism we will define, and $\Frob_R$ is `the $\kk$-linear extension of Frobenius' in the following sense. Take $\kk = \ZZ/2$ and consider the Frobenius map $x \mapsto x^2$ on $\QH^*_R(X)$; now pass to general $\kk$ by applying $\kk \otimes_{\ZZ/2} {-}$ and extending the map $\kk$-linearly. Explicitly, a general element of $\QH^*_R(X)$ is of the form \smash{$\sum_j \lambda_j z^{A_j} x_i$} for $\lambda_i \in \kk$, $A_i \in \rH_1(L; \ZZ/2)$, and $x_i \in \QH^*(X; \ZZ/2)$, and
\[
\Frob_R \biggl(\sum_j \lambda_j z^{A_j} x_j\biggr) = \sum_j \lambda_j z^{2A_j} x_j^2 = \sum_j \lambda_j x_j^2.
\]

\begin{Remark}
In \cite[Theorem 1.13]{TonkonogCO}, Tonkonog constructed a similar diagram over $\kk$, instead of~$R$, namely
\begin{equation}\label{eqDdiagram}
\begin{tikzcd}[column sep=5em, row sep=2.5em]
\QH^*(X) \arrow{r}{\Frob} \arrow{dr}[swap]{\CO^0_{\LL}} & \QH^{2*}(X) \arrow{d}{\calD}[swap, anchor=center, rotate=-90, yshift=-1ex]{\cong}
\\ & \HF^*\bigl(\LL, \LL\bigr).
\end{tikzcd}
\end{equation}
Here $\Frob$ is the $\kk$-linear extension of the Frobenius map on $\QH^*(X)$, and $\calD$ is an isomorphism constructed by Haug \cite{Haug} and Hyvrier \cite{Hyvrier}. Our diagram extends Tonkonog's, in the sense that reducing \eqref{eqDRdiagram} modulo $\m$ (which collapses the left-hand vertical arrow to an identity map) gives~\eqref{eqDdiagram}.

The statement of \cite[Theorem 1.13]{TonkonogCO} is not strictly correct: it has the Frobenius map itself in place of $\Frob$, but the proof given is really about $\Frob$ instead. The result is used by Evans--Lekili in \cite[Example 7.2.4]{EvansLekiliGeneration}, in their proof that $\LL$ split-generates $\Fuk(X)_0$ when $L$ is orientable, in order to show that \smash{$\CO^0_{\LL}(e_\alpha) \neq 0$} for any non-zero idempotent $e_\alpha$ in $\QH^*(X)$. But it is straightforward to modify their argument to use $\Frob$ instead, as follows. Take a $\ZZ/2$-basis $x_i$ for the kernel of Frobenius $\QH^*(X; \ZZ/2) \to \QH^{2*}(X; \ZZ/2)$, and let $y_j$ be a basis for a complementary subspace of $\QH^*(X; \ZZ/2)$. Note that the $y_j^2$ are $\ZZ/2$-linearly independent. Now view the $x_i$ and $y_j$ as elements of $\QH^*(X)$, i.e., tensor by $\kk$. Note that they form a $\kk$-basis, and that the $y_j^2$ are now $\kk$-linearly independent. Writing our non-zero idempotent $e_\alpha$ as $\sum_i \lambda_i x_i + \sum_j \mu_jy_j$ for $\lambda_i, \mu_j \in \kk$, we have
\[
\Frob(e_\alpha) = \sum_i \lambda_i x_i^2 + \sum_j \mu_j y_j^2 = \sum_j \mu_j y_j^2.
\]
We want to show \smash{$\CO^0_{\LL}(e_\alpha) \neq 0$}, and by \eqref{eqDdiagram} it suffices to show that $\Frob(e_\alpha) \neq 0$. So by $\kk$-linear~independence of the $y_j^2$ it suffices to show that the $\mu_j$ are not all zero, and this is true~since
\[
0 \neq e_\alpha = e_\alpha^2 = \sum_i \lambda_i^2 x_i^2 + \sum_j \mu_j^2 y_j^2 = \sum_j \mu_j^2 y_j^2.
\]
\end{Remark}

Taking the diagram \eqref{eqDRdiagram} as given, note that since $\pi$ is surjective we have
\[
\ker \hatCO = \pi\bigl(\ker \bigl(\hatCO \circ \pi\bigr)\bigr) = \pi(\ker (\calD_R \circ \Frob_R)) = \pi(\ker \Frob_R).
\]
The proof of Theorem~\ref{TheoremE} is thus completed by the following result.

\begin{Proposition}
\label{propKerFrob}
We have $\ker \Frob_R \subset \ker \pi$, so $\ker \hatCO = 0$.
\end{Proposition}

\begin{Remark}
If we can construct the diagram and prove this result in the case $\kk = \ZZ/2$, then we can obtain the general case by simply tensoring with the desired $\kk$. \textit{We will therefore assume for the rest of this section that $\kk = \ZZ/2$. In particular, all $($co$)$homology groups implicitly have $\ZZ/2$ coefficients, unless stated otherwise.} With this in place, $\Frob_R$ simplifies to the genuine Frobenius map.
\end{Remark}

\begin{proof}[Proof of Proposition~\ref{propKerFrob}]
Let $\nu_1, \dots, \nu_N$ be the normals to the facets of the moment polytope~$\Delta$ of $X$. We can naturally view them as elements of $\rH_1(T; \ZZ)$, where $T$ is the $n$-torus acting on $X$ as part of its toric structure. For more details on this and the general toric background, see Appendix~\ref{sscToricSetup}. We assume that the facets are ordered so that the last $n$ of them meet at a~vertex of $\Delta$. Smoothness of $X$ then tells us, via the Delzant condition, that $\nu_{N-n+1}, \dots, \nu_N$ is a basis for $\rH_1(T; \ZZ)$.

From Corollary~\ref{corToricQH}, we have identifications
\[
\QH^*_R(X) \cong \kk\bigl[Z_1^{\pm 1}, \dots, Z_N^{\pm 1}\bigr] \bigg/ \biggl(\sum_j \nu_j Z_j\biggr) + \bigl(Z^{2A}-1 \mid A \in \rH_2(X; \ZZ)\bigr)
\]
and
\[
\QH^*(X) \cong \kk\bigl[Z_1^{\pm 1}, \dots, Z_N^{\pm 1}\bigr] \bigg/ \biggl(\sum_j \nu_j Z_j\biggr) + \bigl(Z^A-1 \mid A \in \rH_2(X; \ZZ)\bigr),
\]
with $\pi$ sending $Z_j$ to $Z_j$. Here $Z^A$ \big(similarly $Z^{2A}$\big) denotes the monomial \smash{$\prod_j Z_j^{\langle A, H_j \rangle}$}, and an ideal generated by vectors like $\sum_j \nu_j Z_j$ means the ideal generated by their components with respect to any basis of $\rH_1(T; \ZZ)$. Taking the latter basis to be $\nu_{N-n+1}, \dots, \nu_N$, the components of $\sum_j \nu_j Z_j$ have the form $Z_{N-n+1} - l_1, \dots, Z_N - l_n$ for some linear functions $l_1, \dots, l_n$ of $Z_1, \dots, Z_{N-n}$. We then have
\[
\QH^*_R(X) \cong \kk\bigl[Z_1^{\pm1}, \dots, Z_{N-n}^{\pm 1}\bigr]\bigl[l_1^{-1}, \dots, l_n^{-1}\bigr] \big/ \bigl(\widetilde{Z}^{2A}-1 \mid A \in \rH_2(X; \ZZ)\bigr)
\]
and{\samepage
\[
\QH^*(X) \cong \kk\bigl[Z_1^{\pm1}, \dots, Z_{N-n}^{\pm 1}\bigr]\bigl[l_1^{-1}, \dots, l_n^{-1}\bigr] \big/ \bigl(\widetilde{Z}^A-1 \mid A \in \rH_2(X; \ZZ)\bigr),
\]
where $\widetilde{Z}^A$ \big(similarly $\widetilde{Z}^{2A}$\big) denotes \smash{$\prod_{j=1}^{N-n} Z_j^{\langle A, H_j \rangle}\prod_{j=1}^n l_j^{\langle A, H_{N-n+j} \rangle}$}.}

Given this setup, to prove that $\ker \Frob_R$ is contained in $\ker \pi$ it suffices to prove that the lift of $\ker \Frob_R$ from $\QH^*_R(X)$ to \smash{$\kk\bigl[Z_1^{\pm1}, \dots, Z_{N-n}^{\pm 1}\bigr]\bigl[l_1^{-1}, \dots, l_n^{-1}\bigr]$} is contained in the corresponding lift of $\ker \pi$. We denote these lifts by $I$ and $J$ respectively; we want to show that $I \subset J$. Take then a~general element $f$ of \smash{$\kk\bigl[Z_1^{\pm1}, \dots, Z_{N-n}^{\pm 1}\bigr]\bigl[l_1^{-1}, \dots, l_n^{-1}\bigr]$} and suppose it lies in $I$, which means that
\begin{equation}
\label{eqInI}
f^2 \in \bigl(\widetilde{Z}^{2A}-1 \mid A \in \rH_2(X; \ZZ)\bigr) \subset \kk\bigl[Z_1^{\pm1}, \dots, Z_{N-n}^{\pm 1}\bigr]\bigl[l_1^{-1}, \dots, l_n^{-1}\bigr].
\end{equation}
We want to show that $f$ also lies in $J$, i.e., that
\begin{equation}
\label{eqInJ}
f \in \bigl(\widetilde{Z}^A-1 \mid A \in \rH_2(X; \ZZ)\bigr) \subset \kk\bigl[Z_1^{\pm1}, \dots, Z_{N-n}^{\pm 1}\bigr]\bigl[l_1^{-1}, \dots, l_n^{-1}\bigr].
\end{equation}

From \eqref{eqInI}, we get that there exist
\[
g_1, \dots, g_m \in \kk\bigl[Z_1^{\pm1}, \dots, Z_{N-n}^{\pm 1}\bigr]\bigl[l_1^{-1}, \dots, l_n^{-1}\bigr] \qquad \text{and} \qquad A_1, \dots, A_m \in \rH_2(X; \ZZ)
\]
such that
\[
f^2 = \sum_{j=1}^m g_j \cdot \bigl(\widetilde{Z}^{2A_j} - 1\bigr).
\]
Multiplying through by a large power of $l_1\dots l_n$, we can ensure that no negative powers of the $l_j$ appear. More precisely, for a sufficiently large positive integer $k$ we have that $F \coloneqq f\cdot (l_1\cdots l_n)^{2k}$, all $G_j \coloneqq g_j \cdot (l_1\cdots l_n)^k$, and all $\bigl(\widetilde{Z}^{2A_j} - 1\bigr)(l_1\cdots l_n)^k$ are Laurent polynomials in $Z_1, \dots, Z_{N-n}$. We also have that
\begin{equation}
\label{eqFG}
F^2 = \sum_{j=1}^m G_j \cdot \bigl(\widetilde{Z}^{2A_j} - 1\bigr)(l_1\cdots l_n)^{2k}.
\end{equation}
Note that (since we are in characteristic $2$) only \emph{even} powers of $Z_1, \dots, Z_{N-n}$ appear in each of the Laurent polynomials $F^2$ and $\bigl(\widetilde{Z}^{2A_j} - 1\bigr)(l_1\cdots l_n)^{2k}$. So any terms in \eqref{eqFG} that arise from monomials in the $G_j$ containing odd powers must cancel each other out. In other words, if we write $G_j^\mathrm{ev}$ for the part of $G_j$ containing only even powers, then we have
\[
F^2 = \sum_{j=1}^m G^\mathrm{ev}_j \cdot \bigl(\widetilde{Z}^{2A_j} - 1\bigr)(l_1\cdots l_n)^{2k}.
\]
All of the expressions $F^2$, $G^\mathrm{ev}_j$, and \smash{$\bigl(\widetilde{Z}^{2A_j} - 1\bigr)(l_1\cdots l_n)^{2k}$} are now in the image of the Frobenius~morphism on \smash{$\kk\bigl[Z_1^{\pm 1}, \dots, Z_{N-n}^{\pm 1}\bigr]$}, which is injective since this algebra is an integral domain. So~we can apply the inverse of Frobenius (i.e., `square root both sides') to obtain
\[
F = \sum_{j=1}^m \sqrt{G^\mathrm{ev}_j} \cdot \bigl(\widetilde{Z}^{A_j} - 1\bigr)(l_1\cdots l_n)^k,
\]
where \smash{$\sqrt{G^\mathrm{ev}_j}$} denotes the Laurent polynomial obtained from \smash{$G^\mathrm{ev}_j$} by halving all of the exponents. Dividing through by $(l_1\cdots l_n)^{2k}$ now gives
\[
f = \sum_{j=1}^m \frac{\sqrt{G^\mathrm{ev}_j}}{(l_1 \dots l_n)^k} \cdot \bigl(\widetilde{Z}^{A_j} - 1\bigr),
\]
which proves \eqref{eqInJ} and hence completes the proof.
\end{proof}

\begin{Remark}
The reverse inclusion, $\ker \pi \subset \ker \Frob_R$, is an easy consequence of the fact that every monomial in $R$ squares to $1$. So we actually have $\ker \Frob_R = \ker \pi$.
\end{Remark}

The remainder of this section is devoted to constructing the commutative diagram \eqref{eqDRdiagram}. Most of the work goes into defining $\calD_R$ and showing that it is an isomorphism, and this is where we begin---first studying the classical case due to Duistermaat \cite{Duistermaat}, next recapping the Seidel and relative Seidel maps, and then combining these to construct $\calD_R$, building on work of Haug~\cite{Haug}, Hyvrier~\cite{Hyvrier}, and Tonkonog \cite{TonkonogCO}. Finally, in Section~\ref{sscCompleteDiagram}, we explain the rest of the diagram.

\subsection{The Duistermaat isomorphism}
\label{sscDuistermaat}

As promised, we start by reviewing the remarkable relationship between the classical cohomology rings of $X$ and $L$ in characteristic $2$, which goes back to Duistermaat \cite{Duistermaat}. As in the proof of Proposition~\ref{propKerFrob}, let $\nu_1, \dots, \nu_N$ be the normals to the facets of the moment polytope $\Delta$ of $X$. Let $D_1, \dots, D_N$ be the toric divisors corresponding to the respective facets, and let $H_1, \dots, H_N$ be their Poincar\'e dual cohomology classes in $\rH^2(X; \ZZ)$. As before, see Appendix~\ref{sscToricSetup} for more details. The following description of $\rH^*(X; \ZZ)$ is well-known but we outline a proof that is tailored towards our later needs.

\begin{Proposition}[{\cite[Theorem 10.8]{Danilov}}]
\label{propToricH}
As a $\ZZ$-algebra $\rH^*(X; \ZZ)$ is generated by the $H_j$ modulo the linear relations $\sum_j \nu_j H_j = 0$ and the Stanley--Reisner relations $($and no other relations are needed$)$. The latter say that for a subset $J \subset \{1, \dots, N\}$, we have
\[
\prod_{j \in J} H_j = 0 \qquad \text{if} \ \bigcap_{j \in J} D_j = \varnothing.
\]
\end{Proposition}

\begin{Remark}
\label{rmkVectorRelations}
The vectors $\nu_j$ live in the rank-$n$ lattice $\rH_1(T; \ZZ)$, and the expression ${\sum_j \nu_j H_j = 0}$ is an equality in that lattice. Concretely, it therefore imposes $n$ linear relations between the~$H_j$. These relations are independent since we may assume, as in the proof of Proposition~\ref{propKerFrob}, that~${\nu_{N-n+1}, \dots, \nu_N}$ form a basis for $\rH_1(T; \ZZ)$.
\end{Remark}

\begin{proof}[Sketch proof of Proposition~\ref{propToricH}]
All (co)homology groups in this proof are with $\ZZ$ coefficients.

As a toric manifold, $X$ comes equipped with an action of an $n$-torus $T$ and a $T$-invariant K\"ahler metric $g$. Let $\mathfrak{t}$ denote the Lie algebra of $T$, and $\mu \colon X \to \mathfrak{t}^\vee$ the moment map for the $T$-action. For generic $\xi$ in $\mathfrak{t}$ the map $f_\xi \coloneqq \langle \mu, \xi \rangle \colon X \to \RR$ is a perfect Morse function, so $\rH^*(X)$ is a~free abelian group. Moreover, the pair $(f_\xi, g)$ is Morse--Smale and the descending manifolds are toric subvarieties, so $\rH^*(X)$ is spanned by the Poincar\'e duals of the toric subvarieties. Since each toric subvariety can be written as a transverse intersection of toric divisors $D_j$, under Poincar\'e duality we deduce that $\rH^*(X)$ is generated as an algebra by the $H_j$. The Stanley--Reisner relations follow immediately from Poincar\'e duality and the intersection product.

Next consider the linear relations between the $H_j$. We wish to show that, as a $\ZZ$-module, $\rH^2(X)$ is generated by the $H_j$ modulo $\sum_j \nu_j H_j = 0$. Equivalently, if $F$ is the free $\ZZ$-module with basis $H_1, \dots, H_N$ and \smash{$\theta \colon \rH^1(T) \to F$} is the $\ZZ$-linear map given by
\[
a \in \rH^1(T) \mapsto \sum_j \langle \nu_j, a \rangle H_j \in F,
\]
then we wish to show that the image of $\theta$ is the set $R$ of relators between the $H_j$, i.e., the kernel of the projection $F \to \rH^2(X)$.

To do this, consider the dual map $\theta^\vee \colon F^\vee \to \rH_1(T)$, which has matrix $(\begin{matrix} \nu_1 \cdots \nu_N \end{matrix})$ with respect to the dual basis $H^\vee_1, \dots, H^\vee_N$ of $F^\vee$. Since (WLOG) $\nu_{N-n+1}, \dots, \nu_N$ form a basis for~$\rH_1(T)$, the map $\theta^\vee$ is surjective and thus induces a splitting $F^\vee = \ker (\theta^\vee) \oplus P$, where $P \subset F^\vee$ is a submodule mapped isomorphically by $\theta^\vee$ to $\rH_1(T)$. Note that $P$ must be a free module of rank~$n$, because~$\rH_1(T)$ is, and hence the complement $\ker (\theta^\vee)$ must be a free module of rank~${N-n}$. Dualising, we obtain a splitting $F = (\ker(\theta^\vee))^\circ \oplus P^\circ$, where ${}^\circ$ denotes annihilator, under which $\theta$ induces an isomorphism $\rH^1(T) \to (\ker \theta^\vee)^\circ$. Moreover, $(\ker(\theta^\vee))^\circ$ and $P^\circ$ are free modules of ranks $n$ and $N-n$ respectively. Our task---proving that $R$ is the image of $\theta$---is thus reduced to showing that $R = (\ker \theta^\vee)^\circ$.

If we can show that $R \subset (\ker(\theta^\vee))^\circ$, then we have
\[
\rH^2(X) = F / R \cong \bigl( \bigl(\ker\bigl(\theta^\vee\bigr)\bigr)^\circ \big/ R \bigr) \oplus P^\circ,
\]
where $P^\circ$ is a free module of rank $N-n$. We saw already that $\rH^2(X)$ is a free module, and a~more careful analysis (e.g.,~as in \cite[Lemma~2.4]{SmithQH}) shows that its rank is $N-n$, so we conclude that $(\ker(\theta^\vee))^\circ \big/ R = 0$. Hence $R = (\ker(\theta^\vee))^\circ$, which is what we want. It therefore suffices to show that $R \subset (\ker(\theta^\vee))^\circ$.

Suppose then that a linear relation $\sum_j r_j H_j = 0$ holds, and that $\sum_j s_j H^\vee_j$ lies in the kernel of $\theta^\vee$, i.e., $\sum_j s_j \nu_j = 0$, for integers $r_j$ and $s_j$. We need to show that $\sum_j r_j s_j = 0$. Let $K$ denote the monotone toric fibre in $X$. For each $j$, there is a \emph{basic disc class} $\beta_j \in \rH_2(X, K)$ which meets $D_j$ once (transversely) and is disjoint from the other $D_i$. Recall from the proof of Proposition~\ref{propKerFrob} that the $\nu_j$ naturally live in $\rH_1(T)$, which is identified with $\rH_1(K)$ since $K$ is a free $T$-orbit. Under this identification, the boundary of $\beta_j$ is exactly $\nu_j \in \rH_1(K)$, so since the $s_j$ satisfy $\sum_j s_j \nu_j = 0$ we have that $\sum_j s_j \beta_j \in \rH_2(X, K)$ lifts to a class $A$ in $\rH_2(X)$. The intersection number $A \cdot D_j = \langle A, H_j \rangle$ is exactly $s_j$, since $\beta_i \cdot D_j = \delta_{ij}$, so we conclude that
\[
\sum_j r_j s_j = \sum_j r_j \langle A, H_j \rangle = \biggl\langle A, \sum_j r_j H_j\biggr\rangle = \langle A, 0 \rangle = 0,
\]
as we wanted. Therefore, $\rH^2(X)$ is indeed generated by the $H_j$ modulo $\sum_j \nu_j H_j = 0$.

We have thus shown that $\rH^*(X)$ is generated by the $H_j$, modulo the linear relations
\[
\sum_j \nu_j H_j = 0
\]
and the Stanley--Reisner relations, plus possibly some extra relations. To show that no other relations are required takes some extra work, but that is not relevant to our later purposes so we refer the interested reader to \cite[Theorem 12.4.4]{CoxLittleSchenck}.
\end{proof}

Now consider the real Lagrangian $L \subset X$. Each toric subvariety $Z \subset X$ intersects $L$ cleanly, and \smash{$\dim Z \cap L = \frac{1}{2} \dim Z$}. Let $d_1, \dots, d_N$ be the intersections of the toric divisors with $L$, and let $h_1, \dots, h_N$ be their Poincar\'e dual cohomology classes in $\rH^1(L)$. We remind the reader that we have now returned to our implicit assumption for this section, that all coefficients are in $\kk = \ZZ/2$ unless stated otherwise. Our next result is again standard, but we explain the proof as it is less well-known.

\begin{Proposition}
\label{propRealToricH}
As a $\ZZ/2$-algebra $\rH^*(L)$ is generated by the $h_j$ modulo the linear relations $\sum_j \nu_j h_j = 0$, again interpreted as a vector equation, and the Stanley--Reisner relations $($and no other relations are needed$)$.
\end{Proposition}
\begin{proof}
Duistermaat \cite[Theorem 3.1]{Duistermaat} showed that if one takes the perfect Morse function $f_\xi$ on $X$ from the proof of Proposition~\ref{propToricH}, then the restriction $f_\xi |_L$ is a perfect Morse function on $L$, as long as mod-$2$ coefficients are used. The critical points of the restricted function are the same as those of the original function, but of half their previous index. We thus have $\dim_\kk \rH^{2j}(X) = \dim_\kk \rH^j(L)$ for each $j$. Moreover, using the restriction of the metric $g$ from Proposition~\ref{propToricH} the descending manifolds on $L$ are the intersections of those on $X$ with $L$. In~particular, $\rH^*(L)$ is spanned by the intersections of toric subvarieties with $L$. These are all transverse intersections of the $d_j$ in $L$, so $\rH^*(L)$ is generated as an algebra by the $h_j$. The Stanley--Reisner relations hold as before.

Now consider the linear relations between the $h_j$. Following an analogous argument to Proposition~\ref{propToricH}, but with $\kk = \ZZ/2$ coefficients instead of $\ZZ$ (which actually makes things much simpler since we are now working over a field), it suffices to show that if $\sum_j r_j h_j = 0$ and $\sum_j s_j \nu_j = 0$ for some $r_j$ and $s_j$ in $\kk$, then $\sum_j r_j s_j = 0$. In the proof of Proposition~\ref{propToricH}, we had to use the fact that \smash{$\rH^2(X; \ZZ) \cong \ZZ^{N-n}$}, and we now need the corresponding fact for \smash{$\rH^1(L)$}, namely $\dim_\kk \rH^1(L) = N-n$. This follows by tensoring $\rH^2(X; \ZZ) \cong \ZZ^{N-n}$ with $\kk$ and applying the equality $\dim_\kk \rH^{2j}(X) = \dim_\kk \rH^j(L)$ established in the previous paragraph.

So suppose then that we have $r_j$ and $s_j$ in $\kk$ such that $\sum_j r_j h_j = 0$ and $\sum_j s_j \nu_j = 0$. We need to show that $\sum_j r_j s_j = 0$. To do this, consider the transverse intersection $K \cap L$. It comprises~$2^n$ points and is a torsor for the $2$-torsion subgroup of the torus $T$, which is naturally identified with $\rH_1(T)$. Given a point $p \in K \cap L$ there is a unique class $\eta_j^p$ in $\rH_1(L, K \cap L)$ which starts at~$p$, crosses once transversely over $d_j$, and then returns to $K \cap L$ without meeting any other $d_i$. This can be viewed as the real part of the basic disc class $\beta_j$. The final endpoint of~$\eta_j^p$ is~${\nu_j + p}$, meaning the action of $\nu_j \in \rH_1(T)$ on $p$. Fix an arbitrary starting point $p \in K \cap L$, and suppose that $s_{j_1}, \dots, s_{j_m} = 1$ whilst all other $s_j$ are $0$. Let $p_1 = p$ and inductively define $p_{j+1} = \nu_j + p_j$. Because $\sum_j s_j \nu_j = 0$ we have that $p_{m+1} = p_1$, so the class \smash{$\eta_{j_1}^{p_1} + \dots + \eta_{j_m}^{p_m}$} has empty boundary in $\rH_0(K \cap L)$ and hence lifts to $a \in \rH_1(L)$. We then have
\[
\sum_j r_j s_j = \sum_j r_j \langle a, h_j \rangle = \biggl\langle a, \sum_j r_j h_j\biggr\rangle = \langle a, 0 \rangle = 0,
\]
which is what we wanted.

So $\rH^*(L)$ is generated by the $h_j$ modulo the claimed linear and Stanley--Reisner relations. To show there are no other relations, and hence complete the proof, it suffices to show that
\[
\dim_\kk \rH^*(L) = \dim_\kk \kk[h_1, \dots, h_N] / (\text{linear and Stanley--Reisner relations}).
\]
This follows by reducing Proposition~\ref{propToricH} mod $2$ and using again the result of the first paragraph that $\dim_\kk \rH^{2j}(X) = \dim_\kk \rH^j(L)$.
\end{proof}

The following is an immediate consequence of Propositions~\ref{propToricH} and~\ref{propRealToricH}.

\begin{Corollary}
\label{corDclass}
The map sending a toric subvariety $Z \subset X$ to $Z \cap L$ induces a well-defined algebra isomorphism
\[
\calD_\mathrm{class}\colon \ \rH_{2*}(X; \ZZ/2) \to \rH_*(L; \ZZ/2),
\]
with respect to the intersection products.
\end{Corollary}

\begin{Remark}
We call the map $\calD$ for `Duistermaat', with subscript ${}_\mathrm{class}$ for `classical', to distinguish it from its quantum counterpart $\calD_R$ that we will define shortly.
\end{Remark}

To construct $\calD_R$, we will mimic the above strategy: give a presentation of $\QH^*_R(X)$ (we have seen this already), show that $\HF^*_R(\bL, \bL)$ has the same dimension, then identify generators of $\HF^*_R(\bL, \bL)$ and show that they satisfy all of the relations satisfied by the corresponding generators of $\QH^*_R(X)$.

\subsection{The Seidel maps}

In order to implement this plan to construct $\calD_R$, we will use the Seidel representation \cite{SeidelRep} and the relative Seidel map introduced by Hu--Lalonde \cite{HuLalonde}. We now recap these, and discuss the extension of the latter to $R$ coefficients.

Recall that the Seidel representation is a map $S \colon \pi_1\Ham(X) \to \QH^*(X)^\times$, defined on a loop~$\gamma$ by counting pseudoholomorphic sections of the $X$-bundle over $S^2$ clutched by $\gamma$. This map is group homomorphism when $\pi_1\Ham(X)$ is equipped with the loop concatenation product; this is proved by gluing fibrations. By the usual Eckmann--Hilton argument, one could equivalently define the product on $\pi_1\Ham(X)$ by pointwise composition.

Recall also that there is a relative Seidel map \smash{$S_{\LL} \colon \pi_0 \calP_L \Ham(X) \to \HF^*\bigl(\LL, \LL\bigr)^\times$}, where
\[
\calP_L \Ham(X) = \{ \text{paths } \alpha\colon \ [0, 1] \to \Ham(X) \mid \alpha(0) = \id_X \text{ and } \alpha(1)(L) = L\},
\]
defined as follows. Let $H$ be the rounded half-strip
\[
H = \{s +  {\rm i}t \mid s \leq 0 \text{ and } t \in [-1, 1]\} \cup \bigl\{s+{\rm i} t \mid s^2 + t^2 \leq 1\bigr\} \subset \CC.
\]
Given $\alpha \in \calP_L \Ham(X)$, let $\Lambda_\alpha$ be the following moving Lagrangian boundary condition for $H$, i.e., the following family of Lagrangian submanifolds in $X$ parametrised by $\partial H$:
\[
\Lambda_\alpha(s \pm {\rm i}) = L \quad \text{for $s \leq 0$} \qquad \text{and} \qquad \Lambda_\alpha\bigl({\rm e}^{\pi {\rm i} (\tau - \frac{1}{2})}\bigr) = \alpha(\tau)(L) \quad \text{for $\tau \in [0, 1]$}.
\]
The element $S_{\LL}(\alpha) \in \HF^*\bigl(\LL, \LL\bigr)$ is defined by counting finite-energy solutions $u \colon H \to X$ of the Floer equation, with $u(z) \in \Lambda_\alpha(z)$ for all $z \in \partial H$, and with output at the obvious strip-like end. Equivalently, this is the continuation element associated to the Hamiltonian isotopy $\alpha$.

Degenerating domains as shown in Figure~\ref{figSeidelHom} yields the twisted-homomorphism property
\begin{equation}
\label{eqTwistedHom}
\mu^2_{\Fuk}\bigl(\alpha_1(1)_*S_{\LL}(\alpha_2), S_{\LL}(\alpha_1)\bigr) = S_{\LL}(\alpha_1 \cdot \alpha_2),
\end{equation}
where the product on $\pi_0\calP_L \Ham(X)$ is defined by
\begin{align*}
\alpha_1 \cdot \alpha_2 &= \alpha_1 \text{ followed by (i.e., concatenated with) } \alpha_1(1) \circ \alpha_2
\\ &= \alpha_1 \circ \alpha_2 \text{ pointwise}
\\ &= \alpha_2 \text{ followed by } \alpha_1 \circ \alpha_2(1).
\end{align*}
\begin{figure}[b]
\centering
\begin{tikzpicture}
\def\a{1.5cm}
\def\b{0.7cm}
\def\c{\b/(1-1/sqrt(2))}
\draw (0, \b) -- ++(\a, 0) arc[start angle=-90, end angle=-45, radius=\c] arc[start angle=135, end angle=90, radius=\c] coordinate(A) -- ++(3*\a, 0) arc[start angle=90, end angle=0, radius=\b] coordinate(C) arc[start angle=0, end angle=-90, radius=\b] -- ++(-3*\a, 0) arc[start angle=90, end angle=270, radius=\b] -- ++(3*\a, 0) arc[start angle=90, end angle=0, radius=\b] coordinate(D) arc[start angle=0, end angle=-90, radius=\b] -- ++(-3*\a, 0) coordinate(B) arc[start angle=-90, end angle=-135, radius=\c] arc[start angle=45, end angle=90, radius=\c] -- ++(-\a, 0) -- cycle;
\draw[dashed] ($(A)+(\a, 0)$) -- ++(0, -2*\b);
\draw[dashed] ($(B)+(\a, 0)$) -- ++(0, 2*\b);
\draw (C) node[anchor=west]{$\alpha_1(1)(\Lambda_{\alpha_2})$};
\draw (D) node[anchor=west]{$\Lambda_{\alpha_1}$};
\end{tikzpicture}
\caption{Degenerating the half strip defining $S_{\LL}(\alpha_1 \cdot \alpha_2)$ into two half strips and a half pair of pants, defining $\mu^2_{\Fuk}\bigl(\alpha_1(1)_*S_{\LL}(\alpha_2), S_{\LL}(\alpha_1)\bigr)$.}\label{figSeidelHom}
\end{figure}
Note that these equalities are in $\pi_0 \calP_L\Ham(X)$ and would only be homotopies rel endpoints if we were working in $\calP_L\Ham(X)$ itself.

If $\LL$ is equipped with a non-trivial local system $\calL$, then to define $S_{\LL}(\alpha)$ we need to choose an~isomorphism from $\alpha(1)_*\calL$ to $\calL$. Such a choice need not exist or be unique. (Outside characteristic~$2$ one also needs to choose an isomorphism from $\alpha(1)_*\mathfrak{s}$ and $\mathfrak{s}$, where $\mathfrak{s}$ is the spin structure on~$L$, but this is irrelevant for us.)

Suppose now that we want to extend $S_{\LL}$ to a map $S_\bL$ which lands in $\HF^*_R(\bL, \bL)^\times$. Since $\HF^*_R(\bL, \bL)$ can be viewed as the Floer cohomology of $L$ equipped with the tautological \mbox{rank-$1$} local system $\calL_R$ over $R$, the previous paragraph tells us that to define $S_\bL(\alpha)$ we need a~choice of isomorphism $\alpha(1)_*\calL_R \to \calL_R$. Recalling that $R = \kk[\rH_1(L; \ZZ/2)]$, we see that such an isomorphism exists if and only if $\alpha(1)$ acts trivially on $\rH_1(L)$ (coefficients implicitly taken to be $\kk = \ZZ/2$). In this case an isomorphism is freely and uniquely determined by an $R$-module isomorphism $(\alpha(1)_*\calL_R)_p \to (\calL_R)_p$ over a single point $p \in L$. A natural way to define such an~\mbox{$R$-module} isomorphism is by parallel transport along a homotopy class $\eta$ of path rel endpoints from $\alpha(1)^{-1}(p)$ to $p$. Given a choice of $\eta$, we denote the resulting relative Seidel element by~$S_\bL(\alpha, \eta)$. The analogue of the twisted homomorphism property \eqref{eqTwistedHom} is now
\[
\mu^2_R(\alpha_1(1)_*S_\bL(\alpha_2, \eta_2), S_\bL(\alpha_1, \eta_1)) = S_\bL\bigl(\alpha_1 \cdot \alpha_2, \bigl(\alpha_2(1)^{-1} \circ \eta_1\bigr) \cdot \eta_2\bigr),
\]
assuming $\eta_1$ and $\eta_2$ are defined using the same point $p$.

Hu--Lalonde \cite[Proposition 3.16]{HuLalonde} show that for a loop $\gamma$ in $\pi_1 \Ham(X) \subset \pi_0 \calP_L \Ham(X)$ we have $\CO^0_{\LL}(S(\gamma)) = S_{\LL}(\gamma)$; in their notation, the map $\CO^0_{\LL}$ is called $\mathscr{A}$. In this situation, $\gamma(1) = \id_X$ fixes the whole of $L$ pointwise, so we can take $\eta$ to be the constant path $\eta_p$ at $p$, for any point $p$. The same proof as for $S_{\LL}$ then goes through to show that
\begin{equation}
\label{eqSLCO}
\hatCO(S(\gamma)) = S_\bL(\gamma, \eta_p).
\end{equation}

\subsection[The isomorphism D\_R]{The isomorphism $\boldsymbol{\calD_R}$}

We are now ready to define $\QH^*_R(X)$ and assemble the above ingredients to construct the $R$-algebra isomorphism
\[
\calD_R\colon \ \QH^*_R(X) \to \HF^*_R(\bL, \bL)
\]
appearing in \eqref{eqDRdiagram}. We do this by extending ideas of Haug \cite{Haug} and Hyvrier \cite{Hyvrier}, who constructed an analogous $\kk$-algebra isomorphism $\calD \colon \QH^*(X) \to \HF^*\bigl(\LL, \LL\bigr)$.

To define $\QH^*_R(X)$, first recall that $R = \kk[\rH_1(L; \ZZ/2)]$ and that we have an isomorphism
\[
\rH_2(X; \ZZ/2) \cong \rH_1(L; \ZZ/2)
\]
from $\calD_\mathrm{class}$. We can therefore view $R$ as $\kk[\rH_2(X; \ZZ/2)]$. We then define $\QH^*_R(X)$ to be the $R$-algebra with underlying $R$-module $R \otimes \rH^*(X)$, and with product defined by counting pseudoholomorphic spheres in the usual way, but with the contribution of spheres in class $A \in \rH_2(X)$ weighted by the monomial $Z^A \in \kk[\rH_2(X; \ZZ/2)] = R$.

\begin{Remark}\label{rmkZz}
We will usually use $Z$ for the formal variable in group algebras built from the homology of $X$, and $z$ in group algebras built from the homology of $L$. So under our twin perspectives of $R$ as both $\kk[\rH_2(X; \ZZ/2)]$ and $\kk[\rH_1(L; \ZZ/2)]$, we could write \smash{$Z^A = z^{\calD_\mathrm{class}(A)}$}.
\end{Remark}

Recall that from Corollary~\ref{corToricQH}, we have an identification of $\kk$-algebras
\begin{equation}
\label{eqQHR}
\QH^*_R(X) \cong \kk\bigl[Z_1^{\pm 1}, \dots, Z_N^{\pm 1}\bigr] \bigg/ \biggl(\sum_j \nu_j Z_j\biggr) + \biggl(\prod_j Z_j^{\langle 2A, H_j \rangle}-1 \mid A \in \rH_2(X; \ZZ)\biggr),
\end{equation}
under which $H_j$ on the left corresponds to $Z_j$ on the right. The proof of Corollary~\ref{corToricQH} actually~shows that this is an identification of $R$-algebras, if \smash{$Z^{A \mod 2} \in R$} is sent to \smash{$\prod_j Z_j^{\langle A, H_j \rangle}$} on the right-hand side. (This is consistent with our earlier definition of \smash{$Z^A$} in the proof of Proposition~\ref{propKerFrob}, and is well-defined---i.e., only depends on $A \mod 2$---because we already quotiented out by even powers on the right.) The desired isomorphism $\calD_R$ is then provided by the following~result.

\begin{Proposition}\
\label{propDR}
\begin{enumerate}[label=$(\roman*)$,ref=(\roman*)]\itemsep=0pt
\item\label{HF1} We have $\dim_\kk \HF^*_R(\bL, \bL) = \dim_\kk \QH^*_R(X)$.
\item\label{HF1b} The algebra $\HF^*_R(\bL, \bL)$ is commutative.
\item\label{HF2} As an $R$-algebra, $\HF^*_R(\bL, \bL)$ is generated by $\PSS(h_1), \dots, \PSS(h_N)$.
\item\label{HF3} Each $\PSS(h_j)$ is invertible.
\item\label{HF4} For each $A \in \rH_2(X; \ZZ)$, we have \smash{$\prod_j \PSS(h_j)^{\langle H_j, A\rangle} = Z^{A \mod 2} \in R$}.
\item\label{HF5} These classes satisfy
\[
\sum_j \nu_j \PSS(h_j) = 0 \qquad \text{and, for all $A \in \rH_2(X; \ZZ)$,} \qquad \prod_j \PSS(h_j)^{\langle H_j, 2A\rangle} = 1.
\]
$($The former represents $n$ linear relations, as in Remark~$\ref{rmkVectorRelations}.)$
\end{enumerate}
Thus, by $\ref{HF1b}$, $\ref{HF2}$, $\ref{HF3}$, and $\ref{HF5}$, there is a surjective $R$-algebra homomorphism
\[
R\bigl[Z_1^{\pm 1}, \dots, Z_N^{\pm 1}\bigr] \bigg/ \bigg(\sum_j \nu_j Z_j\bigg) + \bigg(\prod_j Z_j^{\langle 2A, H_j \rangle}-1\mid A \in \rH_2(X; \ZZ)\bigg) \to \HF^*_R(\bL, \bL),
\]
given by $Z_j \mapsto \PSS(h_j)$. By~$\ref{HF4}$, we can replace the $R$ on the left with $\kk$, if we make the left-hand side into an $R$-algebra by letting $Z^{A\mod 2} \in R$ act as $\prod_j Z_j^{\langle A, H_j \rangle}$. Then~\eqref{eqQHR} tells us that we can replace the domain with $\QH^*_R(X)$, and by $\ref{HF1}$ the resulting map is an isomorphism. This map is our $\calD_R$. By construction, it is the unique $R$-algebra homomorphism $\QH^*_R(X) \to \HF^*_R(\bL, \bL)$ sending each $H_j$ to $\PSS(h_j)$.
\end{Proposition}
\begin{proof}
\ref{HF1} For this, we follow Haug, who proved the analogous result for $\HF^*\bigl(\LL, \LL\bigr)$ \cite[Theorem~A\,(i)]{Haug}. We compute $\HF^*_R(\bL, \bL)$ using a pearl model with a perfect Morse function, as~in Proposition~\ref{propRealToricH}, and with the standard toric complex structure $J_\mathrm{std}$. Transversality can be achieved by appropriate perturbation of the Morse function and metric; see \cite[Section~7.2]{Haug}. The differential counts pearly trajectories, and there is a $\ZZ/2$-action on these by reflecting discs using the antisymplectic involution $\tau \colon X \to X$ given by complex conjugation on homogeneous coordinates. Note that this reflection map $\tau$ fixes $L$ pointwise and is antiholomorphic with respect to $J_\mathrm{std}$. A disc and its reflection under $\tau$ have the same boundary class in $\rH_1(L)$---recall the implicit $\ZZ/2$ coefficients---so they are counted with the same $R$ weight. Since we are working in characteristic $2$, we conclude that all contributions to the pearly differential cancel with their reflections, except those coming from fixed points of the action. These fixed points are precisely trajectories containing no discs (see \cite[Proposition~4.29]{SmithPlatonic}), i.e., Morse trajectories, but these collectively cancel out because the Morse function is perfect. So the differential vanishes, and we get
\[
\dim_\kk \HF^*_R(\bL, \bL) = \dim_\kk R \cdot \dim_\kk \rH^*(L) = \dim_\kk R \cdot \dim_\kk \rH^*(X) = \dim_\kk \QH^*_R(X).
\]

\ref{HF1b} Here we follow \cite[Proposition 4.30]{SmithPlatonic} (see also \cite[Corollary 1.6]{FOOOAntisymplectic} for related results in characteristic $0$), and again we use a pearl model with the standard complex structure. The Floer product counts Y-shaped trajectories, and $\tau$ acts on these by reversing the order of the inputs, so the product is commutative. More precisely, we use different Morse data on each leg of the Y, and this Morse data is also exchanged by the action of $\tau$. But this does not affect the argument at the level of cohomology.

\ref{HF2} This is essentially \cite[Lemma 5.6]{Hyvrier}. Equip $\HF^*_R(\bL, \bL)$ with the filtration whose $p$th filtered piece is spanned by pearl complex generators of Morse index at most $p$. By monotonicity, the Floer product respects this filtration, and the associated graded product is the ordinary cup product. The result then follows from the fact, proved in Proposition~\ref{propRealToricH}, that the $h_j$ generate~$\rH^*(L)$ as a $\kk$-algebra.

\ref{HF3} For each $j$, there is a Hamiltonian $S^1$-action on $X$ that rotates anticlockwise around the~$j$th toric divisor $D_j$. This is the restriction of the Hamiltonian $T$-action to the $1$-parameter subgroup generated by $\nu_j$ in the Lie algebra $\mathfrak{t}$. Let $\gamma_j$ be the corresponding primitive loop in $\Ham(X)$. Doing `half of $\gamma_j$' gives a path in $\Ham(X)$, generated by~\smash{$\frac{1}{2}\nu_j$}, whose endpoints preserve~$L$ setwise. We denote this path by $\alpha_j \in \calP_L \Ham(X)$. Hyvrier \cite[Theorem 1.3]{Hyvrier} showed that~$S_{\LL}(\alpha_j)$ is given by the $\PSS$ image of $h_j$ in $\HF^*\bigl(\LL, \LL\bigr)$. Moreover, if we use a~Floer datum for~$L$ for which the Hamiltonian is small, then this $S_{\LL}(\alpha_j)$ is computed by approximately constant maps $H \to X$ landing approximately in $d_j \subset L$. This makes sense since~$\alpha_j$ fixes $d_j = D_j \cap L$ pointwise, and we can only say `approximately' because of the presence of a~(small) Hamiltonian perturbation in the Floer datum. So, if $\eta_j$ is a constant path at a point in~$d_j$, then we have $S_\bL(\alpha_j, \eta_j) = \PSS(h_j)$ in $\HF^*_R(\bL, \bL)$. Letting \smash{$\alpha^{-1}_j$} denote the pointwise inverse to~$\alpha_j$, the twisted homomorphism property for $S_\bL$ then tells us that
\[
\mu^2_R\bigl(\alpha_j(1)_*S_\bL\bigl(\alpha_j^{-1}, \eta_j\bigr), S_\bL(\alpha_j, \eta_j)\bigr) = S_\bL\bigl(\alpha_j \cdot \alpha^{-1}_j, \eta_j \cdot \eta_j\bigr).
\]
The right-hand side is $S_\bL(\id_L, \eta_j) = 1_\bL$, so we deduce that \smash{$\alpha_j(1)_*S_\bL\bigl(\alpha_j^{-1}, \eta_j\bigr)$} is inverse to $\PSS(h_j)$. In fact, $\alpha_j(1)_*$ acts trivially on $\HF^*_R(\bL, \bL)$ since it fixes each generator $\PSS(h_j)$, so we can write the inverse just as \smash{$S_\bL\bigl(\alpha_j^{-1}, \eta_j\bigr)$}.

\ref{HF4} This is an extension of \cite[Proposition 5.5]{Hyvrier}. Before getting into the argument we modify the paths $\eta_j$ used in the previous part so that we can compose them more easily. Fix once and for all a basepoint $p \in L$ in the complement of the $d_j$. There is a unique homotopy class $\hateta_j$ of path rel endpoints from $\alpha_j(1)^{-1}(p)$ to $p$ which crosses $d_j$ once, transversely, and avoids all other~$d_i$. In the notation of the proof of Proposition~\ref{propRealToricH}, the path $\hateta_j$ represents the relative homology class $\eta_j^q$, where $q = \alpha_j(1)^{-1}(p)$. By `sucking in the ends' of $\hateta_j$ to give a constant path in $d_j$, we see that \smash{$S_\bL(\alpha_j, \hateta_j) = S_\bL(\alpha_j, \eta_j)$}. Similarly, if \smash{$\hateta^{\,-1}_j$} denotes the analogous homotopy class of path from $\alpha_j(1)(p)$ to $p$, then \smash{$S_\bL\bigl(\alpha_j^{-1}, \hateta_j^{\,-1}\bigr) = S_\bL\bigl(\alpha_j^{-1}, \eta_j\bigr)$}.

Turning now to the argument itself, fix $A \in \rH_2(X; \ZZ)$ and pick $j_1, \dots, j_m \in \{1, \dots N\}$ and signs $\eps_1, \dots, \eps_m$ such that \smash{$\prod_j \PSS(h_j)^{\langle H_j, A\rangle} = \PSS(h_{j_1})^{\eps_1} \cdots \PSS(h_{j_m})^{\eps_m}$}, i.e., such that for each~$J$
\[
\sum_{i \mid j_i = J} \eps_i = \langle H_J, A \rangle.
\]
Writing each \smash{$\PSS(h_j)^{\pm 1}$} as \smash{$S_\bL\bigl(\alpha_j^{\pm 1}, \hateta_j^{\,\pm 1}\bigr)$}, and using that each $\alpha_j(1)_*$ acts trivially on $\HF^*_R(\bL, \bL)$, the twisted homomorphism property gives
\begin{equation}
\label{eqProdPSS}
\prod_j \PSS(h_j)^{\langle H_j, A\rangle} = S_\bL\bigl(\alpha_{j_1}^{\eps_1} \cdots \alpha_{j_m}^{\eps_m}, \bigl(\alpha_{j_m}(1)^{-\eps_m} \circ \dots \circ \alpha_{j_2}(1)^{-\eps_2} \circ \hateta_{j_1}^{\,\eps_1}\bigr) \cdots \hateta_{j_m}^{\,\eps_m}\bigr).
\end{equation}
Letting $\tilalpha$ denote the homotopy class of path rel endpoints \smash{$\alpha_{j_1}^{\eps_1} \cdots \alpha_{j_m}^{\eps_m} \in \pi_0\calP_L \Ham(X)$}, we claim~that
\begin{enumerate}[label=(\alph*)]\itemsep=0pt
\item\label{itmalpha} $\tilalpha$ is a closed loop (i.e., $\tilalpha(1) = \id_X$) and is nullhomotopic rel endpoints.
\item\label{itmeta} The path \smash{$\bigl(\alpha_{j_m}(1)^{-\eps_m} \circ \dots \circ \alpha_{j_2}(1)^{-\eps_2} \circ \hateta_{j_1}^{\,\eps_1}\bigr) \cdots \hateta_{j_m}^{\,\eps_m}$} from \smash{$\tilalpha(1)^{-1}(p) = p$} to $p$ in $L$ represents the homology class $\calD_\mathrm{class}(A \mod 2) \in \rH_1(L)$.
\end{enumerate}
Assuming these results, \ref{itmeta} tells us that
\begin{equation}
\label{eqSLintermediate}
S_\bL\bigl(\tilalpha, \bigl(\alpha_{j_m}(1)^{-\eps_m} \circ \dots \circ \alpha_{j_2}(1)^{-\eps_2} \circ \hateta_{j_1}^{\,\eps_1}\bigr) \cdots \hateta_{j_m}^{\,\eps_m}\bigr) = z^{\calD_\mathrm{class}(A \mod 2)} S_\bL(\tilalpha, \eta_p),
\end{equation}
where $\eta_p$ represents the constant path at $p$, as before. By \ref{itmalpha} and \eqref{eqSLCO}, we have $S_\bL(\tilalpha, \eta_p) = 1_\bL$, so using the relationship between $Z$ and $z$ from Remark~\ref{rmkZz}, we obtain
\[
z^{\calD_\mathrm{class}(A \mod 2)} S_\bL(\tilalpha, \eta_p) = Z^{A \mod 2}.
\]
Plugging this into \eqref{eqSLintermediate} and thence into \eqref{eqProdPSS} yields
\[
\prod_j \PSS(h_j)^{\langle H_j, A\rangle} = Z^{A \mod 2},
\]
as wanted. To complete the proof of \ref{HF4}, it therefore remains to prove~\ref{itmalpha} and~\ref{itmeta}.

For \ref{itmalpha}, recall that each $\alpha_j$ is generated by the vector \smash{$\frac{1}{2} \nu_j$} in the Lie algebra of the torus $T \subset \Ham(X)$. We can represent the homotopy class $\tilalpha$ by the pointwise composition of the $\alpha_{j_i}$, which is then generated by the vector
\[
\frac{\eps_1}{2} \nu_{j_1} + \dots + \frac{\eps_m}{2} \nu_{j_m} = \sum_j \langle A, H_j \rangle \frac{1}{2} \nu_j = \frac{1}{2} \biggl\langle A, \sum_j \nu_j H_j \biggr\rangle.
\]
The right-hand side vanishes since $\sum_j \nu_j H_j = 0$, so $\tilalpha$ can be represented by the path in $\Ham(X)$ generated by the zero vector, i.e., the constant path at $\id_X$.

For \ref{itmeta}, first let $A_j = \langle A, H_j \rangle \mod 2 \in \ZZ/2$, or equivalently $A _j = A \cdot D_j \mod 2$. By Corollary~\ref{corDclass}, we then have
\begin{equation}
\label{eqDclassA}
\calD_\mathrm{class}(A \mod 2) \cdot d_j = A_j \qquad \text{for all $j$}.
\end{equation}
Since the $d_j$ form a basis for $\rH_{n-1}(L)$, we conclude that $\calD_\mathrm{class}(A \mod 2)$ is uniquely determined by~\eqref{eqDclassA}. But by construction \smash{$\bigl(\alpha_{j_m}(1)^{-\eps_m} \circ \dots \circ \alpha_{j_2}(1)^{-\eps_2} \circ \hateta_{j_1}^{\,\eps_1}\bigr) \cdots \hateta_{j_m}^{\,\eps_m}$} intersects $d_j$ exactly~$A_j$ times mod $2$. This completes the proof of \ref{itmeta} and hence of~\ref{HF4}.

\ref{HF5} The linear relations follow immediately from the corresponding relations between the $h_j$ (proved in Proposition~\ref{propRealToricH}) and the fact that $\PSS$ is a linear map. The multiplicative relations follow from \ref{HF4} and the fact that every monomial in $R$ squares to $1$.
\end{proof}

\begin{Remark}
\label{rmkToricAlternative}
A possible alternative proof strategy, in the spirit of Haug \cite{Haug}, is to consider the $Y$-shaped trajectories contributing to the Floer product on $\HF^*_R(\bL, \bL)$ and use the action of~$\tau$ to show that they all cancel except those in which the only non-constant disc is at the centre of the $Y$. In each of the latter trajectories, the central disc can be glued to its reflection under~$\tau$ to give a trajectory contributing to the quantum product on $\QH^*_R(X)$, and one can use the action of~$\tau$ again to show that all other contributions to the quantum product cancel. Whilst this approach is more direct than that used above, it requires a more careful understanding of the relationship between $\tau$ and $\calD_\mathrm{class}$, and we were not easily able to verify the necessary transversality conditions on the moduli spaces.
\end{Remark}

\begin{Remark}
\label{rmkWLcancellation}
Mod-$2$ cancellation of discs with their reflections by $\tau$ shows that $W_L(\calL) = 0$, which we claimed at the beginning of Section~\ref{secRealToric}.
\end{Remark}

\subsection{Concluding the construction of the diagram}
\label{sscCompleteDiagram}

Recall from Section~\ref{sscThmDOutline} that the proof of Theorem~\ref{TheoremE} is reduced to constructing the commutative diagram \eqref{eqDRdiagram}, namely
\[
\begin{tikzcd}[column sep=5em, row sep=2.5em]
\QH^*_R(X) \arrow{d}{\pi} \arrow{r}{\Frob_R} & \QH^{2*}_R(X) \arrow{d}{\calD_R}[swap, anchor=center, rotate=-90, yshift=-1ex]{\cong}
\\ \QH^*(X) \arrow{r}{\hatCO} & \HF^*_R(\bL, \bL).
\end{tikzcd}
\]
Recall that $\pi$ is reduction modulo $\m$ and $\Frob_R$ is the $\kk$-linear extension of the Frobenius map, which is simply the Frobenius map itself since we have reduced to the case $\kk = \ZZ/2$.

Having constructed the isomorphism $\calD_R$, it remains to explain why the diagram commutes. Explicitly, for each $x \in \QH^*_R(X)$ we need to show that
\[
\calD_R\bigl(x^2\bigr) = \hatCO(x \mod \m).
\]
Since all maps involved are $\kk$-algebra homomorphisms, and $\QH^*_R(X)$ is generated as a $\kk$-algebra by the $Z^A \in R$ and by the $H_j$, it suffices to check this for $x$ equal to each $Z^A$ and to each $H_j$. For $x = Z^A$, it is immediate, since then $x^2 = 1 = x \mod \m$. We are left to deal with $x = H_j$, for which the problem reduces to showing that \smash{$\calD_R(H_j)^2 = \hatCO(H_j)$}. For this we extend \cite[Theorem 1.13]{TonkonogCO}.

From the proof of Proposition~\ref{propDR}, we have $\calD_R(H_j) = \PSS(h_j) = S_\bL(\alpha_j, \hateta_j)$, so using the twisted homomorphism property we have
\[
\calD_R(H_j)^2 = S_\bL(\alpha_j, \hateta_j)^2 = S_\bL\bigl(\alpha_j \cdot \alpha_j, \bigl(\alpha_j(1)^{-1} \circ \hateta_j\bigr) \cdot \hateta_j\bigr).
\]
The concatenation $\alpha_j \cdot \alpha_j$ is the loop $\gamma_j$ in $\Ham(X)$ that appears in the proof of Proposition~\ref{propDR}\,\ref{HF3}. The loop $\bigl(\alpha_j(1)^{-1} \circ \hateta_j\bigr) \cdot \hateta_j$, meanwhile, represents the zero class in $\rH_1(L)$ since its mod-$2$ intersection number with each $d_i$ is zero. Therefore, $\calD_R(H_j)^2 = S_\bL(\gamma_j, \eta_p)$. McDuff--Tolman \cite[p.~9]{McDuffTolman} showed that $S(\gamma_j) = H_j$, so \eqref{eqSLCO} then gives
\[
\calD_R(H_j)^2 = \hatCO(H_j),
\]
which is what we needed.

\begin{Remark}
The idea of Remark~\ref{rmkToricAlternative} suggests another possible strategy, whereby one takes trajectories contributing to $\hatCO(x)$ and glues them to their reflections under $\tau$ to obtain trajectories contributing to \smash{$\calD_R(x^2)$}. Again, however, it is unclear whether the necessary transversality can be achieved.
\end{Remark}

\section[Understanding H(hat Phi\_*)]{Understanding $\boldmath{\operatorname{H}\bigl(\widehat{\Phi}_*\bigr)}$}
\label{sechatPhi}

In this penultimate section, we prove the two remaining main results from the monotone setting, namely Theorems~\ref{TheoremG} and~\ref{TheoremJ}, which both relate to the map
\[
\rH\bigl(\hatPhi_*\bigr)\colon \ \HH^*\bigl(\CF^*\bigl(\LL, \LL\bigr)\bigr) \to \HH^*\bigl(\CF^*\bigl(\LL, \LL\bigr), \hatB\,\bigr).
\]
Recall that $\calE \in \mf(S, W_L - \lambda)$ is the image of $\LL$ under the localised mirror functor $\LMF[L,\lambda]$, $\hatB$ is the $\m$-adic completion of its endomorphism dg-algebra, and \smash{$\hatPhi$} is the $A_\infty$-algebra homomorphism $\CF^*\bigl(\LL, \LL\bigr) \to \hatB$ induced by $\LMF[L,\lambda]$. The map \smash{$\hatPhi$} makes \smash{$\hatB$} into a \smash{$\CF^*\bigl(\LL, \LL\bigr)$}-bimodule, and can also be viewed as a bimodule map \smash{$\CF^*\bigl(\LL, \LL\bigr) \to \hatB$}. Then \smash{$\rH\bigl(\hatPhi_*\bigr)$} is the map on Hochschild cohomology induced by pushforward of coefficient bimodules.

In Section~\ref{sscTori}, we prove Theorem~\ref{TheoremG}, that \smash{$\rH\bigl(\hatPhi_*\bigr)$} is an isomorphism if $L$ is a torus and its local system $\calL \in \Spec S$ is a critical point of $W_L \in S$. By arguments analogous to those in Section~\ref{sscIndependence}, this is independent of the choice of auxiliary data, so it suffices to prove it for a~single choice and this is what we do. Then in Section~\ref{sscThmI}, we prove Theorem~\ref{TheoremJ}, namely that
\begin{enumerate}\itemsep=0pt
\item If $L$ is simply connected, then
\[
\rH\bigl(\hatTheta\bigr)^{-1} \circ \rH\bigl(\hatPhi_*\bigr)\colon \ \HH^*\bigl(\CF^*\bigl(\LL, \LL\bigr)\bigr) \to \HF^*\bigl(\LL, \LL\bigr)^\op
\]
is projection to length zero.
\item If $L$ is simply connected and weakly exact, then $\rH\bigl(\hatTheta\bigr)^{-1} \circ \rH\bigl(\hatPhi_*\bigr)$ is identified with
\[
\ev_*\colon \ \rH_{-*}\bigl(\Lambda L^{-TL}\bigr) \to \rH_{-*}\bigl(L^{-TL}\bigr) \cong \rH^*(L),
\]
where $\Lambda L$ is the free loop space of $L$ and $\ev \colon \Lambda L \to L$ is the evaluation-at-basepoint map.
\end{enumerate}

\subsection{Tori}\label{sscTori}

Assume throughout this subsection that $L$ is a torus and that $\calL$ is a critical point of $W_L$. It is well-known that $\LL$ is then \emph{wide} \cite[Proposition~3.3.1]{BiranCorneaLagrangianTopology}, meaning that $\HF^*\bigl(\LL, \LL\bigr)$ is additively isomorphic to $\rH^*(L)$. This torus case was studied in \cite{SmithSuperfiltered}, building heavily on \cite{ChoHongLauLMF,ChoHongLauTorus}, and we begin by recalling some results from there. Note that in \cite[Section~6]{SmithSuperfiltered} all objects are decorated with primes to distinguish them from (different, but related!) unprimed versions appearing earlier in that paper.

\begin{Proposition}\label{propPhiqis}
For a suitable choice of auxiliary data, based on a pearl model for a carefully chosen perfect Morse function, we have the following:
\begin{enumerate}[label=$(\roman*)$,ref=(\roman*)]\itemsep=0pt
\item The matrix factorisation $\calE$ is quasi-isomorphic in $\mf(S, W_L - \lambda)$ to an explicit wedge-contraction matrix factorisation $\calE_0$, and conjugating by this quasi-isomorphism induces a dg-algebra quasi-isomorphism $\Psi \colon \calB \to \calB_0$ between their endomorphism algebras {\rm \cite[{\it Proposition} 6.11]{SmithSuperfiltered}}.
\item\label{itmPhiqis} The composition $\Psi \circ \Phi \colon \CF^*\bigl(\LL, \LL\bigr) \to \calB_0$ is a quasi-isomorphism, so $\Phi \colon \CF^*\bigl(\LL, \LL\bigr) \to \calB$ is a quasi-isomorphism {\rm \cite[{\it below Remark} 6.12]{SmithSuperfiltered}}.
\end{enumerate}
\end{Proposition}

\begin{Corollary}
For these auxiliary data, the map $\hatPhi \colon \CF^*\bigl(\LL, \LL\bigr) \to \hatB$ is a quasi-isomorphism.
\end{Corollary}
\begin{proof}
The map $\hatPhi$ can be viewed as the composition of $\Phi$ with $\hatS \otimes_S {-} \colon \calB \to \hatB$. The former is a quasi-isomorphism by Proposition~\ref{propPhiqis}\,\ref{itmPhiqis}, whilst the latter is a quasi-isomorphism because for any Noetherian ring $R$ and ideal $I \subset R$ the $I$-adic completion of $R$ is flat over $R$ \cite[Theorem~7.2\,(b)]{Eisenbud}.
\end{proof}

It is now straightforward to deduce the result we want.

\begin{Proposition}
\label{propThmF}
Using the same auxiliary data, the map
\[
\rH\bigl(\hatPhi_*\bigr)\colon \ \HH^*\bigl(\CF^*\bigl(\LL, \LL\bigr)\bigr) \to \HH^*\bigl(\CF^*\bigl(\LL, \LL\bigr), \hatB\,\bigr)
\]
is an isomorphism.
\end{Proposition}
\begin{proof}
Equip both Hochschild complexes with the length filtration. The map $\hatPhi_*$ respects these filtrations so induces a map between spectral sequences. On $E_1$ pages this looks like
\[
\rH\bigl(\hatPhi\bigr)_*\colon \ \rCC^*\bigl(\HF^*\bigl(\LL, \LL\bigr)\bigr) \to \rCC^*\bigl(\HF^*\bigl(\LL, \LL\bigr), \rH\bigl(\hatB\,\bigr)\bigr),
\]
which is an isomorphism because $\rH\bigl(\hatPhi\bigr)\colon \HF^*\bigl(\LL, \LL\bigr) \to \rH\bigl(\hatB\,\bigr)$ is. The result then follows from the Eilenberg--Moore comparison theorem (Theorem~\ref{thmEMcomparison}).
\end{proof}

\subsection{Simply connected Lagrangians}\label{sscThmI}

For this subsection, assume that $L$ is simply connected. Then $S = \kk$ and $\HF^*_S(\bL, \bL)^\op = \HF^*\bigl(\LL, \LL\bigr)$, so \smash{$\rH\bigl(\hatTheta\bigr)^{-1} \circ \rH\bigl(\hatPhi_*\bigr)$} is a map
\[
\HH^*\bigl(\CF^*\bigl(\LL, \LL\bigr)\bigr) \to \HF^*\bigl(\LL, \LL\bigr)^\op.
\]
Theorem~\ref{TheoremJ}\,(i) is the following result.

\begin{Proposition}
\label{propThmIa}
In this situation, $\rH\bigl(\hatTheta\bigr)^{-1} \circ \rH\bigl(\hatPhi_*\bigr)$ is projection to length zero.
\end{Proposition}
\begin{proof}
First we describe \smash{$\rH\bigl(\hatTheta\bigr)^{-1}$} more explicitly. Since $S = \kk$, we have that
\[
\hatB = \eend_\kk^*\bigl(\CF^*\bigl(\LL, \LL\bigr)\bigr)
\]
and that \smash{$\hatTheta$} reduces to the map
\[
\theta\colon \ \CF^*\bigl(\LL, \LL\bigr)^\op \to \rCC^*\bigl(\CF^*\bigl(\LL, \LL\bigr), \eend_\kk^*\bigl(\CF^*\bigl(\LL, \LL\bigr)\bigr)\bigr)
\]
given by \eqref{eqtheta}. As explained in the proof of Proposition~\ref{propThmC}, a quasi-inverse to this map is given~by
\[
\Pi'\colon \ \phi \mapsto \phi^0(e_{\LL}),
\]
where $e_{\LL}$ is a chain-level representative for the unit in $\HF^*\bigl(\LL, \LL\bigr)$. (In Proposition~\ref{propThmC}, we used $\Pi$, which differs from $\Pi'$ by a sign \smash{$(-1)^{\lvert \phi \rvert}$}, but that was because there we were viewing its codomain as an $A_\infty$-module rather than an $A_\infty$-algebra; see Remark~\ref{rmkPiSign}.)

Next recall that the pushforward map \smash{$\hatPhi_*\colon \rCC^*\bigl(\CF^*\bigl(\LL, \LL\bigr)\bigr) \to \rCC^*\bigl(\CF^*\bigl(\LL, \LL\bigr), \hatB\,\bigr)$} is given by
\[
\phi \mapsto \sum(-1)^{(\lvert \phi \rvert - 1)\maltese_i} \hatPhi(\dots, \phi(\dots), \stackrel{i}{\dots}).
\]

Combining these, we see that \smash{$\rH\bigl(\hatTheta\bigr)^{-1} \circ \rH\bigl(\hatPhi_*\bigr) = \rH\bigl(\Pi' \circ \hatPhi_*\bigr)$} and that \smash{$\Pi' \circ \hatPhi_*$} is given by
\[
\phi \mapsto \Pi'\Bigl(\sum(-1)^{(\lvert \phi \rvert - 1)\maltese_i} \hatPhi(\dots, \phi(\dots), \stackrel{i}{\dots})\Bigr) = \hatPhi^1\bigl(\phi^0\bigr)(e_{\LL}) = \mu^2\bigl(\phi^0, e_{\LL}\bigr).
\]
The last equality follows from the construction of $\Phi^1 = \LMF^1$ in Definition~\ref{defLMF}. In cohomology $\mu^2\bigl(\phi^0, e_{\LL}\bigr)$ coincides with $\phi^0$, so \smash{$\rH(\Pi' \circ \hatPhi_*)$} coincides with \smash{$\phi \mapsto \phi^0$}, i.e., projection to length zero.
\end{proof}

Now assume that $L$ is also weakly exact ($\omega$ vanishes on $\pi_2(X, L)$). Then Floer cochain algebras reduce to singular cochain algebras, so \smash{$\rH\bigl(\hatTheta\bigr)^{-1} \circ \rH\bigl(\hatPhi_*\bigr)$} reduces to a map
\[
\HH^*(\rC^*(L)) \to \rH^*(L)^\op = \rH^*(L).
\]
Since $L$ is simply connected, we also have an isomorphism $F\colon \rH_{-*}\bigl(\Lambda L^{-TL}\bigr) \to \HH(\rC^*(L))$ \cite[Corollary~11]{CohenJones}. Theorem~\ref{TheoremJ}\,(ii) is the following result.

\begin{Proposition}
\label{propThmIb}
In this situation, $\rH\bigl(\hatTheta\bigr)^{-1} \circ \rH\bigl(\hatPhi_*\bigr) \circ F$ coincides with
\[
\ev_*\colon \ \rH_{-*}\bigl(\Lambda L^{-TL}\bigr) \to \rH_{-*}\bigl(L^{-TL}\bigr) \cong \rH^*(L),
\]
where $\ev \colon \Lambda L \to L$ is the evaluation-at-basepoint map.
\end{Proposition}
\begin{proof}
We begin by recapping the construction of the isomorphism
\[
F\colon \ \rH_{-*}\bigl(\Lambda L^{-TL}\bigr) \to \HH^*(\rC^*(L)),
\]
following \cite{CohenJones}. Letting \smash{$\Delta^k = \bigl\{(t_1, \dots, t_k) \in \RR^{k+1} \mid 0 \leq t_1 \leq \dots \leq t_k \leq 1\bigr\}$} be the standard $k$-simplex, there is a map \smash{$\Delta^k \times \Lambda L \to L^{k+1}$} given by
\[
((t_1, \dots, t_k), \gamma ) \mapsto (\ev(\gamma) = \gamma(0), \gamma(t_1), \dots, \gamma(t_k)).
\]
This induces a map between Thom spectra
\[
f_k\colon \ \bigl(\Delta^k\bigr)_+ \wedge \Lambda L^{-TL} \to L^{-TL} \wedge \bigl(L^k\bigr)_+,
\]
where the virtual bundle $-TL$ on $\Lambda L$ implicitly means $\ev^* (-TL)$. The corresponding map on chains
\begin{align*}
\rC_{*-k}\bigl(\Lambda L^{-TL}\bigr) \to \rC_*\bigl(L^{-TL}\bigr) \otimes \rC_*(L)^{\otimes k} &\simeq \hom^*\bigl(\rC^{-*}(L)^{\otimes k}, \rC_*\bigl(L^{-TL}\bigr)\bigr)
\\ &\simeq \hom^*\bigl(\rC^{-*}(L)^{\otimes k}, \rC^{-*}(L)\bigr)
\end{align*}
simplifies to a chain map
\[
(f_k)_*\colon \ \rC_{-*}\bigl(\Lambda L^{-TL}\bigr) \to \hom^*\bigl(\rC^*(L)[1]^{\otimes k}, \rC^*(L)\bigr),
\]
and by \cite[Corollary 11]{CohenJones} these $(f_k)_*$ assemble to define an isomorphism
\[
F\colon \ \rH_*\bigl(\Lambda L^{-TL}\bigr) \to \HH^*(\rC^*(L)).
\]

Combining this description with Proposition~\ref{propThmIa}, we see that $\rH\bigl(\hatTheta\bigr)^{-1} \circ \rH\bigl(\hatPhi_*\bigr) \circ F$ is $F$ followed by projection to length zero, which is precisely $\rH((f_0)_*)$. By construction, this is $\ev_*$, followed by the isomorphism $\rH_{-*}\bigl(L^{-TL}\bigr) \to \rH^*(L)$.
\end{proof}

\section{The non-monotone case}
\label{secNonMonotone}

In this section only, we allow $X$ and $L$ to be non-monotone, and we work over the Novikov field
\[
\Lambda = \CC \bigl[\hspace{-1mm}\bigl[ T^\RR \bigr]\hspace{-1mm}\bigr] = \Biggl\{ \sum_{j = 1}^\infty a_j T^{s_j} \mid a_j \in \CC \text{ and } s_j \in \RR \text{ with } s_j \to \infty\Biggr\}
\]
instead of $\kk$ as above. Let $\val \colon \Lambda \to \RR \cup \{\infty\}$ denote the valuation
\[
\val\biggl(\sum_j a_j T^{s_j} \biggr) = \min \{s_j \mid a_j \neq 0\},
\]
with the convention that this minimum is $\infty$ if the set is empty, i.e., $\val(0) = \infty$.

In Section~\ref{sscNonMonotoneSetup}, we discuss some generalities of Floer theory in this setting. We then move on to describe how our main results adapt, before applying them to the case of toric fibres.

\subsection{Setup}
\label{sscNonMonotoneSetup}

In non-monotone Floer theory, the counts of pseudoholomorphic curves are generally infinite, so to make sense of them we must work with suitably filtered and completed algebraic objects. In~this section, we will work with a specific notion of filtration, which we now make precise.

\begin{Definition}
\label{defFiltrations}
An abelian group $A$ is \emph{filtered} if it carries a decreasing filtration $F^sA$ by subgroups, indexed by $s \in \RR$, which is exhaustive ($\bigcup_s F^sA = A$) and Hausdorff ($\bigcap_s F^sA = 0$). The abelian group $A$ is \emph{complete} if the natural map \smash{$A \to \varprojlim A/F^sA$} is an isomorphism; this automatically implies Hausdorffness. The prototypical example of a complete filtered object is $\Lambda$, with $F^s\Lambda = \Lambda_{\geq s} \coloneqq \val^{-1}(\RR_{\geq s})$. If $A$ has extra algebraic structure, then we require the filtration to be compatible with this. For example, a \emph{filtered $\Lambda$-algebra} is a $\Lambda$-algebra $A$ in the usual commutative algebra sense, carrying a filtration $F^sA$ of the sort just discussed, such that the multiplication $A \otimes A \to A$ and ring homomorphism $\Lambda \to A$ defining the $\Lambda$-algebra structure are filtered, i.e., $(F^{s_1}A)(F^{s_2}A) \subset F^{s_1+s_2}A$ for all $s_1$ and $s_2$, and $\Lambda_{\geq s}$ maps into $F^sA$ for all $s$. Similarly, if $A$ is augmented, then we require the augmentation $A \to \Lambda$ to be filtered.
\end{Definition}

\begin{Definition}
\label{defcfiltered}
Given a filtered $\Lambda$-algebra $A$, a rank-$1$ local system $\calE$ on $L$ over $A$ is \emph{$c$-filtered} for $c \in \RR_{\geq 0}$ if each fibre is equipped with a filtration such that: each fibre is isomorphic to~$A$ as a~\emph{filtered} $A$-module; for all paths $\gamma$ and for all $s$, the parallel transport ${\calP_\gamma \colon \calE_{\gamma(0)} \to \calE_{\gamma(1)}}$ maps~$F^s\calE_{\gamma(0)}$ into $F^{s-c}\calE_{\gamma(1)}$. Note that the fibre filtrations need not be (and in general cannot~be) chosen consistently in a locally constant way.
\end{Definition}

\begin{Lemma}
In terms of the monodromy representation $\xi \colon \pi_1(L) \to A^\times$, a rank-$1$ local system can be made $c$-filtered if and only if there exists a $c$ such that $\xi$ lands in $A^\times \cap F^{-c}A$.
\end{Lemma}
\begin{proof}
Fix a basepoint $p$ in $L$ and suppose there exists $c$ such that the monodromy representation $\xi \colon \pi_1(L, p) \to A^\times$ lands in $A^\times \cap F^{-c}A$. Now define the fibre filtrations as follows: fix a~filtration $F^s\calE_p$ of $\calE_p$; for each point $q$ in $L$ choose a path $\gamma_q$ from $p$ to $q$, with $\gamma_p$ taken to be the constant path; equip $\calE_q$ with the filtration $F^s\calE_q = \calP_{\gamma_q}(F^s\calE_p)$. We claim this has the desired property.

Well, take a path $\gamma$ from $q$ to $r$. We have $\gamma = \gamma_r \cdot \beta \cdot \gamma_q^{-1}$ for some loop $\beta$ based at $p$, where $\cdot$ denotes concatenation and ${}^{-1}$ denotes reversal. Then $\calP_\gamma = \calP_{\gamma_r} \circ \calP_{\beta} \circ \calP_{\gamma_q}^{-1}$, where $\calP_{\gamma_r}$ and $\calP_{\gamma_q}^{-1}$ preserve the filtration level exactly and $\calP_\beta = \xi$ shifts it down by at most $c$, so we are done.

The converse is clear: if $\calE$ is $c$-filtered, then apply the defining condition to paths from~$p$ to~$p$.
\end{proof}

Technical foundations in pseudoholomorphic curve theory are also required, in order to define compatible virtual fundamental chains on the relevant moduli spaces. Recall that we encapsulate these in Assumption~\ref{MonAss}, which asserts the existence of sufficient machinery to
\begin{enumerate}\itemsep=0pt
\item\label{ass1b} Define the Fukaya category $\Fuk(X)_\lambda$ over $\Lambda$ and prove the operations satisfy the $A_\infty$-relations. We assume the category is strictly unital, but see Remark~\ref{rmkStrictUnit}.
\item\label{ass2b} Construct $\CO_\lambda\colon \QH^*(X) \to \HH^*(\Fuk(X)_\lambda)$ as a unital $\Lambda$-algebra homomorphism. We assume that it can be made to land in strictly unital Hochschild cochains.
\item\label{ass3b} Prove a generation criterion for summands of $\Fuk(X)_\lambda$ based on injectivity of
\[
\CO_{\LL}\colon \ \QH^*(X) \to \HH^*\bigl(\CF^*\bigl(\LL, \LL\bigr)\bigr)
\]
on factors of $\QH^*(X)$.
\end{enumerate}

For the rest of the section, we assume~\ref{ass1b} holds. We will later also impose~\ref{ass2b} and~\ref{ass3b}.

Objects of $\Fuk(X)_\lambda$ are compact, connected Lagrangians $L$ in $X$, equipped with a pin structure and $\ZZ/2$-grading, plus a $c$-filtered rank-$1$ local system over $\Lambda$ (for some $c$), and a weak bounding cochain $b \in F^{>c}\CF^\mathrm{odd}\bigl(\LL, \LL\bigr)$ whose curvature is $\lambda \in \Lambda$. The $A_\infty$-operations on $\Fuk(X)_\lambda$ count pseudoholomorphic discs $u$ as before, but now additionally weighted by $T^{\omega(u)}$. Each morphism space is a complete filtered $\Lambda$-vector space, so by Gromov compactness these weighted disc counts converge.

\begin{Remark}
The reader may be surprised that in Definition~\ref{defcfiltered} we allowed the parallel transport maps to decrease the filtration level, i.e., we allowed $c$ to be positive. This flexibility is necessary for our applications, and does not cause any convergence problems for the following reason. When defining the contribution of a Floer cochain $c$ to an operation $\mu^k(a_k, \dots, a_1)$, assuming first that there are no weak bounding cochains, we count discs with inputs $a_i$ and output $c$, weighted by $T^\text{area}$ and by $k+1$ parallel transport maps around the boundary. By Definition~\ref{defcfiltered}, there is an a priori lower bound on the combined filtration shifts of the $k+1$ parallel transport maps, so Gromov compactness ensures that below any given filtration level the count of such discs is finite. Suppose we now also incorporate weak bounding cochains on our Lagrangians. By definition, if the local system on a Lagrangian $L$ is $c$-filtered, then the weak bounding cochain $b$ on $L$ must lie in $F^{>c}\CF^*(L, L)$, i.e., in $F^{c+\delta}\CF^*(L, L)$ for some $\delta > 0$. Then each insertion of $b$ introduces a filtration shift of $\geq -c$ from the additional parallel transport map and $\geq c+\delta$ from $b$ itself. So each time we insert $b$ we increase the filtration level of the output by $\geq \delta$, and thus convergence is maintained.
\end{Remark}

Using exactly the same technical foundations, one can similarly define the Fukaya category~$\Fuk_R(X)_r$ over any complete filtered $\Lambda$-algebra $R$. Objects now carry $c$-filtered rank-$1$ local systems over $R$, plus weak bounding cochains with curvature $r \in R$. Objects of $\Fuk(X)_\lambda$ can be converted into objects of $\Fuk_R(X)_\lambda$ by applying $R \otimes_\Lambda {-}$ to their local systems and bounding~cochains.

\subsection{Adapting our main results}

We now explain how to prove Theorems~\ref{TheoremAp}, \ref{TheoremBp} and~\ref{TheoremCp}, in the form of Propositions~\ref{propThmAp}, \ref{propThmBp} and \ref{propThmCp}. Using Assumption~\ref{MonAss}, these are essentially straightforward adaptations of what we have done already.

The starting point is an object \smash{$\bigl(\LL, b\bigr) \in \Fuk(X)_\lambda$}, a complete filtered augmented $\Lambda$-algebra~$R$, and a lift $(\bL, \mathbf{b})$ of \smash{$\bigl(\LL, b\bigr)$} to $R$. Recall that the latter is an object in $\Fuk_R(X)_{W_L}$, for some $W_L \in R$, whose reduction modulo the augmentation ideal $\m$ is $\bigl(\LL, b\bigr)$. Since the localised mirror functor is defined purely in terms of the $A_\infty$-operations on $\Fuk_R(X)_\bullet$ (it involves both~$\Fuk_R(X)_\lambda$ and~$\Fuk_R(X)_{W_L}$), it adapts immediately to the non-monotone setting to give $\LMF[(\bL, \mathbf{b}), \lambda] \colon \Fuk(X)_\lambda \to \mf(R, W_L - \lambda)$. This gives us a mixed Hochschild complex $\rCC^*(\Fuk(X)_\lambda, \mf(R, W_L - \lambda))$ as before, and we have the following.

\begin{Proposition}
\label{propThmAp}
There is an $A_\infty$-algebra homomorphism
\[
\Theta\colon \ \CF^*_R((\bL, \mathbf{b}), (\bL, \mathbf{b}))^\op \to \rCC^*(\Fuk(X)_\lambda, \mf(R, W_L - \lambda)),
\]
which is cohomologically unital and extends the module action of
\[
\CF^*_R((\bL, \mathbf{b}), (\bL, \mathbf{b}))^\op \qquad \text{on} \ \LMF[(\bL, \mathbf{b}), \lambda].
\]
\end{Proposition}
\begin{proof}
The map is defined by the same schematic diagrams and sign twist as in Definition~\ref{defTheta}. The pseudoholomorphic curves being counted are of the same form as those defining the $A_\infty$-operations, and the fact that $\Theta$ is an $A_\infty$-algebra homomorphism is proved by the same form of argument as the $A_\infty$-relations on the category (as in Proposition~\ref{propThetaHom}), so no foundations beyond Assumption~\ref{MonAss}\,\ref{ass1b} are needed. Compared with the monotone case, there is one new type of degeneration that can occur, namely formation of a boundary bubble carrying no marked points. This corresponds to bubbling off of $\mu^0$ terms, which are all multiples of the unit by the assumption that the Lagrangians carry weak bounding cochains. So these degenerations cancel out by strict unitality of the $A_\infty$-operations.

The fact that $\Theta$ extends the module action follows directly from its definition, as in Lem\-ma~\ref{lemThetaModuleAction}. Cohomological unitality is a consequence of Proposition~\ref{propThmBp} if one is willing to make Assumption~\ref{MonAss}\,\ref{ass2b}, or of the argument suggested in Remark~\ref{rmkRingUnitality} if not.
\end{proof}

\begin{Proposition}
\label{propThmBp}
If Assumption~$\ref{MonAss}\,\ref{ass2}$ also holds, then
\[
\rH(\Theta) \circ \CO^0_{(\bL, \mathbf{b})} = \rH\bigl(\bigl(\LMF[(\bL, \mathbf{b}),\lambda]\bigr)_*\bigr) \circ \CO_\lambda.
\]
\end{Proposition}
\begin{proof}
This follows from the same form of argument as Corollary~\ref{corThmBCommutes}, which only uses moduli spaces analogous to those used in defining $\CO_\lambda$ and in proving that it lands in Hochschild cocycles, so no foundations beyond Assumption~\ref{MonAss}\,\ref{ass1} and \ref{ass2} are needed. Again there are additional degenerations coming from bubbling off of $\mu^0$, but these cancel since we are assuming~$\CO_\lambda$ lands in strictly unital Hochschild cochains.
\end{proof}

\begin{Proposition}
\label{propThmCp}
If $R$ is Noetherian, then the $\m$-adically completed map $\hatTheta$ is a quasi-iso-morphism.
\end{Proposition}
\begin{proof}
The proof of Theorem~\ref{TheoremC} in Section~\ref{secThmC} is essentially purely algebraic, and translates directly to our new setting (working over $\Lambda$ now, in place of $\kk$, of course). The argument relies on $R$ being Noetherian, which was automatic before, but now we have imposed it directly.
\end{proof}

Before moving on to study the special case of toric fibres, we prove a general result that establishes, under natural conditions, various properties of $R$ that are useful for applying the above machinery.

\begin{Lemma}
\label{lemFinDimProperties}
Suppose $R$ is a local $\Lambda[\rH_1(L; \ZZ)]$-algebra which is complete, filtered, and finite-dimensional as a $\Lambda$-algebra. The following then hold:
\begin{enumerate}[label=$(\roman*)$,ref=(\roman*)]\itemsep=0pt
\item\label{FDaugmented} The unique augmentation $\eps \colon R \to \Lambda$, given by quotienting by the unique maximal ideal $\m \subset R$, is automatically filtered.
\item\label{FDc} If $\trho \colon \rH_1(L; \ZZ) \to R^\times$ denotes the map sending $\gamma$ to the image of $\tau^\gamma$ under the structure map $\Lambda[\rH_1(L; \ZZ)] \to R$, and if $\eps \circ \trho$ lands in $\Lambda_{\geq 0}$, then there exists $c \geq 0$ such that $\trho$ lands in $F^{-c}R$. $($Note that in the rest of the paper we use $z$ as the monomial recording $\rH_1(L; \ZZ)$ classes. Here we use $\tau$ instead, for consistency with {\rm \cite{SmithQH}}.$)$
\item\label{FDNoetherianComplete} $R$ is Noetherian and $\m$-adically complete.
\end{enumerate}
\end{Lemma}
\begin{proof}
\ref{FDaugmented} Suppose for contradiction that $\eps$ is not filtered. Then there exist $s \in \RR$ and $r \in F^sR$ such that $\eps(r) \notin \Lambda_{\geq s}$. Say $\val \circ \eps(r) = s-\delta$, with $\delta > 0$. Replacing $r$ with $\eps(r)^{-1}r$, we may assume that $r \in F^\delta R$ and that $\eps(r) = 1$. Now consider the $\Lambda$-linear endomorphism of $R$ given by multiplication by $r$, and the associated decomposition of $R$ into generalised eigenspaces. Each generalised eigenspace must be an ideal of $R$, but $R$ is local so any proper ideal must be contained in the unique maximal ideal. This means that the whole of $R$ must be a single generalised eigenspace, so there exists $\lambda \in \Lambda$ and $m = \dim_\Lambda R$ such that $(r-\lambda)^m$ annihilates $R$ and hence $(r-\lambda)^m = 0$. Applying $\eps$ to this expression gives $\lambda = 1$, and expanding it out, we then obtain
\begin{equation}
\label{eq1equals}
1 = - \sum_{j=1}^m \binom{m}{j} (-1)^{m-j} r^j.
\end{equation}
The right-hand side of \eqref{eq1equals} lies in $F^\delta R$, so $1$ also lies in $F^\delta R$. Then for all positive integers $p$ we get $1 = 1^p \in F^{p\delta}R$ and hence
\[
1 \in \bigcap_{p = 1}^\infty F^{p\delta} R = 0,
\]
which is impossible (here we used that the filtration on $R$ is Hausdorff). So $\eps$ is indeed filtered.

\ref{FDc} Fix elements $\gamma_1, \dots, \gamma_{2n}$ which generate $\rH_1(L; \ZZ)$ as a monoid (e.g.,~a basis and their negatives) and let $\trho_j = \trho(\gamma_j)$. It suffices to show that for each $j$ there exists $c_j \geq 0$ such that all positive powers of $\trho_j$ lie in $F^{-c_j}R$, since then $c = c_1 + \dots +c_{2n}$ works. So fix $j$ and consider~${\trho_j \in R^\times}$. As in \ref{FDaugmented}, we can consider the generalised eigenspace decomposition for the action of~$\trho_j$ on $R$, and deduce that $(\trho_j - \eps(\trho_j))^m = 0$ for some $m$, so
\begin{equation}
\label{eqtrho}
\trho_j^{\:m} = - \sum_{j=1}^m \binom{m}{j}(-\eps(\trho_j))^j \trho_j^{\:m-j}.
\end{equation}
By assumption, we have $\eps(\trho_j) \in \Lambda_{\geq 0}$. So if we pick $c_j \geq 0$ such that $\trho_j, \dots, \trho_j^{\:m-1} \in F^{-c_j}R$, then~\eqref{eqtrho} tells us that $\trho_j^{\:m}$ also lies in $F^{-c_j}R$. Repeatedly multiplying \eqref{eqtrho} through by $\trho_j$, we~get by induction that $\trho_j^{\:p} \in F^{-c_j}R$ for all positive integers $p$, which is what we wanted.

\ref{FDNoetherianComplete} The Noetherian condition is obvious from finite-dimensionality. $\m$-adic completeness, meanwhile, follows from the fact that $\m^p = 0$ for some $p$. The latter can be proved using the Artinian property (via \cite[Lemma 10.53.4]{stacks-project}, for example), or directly by picking a basis $r_1, \dots, r_k$ for $\m$ and using generalised eigenspace decompositions as in \ref{FDaugmented} to show that each $r_j$ is nilpotent.
 \end{proof}

\subsection{Split-generation by toric fibres}

In this subsection, we specialise to the case where $X$ is a compact toric manifold equipped with its canonical $\ZZ/2$-grading, and prove Theorem~\ref{TheoremL} which states that its Fukaya category is generated by toric fibres. The precise statement is Proposition~\ref{propThmL} below. During the exposition, we include several lemmas whose proofs we defer to the end of the subsection, to avoid distracting from the main thread of the argument.

First we introduce some notation. Let
\begin{equation}
\label{eqMomentPolytope}
\Delta = \bigl\{x \in \mathfrak{t}^\vee \mid \langle \nu_j, x \rangle \geq - \lambda_j \text{ for } i=1,\dots,N\bigr\}
\end{equation}
be the moment polytope of $X$ as in Appendix~\ref{sscToricSetup}, and let $L$ be an arbitrary toric fibre. By translating $\Delta$, we may assume that the $\lambda_j$ are all positive and that $L = \mu^{-1}(0)$. As discussed in Appendix~\ref{sscToricSetup}, the $\nu_j$ can naturally be viewed as elements of $\rH_1(L; \ZZ)$, where they are the boundaries of the \emph{basic disc classes} $\beta_j$ of area $\lambda_j$.

\begin{Definition}
Let $\Gamma_\RR$ be the submonoid of $\RR \oplus \rH_1(L; \ZZ)$ generated by $(\lambda_1, \nu_1), \dots, (\lambda_N, \nu_N)$ and $\RR_{\geq 0} \oplus 0$. Its monoid ring $\CC[\Gamma_\RR]$ naturally lies inside $A \coloneqq \CC\bigl[T^\RR\bigr][\rH_1(L; \ZZ)]$, with $(s, \gamma) \in \Gamma_\RR$ represented by the monomial $T^s \tau^\gamma$. Now define a filtration on $A$ by $F^sA = T^s\CC[\Gamma_\RR]$. This is exhaustive since the $\nu_i$ span $\rH_1(L; \ZZ)$ and is Hausdorff since all exponents of $T$ appearing in~$F^sA$ are at least $s$. Let $\overline{A}$ be the completion of $A$ with respect to this filtration. This naturally contains $\Lambda$ as the completion of $\CC\bigl[T^\RR\bigr]$, so is a complete filtered $\Lambda$-algebra.
\end{Definition}

For a specific closed ideal $I \subset \overline{A}$, Fukaya--Oh--Ohta--Ono \cite{FOOOtoricMS} construct an isomorphism of filtered $\Lambda$-algebras $\ks \colon \QH^*(X) \to \overline{A}/I$ which they call the Kodaira--Spencer map. Their filtration is defined slightly differently from ours, but a proof using the present conventions is given in \cite{SmithQH} (the fact that both $\ks$ and its inverse respect the filtration follows from the fact that the domain and codomain can both be defined over $\Lambda_0$, where the filtrations become $F^s{-} = T^s{-}$, and filteredness then follows from $\Lambda_{\geq 0}$-linearity). The inclusions of $\Lambda$ and $\CC[\rH_1(L; \ZZ)]$ into $\overline{A}$ make $\overline{A}/I$, and hence $\QH^*(X)$, into a $\Lambda[\rH_1(L; \ZZ)]$-algebra. Even though $\overline{A}$ is strictly bigger than $\Lambda[\rH_1(L; \ZZ)]$, the next result says that this difference disappears after quotienting by $I$.

\begin{Lemma}
\label{lemQHsurj}
The structure map $\sigma \colon \Lambda[\rH_1(L; \ZZ)] \to \QH^*(X)$ is surjective.
\end{Lemma}

\begin{Remark}
There is an obvious filtration of $\Lambda[\rH_1(L; \ZZ)]$ by $\Lambda_{\geq s}[\rH_1(L; \ZZ)]$, but $\QH^*(X)$ is \emph{not} generally a filtered $\Lambda[\rH_1(L; \ZZ)]$-algebra with respect to this filtration, i.e., $\sigma$ is not in general filtered.
\end{Remark}

Thus $\Spec \QH^*(X)$ is a zero-dimensional closed subscheme of $\Spec \Lambda[\rH_1(L; \ZZ)]$, and hence $\QH^*(X)$ is a product of local rings, each corresponding to a single closed point of
\[
\Spec \Lambda[\rH_1(L; \ZZ)].
\]
Since $\Lambda$ is algebraically closed \cite[Lemma A.1]{FOOOToricI}, each of these closed points corresponds to a~homomorphism $\rH_1(L; \ZZ) \to \Lambda^\times$. Let $R$ be one of the local factors of $\QH^*(X)$, equipped with the filtration induced by the projection $\pi \colon \QH^*(X) \to R$. Note that $R$ is finite-dimensional over $\Lambda$, and is a $\Lambda[\rH_1(L; \ZZ)]$-algebra via $\pi \circ \sigma$. To start putting us in the setup of the previous subsection, we have the following.

\begin{Lemma}
\label{lemRproperties}
The filtration on $R$ satisfies Definition~$\ref{defFiltrations}$ and is complete.
\end{Lemma}

We are thus in the situation of Lemma~\ref{lemFinDimProperties}, so by part \ref{FDaugmented} of that result $R$ is a complete filtered augmented $\Lambda$-algebra. The augmentation $\eps \colon R \to \Lambda$ is unique, with kernel given by the unique maximal ideal $\m$ in $R$. Let $\trho \colon \rH_1(L; \ZZ) \to R^\times$ be the homomorphism $\gamma \mapsto \pi \circ \sigma(\tau^\gamma)$, and let $\rho = \eps \circ \trho$. Note that $\rho$ is the homomorphism $\rH_1(L; \ZZ) \to \Lambda^\times$ corresponding to $\m$ as a closed point of $\Spec \Lambda[\rH_1(L; \ZZ)]$.

The next result is the key step in defining our objects $\LL$ and $\bL$.

\begin{Lemma}[{morally equivalent to \cite[Lemma~2.2.2]{FOOOtoricMS}}]
\label{lemInteriorPoint}
By translating $\Delta$ if necessary, i.e., by changing which toric fibre $L$ is, we may assume that $\rho$ lands in $\Lambda_{\geq 0}$.
\end{Lemma}

Assume now that this translation has been done. Then $\rho$ defines (the monodromy of) a~$0$-filtered rank-$1$ local system $\calL$ on $L$ over $\Lambda$, and we take $\LL$ to be $L$ equipped with this local system, the standard spin structure (as defined in Appendix~\ref{sscToricSetup}), and an arbitrary $\ZZ/2$-grading (equivalent to an orientation). We use Fukaya--Oh--Ohta--Ono's de Rham model and torus-equivariant perturbation data from \cite{FOOOToricI}, and take the weak bounding cochain $b$ to be zero. This is indeed a weak bounding cochain: in \cite[Section~12]{FOOOToricI}, they show that for any rank-$1$ local system~$\calL_0$ with monodromy $\rho_0 \colon \rH_1(L; \ZZ) \to \CC^\times$, every $b_0 \in \rH^1(L; \Lambda_{>0})$ is a weak bounding cochain, and the same argument applies with our $\calL$ and $\rho$ in place of $\calL_0$ and $\rho_0$. Let $\lambda$ be the curvature of~$\bigl(\LL, b = 0\bigr)$.

\begin{Remark}
By \cite[Lemma 11.8]{FOOOToricI}, we expect $\bigl(\LL, 0\bigr)$ to be isomorphic to $L$ equipped with~$\calL_0$ and $b_0$ if $\rho = \rho_0 \cdot e^{b_0}$.
\end{Remark}

Similarly, by Lemma~\ref{lemFinDimProperties}\,\ref{FDc} $\trho$ defines a $c$-filtered rank-$1$ local system on $L$ over $R$, whose reduction modulo $\m$ is $\calL$, and we take $\bL$ to be $L$ equipped with this local system and with the same spin structure and orientation as $\LL$. We use the same de Rham model and perturbation data, and take its weak bounding cochain $\mathbf{b}$ also to be zero. By construction, this $(\bL, 0)$ is a lift of $\bigl(\LL, 0\bigr)$ to $R$.

\begin{Lemma}
The maps
\[
\hatCO\colon \ \QH^*(X) \to R \qquad \text{and} \qquad \pi\colon \ \QH^*(X) \to R
\]
coincide.
\end{Lemma}
\begin{proof}
Since $\QH^*(X)$ is generated as a $\kk$-algebra by the $H_j$, it suffices to show that the maps agree on each $H_j$. By definition, \smash{$\hatCO(H_j) \in R$} counts holomorphic discs $u$ mapping an interior marked point to the divisor $D_j$, weighted by
\[
\trho([\partial u]) = \pi \circ \sigma\bigl(\tau^{[\partial u]}\bigr) = \pi \circ \ks^{-1}\bigl(\tau^{[\partial u]} \mod I\bigr).
\]
Meanwhile, $\ks (H_j) \in \overline{A}/I$ counts the same thing, but weighted by \smash{$\tau^{[\partial u]} \mod I$}. We can choose the same perturbation data for \smash{$\hatCO(H_j)$} as for $\ks(H_j)$, namely the torus-equivariant Kuranishi multisections of \cite[Lemma 6.5]{FOOOToricII}, and we then get
\[
\hatCO(H_j) = \pi \circ \ks^{-1} (\ks(H_j)) = \pi(H_j),
\]
as wanted.
\end{proof}

Since $R$ is Noetherian and $\m$-adically complete by Lemma~\ref{lemFinDimProperties}\,\ref{FDNoetherianComplete}, Corollary~\ref{CorollaryK} gives the following, which is the precise statement of Theorem~\ref{TheoremL}.

\begin{Proposition}
\label{propThmL}
Under Assumption~$\ref{MonAss}$, $\bigl(\LL, 0\bigr)$ split-generates the summand of $\Fuk(X)_\lambda$ corresponding to the factor $R$ of $\QH^*(X)$.
\end{Proposition}

\begin{Remark}
In \cite[Section~4.7]{FOOOtoricMS}, Fukaya--Oh--Ohta--Ono factor their isomorphism $\ks$ through Hochschild cohomology via a closed-open map, denoted by $\widehat{q}$. However, they work over $\Lambda_{\geq 0}$, whereas for the generation criterion one must work over $\Lambda$. For example, their $\widehat{q}$ is injective on the whole of $\QH^*(X; \Lambda_{\geq 0})$, regardless of which toric fibre one is looking at, whilst for split-generation and injectivity of $\CO_{\LL}$ one must choose a specific toric fibre for each factor of $\QH^*(X)$.
\end{Remark}

We wrap up this subsection by filling in the promised proofs of Lemmas~\ref{lemQHsurj}, \ref{lemRproperties} and \ref{lemInteriorPoint}.

\begin{proof}[Proof of Lemma~\ref{lemQHsurj}]
Let $\tau_j = T^{\lambda_j}\tau^{\nu_j}$. It suffices to prove the following claim: $\QH^*(X)$ is spanned over $\Lambda$ by finitely many monomials in the $\sigma(\tau_j)$. To prove this claim, we use \cite[Lemma~2.20]{SmithQH} (corresponding to \cite[Theorem~4.6]{FOOOToricI}), which tells us that $\ks(H_j) \in \tau_j + F^{>0}\overline{A}$ modulo $I$, and hence that $\sigma(\tau_j) \in H_j + F^{>0}\QH^*(X)$, where $H_j$ is the Poincar\'e dual of the $j$th toric divisor. The claim then follows from the fact that $\QH^*(X; \Lambda_{\geq 0})$ is spanned over $\Lambda_{\geq 0}$ by finitely many monomials in the $H_j$.
\end{proof}

\begin{proof}[Proof of Lemma~\ref{lemRproperties}]
The filtration on $R$ is automatically decreasing and exhaustive, so we just need to show that it is complete (since this also implies Hausdorffness). For this, note that if $e_1, \dots, e_k$ is a $\Lambda_{\geq 0}$-basis for $\QH^*(X; \Lambda_{\geq 0})$, then the filtration on $\QH^*(X)$ is defined by~$F^s\QH^*(X) = \bigoplus_{j=1}^k \Lambda_{\geq s} e_j$. Note further that we can always pick the basis $e_1, \dots, e_k$ such that the first $l$ elements form a $\Lambda_{\geq 0}$-basis for $(\ker \pi) \cap \QH^*(X; \Lambda_{\geq 0})$. Having done this, the images $\pi(e_{l+1}), \dots, \pi(e_k)$ of $e_{l+1}, \dots, e_k$ in $R$ form a $\Lambda$-basis for $R$, and the induced filtration on $R$ is~\smash{$F^sR = \bigoplus_{j=l+1}^k \Lambda_{\geq s} \pi(e_j)$}. Thus $R$ is isomorphic to $\Lambda^{k-l}$ as a filtered $\Lambda$-module, and hence is~complete.
\end{proof}

\begin{proof}[Proof of Lemma~\ref{lemInteriorPoint}]
We need to show that by translating $\Delta$ we can ensure that
\[
\val \circ \rho\colon \ \rH_1(L; \ZZ) \to \RR
\]
is non-negative, which is equivalent to it being identically zero.

To begin, note that $\val$ is naturally an element of the space $\mathfrak{t}^\vee$ in which $\Delta$ lives \big(in fact, $\mathfrak{t}^\vee$ is naturally identified with $\rH^1(L; \RR)$\big). Translating $\Delta$ by $-a \in \mathfrak{t}^\vee$ corresponds to changing $x$ to $x+a$ in \eqref{eqMomentPolytope}, or equivalently to changing $\lambda_j$ to $\lambda_j + \langle \nu_j, a \rangle$. The allowed translations are those that keep $0$ in the interior of the polytope, i.e., those satisfying $\langle \nu_j, a \rangle > -\lambda_j$ for all $j$. The effect of this translation on $\ks$ is to multiply $\ks(H_j)$ by $T^{\langle \nu_j, a \rangle}$. The effect on $\sigma$ is therefore to multiply $\sigma(\tau^{\nu_j})$ by $T^{-\langle \nu_j, a \rangle}$. The effect on $\val \circ \rho(\nu_j)$ in turn is thus to subtract $\langle \nu_j, a \rangle$. So to be able to make $\val \circ \rho$ identically zero, we need to be able to choose $a$ such that $\val \circ \rho(\nu_j) = \langle \nu_j, a \rangle$ for all $j$, and $\langle \nu_j, a \rangle > -\lambda_j$ for all $j$. In other words, the first part of the lemma reduces to showing that $\val \circ \rho(\nu_j) > -\lambda_j$ for all $j$.

Next note that $\rho(\nu_j)$ can be written as $\eps \circ \pi \circ \sigma(\tau^{\nu_j}) = T^{-\lambda_j} \eps \circ \pi \circ \sigma(\tau_j)$, where $\tau_j$ denotes $T^{\lambda_j}\tau^{\nu_j} \in \Lambda[\rH_1(L; \ZZ)]$. Letting $r_j = \pi \circ \sigma(\tau_j)$ be the image of $\tau_j$ in $R$, our task reduces to showing that $\val \circ \eps(r_j) > 0$ for all $j$. Since $\tau_j$ lies in filtration level $0$ in $\overline{A}$ we have that $\sigma(\tau_j)$ lies in $F^0\QH^*(X)$, and hence that $r_j$ lies in $F^0R$. Since $\eps$ is filtered (Lemma~\ref{lemRproperties}), we immediately get $\val \circ \eps(r_j) \geq 0$ for all $j$. We thus want to show that there are no $j$ with $\val \circ \eps(r_j) = 0$.

Suppose then, for contradiction, that such $j$ do exist: call them $j_1, \dots, j_k$, and let $\eps(r_{j_i}) = a_i + b_i$, where $a_i \in \CC^\times$ and $b_i \in \Lambda_{>0}$. If the toric divisors $D_{j_1}, \dots, D_{j_k}$ have empty intersection, then $H_{j_1} \cdots H_{j_k}$ lies in $F^{>0}\QH^*(X)$. Since $\sigma(\tau_j) \in H_j + F^{>0}\QH^*(X)$, we deduce that $r_{j_1} \cdots r_{j_k} \in F^{>0}R$. Applying $\eps$ we conclude that $a_1 \cdots a_k \in \Lambda_{>0}$, which is nonsense. Therefore, the divisors $D_{j_1}, \dots, D_{j_k}$ have non-empty intersection, and hence the vectors $\nu_{j_1}, \dots, \nu_{j_k}$ are linearly independent. Modulo elements of $F^{>0}\overline{A}$, the ideal $I \subset \overline{A}$ contains the components of~$\sum_j \nu_j \tau_j$. We therefore have $\sum_j \nu_j r_j \in F^{>0}R$ and hence $\sum_j \nu_j \eps(r_j) \in \Lambda_{>0}$. But the constant term in $\sum_j \nu_j \eps(r_j)$ is $\sum_i \nu_{j_i} a_i$, which is non-zero since the $\nu_{j_i}$ are linearly independent. This gives the desired contradiction, and we conclude that there are no $j$ with $\val \circ \eps(r_j) = 0$. Hence $\val \circ \eps(r_j) > 0$ for all $j$ and thus we can indeed translate $\Delta$ to make $\val \circ \rho$ identically zero.
\end{proof}

\begin{Remark}
In the presence of a bulk deformation from $F^{>0}\QH^\mathrm{even}(X)$, the proofs go through unchanged. The results similarly hold in the presence of a B-field $B \in \QH^2(X; \CC)$, but now with one small modification: in the proofs of Lemmas~\ref{lemQHsurj} and~\ref{lemInteriorPoint}, we have $\sigma(\tau_j) = a_jH_j + F^{>0}\QH^*(X)$ for some $a_j \in \CC^\times$ (previously $a_j$ was $1$); but this does not affect the arguments.
\end{Remark}

\appendix
\section{The generation criterion}\label{secGen}

In this appendix, we review the decomposition of the Fukaya category induced by decompositions of quantum cohomology, and derive the corresponding generation criterion. We then look at how this relates to the more standard splittings of the Fukaya category by eigenvalues of the first Chern class. This material is all well-known to experts, but we are not aware of the details being spelled out in the literature, especially Lemma~\ref{lemSubcatsZero}.

We remind the reader that, having finished Section~\ref{secNonMonotone}, we are now back in the monotone setting.

\subsection{Decompositions of the Fukaya category}\label{sscDecomp}

Suppose that $\QH^*(X)$ decomposes into a product of $\ZZ/2$-graded $\kk$-algebras $Q_i$. For each $\lambda$, this induces a splitting of $\Fuk(X)_\lambda$ into pairwise orthogonal subcategories $\Fuk(X)_{\lambda,i}$, and the goal of this subsection is to summarise the construction and prove that it is compatible with the closed-open map. See also \cite[Section~(5b)]{SeidelFlux}.

\begin{Remark}
Similarly, a decomposition of $\QH^*(X)$ into a product of ungraded $\kk$-algebras~$Q_i$ gives rise to a decomposition of the ungraded category $\Fuk(X)_\lambda^\mathrm{un}$ into pairwise orthogonal subcategories $\Fuk(X)_{\lambda, i}^\mathrm{un}$. We will not separately mention this case further as the arguments are identical.
\end{Remark}

The decomposition of $\QH^*(X)$ into the $Q_i$ corresponds to a decomposition of its unit $1_X$ into pairwise orthogonal idempotents $e_i$ of even degree. Roughly speaking, $\Fuk(X)_{\lambda,i}$ is the image of~$\Fuk(X)_\lambda$ under the module action of $e_i$ via $\CO^0$. More precisely, let $\calQ$ denote the $A_\infty$-category $\operatorname{mod} \Fuk(X)_\lambda$ of $A_\infty$-modules over $\Fuk(X)_\lambda$, and let $\operatorname{Tw} \Fuk(X)_\lambda$ be the smallest full subcategory of~$\calQ$ that contains the image of the Yoneda embedding $\Fuk(X)_\lambda \to \calQ$ and is closed under taking shifts and mapping cones. We may implicitly view $\Fuk(X)_\lambda$ as a full subcategory of $\operatorname{Tw} \Fuk(X)_\lambda$ or of $\calQ$, via Yoneda. Given an object $T$ in $\operatorname{Tw} \Fuk(X)_\lambda$ and an idempotent $\wp$ in $\hom^*(T, T)$, Seidel shows in \cite[Section~(4b)]{SeidelBook} that we can construct an object in $\calQ$ representing the \emph{abstract image} of $\wp$. Let $\operatorname{\Pi}\operatorname{Tw} \Fuk(X)_\lambda$ be the full subcategory of $\calQ$ comprising all such abstract images of idempotents. The category $\Fuk(X)_{\lambda,i}$ is defined to be the full subcategory of~$\operatorname{\Pi}\operatorname{Tw} \Fuk(X)_\lambda$ given by the abstract images of specific idempotents, constructed as follows. There is a homomorphism~\cite[Section~(1c)]{SeidelFlux}
\[
\Gamma^\mathrm{mod}\colon \ \HH^*(\Fuk(X)_\lambda) \to \HH^*(\calQ),
\]
such that the maps
\[
\HH^*(\Fuk(X)_\lambda) \xto{\text{restrict} \,\circ\, \Gamma^\mathrm{mod}} \HH^*(\operatorname{\Pi} \operatorname{Tw} \Fuk(X)_\lambda) \xto{\text{restrict}} \HH^*(\Fuk(X)_\lambda)
\]
are mutually inverse isomorphisms. We can therefore view $\CO_\lambda$ as a map to $\HH^*(\operatorname{\Pi} \operatorname{Tw} \Fuk(X)_\lambda)$, and similarly talk about $\CO_T$ and $\CO_T^0$ for objects $T$ in $\operatorname{\Pi} \operatorname{Tw} \Fuk(X)_\lambda$. For each $T$ in $\operatorname{Tw} \Fuk(X)_\lambda$ we get an idempotent $p_{T,i} \coloneqq \CO^0_T(e_i)$ in $\Hom^*(T, T) \coloneqq \rH^*(\hom(T, T))$, and Seidel shows in~\mbox{\cite[Section~(4b)]{SeidelBook}} that we can lift this $p_{T,i}$ to a chain-level idempotent up to homotopy $\wp_{T,i}$ in~$\hom^*(T, T)$. He further shows that the abstract image $T_i$ of $\wp_{T, i}$ is independent of the choice of lift $\wp_{T,i}$, up to quasi-isomorphism. The category $\Fuk(X)_{\lambda, i}$ is defined to be the full subcategory of~$\operatorname{\Pi} \operatorname{Tw} \Fuk(X)_\lambda$ comprising these images $T_i$.

For any objects $S$ and $T$ in $\Fuk(X)_\lambda$, and any $i$ and $j$, we have natural isomorphisms \cite[Lemmas~4.4 and 4.5]{SeidelBook}
\begin{align*}
&\Hom(S, T_i)= p_{T,i}\Hom(S, T), \qquad
 \Hom(S_j, T)= \Hom(S, T)p_{S,j}.
\end{align*}
Since $\CO_\lambda(e_i)$ is a Hochschild cocycle of even degree, we also have
\[
p_{T,i}\Hom(S, T) = \Hom(S, T)p_{S,i}.
\]
Combining these results with the fact that $p_{S,i}p_{S,j} = 0$ for $i \neq j$ (since $e_ie_j = 0$), we see that for $i \neq j$ the objects $S_j$ and $T_i$---and hence the categories $\Fuk(X)_{\lambda,i}$ and $\Fuk(X)_{\lambda,j}$---are indeed orthogonal. Hence the Hochschild cohomology $\HH^*(\Fuk(X)_\lambda) \cong \HH^*(\operatorname{\Pi} \operatorname{Tw} \Fuk(X)_\lambda)$ decomposes as $\bigoplus_i \HH^*(\Fuk(X)_{\lambda,i})$ as a $\ZZ/2$-graded algebra.

\begin{Proposition}
\label{propCOblock}
The closed-open map
\[
\CO_\lambda\colon \ \QH^*(X) = \bigoplus_i Q_i \to \HH^*(\Fuk(X)_\lambda) = \bigoplus_i \HH^*(\Fuk(X)_{\lambda,i})
\]
is block-diagonal with respect to this decomposition.
\end{Proposition}
\begin{proof}
Given distinct $i$ and $j$, we need to show that
\begin{equation}
\label{eqCOi}
\QH^*(X) \xto{\CO_\lambda} \HH^*(\Fuk(X)_\lambda) \cong \HH^*(\operatorname{\Pi} \operatorname{Tw} \Fuk(X)_\lambda) \xto{\text{restrict}} \HH^*(\Fuk(X)_{\lambda, i})
\end{equation}
vanishes on $Q_j$. Since this map is an algebra homomorphism, and we have $e_iQ_j = 0$, it suffices to show that the image of $e_i$ under \eqref{eqCOi} is invertible. By an argument \cite[Lemma 2.8]{SheridanFano} with the Eilenberg--Moore comparison theorem, it suffices in turn to show that there exists a constant $\kappa \neq 0$ such that for each object $Y$ in $\Fuk(X)_{\lambda, i}$ we have $\CO^0_Y(e_i) = \kappa \cdot 1_Y$, where $1_Y$ denotes the unit in $\Hom^*(Y, Y)$. But we can write $Y$ as $T_i$ (using the above notation) for some $T$ in $\Fuk(X)_\lambda$, and then have
\[
\CO_Y^0(e_i) = p_{T,i}\CO^0_T(e_i) = (p_{T,i})^2.
\]
Since $p_{T,i}$ is exactly $1_Y$, we are done with $\kappa = 1$.
\end{proof}

\subsection{The generation criterion for summands}
%\label{sscGen}

Recall that the generation criterion stated in Theorem~\ref{thmGeneration} assumed injectivity of the whole map~$\CO_\lambda$. If, however, one is only interested in split-generating one of the pieces $\Fuk(X)_{\lambda, i}$, then it suffices to check injectivity on $Q_i$. The result is well-known to experts, and the proof we give is based on \cite[Corollary~2.19]{SheridanFano}, which in turn uses important ideas from~\cite{AbouzaidGeometricCriterion} and~\cite{RitterSmith}, but we present it for completeness.

\begin{Remark}
Again, identical arguments apply to the ungraded categories $\Fuk(X)_{\lambda,i}^\mathrm{un}$.
\end{Remark}

\begin{Theorem}
\label{thmGenerationFine}
Assuming that $X$ is compact, if $\subFuk$ is a full subcategory of $\Fuk(X)_{\lambda,i}$ and if the composition
\begin{equation*}
Q_i \xto{\CO_\lambda} \HH^*(\Fuk(X)_{\lambda,i}) \xto{\mathrm{restrict}} \HH^*(\subFuk)
\end{equation*}
is injective, then $\subFuk$ split-generates $\Fuk(X)_{\lambda, i}$. In particular, a single Lagrangian $\LL$ in $\Fuk(X)_{\lambda, i}$ split-generates if
\begin{equation*}
\CO_{\LL}\colon \ \QH^*(X) \xto{\CO_\lambda} \HH^*(\Fuk(X)_\lambda) \xto{\mathrm{restrict}} \HH^*\bigl(\CF^*\bigl(\LL, \LL\bigr)\bigr)
\end{equation*}
is injective on $Q_i$.
\end{Theorem}
\begin{proof}
In order to show that $\subFuk$ split generates an object $Y$ in $\Fuk(X)_{\lambda,i}$, it suffices to show that the composition
\begin{equation}
\label{eqOCcriterion}
\HH_*(\subFuk) \xto{\OC_\lambda} \QH^{*+n}(X; \kk) \xto{\CO^0_{Y}} \Hom^{*+n}(Y, Y)
\end{equation}
hits $1_{Y}$, where
\[
\OC_\lambda\colon \ \HH_*(\Fuk(X)_\lambda) \cong \HH_*(\operatorname{\Pi} \operatorname{Tw} \Fuk(X)_\lambda) \to \QH^{*+n}(X; \kk)
\]
is the \emph{open--closed string map}. We refer the reader to \cite{AbouzaidGeometricCriterion} for the definition of this map and for the proof that hitting $1_Y$ implies split-generation; the argument breaks into a geometric part \cite[Proposition 1.3]{AbouzaidGeometricCriterion} based on the Cardy relation and a purely algebraic part \cite[Lemma~1.4]{AbouzaidGeometricCriterion}. Writing $Y$ as $T_i$ as in the proof of Proposition~\ref{propCOblock}, and using the fact that $1_{Y} = p_{T,i} =\CO^0_T(e_i)$, we see that for \eqref{eqOCcriterion} to hit $1_Y$ it is enough to show that $\OC_\lambda$ restricted to (the image of) $\HH_*(\subFuk)$ hits $e_i$. This is what we shall prove.

By \cite[Proposition 2.6]{SheridanFano}, there is a commutative diagram
\begin{equation}
\label{eqOCdual}
\begin{tikzcd}[column sep=large]
\QH^*(X) \arrow{r}{\alpha \mapsto \langle\alpha, {-}\rangle}[swap]{\cong} \arrow{d}{\CO_\lambda} & \QH^*(X)^\vee[-2n] \arrow{d}{\OC_\lambda^\vee}
\\ \HH^*(\Fuk(X)_\lambda) \arrow{r}[swap]{\cong} & \HH_*(\Fuk(X)_\lambda)^\vee[-n],
\end{tikzcd}
\end{equation}
where ${}^\vee$ denotes linear dual and $\langle {-}, {-}\rangle$ is the Poincar\'e duality pairing on $\QH^*(X)$ (this is where compactness of $X$ enters). Assume for now the following claim: the $Q_j$ are pairwise orthogonal with respect to this pairing. Then the top horizontal arrow decomposes into isomorphisms $Q_j \to Q_j^\vee[-2n]$. It follows from the construction of the bottom horizontal arrow that it respects passage to full subcategories, so it also decomposes into isomorphisms
\[
\HH^*(\Fuk(X)_{\lambda,i}) \to \HH_*(\Fuk(X)_{\lambda,i})^\vee[-n].
\]
We then deduce from Proposition~\ref{propCOblock} that $\OC_\lambda$ is block diagonal{\samepage
\[
\OC_\lambda\colon \ \HH_*(\Fuk(X)_\lambda) \cong \bigoplus_j \HH_*(\Fuk(X)_{\lambda, j}) \to \QH^{*+n}(X; \kk) = \bigoplus_j Q_j[n],
\]
and that each block is dual to the corresponding block in $\CO_\lambda$.}

Now focus on the $i$th blocks. We get that
\[
\HH_*(\Fuk(X)_{\lambda,i}) \xto{\OC_\lambda} Q_i[n] \qquad \text{and} \qquad Q_i \xto{\CO_\lambda} \HH^*(\Fuk(X)_{\lambda,i})
\]
are dual to each other, and by using again the compatibility of the bottom horizontal arrow in~\eqref{eqOCdual} with passage to full subcategories we can upgrade this to
\begin{equation}
\label{eqOCfinal}
\HH_*(\subFuk) \xto{\HH_*(\text{inclusion})} \HH_*(\Fuk(X)_{\lambda,i}) \xto{\OC_\lambda} Q_i[n]
\end{equation}
being dual to
\begin{equation}
\label{eqCOfinal}
Q_i \xto{\CO_\lambda} \HH^*(\Fuk(X)_{\lambda,i}) \xto{\text{restrict}} \HH^*(\subFuk).
\end{equation}
We are assuming that \eqref{eqCOfinal} is injective, so \eqref{eqOCfinal} is surjective and hence hits $e_i$, as we wanted.

It remains to prove the claim that the $Q_j$ are pairwise orthogonal with respect to the Poincar\'e pairing. So take distinct $j$ and $k$, and arbitrary elements $\alpha \in Q_j$ and $\beta \in Q_k$. We then have
\[
\langle \alpha, \beta \rangle = \langle e_j\alpha, e_k\beta \rangle = \langle \alpha e_j, e_k\beta \rangle = \langle \alpha, e_je_k \beta \rangle = 0,
\]
completing the proof, where the second equality uses graded-commutativity of $\QH^*(X)$ and the third uses the Frobenius algebra relation.
\end{proof}

%------------------------------------

\subsection[Eigenvalues of c\_1]{Eigenvalues of $\boldsymbol{c_1}$}
\label{sscEval}

\textit{This appendix only applies to the $\ZZ/2$-graded Fukaya category. We shall therefore assume throughout that all Lagrangians are orientable.}

Let $c_1$ denote the first Chern class of $X$, and consider the $\kk$-linear endomorphism~$c_1\qprod$ of~$\QH^*(X)$ given by quantum multiplication by $c_1$. The decomposition of $\QH^*(X)$ that is used most often is into generalised eigenspaces $Q_i$ of $c_1\qprod$, corresponding to eigenvalues $\lambda_i$. It is this decomposition that Sheridan considers in \cite{SheridanFano}, and for which he proves the analogue of Theorem~\ref{thmGenerationFine}. He works over $\CC$, where, as we will prove shortly,
\begin{equation}
\label{eqEvalFukSplitting}
\Fuk(X)_{\lambda,i} = \begin{cases} \Fuk(X)_\lambda & \text{if } \lambda = \lambda_i,
\\ 0 & \text{otherwise.}\end{cases}
\end{equation}
In particular, $\Fuk(X)_\lambda$ is zero unless $\lambda$ is equal to some $\lambda_i$, and in this case a full subcategory~$\subFuk$ split-generates if $\CO_\lambda$ is injective on the generalised eigenspace $Q_i$ (this is precisely \cite[Corollary~2.19]{SheridanFano}; or, assuming \eqref{eqEvalFukSplitting}, it is a special case of Theorem~\ref{thmGenerationFine}).

\begin{Remark}
Since $\CC$ is algebraically closed, $\QH^*(X; \CC)$ decomposes fully into generalised eigenspaces. We will phrase our results for general $\kk$, without assuming this full decomposition.
\end{Remark}

The goal of this subsection is to explain the general version of \eqref{eqEvalFukSplitting} and the corresponding generation result. The first lemma we need is a precise statement of the result mentioned above Corollary~\ref{CorollaryI}.

\begin{Lemma}[Auroux--Kontsevich--Seidel]
\label{lemAKS}
For all $T$ in $\operatorname{\Pi} \operatorname{Tw} \Fuk(X)_\lambda$, we have
\[
CO^0_T(c_1) = \lambda \cdot 1_T.
\]
\end{Lemma}
\begin{proof}
It suffices to prove the result when $T$ is a single (orientable!) Lagrangian $\LL$ in $\Fuk(X)_\lambda$. In this case, the argument of \cite[Lemma 6.7]{AurouxTDuality} and \cite[Lemma 2.7]{SheridanFano}, shows that
\begin{equation*}
%\label{eq2c1}
\CO^0_{\LL}(2c_1) = 2\lambda \cdot 1_{\LL}.
\end{equation*}
The proof involves lifting $2c_1 \in \rH^2(X; \ZZ)$ to the Maslov class $\mu \in \rH^2(X, L; \ZZ)$ of $L$, and picking a cycle $Z \subset X \setminus L$ Poincar\'e dual to $\mu$. Then $\CO^0_{\LL}(2c_1)$ can be computed as the class swept by pseudoholomorphic discs with an interior input on $Z$ and a boundary output. Index $0$ discs are (almost) constant so do not meet $Z$ and hence do not contribute; index~$2$ discs without the interior marked point constraint sweep $\lambda \cdot 1_{\LL}$, and reintroducing the constraint doubles the result since each disc has intersection number $2$ with $Z$; and higher index discs do not contribute for degree~reasons.

Since $L$ is orientable, we can refine this using an observation of Tonkonog \cite[Section~1.2]{TonkonogCO}, as follows. The bundle pair $\bigl(\Lambda^n_\CC TX, \Lambda_\RR^n TL\bigr)$ over $(X, L)$ is classified by a homotopy class of map
\[
\phi\colon \ (X, L) \to (\mathrm{BU}(1), \mathrm{B}(\ZZ/2)) \cong (\CC\PP^\infty, \RR\PP^\infty),
\]
and $\mu$ is defined to be $\phi^*g$, where $g \in \rH^2(\CC\PP^\infty, \RR\PP^\infty; \ZZ)$ is the unique generator which maps to twice the hyperplane class $h \in \rH^2(\CC\PP^\infty; \ZZ)$. As $L$ is orientable, $\phi$ factors as
\[
(X, L) \xto{\psi} (\CC\PP^\infty, \text{point}) \xto{\text{inclusion}} (\CC\PP^\infty, \RR\PP^\infty),
\]
so $\mu = \phi^*g = \psi^* (2h)$. By construction, $\psi^* h$ maps to $c_1$ under $\rH^2(X, L; \ZZ) \to \rH^2(X; \ZZ)$. Now pick a cycle $Z \subset X \setminus L$ Poincar\'e dual to $\psi^*h$, and compute $\CO^0(c_1)$ directly using the above argument but with the new $Z$.
\end{proof}

Now fix an arbitrary decomposition $\QH^*(X) = \bigoplus_i Q_i$ as in the preceding subsections, and let the $e_i$ be the corresponding idempotents. The generalisation of \eqref{eqEvalFukSplitting} is then the following.

\begin{Lemma}
\label{lemSubcatsZero}
Given $\lambda \in \kk$, let $Q_\lambda$ denote the generalised $\lambda$-eigenspace of $c_1\qprod$ $($this is zero unless $\lambda$ is an eigenvalue$)$ and let $Q_\lambda^\perp$ denote the unique $c_1\qprod$-invariant complement, which we will construct during the proof. For each $i$ we obtain factors $Q_i \cap Q_\lambda$ and $Q_i \cap Q_\lambda^\perp$ of $\QH^*(X)$. The corresponding pieces $\Fuk(X)_{\lambda, i, \cap}$ and $\Fuk(X)_{\lambda, i, \perp}$ of $\Fuk(X)_\lambda$ then satisfy
\[
\Fuk(X)_{\lambda, i, \cap} = \Fuk(X)_{\lambda,i} \qquad \text{and} \qquad \Fuk(X)_{\lambda, i, \perp} = 0.
\]
\end{Lemma}

Before proving Lemma~\ref{lemSubcatsZero}, we mention some easy consequences. Morally, these results mean that any given splitting may as well be refined by intersecting with the generalised eigenspace splitting.

\begin{Corollary}[{\cite[Lemma 6.8]{AurouxTDuality}}]
We have $\Fuk(X)_\lambda = 0$ unless $\lambda$ is an eigenvalue of $c_1\qprod$.
\end{Corollary}
\begin{proof}
Consider the trivial splitting $Q_1 = \QH^*(X)$ of $\QH^*(X)$. We get from Lemma~\ref{lemSubcatsZero} that
\[
\Fuk(X)_{\lambda, 1, \cap} = \Fuk(X)_{\lambda,1} = \operatorname{\Pi} \operatorname{Tw} \Fuk(X)_\lambda.
\]
The left-hand side is the subcategory of $\operatorname{\Pi} \operatorname{Tw} \Fuk(X)_\lambda$ associated to the factor $Q_1 \cap Q_\lambda = Q_\lambda$. But this factor, and hence this subcategory, is zero unless $\lambda$ is an eigenvalue.
\end{proof}

\begin{Remark}
This can be proved directly as in \cite{AurouxTDuality}.
\end{Remark}

\begin{Remark}
%\label{rmkEvalSubtlety}
The result may fail for the ungraded category. For example, in characteristic~$2$ the category \smash{$\Fuk\bigl(\CC\PP^2\bigr)^\mathrm{un}_0$} contains the non-zero object $\RR\PP^2$, but the eigenvalues of $c_1\qprod$ are the cube roots of unity. We learnt of this subtlety from Dmitry Tonkonog via \cite[Example 7.2.4]{EvansLekiliGeneration}.
\end{Remark}

\begin{Corollary}
If $Q_i \subset Q_{\lambda'}$, then $\Fuk(X)_{\lambda, i} = 0$ unless $\lambda = \lambda'$.
\end{Corollary}
\begin{proof}
If $\lambda \neq \lambda'$, then $Q_{\lambda'} \subset Q_\lambda^\perp$. If also $Q_i \subset Q_{\lambda'}$, then we get $Q_i \cap Q_\lambda = 0$ and hence
\[
\Fuk(X)_{\lambda, i} = \Fuk(X)_{\lambda, i, \cap} = 0.\qedhere
\tag*{\qed}
\]
\renewcommand{\qed}{}
\end{proof}

\begin{proof}[Proof of Lemma~\ref{lemSubcatsZero}]
Our first task is to construct the splitting $Q_\lambda \oplus Q_\lambda^\perp$. Let $\chi \in \kk[t]$ be the characteristic polynomial of $c_1\qprod$, of degree $n$, and let $q$ be the quotient of $\chi$ by the greatest power of $t-\lambda$ dividing it. By B\'ezout's lemma, there exist $f$ and $g$ in $\kk[t]$ such that
\begin{equation}
\label{eqfgdefn}
(t-\lambda)^nf+qg = 1.
\end{equation}
Let $e$ and $e^\perp$ be the elements of $\QH^*(X)$ obtained by evaluating $qg$ and $(t-\lambda)^nf$ at $c_1$ respectively.

We claim that these are orthogonal idempotents generating the required splitting. (Here we mean orthogonal in the algebraic sense, i.e., $ee^\perp = 0$, but by the Frobenius algebra property this implies orthogonality with respect to the inner product.) To prove this claim, note that by \eqref{eqfgdefn} we have $e+e^\perp = 1_X$. Note also that $\chi$ divides $(t-\lambda)^nq$ and $\chi(c_1) = 0$ by Cayley--Hamilton, so we have $ee^\perp = 0$. Combining these two facts, we get that $e$ and $e^\perp$ are indeed orthogonal idempotents. Next observe that the kernel of $(c_1 - \lambda \cdot 1_X)^n\qprod$ is annihilated by $e^\perp$ and, again using $\chi(c_1) = 0$, that it contains $e$. It follows that this kernel, which is precisely $Q_\lambda$, is given by $e \qprod \QH^*(X)$. It is clear that $Q_\lambda^\perp \coloneqq e^\perp \qprod \QH^*(X)$ is a $c_1\qprod$-invariant complement to $Q_\lambda$, so to complete the claim it suffices to explain why this is unique. Well, given any other $c_1\qprod$-invariant complement $V$ we have $e \qprod V \subset V$, since $e$ is a polynomial in $c_1$. Then
\[
e \qprod V \subset V \cap e \qprod \QH^*(X) = V \cap Q_\lambda = 0,
\]
so $V$ lies in the kernel of $e \qprod$, which is $Q_\lambda^\perp$. Thus, by dimension, $V$ must be equal to $Q_\lambda^\perp$.

With this in hand, the idempotents generating $Q_i \cap Q_\lambda$ and $Q_i \cap Q_\lambda^\perp$ are $e_ie$ and $e_ie^\perp$. Given an object $T$ in $\Fuk(X)_\lambda$, its projections to $\Fuk(X)_{\lambda, i}$, $\Fuk(X)_{\lambda,i,\cap}$, and $\Fuk(X)_{\lambda, i, \perp}$ are thus determined respectively by the idempotents $\CO^0_T(e_i)$, $\CO^0_T(e_ie)$, and $\CO^0_T(e_ie^\perp)$ in $\Hom^*(T, T)$. To prove the lemma, we therefore need to show that
\[
\CO^0_T(e_i) = \CO^0_T(e_ie) \qquad \text{and} \qquad \CO^0_T\bigl(e_ie^\perp\bigr) = 0.
\]
Since $e + e^\perp = 1_X$, it suffices to prove \smash{$\CO^0_T\bigl(e^\perp\bigr) = 0$}, and the latter follows from Lemma~\ref{lemAKS} via
\[
\CO^0_T\bigl(e^\perp\bigr) = \bigl(\CO^0_T(c_1) - \lambda\bigr)^nf\bigl(\CO^0_T(c_1)\bigr) = (\lambda-\lambda)^nf(\lambda) \cdot 1_T = 0.
\tag*{\qed}
\]
\renewcommand{\qed}{}
\end{proof}

Combining Lemma~\ref{lemSubcatsZero} with Theorem~\ref{thmGenerationFine} gives the following.

\begin{Corollary}[{\cite[Corollary 2.19]{SheridanFano}}]
Assuming $X$ is compact, a full subcategory $\subFuk$ of $\Fuk(X)_{\lambda, i}$ split-generates if the composition
\[
Q_i \cap Q_\lambda \xto{\CO_\lambda} \HH^*(\Fuk(X)_{\lambda,i}) \xto{\mathrm{restrict}} \HH^*(\subFuk)
\]
is injective, where $Q_\lambda$ denotes the generalised $\lambda$-eigenspace of $c_1 \qprod$ as above.
\end{Corollary}

\begin{proof}
Using the notation of Lemma~\ref{lemSubcatsZero}, consider the refined decomposition
\[
\bigoplus_i \bigl(Q_i \cap Q_\lambda \oplus Q_i \cap Q_\lambda^\perp\bigr)
\]
of $\QH^*(X)$. By Theorem~\ref{thmGenerationFine} for this new decomposition, a full subcategory $\subFuk$ of $\Fuk(X)_{\lambda,i,\cap}$ split-generates if $\CO_\lambda$ injects $Q_i \cap Q_\lambda$ into $\HH^*(\subFuk)$. But by Lemma~\ref{lemSubcatsZero}, we have
\[
\Fuk(X)_{\lambda,i,\cap} = \Fuk(X)_{\lambda,i}.
\tag*{\qed}
\]
\renewcommand{\qed}{}
\end{proof}

\section[Some A\_infty-algebra]{Some $\boldsymbol{A_\infty}$-algebra}
\label{appHH}

The purpose of this appendix is to establish some lemmas in $A_\infty$-algebra that play a key role in the proof of Theorem~\ref{TheoremC} in Section~\ref{secThmC}. They are not deep, and are probably well-known, but we need explicit formulae compatible with our sign conventions so we give the gory details.

Fix an $A_\infty$-algebra $\calA$ over $\kk$. Throughout this appendix, everything is implicitly either $\ZZ/2$-graded or, if $\Char \kk = 2$, ungraded. Given an $\calA$-bimodule $\calP$, with operations \smash{$\bim{k}{l}_\calP$}, the~Hochschild cochain complex $\rCC^*(\calA, \calP)$ is
\[
\prod_{r \geq 0} \hom^*_\kk\bigl(\calA[1]^{\otimes r}, \calP\bigr)
\]
with differential given by
\begin{align}
\nonumber
\mu^1_{\rCC}(\phi)(\dots) ={}& \sum (-1)^{\lvert \phi \rvert \maltese_l+1} \bim{k}{l}_\calP\bigl(\stackrel{k}{\dots}, \phi(\dots), \stackrel{l}{\dots}\bigr) \\
&{}{+}\, \sum (-1)^{\lvert \phi \rvert + \maltese_i}\phi(\dots, \mu(\dots), \stackrel{i}{\dots}).\label{eqCCbimod}
\end{align}
Undecorated $\mu$ operations are implicitly those on $\calA$. When $\calP$ is the diagonal bimodule, as defined in \cite[(A.2.13)]{SheridanFano}, this reduces to the ordinary Hochschild complex $\rCC^*(\calA, \calA)$ as defined by \eqref{eqCCdiff1}.

Suppose now that $\calM$ and $\calN$ are (left) $\calA$-modules. The $\kk$-module $\hom^*_\kk(\calM, \calN)$ is naturally an $\calA$-bimodule, with operations
\begin{gather}
\bim{0}{0}_{\kk-\hom}(\zeta)(m) = (-1)^{\lvert m \rvert}\bigl(\mu_\calN^1(\zeta(m)) - \zeta\bigl(\mu_\calM^1(m)\bigr)\bigr), \nonumber\\
\bim{k}{0}_{\kk-\hom}(a_k, \dots, a_1, \zeta)(m) = (-1)^{\lvert m \rvert}\mu_\calN^k(a_k, \dots, a_1, \zeta(m)), \nonumber\\
\bim{0}{l}_{\kk-\hom}(\zeta, a_l \dots, a_1)(m) = (-1)^{\lvert m \rvert + 1} \zeta\bigl(\mu_\calM^l(a_l, \dots, a_1, m)\bigr), \nonumber\\
\bim{k}{l}_{\kk-\hom} = 0.\label{eqhombimod}
\end{gather}
for $k, l > 0$. We can thus consider the Hochschild cochain complex $\rCC^*(\calA, \hom^*_\kk(\calM, \calN))$.

The other object we will need is the space
\[
\hom^*_\calA(\calM, \calN) = \prod_{r \geq 0} \hom_\kk^*\big(\calA[1]^{\otimes r} \otimes \calM, \calN\big)
\]
of $\calA$-module pre-morphisms from $\calM$ to $\calN$. Recall that this is a cochain complex with differential
\begin{align*}
\mu^1_{\calA-\hom}(\psi)(\dots, m) = {}&\sum \mu_\calN(\dots, \psi(\dots, m)) + \sum (-1)^{\lvert \psi \rvert + 1} \psi(\dots, \mu_\calM(\dots, m))
\\ &{}{+}\, \sum (-1)^{\lvert \psi \rvert + \maltese_i + \lvert m \rvert + 1} \psi(\dots, \mu(\dots), \stackrel{i}{\dots}, m).
\end{align*}
The cocycles are the actual $\calA$-module morphisms.

Our first result relates the two complexes $\rCC^*\bigl(\calA, \hom^*_\kk(\calM, \calN)\bigr)$ and $\hom^*_\calA(\calM, \calN)$.

\begin{Lemma}
There is an isomorphism of cochain complexes over $\kk$
\[
U\colon \ \rCC^*\bigl(\calA, \hom^*_\kk(\calM, \calN)\bigr) \to \hom^*_\calA(\calM, \calN).
\]
\end{Lemma}
\begin{proof}
Define the map $U$ (which denotes `uncurrying') by
\[
U(\phi)(a_k, \dots, a_1, m) = (-1)^{\lvert \phi \rvert (\lvert m \rvert + 1)} \phi(a_k, \dots, a_1)(m).
\]
By hom-tensor adjunction this is an isomorphism of the underlying $\kk$-modules of the two cochain complexes. Combining \eqref{eqCCbimod} and \eqref{eqhombimod}, we see that the differential on $\rCC^*(\calA, \hom^*_\kk(\calM, \calN))$ is given by
\begin{align*}
\mu^1_{\rCC}(\phi)(\dots)(m) ={}& \sum (-1)^{\lvert m \rvert + 1} \mu_\calN(\dots, \phi(\dots)(m)) \!+\! \sum(-1)^{\lvert \phi \rvert \maltese_i + \lvert m \rvert} \phi(\dots)(\mu_\calM(\stackrel{i}{\dots}, m))
\\ &{}{+}\, \sum (-1)^{\lvert \phi \rvert + \maltese_i} \phi(\dots, \mu(\dots), \stackrel{i}{\dots})(m).
\end{align*}
A straightforward calculation shows that $U$ intertwines this with \smash{$\mu^1_{\calA-\hom}$}.
\end{proof}

Now suppose that $\calM$ is actually $\calA$, viewed as a left module over itself. Denoting the module operations by $\mu_\calM$ still, to avoid confusing them with the algebra operations on $\calA$, they are given by $\mu_\calM(a_k, \dots, a_1) = - \mu(a_k, \dots, a_1)$. We then have the following reassuring result.

\begin{Lemma}
If $\calA$ and $\calN$ are cohomologically unital, then the map
$\pi\colon \hom_\calA^*(\calA, \calN) \to \calN$ given by $\pi(\psi) = \psi(e_\calA)$
is a quasi-isomorphism. Here $e_\calA$ is a chain-level representative for the cohomological unit~$1_\calA$ in~$\calA$.
\end{Lemma}

\begin{Remark}
Cohomological unitality for the $\calA$-module $\calN$ means that for all cocycles $n$ the cocycles $\mu_\calN^2(e_\calA, n)$
and $(-1)^{\lvert n \rvert + 1}n$ are cohomologous. In particular, $\calA$ being cohomologically unital as an $\calA$-module reduces to $\calA$ being cohomologically unital as an $A_\infty$-algebra.
\end{Remark}

\begin{proof}
Define a map $\iota\colon \ \calN \to \hom_\calA^*(\calA, \calN)$ by
\[
\iota(n)(a_k \dots a_1) = (-1)^{\lvert n \rvert + 1} \mu_\calN(a_k, \dots, a_1, n).
\]
The differential on $\hom^*_\calA(\calA, \calN)$ is given by
\begin{equation}
\label{eqhomANdiff}
\mu^1_{\calA-\hom}(\psi)(\dots) = \sum \mu_\calN(\dots, \psi(\dots)) + \sum (-1)^{\lvert \psi \rvert + \maltese_i} \psi(\dots, \mu(\dots), \stackrel{i}{\dots}),
\end{equation}
and it is easily verified from this that both $\pi$ and $\iota$ are chain maps. We claim that they are mutually quasi-inverse. It is straightforward to check that $\pi \circ \iota$ induces the identity map on~$\rH^*(\calN)$, so it remains to show that $\iota \circ \pi$ induces an isomorphism on $\Hom^*_\calA(\calA, \calN)$. For this, we will use the Eilenberg--Moore comparison theorem.

Equip $\hom_\calA^*(\calA, \calN)$ with the length filtration, whose $p$th filtered piece $F^p\hom_\calA^*(\calA, \calN)$ comprises those $\psi$ whose $r$th component $\psi^r$ vanishes for all $r \leq p$. This filtration is exhaustive and complete, and is respected by the map $\iota \circ \pi$ since $\pi$ annihilates $F^1$. By Eilenberg--Moore, it thus suffices to show that, for some $r$, $\iota \circ \pi$ induces an isomorphism on the $E_r$ page of the associated spectral sequence. We claim that this happens for $r=2$.

The zeroth page of the spectral sequence is
\[
E_0^{p,q} = \begin{cases}\hom^q_\kk\bigl(\calA^{\otimes (p+1)}, \calN\bigr) & \text{if $p\geq 0$}, \\ 0 & \text{otherwise.}\end{cases}
\]
The differential coincides with that on the space of maps of complexes from $\calA^{\otimes (p+1)}$ to $\calN$. Its cohomology is therefore the space of chain maps $\calA^{\otimes (p+1)} \to \calN$ modulo chain homotopy. Because we are working over a field, this simplifies to the space of linear maps between the corresponding cohomology groups. In other words, we have
\[
E_1^{p,q} = \begin{cases}\Hom^q_\kk\bigl(\rH^*(\calA)^{\otimes (p+1)}, \rH^*(\calN)\bigr) & \text{if $p\geq 0$}, \\ 0 & \text{otherwise.}\end{cases}
\]

On this $E_1$ page the differential $\mu^1_{E_1} \colon E_1^{p,q} \to E_1^{p+1,q}$ is given by the $\mu_\calN^2$ and $\mu^2$ terms in~\eqref{eqhomANdiff}.~Now define the maps \smash{$h \colon E_1^{p, q} \to E_1^{p-1, q}$} by
\[
h(\psi)(a_{p-1}, \dots, a_1) = (-1)^{\lvert \psi \rvert - 1}\psi(a_{p-1}, \dots, a_1, 1_\calA).
\]
These satisfy
\[
\mu^1_{E_1}(h(\psi)) + h\bigl(\mu^1_{E_1}(\psi)\bigr) = \psi - \iota \circ \pi(\psi),
\]
so $\iota \circ \pi$ is homotopic to the identity on~$E_1$ and hence induces an isomorphism (the identity) on~$E_2$, completing the proof.
\end{proof}

Combining the two previous results gives the following.

\begin{Corollary}\label{corCCN}
Suppose $\calA$ is an $A_\infty$-algebra and $\calN$ is an $\calA$-module, and that both are cohomologically unital. Then the map
$
\pi \circ U\colon \rCC^*(\calA, \hom^*_\kk(\calA, \calN)) \to \calN$
given by $\phi \mapsto (-1)^{\lvert \phi \rvert} \phi^0(e_\calA)$ is a quasi-isomorphism.
\end{Corollary}

\section{Quantum cohomology of monotone toric manifolds}

Throughout this appendix, let $X$ be a compact toric manifold and let $L \subset X$ be a toric fibre. In Appendix~\ref{sscToricSetup}, we give a more precise description of this setup, part way through which we impose the additional condition that $X$ and $L$ are monotone, which will persist to the end of the appendix. In Appendices~\ref{sscHFtoric} and~\ref{sscSmallQH}, we then compute $\HF^*_S(\bL, \bL)$ and use this to give a~presentation of $\QH^*(X)$. Finally, in Appendix~\ref{sscQHcoeff} we discuss the enlargement of the coefficients of quantum cohomology, necessary for our results on real Lagrangians in Section~\ref{secRealToric}. As in the previous appendices, this material is well-known to experts, and we spell it out for completeness and compatibility with our conventions.

\subsection{Toric geometry setup}
\label{sscToricSetup}

The toric manifold $X$ is described by a moment polytope $\Delta$ as follows. Fix an abstract $n$-torus~$T$ with Lie algebra $\mathfrak{t}$, let $M \subset \mathfrak{t}$ be the lattice given by the kernel of the exponential map, and let $\langle {-}, {-} \rangle$ denote the pairing between $\mathfrak{t}$ and its dual $\mathfrak{t}^\vee$. The polytope $\Delta$ is a compact subset of~$\mathfrak{t}^\vee$, of the form
\begin{equation}
\label{eqPolytope}
\Delta = \bigl\{x \in \mathfrak{t}^\vee \mid \langle \nu_i, x \rangle \geq - \lambda_i \text{ for } i=1,\dots,N\bigr\},
\end{equation}
where $\nu_1, \dots, \nu_N$ are elements of $M$ and $\lambda_1, \dots, \lambda_N$ are real numbers. We require that exactly~$k$ facets (codimension-$1$ faces) of $\Delta$ meet at each codimension-$k$ face, and that at each such meeting point the corresponding normal vectors $\nu_i$ can be extended to a $\ZZ$-basis for $M$. We also require that none of the inequalities in \eqref{eqPolytope} is redundant, so there are exactly $N$ facets.

The manifold $X$ is constructed by symplectic reduction of $\CC^N$, using the data of $\Delta$. See \cite[Chapter 29]{CdS} for details. $X$ inherits a Hamiltonian $T$-action, with moment map $\mu \colon X \to \mathfrak{t}^\vee$ whose image is exactly $\Delta$. The \emph{toric fibres} are the preimages of interior points of $\Delta$ under $\mu$, each of which is a Lagrangian free $T$-orbit. Letting $F_i$ denote the $i$th facet, namely
\[
\{x \in \Delta \mid \langle \nu_i, x \rangle = -\lambda_i\},
\]
the \emph{toric divisors} $D_i$ are the preimages $\mu^{-1}(F_i)$. These form a free basis for $\rH_{2n-2}(X, L; \ZZ)$. We write $H_i$ for the Poincar\'e dual of $D_i$, in $\rH^2(X)$.

The holomorphic index $2$ discs in $X$ with boundary on $L$, for the standard complex structure, were explicitly described by Cho and Cho--Oh \cite{ChoClifford,ChoOh}. To state their result, let $\calM_1(\beta)$ denote the moduli space of (smooth) holomorphic discs in class $\beta \in \rH_2(X, L; \ZZ)$ with one boundary marked point. Let $\beta_1, \dots, \beta_N$ denote the basis of $H_2(X, L; \ZZ)$ dual to the toric divisors $D_1, \dots, D_N$. Equip $L$ with the \emph{standard} spin structure, meaning the spin structure induced by the trivialisation of $TL$ induced by any diffeomorphic identification of $L$ with $\RR^n/\ZZ^n$. Cho and Cho--Oh proved the following.

\begin{Proposition}
\label{propToricInd2Count}
The index $2$ disc classes $\beta$ for which $\calM_1(\beta)$ is non-empty are precisely $\beta_1, \dots, \beta_N$. For each $i$, the moduli space $\calM_1(\beta_i)$ is a free $T$-orbit, and the evaluation map
\[
\ev\colon \ \calM_1(\beta_i) \to L
\]
at the marked point is $T$-equivariant. With respect to the standard spin structure, $\ev$ has degree~$+1$.
\end{Proposition}

The disc class $\beta_i$ has area $2\pi(\langle \nu_i, \mu(L) \rangle + \lambda_i)$. There is a natural identification between $\rH_1(L; \ZZ)$ and $\rH_1(T; \ZZ)$ since $L$ is a free $T$-orbit, and between $\rH_1(T; \ZZ)$ and $M$ since $T \cong \mathfrak{t} / M$. Under these identifications, the boundary $\partial \beta_i \in \rH_1(L; \ZZ)$ corresponds to $\nu_i \in M$.

Assume from now on that $X$ is monotone. This is equivalent to being able to translate $\Delta$ so that all $\lambda_i$ are equal, which we assume has been done. We also assume that the symplectic form has been rescaled so that each $\lambda_i$ is $1$. In this case, exactly one of the toric fibres is monotone, namely $\mu^{-1}(0)$. This will be our $L$. Viewing the $\nu_i$ as elements of $\rH_1(L; \ZZ)$ via the identification from the previous paragraph, Proposition~\ref{propToricInd2Count} gives the following.

\begin{Corollary}
\label{corWL}
The superpotential $W_L$ is $z^{\nu_1} + \dots + z^{\nu_N}$.
\end{Corollary}

\subsection{Floer cohomology of the monotone fibre}
\label{sscHFtoric}

Throughout this appendix, we shall use a pearl model for Floer cohomology. So Floer cochains are Morse cochains associated to a fixed choice of Morse function, and the differential counts pearly trajectories as described in Section~\ref{sscTonkRevisit}. In particular, when computing $\HF^*_S$ we weight each trajectory by the monomial $z^{[\partial u_1] + \dots + [\partial u_k]}$ in $S$, where $u_1, \dots, u_k$ are the discs in the trajectory.

With this in place, we can compute $\HF^*_S(\bL, \bL)$ using the Oh spectral sequence \cite{biran2007quantum, OhSS}
\[
E_1 = \rH^*(L; S) \implies \HF^*_S(\bL, \bL).
\]

\begin{Proposition}\label{propHFtoric}
There is a canonical $\kk$-algebra isomorphism $\HF^*_S(\bL, \bL) \cong \Jac W_L$.
\end{Proposition}
\begin{proof}
The restriction of the $E_1$ differential to
\[
\rH^1(L) \to \rH^0(L; S) = S
\]
counts pearly trajectories that start at an index $1$ critical point, contain a single index $2$ disc, and then end at a fixed index $0$ critical point $p$. More precisely, the input should be a coexact linear combination of index $1$ critical points, representing a class $b \in \rH^1(L)$, and we should weight each trajectory by the monomial representing the boundary of its disc. This amounts to counting index $2$ discs $u$ through $p$, weighted by \smash{$z^{[\partial u]}$}, and additionally weighted by the pairing $\langle [\partial u], b \rangle$. Using Corollary~\ref{corWL}, the result is \smash{$\sum_i \langle \nu_i, b \rangle z^{\nu_i}$}.

By $S$-linearity and the Leibniz rule, we see that the full $E_1$ differential on $\rH^*(L; S) = S \otimes_\kk \Lambda_\kk^* \rH^1(L)$ is given by contraction with $\sum_i \nu_i z^{\nu_i}$. Thus the $E_1$ page is a Koszul complex resolving
\[
S\bigg/ \biggl( \sum_i \nu_i z^{\nu_i} \biggr);
\]
see Remark~\ref{rmkKoszulRegular}, below. (As usual, an ideal generated by vectors implicitly means the ideal generated by their components with respect to any basis.) The spectral sequence therefore collapses and
\[
\HF^*_S(\bL, \bL) = \HF^0_S(\bL, \bL) = S\bigg/ \biggl( \sum_i \nu_i z^{\nu_i} \biggr).
\]
To identify this quotient, fix a $\ZZ$-basis $e_1, \dots, e_n$ for $M$, and let $z_1, \dots, z_n$ be the corresponding monomials $z^{e_i}$ in $S$. Then for each $j$ the $e_j$-component of $\sum_i \nu_iz^{\nu_i}$ is
\[
z_j \frac{\partial W_L}{\partial z_j},
\]
so $\HF^*_S(\bL, \bL)$ is exactly $\Jac W_L$.
\end{proof}

\begin{Remark}
\label{rmkKoszulRegular}
The components $c_1, \dots, c_n$ of $\sum_i \nu_i z^{\nu_i}$ with respect to an arbitrary basis form a Koszul-regular sequence in $S$, i.e., their Koszul complex has vanishing (co)homology outside degree $0$, for the following reason. Since the vanishing of a module can be checked locally at maximal ideals, and since localisation is exact, it suffices to show that the sequence is Koszul-regular in each localisation $S_\frn$ of $S$ at a maximal ideal $\frn$. For each such $\frn$, either all $c_i$ are contained in $\frn$ or some $c_i$ is not contained in $\frn$. In the former case, the $c_i$ form a regular---and hence Koszul-regular---sequence by \cite[Exercise~26.2.D]{VakilAG}, because $S_\frn$ is a regular local ring of Krull dimension $n$ and $S_\frn / (c_1, \dots, c_n)$ has Krull dimension zero (since it is finite-dimensional over $\kk$). In the latter case, the Koszul complex over $S_\frn$ is acyclic since multiplication by some~$c_i$ is an isomorphism $S_\frn \to S_\frn$.
\end{Remark}

\subsection{Small quantum cohomology}\label{sscSmallQH}

The map
$\CO^0_\bL\colon \QH^*(X) \to \HF^*_S(\bL, \bL) \cong \Jac W_L$
sends each toric divisor class $H_i$ to the count of index $2$ discs $u$ meeting $D_i$, weighted by $z^{[\partial u]}$. Since the basic classes $\beta_j$ are dual to the~$D_j$, Proposition~\ref{propToricInd2Count} tells us that \smash{$\CO^0_\bL(H_i)$} counts exactly one disc, which is in class $\beta_i$. Thus \smash{$\CO^0_\bL(H_i) = z^{\nu_i}$}. The following is proved in \cite[Proposition~4.6]{SmithQH}, building on work of Fukaya--Oh--Ohta--Ono~\cite{FOOOtoricMS}.

\begin{Theorem}
\label{thmSmallQH}
The map $\CO^0_\bL\colon \QH^*(X) \to \Jac W_L$ is an isomorphism of $\kk$-algebras.
\end{Theorem}

\begin{Remark}
The statement of \cite[Proposition 4.6]{SmithQH} is superficially different, stating that a~map
\[
\operatorname{\mathfrak{ks}_\mathrm{mon}} \colon \ \QH^*(X; R[T]) \to R[\Gamma] \bigg/ \biggl(\sum_i \nu_i z_i\biggr)
\]
is an isomorphism of $R[T]$-algebras, for any ring $R$. Here $T$ is the Novikov variable, and $\Gamma$ is the submonoid of $\ZZ \oplus \rH_1(L; \ZZ)$ generated by $(1, \nu_1), \dots, (1, \nu_N)$. The element $z_i \in R[\Gamma]$ is the monomial corresponding $(1, \nu_i)$, and $R[\Gamma]$ is viewed as an $R[T]$-algebra by letting $T$ acts as the monomial corresponding to $(1, 0)$. To obtain the statement we want, take $R$ to be $\kk$ and reduce both sides modulo $(T-1)$. Then $R[\Gamma]$ becomes $\kk[\rH_1(L; \ZZ)] = S$, and $\sum_i \nu_i z_i$ becomes $\sum_i \nu_iz^{\nu_i}$ as above, so the codomain of $\operatorname{\mathfrak{ks}_\mathrm{mon}}$ becomes $\Jac W_L$. By inspecting the definition of the map itself in \cite[Section~2.3]{SmithQH}, one sees that it becomes exactly the weighted closed-open map $\CO^0_\bL$.
\end{Remark}

More generally, it is proved in \cite[Proposition 4.14]{SmithQH}, again building on \cite{FOOOtoricMS}, that the result still holds after twisting by any class in $\chi \in \rH^2(X; \kk^\times)$ (called $\rho$ in \cite{SmithQH}). Explicitly, fix such a~$\chi$ and~choose a lift to $\widehat{\chi} \in \rH^2(X, L; \kk^\times)$; the set of lifts (is non-empty and) forms a torsor for $\rH^1(L; \kk^\times)$ since the restriction map $\rH^*(X; \kk^\times) \to \rH^*(L; \kk^\times)$ vanishes. Define $\QH^*_\chi(X)$ by~modifying the usual definition of quantum cohomology by weighting each count of holomorphic spheres~$u$ by~$\chi([u])$. Similarly define $\HF^*_{S, \widehat{\chi}}(\bL, \bL)$ by weighting each count of holomorphic discs~$u$ by $\widehat{\chi}([u])$ in addition to the monomial~\smash{$z^{[\partial u]}$}. The same argument as for Proposition~\ref{propHFtoric} shows~that
\[
\HF^*_{S, \widehat{\chi}}(\bL, \bL) \cong \Jac W_{L, \widehat{\chi}},
\]
where \smash{$W_{L, \widehat{\chi}} = \sum_i \widehat{\chi}(\beta_i) z^{\nu_i}$}. Here $\beta_i$ is the $i$th basic disc class as above. The statement is then the following.

\begin{Proposition}
\label{propTwistedCO}
The twisted closed-open map
\[
\CO^0_{\bL, \widehat{\chi}} \colon \ \QH^*_\chi(X) \to \HF^*_{S, \widehat{\chi}}(\bL, \bL)
\]
is an isomorphism of $\kk$-algebras.
\end{Proposition}

\subsection{Enlarging the coefficients}
\label{sscQHcoeff}

We wish to extend Proposition~\ref{propTwistedCO} slightly, by incorporating `every $\chi$ at once', analogously to the way that working over $S$ incorporates `every local system at once'. More precisely, define~\smash{$\QH^*_{\rH_2}(X)$} by working over $\kk[\rH_2(X; \ZZ)]$ and weighting each count of holomorphic spheres~$u$ by the monomial $Z^{[u]}$ corresponding to the homology class $[u]$. Similarly, let \smash{$\tilS = \kk[\rH_2(X, L; \ZZ)]$} and define \smash{$\HF^*_{\tilS}(\bL, \bL)$} by working over \smash{$\tilS$} and weighting each pearly trajectory by the monomial \smash{$Z^{[u_1]+\dots+[u_k]}$} associated to the sum of its disc classes.

Since the basic disc classes $\beta_i$ form a free basis for $\rH_2(X, L; \ZZ)$, we have \smash{$\tilS = \kk\bigl[Z_1^{\pm1}, \dots, Z_N^{\pm 1}\bigr]$}, where $Z_i$ denotes \smash{$Z^{\beta_i}$}. The long exact sequence of the pair $(X, L)$ tells us that
\[
\rH_2(X; \ZZ) = \biggl\{ \sum_i p_i \beta_i \mid p_i \in \ZZ \text{ and } \sum_i p_i\nu_i = 0\biggr\}
\]
so $\kk[\rH_2(X; \ZZ)]$ is the subalgebra of \smash{$\tilS$} spanned by monomials \smash{$Z_1^{p_1}\cdots Z_N^{p_N}$} with $\sum_i p_i \nu_i = 0$. A~class $\chi \in \rH^2(X; \kk^\times)$ induces an algebra homomorphism $\kk[\rH_2(X; \ZZ)] \to \kk$, and we denote its kernel by $\frn$. Reducing \smash{$\QH^*_{\rH_2}(X)$} modulo $\frn$ gives exactly \smash{$\QH^*_\chi(X)$} from the previous subsection. Similarly a choice of lift of $\chi$ to \smash{$\widehat{\chi} \in \rH^2(X, L; \kk^\times)$} induces an isomorphism \smash{$\tilS / \frn \tilS \to S$} by~${Z^\beta \mapsto \widehat{\chi}(\beta) z^{[\partial \beta]}}$. The analogue of $W_L$ in \smash{$\tilS$} is $\sum_i Z_i$, and similar arguments to Proposition~\ref{propHFtoric} and Remark~\ref{rmkKoszulRegular} show that
\[
\HF^*_{\tilS}(\bL, \bL) \cong \tilS \bigg/ \biggl( \sum_i \nu_i Z_i \biggr),
\]
whose reduction modulo $\frn$ is \smash{$\HF^*_{S,\widehat{\chi}}(\bL, \bL)$}.

The main result of this appendix is the following.

\begin{Theorem}
\label{thmSmallishToricQH}
The weighted closed-open map
\[
\CO^0_{\bL, \rH_2}\colon \ \QH^*_{\rH_2}(X) \to \HF^*_{\tilS}(\bL, \bL) = \kk\bigl[Z_1^{\pm 1}, \dots, Z_N^{\pm 1}\bigr] \bigg/ \biggl( \sum_i \nu_i Z_i \biggr)
\]
sends $H_i$ to $Z_i$ and is an isomorphism of $\kk[\rH_2(X; \ZZ)]$ algebras.
\end{Theorem}
\begin{proof}
The computation \smash{$\CO^0_{\bL, \rH_2}(H_i) = Z_i$} is completely analogous to \smash{$\CO^0_\bL(H_i) = z^{\nu_i}$} from the previous subsection. To prove that \smash{$\CO^0_{\bL, \rH_2}$} is an isomorphism, we first show it is surjective. We have just seen that it hits each $Z_i$, and we also know that it hits the image of $\kk[\rH_2(X; \ZZ)]$, namely those monomials $Z_1^{p_1}\cdots Z_N^{p_N}$ with $\sum_i p_i \nu_i = 0$, so to prove surjectivity it suffices to show that together these elements generate each \smash{$Z_i^{-1}$}.

To do this, fix an $i$ and consider the linear functional $\langle \nu_i, \bullet \rangle \colon \mathfrak{t}^\vee \to \RR$. Its restriction to~$\Delta$ attains its maximum at some vertex $v$, which is the intersection of $n$ facets, with normals $\nu_{j_1}, \dots, \nu_{j_n}$ say. The $\nu_{j_k}$ form a $\ZZ$-basis for $M = \rH_1(L; \ZZ)$ so we can write $-\nu_i$ as $\sum_k r_k \nu_{j_k}$ for unique integers $r_k$. Then \smash{$\bigl(Z_i \prod_k Z_{j_k}^{r_k}\bigr)^{-1}$} lies in $\kk[\rH_2(X; \ZZ)]$. And since $\langle \nu_i, \bullet \rangle$ attains its maximum at $v$ the $r_k$ are all non-negative. Therefore,{\samepage
\[
Z_i^{-1} = Z_{j_1}^{r_1} \cdots Z_{j_n}^{r_n} \biggl(Z_i \prod_k Z_{j_k}^{r_k} \biggr)^{-1}
\]
is generated by $Z_{j_1}, \dots, Z_{j_n}$ and the image of $\kk[\rH_2(X; \ZZ)]$, as wanted, so \smash{$\CO^0_{\bL, \rH_2}$} is surjective.}

It now just remains to show that \smash{$\CO^0_{\bL, \rH_2}$} is injective. Assume first that $\kk$ is infinite. For \smash{$\chi \in \rH^2(X; \kk^\times)$}, let $\frn$ denote the kernel of the induced algebra homomorphism $\kk[\rH_2(X; \ZZ)] \to \kk$ as above. Reducing the domain and codomain of \smash{$\CO^0_{\bL, \rH_2}$} modulo $\frn$ gives exactly \smash{$\CO^0_{\bL, \chi}$}, which we know is injective by Proposition~\ref{propTwistedCO}. It therefore suffices to show that for all non-zero $x \in \QH^*_{\rH_2}(X)$ there exists $\chi$ such that $x$ is non-zero modulo $\frn$. And since $\QH^*_{\rH_2}(X)$ is a free $\kk[\rH_2(X; \ZZ)]$-module, it is in fact enough to show that for all non-zero $f \in \kk[\rH_2(X; \ZZ)]$ there exists $\chi$ such that $f$ is non-zero modulo $\chi$.

Suppose then that we are given non-zero $f \in \kk[\rH_2(X; \ZZ)]$. After picking a basis for $\rH_2(X; \ZZ)$, we can view $\kk[\rH_2(X; \ZZ)]$ as a Laurent polynomial ring $\kk\bigl[Y_1^{\pm 1}, \dots, Y_{N-n}^{\pm 1}\bigr]$, where $N-n$ is the rank of $\rH_2(X; \ZZ)$ (recall that $n$ is the complex dimension of $X$ and $N$ is the number of facets of its moment polytope $\Delta$). Then $f$ is a non-zero Laurent polynomial $f(Y_1, \dots, Y_{N-n})$ in the $Y_i$, and we need to show that there exist $y_1, \dots, y_{N-n} \in \kk^\times$ such that $f(y_1, \dots, y_{N-n}) \neq 0$. This follows by induction on $N-n$, using the fact that $\kk$ is infinite but a non-zero polynomial can have only finitely many zeros.

We are left to deal with finite $\kk$. In this case note that if \smash{$\CO^0_{\bL, \rH_2}$} were not injective then it would continue to be non-injective after tensoring with $\kk(t)$. But the resulting map would necessarily be injective by the argument just given for infinite fields.
\end{proof}

\begin{Remark}
The isomorphism \smash{$\QH^*_{\rH_2}(X) \cong \kk\bigl[Z_1^{\pm 1}, \dots, Z_N^{\pm 1}\bigr] / (\sum_i \nu_i Z_i)$} still holds if $\kk$ is allowed to be an arbitrary ring, by arguing as in \cite[Section~4.2, especially Proposition~4.6]{SmithQH}. The isomorphism can still be constructed as a weighted closed-open map, but it is no longer clear that the analogue of Remark~\ref{rmkKoszulRegular} holds, so \smash{$\kk\bigl[Z_1^{\pm 1}, \dots, Z_N^{\pm 1}\bigr] / (\sum_i \nu_i Z_i)$} may in principle be a~proper subalgebra of \smash{$\HF^*_{\tilS}(\bL, \bL)$}.
\end{Remark}

Finally, let $R = \kk[\rH_2(X; \ZZ/2)]$, viewed as a $\kk[\rH_2(X; \ZZ)]$-algebra via the natural map
\[
\rH_2(X; \ZZ) \to \rH_2(X; \ZZ/2),
\]
and let
\[
\QH^*_R(X) = R \otimes_{\kk[\rH_2(X; \ZZ)]} \QH^*_{\rH_2}(X).
\]
The consequence of Theorem~\ref{thmSmallishToricQH} that we actually need is as follows.

\begin{Corollary}
\label{corToricQH}
We have identifications of $\kk$-algebras
\[
\QH^*_R(X) \cong \kk\bigl[Z_1^{\pm 1}, \dots, Z_N^{\pm 1}\bigr] \bigg/ \biggl(\sum_j \nu_j Z_j\biggr) + \biggl(\prod_j Z_j^{\langle 2A, H_j \rangle}-1:A \in \rH_2(X; \ZZ)\biggr)
\]
and
\[
\QH^*(X) \cong \kk\bigl[Z_1^{\pm 1}, \dots, Z_N^{\pm 1}\bigr] \bigg/ \biggl(\sum_j \nu_j Z_j\biggr) + \biggl(\prod_j Z_j^{\langle A, H_j \rangle}-1:A \in \rH_2(X; \ZZ)\biggr),
\]
under which $H_i$ on the left-hand side corresponds to $Z_i$ on the right-hand side.
\end{Corollary}
\begin{proof}
Take the identification of $\kk[\rH_2(X; \ZZ)]$-algebras
\[
\QH^*_{\rH_2}(X) = \kk\bigl[Z_1^{\pm 1}, \dots, Z_N^{\pm 1}\bigr] \bigg/ \biggl( \sum_i \nu_i Z_i \biggr)
\]
from Theorem~\ref{thmSmallishToricQH}, under which $H_i$ is sent to $Z_i$. Recall that the $\kk[\rH_2(X; \ZZ)]$-action on the right-hand side arises from the map
\[
\kk[\rH_2(X; \ZZ)] \to \kk[\rH_2(X, L; \ZZ)] = \tilS = \kk\bigl[Z_1^{\pm 1}, \dots, Z_N^{\pm 1}\bigr].
\]
Under this map, the monomial $Z^A \in \kk[\rH_2(X; \ZZ)]$ is sent to $\prod_j Z_j^{p_j}$, where $A = \sum_j p_j \beta_j$ in~$\rH_2(X, L; \ZZ)$. Since each $\beta_j$ has intersection number $\delta_{ij}$ with the $i$th toric divisor $D_i$, we~can express each $p_j$ as $\langle A, H_j \rangle$, and hence $Z^A$ as \smash{$\prod_j Z_j^{\langle A, H_j \rangle}-1$}. The corollary then follows immediately from the fact that $\QH^*(X)$ and $\QH^*_R(X)$ are obtained from $\QH^*_{\rH_2}(X)$ by setting each~$Z^A$, respectively $Z^{2A}$, to $1$.
\end{proof}

\subsection*{Acknowledgements}

I am grateful to Oscar Randal-Williams, Ivan Smith, and anonymous referees for useful comments, to Nick Sheridan for the triangle algebra argument in Lemma~\ref{lemThetaIndep2}, and to Leonie Luginb\"uhl and Clara Manco for initiating the sequence of events that led to this paper. This work was carried out at St John's College, Cambridge, whom I thank for their support and hospitality.

%\bibliography{FilteredBib}
%\bibliographystyle{sigma}

\pdfbookmark[1]{References}{ref}
\LastPageEnding

\end{document}